\newcommand{\tc}{\textcolor{black}}
\newtheorem{thm}{Theorem}[section]
\newtheorem{lem}{Lemma}[section]
\newtheorem{cor}[lem]{Corollary}
\newtheorem{prop}[thm]{Proposition}
\newtheorem{rem}[thm]{Remark}
\numberwithin{equation}{section}
\newcommand{\abs}[1]{\left\vert#1\right\vert}
\newcommand{\E}{\mathbf{E}\,}
\newcommand{\R}{\mathbf{R}}
\newcommand{\re}{\mathrm{Re}\;\!}
\newcommand{\im}{\mathrm{Im}\;\!}
\newcommand{\Tr}{\mathrm{Tr}\;\!}
\newcommand{\q}{\quad}
\newenvironment{Proof of}{\removelastskip\par\medskip
\noindent{\em Proof of} \rm}{\penalty-20\null\hfill$\square$\par\medbreak}
\def\be{\begin{equation}}
\def\en{\end{equation}}
\def\bee{\begin{eqnarray*}}
\def\ene{\end{eqnarray*}}
\def\E{{\bf E}}
\def\R{{\mathbb R}}
\def\Tr{{\rm Tr}\,}
\def\Im{{\rm Im}\,}
\def\<{\left<}
\def\>{\right>}
\def\1{{\bf 1}}
\def\4{\kern1pt}
\begin{document}
\bibliographystyle{}

\vspace{1in}
 \date{}
\title{\bf Rate of  Convergence of the Expected Spectral Distribution Function to the Marchenko -- Pastur  Law}

\author{{\bf F. G\"otze}\\{\small Faculty of Mathematics}
\\{\small University of Bielefeld}\\{\small Germany}
\and {\bf A. Tikhomirov}$^{1}$\\{\small Department of Mathematics
}\\{\small Komi Research Center of Ural Division of RAS,}\\{\small Syktyvkar State University}
\\{\small  Russia}
}
\maketitle
 \footnote{$^1$Research supported   by SFB 701 ``Spectral Structures and Topological Methods in Mathematics'' University of Bielefeld.
Research supported   by grant  RFBR N~14-01-00500  and by Program of Fundamental Research Ural Division of RAS, Project  12-P-1-1013}

\date{}

\maketitle
\begin{abstract}
Let $\mathbf X=(X_{jk})$ denote a $n\times p$ random matrix with
entries $X_{jk}$, which are independent for $1\le j\le n, 1\le  k\le p$. \tc{Let $n,p$ tend to infinity such that $\frac np=y+O(n^{-1})\in(0,1]$. For those values of $n,p$ we investigate}
the rate of convergence of the expected  spectral distribution function of
the matrix $\mathbf W=\frac1{ p}\mathbf X\mathbf X^*$ to the Marchenko-Pastur  law with parameter $y$. \tc{Assuming the   conditions  $\E X_{jk}=0$,
$\E X_{jk}^2=1$ and}
\begin{equation}
\sup_{n,p\ge1}\sup_{1\le j\le n,1\le k\le p}\E|X_{jk}|^4=: \mu_4<\infty,\quad
\sup_{n,p\ge1}
\sup_{1\le j\le n,1\le k\le p}|X_{jk}|\le Dn^{\frac14},\notag
\end{equation}
we  show that the
 Kolmogorov distance between the {\it expected} spectral
distribution of the sample covariance matrix  $\mathbf W$
and the  Marchenko -- Pastur  law is of order $O(n^{-1})$.
\end{abstract}
\maketitle

\titlecontents{section}[1.5em]{\addvspace{1pt}}{\contentslabel{1.5em}}{}{\titlerule*[0.72pc]{.}\contentspage}

\section{Introduction}
\setcounter{equation}{0}

 The present paper is a continuation of the papers \cite{GT:2014}, \cite{GT:2014a}, where we   proved   
 \tc{non improvable bounds} for the Kolmogorov distance between the expected spectral distribution function of Wigner matrices  and the semicircular 
distribution function. In this paper we estimate the  Kolmogorov distance between the expected spectral distribution function of sample covariance  matrices  
and the Marchenko -- Pastur distribution function.

Consider a family $\mathcal X = \{X_{jk}\}$, $1 \leq j \le n, 1\leq k \leq p$,
of independent real random variables defined on some probability space
$(\Omega,{\textfrak M},\Pr)$, for any $n\ge 1$ and $p\ge1$.  Introduce the  matrices
\begin{displaymath}\notag
 \mathbf X = \ \frac{1}{\sqrt{p}} \left(\begin{array}{cccc}
 X_{11} &  X_{12} & \cdots &  X_{1p} \\
 X_{21} & X_{22} & \cdots &  X_{2p} \\
\vdots & \vdots & \ddots & \vdots \\
 X_{n1} &  X_{n2} & \cdots &  X_{np} \\
\end{array}
\right)
\end{displaymath}
and corresponding sample covariance matrices
$$
\mathbf W=\mathbf X\mathbf X^*.
$$
Here and in what follows we denote by $\mathbf A^*$ the complex conjugate  of matrix $\mathbf A$.
The matrix $\mathbf W$ has a random spectrum $\{s_1^2,\dots,s_n^2\}$ and an
associated spectral empirical distribution function
$\mathcal F_{n}(x) = \frac{1}{n}\,{\rm card}\,\{j \leq n: s_j^2 \leq
x\}, \quad x \in \R$.
Averaging over the random values $X_{ij}(\omega)$, define the expected
(non-random) empirical distribution functions
$ F_{n}(x) = \E\,\mathcal F_{n}(x)$.
Let $G_y(x)$ denote the Marchenko -- Pastur distribution function with parameter $y$ with density
$g_y(x)=G_y'(x)=\frac1{2\pi x}\sqrt{(x-a^2)(b^2-x)}\mathbb I_{[a^2,b^2]}(x)$, where $\mathbb I_{[a^2,b^2]}(x)$
denotes the indicator--function of the interval $[a^2,b^2]$ and $a^2=(1-\sqrt y)^2$ and $b^2=(1+\sqrt y)^2$. 
 The rate of convergence to the Marchenko -- Pastur law has been studied by several authors.  For a  detailed discussion of  previous results see  \cite{GT:2014}
 and \cite{GT:2013a}.
 In what follows we shall assume that $p=p(n)$ such that
\begin{equation}\label{yn}
 y_n:=\frac np, \qquad  |y_n- y| \le c_y n^{-1}, 
\end{equation}
for some constant $c_y >0$.
 We shall estimate  the  Kolmogorov distance between $\mathcal F_n(x)$ and
 the distribution function
 $G_{y}(x)$, \tc{that is,} 
 $\Delta_n:=\sup_x| F_n(x)-G_{y}(x)|$.

The main result of this paper is the following
\begin{thm}\label{main} Let $\E X_{jk}=0$, $\E X_{jk}^2=1$.  Assume that
there exists a constant $\mu_4>0$ such that   
\begin{equation}\label{moment}
 \sup_{n,p\ge1}\,\sup_{1\le j\le n,1\le k\le p}\E|X_{jk}|^4=: \mu_4<\infty.
\end{equation}
 Furthermore, assume that there exists a constant $D$ such that for all $n$
\begin{equation}\label{trun}
 \sup_{1\le j\le n,1\le k\le p}|X_{jk}|\le Dn^{\frac14}.
\end{equation}

Assuming \eqref{yn},  for $y \in (0,1]$  there exists a positive  constant $C=C(D,\mu_4,y,c_y)$ depending on
 $D$, $\mu_4$, $y$ and $c_y$ only
such that,
\begin{equation} \label{kolmog}
\Delta_n=\sup_x| F_n(x)-G_{y}(x)|\le C n^{-1}.
\end{equation}

\end{thm}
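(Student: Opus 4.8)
The plan is to use the Stieltjes-transform method combined with a smoothing inequality, following the scheme developed in \cite{GT:2014}, \cite{GT:2014a}, \cite{GT:2013a} for Wigner matrices and adapted here to sample covariance matrices. Write $\mathbf R(z)=(\mathbf W-z\mathbf I)^{-1}$, let $m_n(z)=\frac1n\E\Tr\mathbf R(z)$ be the Stieltjes transform of $F_n$, and $s(z)=s_y(z)=\int(x-z)^{-1}\,dG_y(x)$ that of $G_y$; recall $s$ is the root of $yzs^2+(z+y-1)s+1=0$ with $\Im s>0$ for $\Im z>0$, and that the discriminant of this quadratic equals $(z+y-1)^2-4yz=(z-a^2)(z-b^2)$, which vanishes precisely at the edges $a^2$ and $b^2$. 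By a smoothing inequality — for the sharp rate one needs the refined, edge-sensitive version of \cite{GT:2014} rather than the classical bound — the Kolmogorov distance $\Delta_n$ is controlled by integrals of $|m_n(z)-s(z)|$ along horizontal lines $\Im z=v$ together with boundary contributions; the problem is thereby reduced to estimating $\Lambda_n(z):=m_n(z)-s(z)$ near the support $[a^2,b^2]$, with particular care at the two edges and, when $y=1$, at the hard edge $0$.

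Second, I would derive an approximate self-consistent equation for $m_n$. Deleting the $j$-th row of $\mathbf X$ and combining the Schur complement formula with the push-through identity relating $\mathbf X\mathbf X^*$ and $\mathbf X^*\mathbf X$, one writes $R_{jj}=-\bigl[z\bigl(1+\mathbf x_j^*\underline{\mathbf R}^{(j)}\mathbf x_j\bigr)\bigr]^{-1}$, where $\mathbf x_j^*$ is the $j$-th row of $\mathbf X$ and $\underline{\mathbf R}^{(j)}$ is the resolvent of $(\mathbf X^{(j)})^*\mathbf X^{(j)}$, with $\mathbf X^{(j)}$ obtained from $\mathbf X$ by deleting the $j$-th row. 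Splitting the quadratic form $\mathbf x_j^*\underline{\mathbf R}^{(j)}\mathbf x_j$ into its conditional mean $\frac1p\Tr\underline{\mathbf R}^{(j)}$ and a fluctuation $\varepsilon_j$, using the rank-one comparison of $\frac1p\Tr\underline{\mathbf R}^{(j)}$ with $\frac1p\Tr\underline{\mathbf R}$ and the exact identity $\frac1p\Tr\underline{\mathbf R}=y_n\,\frac1n\Tr\mathbf R-(1-y_n)z^{-1}$, and expanding $R_{jj}$ in the resulting error, one arrives at
$$zy_n\,m_n^2(z)+(z+y_n-1)\,m_n(z)+1=\delta_n(z),$$
where $\delta_n$ collects (i) the rank-one corrections, (ii) second-order terms built from $\E\varepsilon_j^2$ and $\E|\varepsilon_j|^2$, together with $\mu_4$-dependent pieces and the fluctuations of $\frac1p\Tr\underline{\mathbf R}^{(j)}$, and (iii) terms proportional to $\E|\Lambda_n|$ and to $|y_n-y|\le c_yn^{-1}$. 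Subtracting the two quadratics and using $2zy\,s(z)+z+y-1=\pm\sqrt{(z-a^2)(z-b^2)}$ gives the stability relation
$$\bigl(\sqrt{(z-a^2)(z-b^2)}+zy_n\Lambda_n(z)\bigr)\,\Lambda_n(z)=\delta_n(z)+O(n^{-1}),$$
which, combined with the trivial a priori bounds $|m_n(z)|,\Im m_n(z)\le(\Im z)^{-1}$ and the inequality $\Im m_n\lesssim\Im s+|\Lambda_n|$, is the engine of a bootstrap: each round uses the current bound on $\Lambda_n$ to sharpen $\delta_n$ and hence, via the stability relation, $\Lambda_n$ itself, down to $\Im z$ of order $n^{-1}$.

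The delicate part — and the reason the statement holds for the \emph{expected} spectral distribution at the exact rate $n^{-1}$, rather than with a logarithmic loss — is the estimation of $\delta_n(z)$. After taking the expectation the leading, linear-in-$\varepsilon_j$ contributions vanish, so $\delta_n$ is built only from genuinely second-order quantities; the truncation \eqref{trun} makes all moments of the $X_{jk}$ grow at most polynomially in $n$, which is exactly what controls the higher moments of the $\varepsilon_j$ and of $\frac1p\Tr\underline{\mathbf R}^{(j)}$, while the fourth-moment bound \eqref{moment} governs the remaining $\mu_4$-terms. Feeding these estimates, together with the stability relation, into the refined smoothing inequality and integrating over $u$ with the optimal height $v=v(u)$ (of order $n^{-1}$ in the bulk, enlarged near $a^2$, $b^2$ and $0$) yields $\Delta_n\le Cn^{-1}$.

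I expect the main obstacle to be the analysis in the edge zones, where $\sqrt{(z-a^2)(z-b^2)}$ and the Marchenko--Pastur density both degenerate like a square root and the stability relation loses its strength: there one must carry the expansion of $m_n$ one term further, show that the resulting $n^{-1}$-correction does not blow up faster than the edge form of the smoothing inequality can accommodate, and check that $\delta_n$ still carries enough decay. The interplay of \eqref{moment}, the truncation scale $n^{1/4}$ in \eqref{trun}, and the passage to the expected spectral distribution is precisely what makes the final exponent $n^{-1}$ — rather than $n^{-3/4}$ or $n^{-1}\log n$ — attainable, and the case $y=1$, with its hard edge at the origin, requires separate treatment throughout.
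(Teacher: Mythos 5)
Your skeleton — smoothing inequality, Schur-complement self-consistent equation, stability relation for $\Lambda_n=m_n-s_y$, bootstrap in $v$ down to $v\asymp n^{-1}$, and the observation that taking expectations kills the linear terms — is the same general strategy as the paper's (which works with the Hermitized block matrix $\mathbf V$ and the symmetrized law rather than with $\mathbf W$ directly, an inessential difference). But the two points you explicitly defer are exactly where the proof lives, and for the first of them the route you sketch is not the one that works. You propose to handle the zones near $a^2$, $b^2$ (and $0$ for $y=1$) by ``carrying the expansion of $m_n$ one term further'' there; the actual argument performs no edge analysis at all. The smoothing bound (Proposition \ref{smoothing}, Corollary \ref{smoothing2}) is built so that an $\varepsilon$-neighbourhood of the edges is simply excised at the price of an additive error $C_2\varepsilon^{3/2}$, which comes from the square-root vanishing of the Marchenko--Pastur density; choosing $v_0=A_0n^{-1}$ and $\varepsilon\asymp n^{-2/3}$ (so that $2v_0\varkappa=\varepsilon^{3/2}$, condition \eqref{avcond1}) makes this error $O(n^{-1})$, and the Stieltjes-transform estimates are then needed only on the region $\mathbb G$, which stays at distance $\varepsilon$ from the edges with $v\ge v_0/\sqrt{\gamma}$. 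Since you neither supply the edge expansion nor this excision mechanism, your plan does not close at its self-declared ``main obstacle''.

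The second gap is quantitative. Avoiding the $\log n$ loss is not a consequence of the linear terms vanishing in expectation: the dangerous contribution is the $\varepsilon_{j3}$-block, and the paper removes it through the exact identity $\frac1n\sum_{j}\varepsilon_{j3}R_{jj}=-\frac{y}{2n}\bigl(m_n'(z)-\frac{m_n(z)}{z}\bigr)$, followed by a separate self-consistent equation for $m_n'(z)-s_y'(z)$ (Section \ref{better}) and the variance bound $\E|\Lambda_n|^2\le C(nv)^{-2}$ of Lemma \ref{lam1*}; a direct use of variance bounds on these terms gives only $O(n^{-1}\log n)$ after the $v$-integration. Moreover, every estimate of the ``second-order'' quantities in your $\delta_n$ at $v\asymp n^{-1}$ relies on the uniform high-moment bounds $\E|R_{jj}|^q\le C^q$ for $q$ up to $A_1(nv)^{1/4}$ on $\mathbb G$ (Theorem \ref{bp1*}, Corollary \ref{cor8}), proved by a recursive Rosenthal--Burkholder scheme for the quadratic forms; your bootstrap invokes only the trivial a priori bounds $|m_n(z)|\le(\operatorname{Im}z)^{-1}$, which are far too weak to control those terms with the powers of $v$ needed for the $n^{-1}$ rate.
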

\begin{cor}\label{cormain}Let $\E X_{jk}=0$, $\E X_{jk}^2=1$.  Assume that
\begin{equation}\label{moment1}
 \sup_{n\ge1}\,\sup_{1\le j\le k\le n}\E|X_{jk}|^8=: \mu_8<\infty.
\end{equation}
Assuming \eqref{yn}, for any $y\in (0,1]$    there exists  positive  constants $C=C(\mu_8, y, c_y)$  depending on  
 $\mu_8$, $y$ and $c_y$ only
such that,
\begin{equation} \label{kolmog1}
\Delta_n\le C n^{-1}.
\end{equation}
\end{cor}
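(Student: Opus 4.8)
The strategy is to reduce Corollary~\ref{cormain} to Theorem~\ref{main} by truncation; the eighth moment in \eqref{moment1} is assumed precisely because it is what keeps the truncation error of the optimal order $O(n^{-1})$. Fix $D=1$, set $\widehat X_{jk}=X_{jk}\mathbb I(|X_{jk}|\le n^{1/4})$, $\check X_{jk}=\widehat X_{jk}-\E\widehat X_{jk}$, $\sigma_{jk}^2=\E\check X_{jk}^2$, $\widetilde X_{jk}=\sigma_{jk}^{-1}\check X_{jk}$, and write $\widehat{\mathbf X}=p^{-1/2}(\widehat X_{jk})$, $\widetilde{\mathbf X}=p^{-1/2}(\widetilde X_{jk})$, with corresponding sample covariance matrices $\widehat{\mathbf W},\widetilde{\mathbf W}$ and spectral distribution functions $\widehat{\mathcal F}_n,\widetilde{\mathcal F}_n$. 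From $\E X_{jk}=0$, $\E X_{jk}^2=1$ and Chebyshev's inequality applied to $|X_{jk}|^8$ one gets $|\E\widehat X_{jk}|\le\mu_8 n^{-7/4}$ and $|\sigma_{jk}^2-1|\le C\mu_8 n^{-3/2}$, hence for $n$ large $\sigma_{jk}^2\ge\tfrac12$, $|\sigma_{jk}^{-1}-1|\le C\mu_8 n^{-3/2}$, $\sup_{j,k}\E|\widetilde X_{jk}|^4\le C(\mu_8^{1/2}+1)$ (using $\mu_4\le\mu_8^{1/2}$), and $|\widetilde X_{jk}|\le 2n^{1/4}$. Thus $\widetilde{\mathbf X}$ satisfies \eqref{moment} and \eqref{trun} with mean zero, variance one, and constants depending only on $\mu_8$, so Theorem~\ref{main} gives $\sup_x|\E\widetilde{\mathcal F}_n(x)-G_y(x)|\le C(\mu_8,y,c_y)n^{-1}$. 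It then remains to estimate $\sup_x|\E\mathcal F_n(x)-\E\widetilde{\mathcal F}_n(x)|$, which I split into the truncation step $\mathbf W\to\widehat{\mathbf W}$ and the recentering/rescaling step $\widehat{\mathbf W}\to\widetilde{\mathbf W}$.

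For the truncation step I would use the rank inequality for sample covariance matrices, $\sup_x|\mathcal F_n(x)-\widehat{\mathcal F}_n(x)|\le\frac{C}{n}\operatorname{rank}(\mathbf X-\widehat{\mathbf X})\le\frac{C}{n}\,\#\{(j,k):|X_{jk}|>n^{1/4}\}$, and take expectations. Chebyshev with the eighth moment gives $\E\,\#\{(j,k):|X_{jk}|>n^{1/4}\}\le np\,\mu_8 n^{-2}\le C(\mu_8,y)\,n^{-1}$, so this step contributes only $O(n^{-2})$. (With only $2+2q$ finite moments the same computation produces $O(n^{(1-q)/2})$, which is $O(n^{-1})$ exactly when $q\ge 3$; this is why \eqref{moment1} imposes eight moments.)

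For the recentering/rescaling step let $\mathbf Y=\widetilde{\mathbf X}-\widehat{\mathbf X}$. Using $|\widehat X_{jk}|\le n^{1/4}$ together with the estimates above, every entry of $\mathbf Y$ is $O(n^{-7/4})$, so every summand of $\|\mathbf Y\|_{\mathrm{HS}}^2$ is $O(n^{-7/2})$, while $\E\|\mathbf Y\|_{\mathrm{HS}}^2\le C\mu_8^2 n^{-2}$ (here $\E\,\Tr\widehat{\mathbf W}\le n$ is used). Hence Bernstein's inequality gives $\|\mathbf Y\|_{\mathrm{HS}}\le C\mu_8 n^{-1}$ off a set of exponentially small probability; let $\mathcal G$ be the intersection of this event with $\{\|\widehat{\mathbf X}\|\le 2+\sqrt y\}$, whose complement has probability $o(n^{-1})$ by the standard operator-norm estimate available under \eqref{moment}, \eqref{trun}. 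On $\mathcal G$ one has $\|\widehat{\mathbf W}-\widetilde{\mathbf W}\|\le 2\|\widehat{\mathbf X}\|\,\|\mathbf Y\|_{\mathrm{HS}}+\|\mathbf Y\|_{\mathrm{HS}}^2\le Cn^{-1}$ deterministically, hence $\widetilde{\mathcal F}_n(x-Cn^{-1})\le\widehat{\mathcal F}_n(x)\le\widetilde{\mathcal F}_n(x+Cn^{-1})$ for every $x$. Splitting the expectation according to $\mathcal G$, using $\Pr(\mathcal G^c)=o(n^{-1})$ together with the bound already obtained for $\widetilde{\mathbf W}$, and adding the $O(n^{-2})$ from the truncation step, one arrives at
\[
\Delta_n\le Cn^{-1}+\sup_x\bigl(G_y(x+Cn^{-1})-G_y(x-Cn^{-1})\bigr).
\]

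The last term is where real care is needed. For $y\in(0,1)$ the Marchenko--Pastur density $g_y$ is bounded (it vanishes at both endpoints $a^2=(1-\sqrt y)^2>0$ and $b^2$), so that term is $\le C\|g_y\|_\infty n^{-1}=O(n^{-1})$ and \eqref{kolmog1} follows. The genuine obstacle is the case $y=1$: then $a^2=0$ and $g_1(x)\sim(\pi\sqrt x)^{-1}$ as $x\to 0+$, so the crude shift argument loses a factor $n^{1/2}$ in a neighbourhood of the origin, and this cannot be repaired by changing the truncation level since the perturbation shift is itself of order $n^{-1}$. For $y=1$ I would instead avoid comparing $\widehat{\mathbf W}$ and $\widetilde{\mathbf W}$ by a norm bound near $x=0$ and re-examine the proof of Theorem~\ref{main}: the normalization $\E X_{jk}=0$, $\E X_{jk}^2=1$ enters the self-consistent equation and the accompanying stability and smoothing estimates only through quantities that change by $O(n^{-7/4})$ and $O(n^{-3/2})$, respectively, when $X_{jk}$ is replaced by $\check X_{jk}$; propagating these errors through the argument should perturb the final bound by at most $O(n^{-1})$, uniformly up to the soft edge at $0$. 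Verifying this last point — that the near-optimal analysis degrades only by $O(n^{-1})$ under the small violation of the variance normalization, all the way down to the edge of the spectrum when $y=1$ — is the step I expect to be the real difficulty, the rest being essentially bookkeeping.
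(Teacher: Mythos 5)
Your reduction works cleanly only for $y\in(0,1)$, and the place where it breaks, $y=1$, is exactly the case the corollary must also cover; as you yourself note, the final step there is left unproved, and this is a genuine gap, not bookkeeping. The failure is structural: comparing $\widehat{\mathbf W}$ and $\widetilde{\mathbf W}$ through an operator-norm bound $\|\widehat{\mathbf W}-\widetilde{\mathbf W}\|\le Cn^{-1}$ and then shifting the argument of $G_y$ necessarily pays $\sup_x\bigl(G_y(x+Cn^{-1})-G_y(x-Cn^{-1})\bigr)$, which at the hard edge $a^2=0$ (density $\sim(\pi\sqrt x)^{-1}$) is of order $n^{-1/2}$, and your proposed repair -- rerunning the proof of Theorem \ref{main} with entries whose mean and variance are only approximately $0$ and $1$ -- is precisely the sixty pages you would have to redo and verify; nothing in your write-up actually does this. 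The paper avoids the problem entirely by never comparing distribution functions of the modified matrices through eigenvalue shifts: the truncation step is handled, as in your proposal, by Bai's rank inequality (Lemma \ref{trunc}), but the recentering and renormalization are compared at the level of \emph{Stieltjes transforms} of the symmetrized (Hermitized) matrices, giving $|\E(\widetilde m_n(z)-\breve m_n(z))|\le Cn^{-3/2}v^{-3/2}$ and $|\E(\widetilde m_n(z)-\widehat m_n(z))|\le C\mu_8 n^{-3/2}v^{-3/2}$ (Lemmas \ref{trunc2*} and \ref{trunc1}, via the resolvent identity and Hilbert--Schmidt bounds). These errors are then fed into the same smoothing inequality \eqref{smoothing11} used for Theorem \ref{main}, where integration in $v$ over the contour turns $n^{-3/2}v^{-3/2}$ into $O(n^{-1})$ uniformly in $y\in(0,1]$; moreover, the symmetrization turns the $y=1$ hard edge into an interior point of the symmetrized law (which for $y=1$ is the semicircle law), so no singular density ever enters. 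That is the missing idea in your argument: do the comparison on the resolvent/Stieltjes-transform side, inside the contour-integral machinery, rather than on the spectral-distribution side.

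Two smaller points. First, your arithmetic in the truncation step is off: $np\,\mu_8 n^{-2}\asymp\mu_8/y$ is $O(1)$, not $O(n^{-1})$, so the rank inequality gives a contribution of order $n^{-1}$ (as in Lemma \ref{trunc}), not $n^{-2}$; this is harmless for the final statement but should be corrected. Second, your application of Theorem \ref{main} to the recentered, renormalized, truncated array $\widetilde{\mathbf X}$ is legitimate (it satisfies \eqref{moment} and \eqref{trun} with constants depending only on $\mu_8$), and this part coincides with the paper's use of Theorem \ref{stiltjesmain} through Remark \ref{trunc00}; the divergence, and the gap, is solely in how the three matrices are compared afterwards.
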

\begin{rem}
 It is straightforward to check that by assumption \eqref{yn}
\begin{equation}
\sup_x|G_{y_n}(x)-G_y(x)|\le  C n^{-1},
\end{equation}
with a constant $C$ depending on $y$ and $c_y$.
Thus without loss of generality we shall assume in the following proofs that $y=y_n$.
\end{rem}

For any distribution function $F(x)$ we define the Stieltjes transform $s_F(z)$, for $z=u+iv$ with $v>0$, via formula
\begin{equation}
 s_F(z)=\int_{-\infty}^{\infty}\frac1{x-z}dF(x). 
\end{equation}
We introduce \tc{the} symmetrized distribution function
$$
\widetilde{\mathcal F}_n(x)=\frac{1+\tc{\text{ \rm sign} (x)} \mathcal F_n(x^2)}2.
$$
 We denote the Stieltjes transform of $\mathcal F_n(x)$
by $m_n(z)$ and the Stieltjes transform of \tc{the }Marchenko -- Pastur  law with parameter $y$ by $S_y(z)$.  Let
$\mathbf R=\mathbf R(z)$ be the resolvent matrix of $\mathbf W$ given by
$\mathbf R=(\mathbf W-z\mathbf I_n)^{-1}$,
for all $z=u+iv$ with $v\ne 0$. Here and in what follows $\mathbf I_n$ denotes the identity matrix of dimension $n$.
Sometimes we shall omit the \tc{sub-index} in the notation of an identity  matrix.
Denote by $m_n(z)$ the Stieltjes transform of the distribution function $\mathcal F_n(x)$.
It is a well-known fact that
\begin{equation}
 m_n(z)=\frac1n\sum_{j=1}^n\frac1{s^2_j-z}=\frac1n\Tr \mathbf R.
\end{equation}
The Stieltjes transform $S_y(z)$ of the Marchenko --Pastur distribution
satisfies the equation
\begin{equation}\label{stmarpas}
yzS_y^2(z)+(y-1+z)S_y(z)+1=0
\end{equation}
(see, for example, equality (3.10) in \cite{GT:2009}). For the Stieltjes transform $ s_y(z)=zS_y(z^2)$ of the symmetrized Marchenko -- Pastur distribution
we have
\begin{equation}
1+(z+\frac{y-1}z)  s_y(z)+y{ s}^2_y(z)=1
\end{equation}
(see, for instance,  equality (3.11)
in \cite{GT:2009}).
We introduce the $(n+p)\times(n+p)$ Hermitian  matrix by
\begin{equation}\notag
 \mathbf V=\begin{bmatrix}&\mathbf O&\mathbf X\\&\mathbf X^*&\mathbf O\end{bmatrix}.
\end{equation}
It is well known that \tc{the} eigenvalues of the matrix $\mathbf V$ are $-s_1,\ldots,-s_n,s_n,\ldots, s_1$ and $0$ of multiplicity $p-n$.
Introduce the resolvent matrix $\widetilde{\mathbf R}$ of the matrix $\mathbf V$ by
\begin{equation}
 \widetilde{\mathbf R}=(\mathbf V-z\mathbf I_{n+p})^{-1}.
\end{equation}
It is straightforward to check that
\begin{equation}
 \widetilde m_n(z)=zm_n(z^2)=\frac1{2n}\Tr\widetilde {\mathbf R}+\frac{1-y}{2zy}.
\end{equation}

In what follows we shall consider the symmetrized distribution only.\tc{If it is clear from the context we shall omit the symbol $\, \widetilde{\cdot}\, $ in the notation of 
distribution functions, Stieltjes transforms, resolvent matrices and etc.}

Let 
\begin{equation}\label{v0}
v_0:= A_0n^{-1}
\end{equation}
 and $\gamma(z):=\min\{|a-|u||,|b-|u||\}$, for $z=u+iv$.
Introduce the region 
$\mathbb G=\mathbb G(A_0, n,\varepsilon)\subset\mathbb C_+$ 
\begin{equation}\notag
 \mathbb G:=\{z=u+iv\in\mathbb C_+: a+\varepsilon\le |u|\le b-\varepsilon,\,V\ge v\ge v_0/\sqrt{\gamma(z)}\}.
\end{equation}
Let  $\varkappa>0$  be \tc{a} positive number such that
\begin{equation} \label{constant}
 \frac1{\pi}\int_{|u|\le \varkappa}\frac1{u^2+1}du=\frac34.
\end{equation}
\newpage
\tc{On the level of Stieltjes transforms our results are based on the following approximations.}
\begin{thm}\label{stiltjesmain}Let  $\frac12>\varepsilon>0$ be positive numbers \tc{(depending on $v_0$, see \eqref{v0})} such that
\begin{equation}\label{avcond}
 \varepsilon^{\frac32}:=2v_0\varkappa.
\end{equation}Assuming the conditions of Theorem \ref{main}, there exists a positive constant $C=C(D,A_0,\mu_4, y)$ depending on 
$D$, $A_0$, $\mu_4$ and $y$ only, such that, for 
$z\in\mathbb G$
\begin{align}\notag
 |\E m_n(z)-s_y(z)|\le \frac C{n v^{\frac34}}+\frac C{n^{\frac32}v^{\frac32}|(z+\frac{y-1}z)^2-4y|^{\frac14}}.
\end{align}

\end{thm}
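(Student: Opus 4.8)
The plan is to set up and analyze a self-consistent equation for the resolvent of the symmetrized matrix $\mathbf V$ along the lines of the standard Stieltjes-transform method, but controlling the error terms sharply enough to obtain the stated rate. First I would write, using the Schur complement / resolvent expansion applied to the rows and columns of $\mathbf V$, an identity of the form
\begin{equation}\notag
 1+\Bigl(z+\tfrac{y-1}z\Bigr)\E m_n(z)+y(\E m_n(z))^2 = \E\mathcal E_n(z),
\end{equation}
where $\mathcal E_n(z)$ collects the fluctuation terms: diagonal entries of $\widetilde{\mathbf R}$ minus their expected value, quadratic forms $\mathbf x_j^*\widetilde{\mathbf R}^{(j)}\mathbf x_j - \frac1p\Tr(\cdots)$ in the columns of $\mathbf X$, and the off-diagonal contributions. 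The key point is that the left-hand side is exactly the quadratic whose root is $s_y(z)$ (cf. the displayed equation $1+(z+\frac{y-1}z)s_y(z)+ys_y^2(z)=1$), so subtracting gives
\begin{equation}\notag
 \bigl(\E m_n(z)-s_y(z)\bigr)\Bigl(\bigl(z+\tfrac{y-1}z\bigr)+y(\E m_n(z)+s_y(z))\Bigr)=\E\mathcal E_n(z).
\end{equation}
The factor multiplying $\E m_n-s_y$ is, up to lower-order corrections, $\sqrt{(z+\frac{y-1}z)^2-4y}$, which explains the appearance of $|(z+\frac{y-1}z)^2-4y|^{\frac14}$ in the denominator of the bound: on the set $\mathbb G$ one has good two-sided control of this square root in terms of $\gamma(z)$ and $v$, and near the edges it is precisely this quantity that degenerates.

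The second and more laborious step is to bound $\E\mathcal E_n(z)$. For the quadratic-form errors I would use the fact that $|X_{jk}|\le Dn^{1/4}$ together with $\E|X_{jk}|^4\le\mu_4$ to get, via a Rosenthal-type inequality, $\E|\mathbf x_j^*\widetilde{\mathbf R}^{(j)}\mathbf x_j-\frac1p\Tr\widetilde{\mathbf R}^{(j)}|^2 \les \frac{C}{p^2}\Tr(\widetilde{\mathbf R}^{(j)}(\widetilde{\mathbf R}^{(j)})^*)+\frac{C\mu_4 n^{1/2}}{p^2}\sum_k|\widetilde R^{(j)}_{kk}|^2$, and then convert $\Tr(\widetilde{\mathbf R}\widetilde{\mathbf R}^*)$ and $\sum|\widetilde R_{kk}|^2$ into $\frac1v\Im\frac1n\Tr\widetilde{\mathbf R}$-type quantities, i.e. into $\frac1v\Im\E m_n(z)$. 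The interlacing bound $\|\widetilde{\mathbf R}-\widetilde{\mathbf R}^{(j)}\|$ controls the difference between resolvents with and without one column, which handles the coupling between $\mathcal E_n$ and the randomness. Collecting terms, one arrives at an estimate of the shape
\begin{equation}\notag
 |\E\mathcal E_n(z)|\les \frac{C}{nv}\,\Im\E m_n(z)+\frac{C}{nv}+\frac{C\sqrt{\mu_4}}{n^{3/2}v^2},
\end{equation}
possibly after a bootstrap. Dividing by the square-root factor and using $\Im s_y(z)\les C\sqrt{\gamma(z)+v}$ together with the definition of $\mathbb G$ (where $v\ges v_0/\sqrt{\gamma(z)}$, hence $v^{-1}\sqrt{\gamma(z)}$ is bounded) to absorb the $\Im\E m_n$ term into the left side, one obtains the claimed bound $\frac{C}{nv^{3/4}}+\frac{C}{n^{3/2}v^{3/2}|(z+\frac{y-1}z)^2-4y|^{1/4}}$.

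The main obstacle, and where the real work lies, is the bootstrap argument needed to pass from a crude a priori bound on $|\E m_n-s_y|$ (say of order one, or of order $(nv)^{-1/2}$ from general concentration) to the sharp bound: one must show that the "bad" factor $(z+\frac{y-1}z)+y(\E m_n+s_y)$ does not become too small, i.e. that $\E m_n$ stays in the regime where $\re$ of this factor is comparable to $\sqrt{|(z+\frac{y-1}z)^2-4y|}$. This is a stability analysis of the quadratic equation near its branch points, and it forces the restriction $|u|\ges a+\varepsilon$, $|u|\les b-\varepsilon$ with $\varepsilon^{3/2}=2v_0\varkappa$; the constant $\varkappa$ from \eqref{constant} enters exactly because a smoothing inequality is used to make the a priori closeness of distributions (hence of Stieltjes transforms on a slightly larger scale) quantitative. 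A secondary technical nuisance is keeping track of the extra factor $n^{1/2}$ coming from the truncation level $Dn^{1/4}$ in the fourth-moment-type bounds, which is why the second error term carries $n^{-3/2}$ rather than $n^{-2}$.
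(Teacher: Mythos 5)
Your skeleton (self-consistent equation for $\E m_n$, subtract the Marchenko--Pastur equation \eqref{reprst}, divide by the factor $z+\frac{y-1}z+y(\E m_n+s_y)\approx\sqrt{(z+\frac{y-1}z)^2-4y}$) coincides with the paper's starting point, but the quantitative heart of your argument cannot give the stated bound. A Rosenthal/Burkholder second-moment estimate of the error of the shape $|\E\mathcal E_n(z)|\le \frac{C}{nv}\Im\E m_n(z)+\frac{C}{nv}+\frac{C}{n^{3/2}v^2}$ is exactly the ``naive'' bound: after dividing by $\sqrt{|(z+\frac{y-1}z)^2-4y|}\asymp\sqrt{\gamma+v}$ and using $\Im\E m_n\asymp\sqrt{\gamma+v}$ it yields at best $|\E m_n-s_y|\le C(nv)^{-1}$, not $Cn^{-1}v^{-3/4}+Cn^{-3/2}v^{-3/2}|(z+\frac{y-1}z)^2-4y|^{-1/4}$. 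On $\mathbb G$ one only has $nv\sqrt{\gamma}\ge A_0$, so $(nv)^{-1}$ is \emph{larger} than both terms of the claimed bound when $v$ is small, and integrating $(nv)^{-1}$ over the vertical segment produces $O(n^{-1}\log n)$ --- precisely the loss the theorem is designed to avoid (the paper states this explicitly in point 4 of the sketch). Your auxiliary claim that $v^{-1}\sqrt{\gamma(z)}$ is bounded on $\mathbb G$ is also false: $v\ge v_0/\sqrt{\gamma}$ only gives $v^{-1}\sqrt{\gamma}\le n\gamma/A_0$, which can be of order $n$, so the proposed absorption step does not go through as written.

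What is missing is the mechanism by which the \emph{expectation} gains over the variance-type bound. In the paper this rests on ingredients you neither supply nor replace: (i) exact cancellation by conditioning, $\E\{\varepsilon_{j1}+\varepsilon_{j2}\,|\,\mathfrak M^{(j)}\}=0$, so that after replacing the random denominator by its $j$-independent version the leading term vanishes identically (see \eqref{lal5} and $\mathfrak T_{11}=0$), leaving products of two fluctuations each of size $O((nv)^{-1/2})$, whence the $(nv)^{-3/2}|(z+\frac{y-1}z)^2-4y|^{-1/4}$ term; (ii) the $\varepsilon_{j3}$-contribution is not estimated via $|m_n'|\le v^{-1}\Im m_n$ (that is what produces $(nv)^{-1}$) but identified as $-\frac{y}{2n}\bigl(m_n'(z)-\frac{m_n(z)}z\bigr)$ and then $m_n'-s_y'$ is expanded once more through $D_1,D_2,D_3$ and the $\beta_{j\nu}$ (Section \ref{better}), the deterministic main part $\mathfrak T_{41}$ being of size $C/(n\sqrt v)\le C/(nv^{3/4})$; (iii) all of this needs the a priori bounds $\E|R_{jj}|^q\le C_0^q$ and $\E|z+ym_n(z)+\frac{y-1}z|^{-q}\le C_0^q$ on $\mathbb G$ (Corollary \ref{cor8}), obtained from the quadratic-form estimates of Theorem \ref{bp1*} by a multiplicative bootstrap in $v$, together with $\E|\Lambda_n|^2\le C(nv)^{-2}$ (Lemma \ref{lam1*}); with only fourth moments and truncation at $Dn^{1/4}$ these do not follow from general concentration. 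Finally, the stability issue you single out as the main obstacle is in fact handled deterministically: since $\Im m_n\ge0$ and $\Im\frac{y-1}z\ge0$, one has $|z+y(m_n+s_y)+\frac{y-1}z|\ge\Im(z+ys_y+\frac{y-1}z)\ge c_0\sqrt{|(z+\frac{y-1}z)^2-4y|}$ (Lemma \ref{lem00}); no bootstrap on $\E m_n$ is needed there, and the condition $\varepsilon^{\frac32}=2v_0\varkappa$ enters through the smoothing inequality used for Theorem \ref{main}, not in the proof of this theorem.
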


For the readers convenience we decided to exposed the full proof for the Marchenko-Pastur case again since in nearly any of the arguments of the 60 page proof of the Wigner result in \cite{GT:2014} adjustments and rewritings for this case had
to be done. For more details see the sketch of the proof  below. 

\newpage

\tableofcontents
\newpage

\bigskip

\section{Sketch of the Proof}

{\noindent \bf 1.} As \tc{in previous work} \cite{GT:2009} we use the so called Hermitization of \tc{a matrix $\mathbf X$ and instead of the spectrum of the matrix $\mathbf W$ we consider the spectrum of the block-matrix} 
$$
\mathbf V=\begin{bmatrix}&\mathbf O&\mathbf X\\&\mathbf X^*&\mathbf O\end{bmatrix},
$$
where $\mathbf O$ denotes the matrix with zero entries. Thus the  proof will be very similar to the proof for the expected
 convergence to the Wigner law  in \cite{GT:2014} but using resolvent identities reflecting  the jump of size $1-y$ at zero in the spectrum. Notable larger changes appear in Section 7 (see point {\bf 4}  below).

{\noindent \bf2.} Furthermore, similarly to \tc{\cite{GT:2014},} we start with an estimate of the Kolmogorov-distance to the \tc{symmetrized} Marchenko -- Pastur  distribution via
an integral over the difference of the corresponding Stieltjes transforms along a contour 
in the upper half-plane using a  smoothing inequality \eqref{smoothing11}. 
The resulting bound \eqref{smoothing11} involves an integral over a segment, say 
$V=4\sqrt y$, at a fixed distance from the real axis and a segment  $u+i A_0 n^{-1}(\min\{(a-\abs{u}),(b-|u|)\}^{-\frac 12}, \; u+iV$,
$a+\varepsilon\le |u|\le b-\varepsilon$
at a distance of order $n^{-1}$ but avoiding to come close to the endpoints $a$ and $b$ of the support.
These segments are part of the boundary of an $n$-dependent region $\mathbb G$ where
bounds of Stieltjes transforms are needed. Since the Stieltjes-transform
and the diagonal elements $R_{jj}(z)$ of the resolvent ($\mathbf R=(\mathbf V-z\mathbf I_{n+p})^{-1}$) of the matrix $\mathbf V$ are uniformly bounded 
on the segment with $\Im z=V$  by $1/V$ (see Section \ref{firsttypeint})  proving a bound of order 
$O(n^{-1})$  for the latter segment near the $x$-axis is the essential problem.

{\noindent \bf 3.} In order to investigate this crucial part of the error 
we start  with the 2nd resolvent  or self-consistency equation for the  Stieltjes transform resp.
the quantities $R_{jj}(z)$ of $\mathbf V$ (see \eqref{repr001} below) based on the difference of the resolvent 
of $\mathbf V^{(j)}$ ($j$th row and column removed) and $\mathbf V$.  
The  necessary bounds of
$\E|R_{jj}|^q$ for large $q=O(1)$ we prove analogously to \cite{GT:2014}.

{\noindent \bf 4.} In Section \ref{expect} we prove a bound  for the error $\E\Lambda_n=\E m_n(z)-s_y(z)$ of the
form 
$n^{-1} v^{-\frac34}+(n v)^{-\frac32}|(z+\frac{y-1}z)^2-4y|^{-\frac14} $
which suffices to prove the rate $O(n^{-1})$ in Theorem  \ref{main}. Here we use a series of martingale-type
decompositions to evaluate the {\it expectation} $\E m_n(z)$ combined  with the bound $\E |\Lambda_n|^2 \le C (nv)^{-2}$ of Lemma  \ref{lam1*} in the Appendix
which is again based on a recursive inequality  for
$\E |\Lambda_n|^2$ in \eqref{finalrek*}. A direct 
application of this bound to estimate the error terms
 $\varepsilon_{j3}$ would result in a less
precise bound of order $O(n^{-1}\log n)$ in Theorem \ref{main}. Bounds of such type will be shown
for the Kolmogorov distance of the empirical {\it random} 
spectral distribution to Marchenko -- Pastur law in a separate
paper. For the expectation we provide sharper bounds
in Section  \ref{better} involving $m'_n(z)$. Note here that in the Marchenko-Pastur case a new term  $c/z^2$ appears.

{\noindent \bf 5.}
The necessary auxiliary bounds for all these steps
are collected in the Appendix.

\section{Bounds for the  Kolmogorov Distance Between Distribution Functions  via  Stieltjes Transforms}\label{smoothviastil}
To bound $\Delta_n$ we shall use an approach developed in
G\"otze and Tikhomirov \cite{GT:2009} and \cite{GT:2004}. We modify a bound for the  
Kolmogorov distance between
distribution functions based on their Stieltjes transforms obtained in \cite{GT:2005}, Lemma 2.1.
Let $\widetilde  G_y(x)$ denote the distribution function defined by the equality
\begin{equation}\label{symmetr}
\widetilde  G_y(x)=\frac{1+\tc{\text{\rm sign}\4(x)} \4G_y(x^2)}2,
\end{equation}
Recall that $G_y(x)$ is Marchenko--Pastur distribution function with parameter $y\in(0,1])$.
The distribution function $\widetilde  G_y(x)$ has a density
\begin{equation}\label{symmetr1}
 \widetilde G_y'(x)=\frac1{2\pi |x|}\sqrt{(x^2-a^2)(b^2-x^2)}\mathbb I\{a\le |x|\le b\}.
\end{equation}
For $y=1$ the distribution function $\widetilde  G_y(x)$ is the distribution function of 
the semi-circular law.
 Given $\frac{\sqrt y}2\ge\varepsilon>0$ introduce the interval $\mathbb J_{\varepsilon}=
[1-\sqrt y+\varepsilon,1+\sqrt y-\varepsilon]$ and
$\mathbb J'_{\varepsilon}=
[1-\sqrt y+\frac12\varepsilon,1+\sqrt y-\frac12\varepsilon]$.
For any $x$ such that $|x|\in[1-\sqrt y,1+\sqrt y]$, define
$\gamma=\gamma(x):=\sqrt y-||x|-1|$. Note that $0\le\gamma\le\sqrt y$.
For any $x:\,|x|\in\mathbb J_{\varepsilon}$, we have $\gamma\ge\varepsilon$, respectively, 
for any $x:\,|x|\in\mathbb J_{\varepsilon}'$,
 we have $\gamma\ge\frac12\varepsilon$.
For a  distribution function $F$ denote by $S_F(z)$ its Stieltjes transform,
$$
S_F(z)=\int_{-\infty}^{\infty}\frac1{x-z}dF(x).
$$
\begin{prop}\label{smoothing}Let $v>0$ and $H>0$ and $\varepsilon>0$ be positive numbers such 
that
\begin{equation}\notag
 \tau=\frac1{\pi}\int_{|u|\le H}\frac1{u^2+1}du=\frac34,
\end{equation}
and
\begin{equation}\label{avcond1}
 2vH\le \varepsilon^{\frac32}.
\end{equation}
If $\widetilde G_y$ denotes the distribution function of the symmetrized (as in \eqref{symmetr})
Marchenko--Pastur law, and $F$ is any distribution function,
 there exist some absolute constants $C_1$, $C_2$, $C_3$ depending on $y$ only such that
\begin{align}
\Delta(F,\widetilde G_y)&:= \sup_{x}|F(x)-\widetilde G_y(x)|\notag\\&\le
2\sup_{x:|x|\in\mathbb J'_{\varepsilon}}\Big|\im\int_{-\infty}^x(S_F(u+i\frac v{\sqrt{\gamma}})
-S_{\widetilde G_y}(u+i\frac v{\sqrt{\gamma}}))du\Big|+C_1v
+C_2\varepsilon^{\frac32}\notag
\end{align}
with $C_1=\begin{cases}\frac {2H^2\sqrt 3}{\pi^2\sqrt{y(1-\sqrt y)}}&\text{ if }0<y<1,\\ 
\frac{H^2}{\pi}&\text{ if }y=1,\end{cases}$ and 
$C_2=\begin{cases}\frac {4}{\pi \sqrt{y(1-\sqrt y)}}&\text{ if }0<y<1,\\ \frac1{\pi}
&\text{ if }y=1.\end{cases}$.

\end{prop}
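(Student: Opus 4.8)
The plan is to adapt the standard smoothing-inequality argument (as in \cite{GT:2005}, Lemma 2.1, and \cite{GT:2009}) to the symmetrized Marchenko--Pastur setting, where the density degenerates like a square root at the endpoints $\pm a$ and $\pm b$. First I would recall the Cauchy--Poisson smoothing: for any distribution function $F$ one writes $F_v = F * P_v$ where $P_v$ is the Cauchy kernel of scale $v$, and $\Delta(F,\widetilde G_y) \le \Delta(F_v,(\widetilde G_y)_v) + \text{(smoothing error)}$, the smoothing error being controlled by the modulus of continuity of $\widetilde G_y$. Since $\widetilde G_y$ has a bounded density away from the endpoints but only a H\"older-$\tfrac12$ modulus near $\pm a,\pm b$ (because $\widetilde G_y'(x) \sim c\,\gamma(x)^{1/2}$ there), the naive bound $Cv$ must be refined: near the edges one gains by using that $\widetilde G_y(x+t)-\widetilde G_y(x) \le C t^{3/2}$ uniformly, which is exactly why the condition $2vH \le \varepsilon^{3/2}$ appears — it matches the scale $v$ against the edge-distance $\varepsilon$ through the $3/2$ exponent. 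This produces the $C_2 \varepsilon^{3/2}$ term.

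Next I would handle the main term $\Delta(F_v,(\widetilde G_y)_v)$. The key identity is that the density of $F_v$ is $\tfrac1\pi \im S_F(u+iv)$, so $F_v(x) - (\widetilde G_y)_v(x) = \tfrac1\pi \int_{-\infty}^x \im\big(S_F(u+iv) - S_{\widetilde G_y}(u+iv)\big)\,du$. One splits the real line into the region where $|x| \in \mathbb J'_\varepsilon$ (the ``good'' region, treated directly by the supremum appearing in the statement) and its complement, i.e. $|x|$ outside $\mathbb J'_\varepsilon$ or $|x|$ near the center. Outside the support-interval the two distribution functions are both close to $0$ or $1$ and their difference is controlled by the tails of the Cauchy kernel plus $\Delta$ itself with a small coefficient, which can be absorbed. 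The device that makes the edges work is replacing the fixed height $v$ by the variable height $v/\sqrt{\gamma(u)}$: this inflates $v$ precisely where $\gamma$ is small, so that the integrand $\im(S_F - S_{\widetilde G_y})$ is evaluated on the $n$-dependent contour bounding $\mathbb G$, matching the region on which Theorem \ref{stiltjesmain} supplies bounds.

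The third block of the argument is the bookkeeping that converts these pieces into the stated inequality with the explicit constants $C_1$, $C_2$. I would estimate the contribution of the fixed-scale smoothing error using the sup-norm of $\widetilde G_y'$ on a neighbourhood of the spectrum — for $0<y<1$ this norm blows up like $(y(1-\sqrt y))^{-1/2}$ near the lower edge $a=1-\sqrt y$, which is the source of the $\sqrt{y(1-\sqrt y)}$ denominators in both constants; for $y=1$ the lower edge is at $0$ and the density is the semicircle density, giving the clean constants $\tfrac{H^2}{\pi}$ and $\tfrac1\pi$. The factor $H^2$ in $C_1$ enters through the second moment of the truncated Cauchy kernel determined by $\tau = 3/4$. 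Each error term must be tracked to see it is either of the form $C_1 v$, $C_2\varepsilon^{3/2}$, or bounded by $\tfrac12\Delta(F,\widetilde G_y)$ (hence absorbable into the left side); the absorbable terms arise from the tails of $P_v$ integrated against the jump-free parts of the distribution functions.

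I expect the main obstacle to be the edge analysis: controlling the smoothing error uniformly near $\pm a$ and $\pm b$, where the square-root vanishing of $g_y$ interacts with the Cauchy kernel of variable width $v/\sqrt{\gamma}$. One has to verify that the enlarged height never exceeds the allowed range (this is where $2vH \le \varepsilon^{3/2}$ is used, together with $\gamma \ge \tfrac12\varepsilon$ on $\mathbb J'_\varepsilon$), and that the gain from the $3/2$-H\"older estimate $\widetilde G_y(x+t) - \widetilde G_y(x) = O(t^{3/2})$ near the edge exactly compensates the loss from the variable scale — a somewhat delicate but routine estimate once the right quantities are matched. The interior (bulk) estimates and the tail estimates are comparatively straightforward perturbations of the Wigner case in \cite{GT:2005}, the only genuinely new feature being the asymmetry of the MP density (one vanishing edge at $a>0$ versus $b$, and, for $y<1$, no mass at $0$), which affects constants but not the structure of the proof.
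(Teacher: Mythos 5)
Your plan is essentially the argument the paper itself relies on: Proposition \ref{smoothing} is not reproved here but imported from \cite{GT:2005}, Lemma 2.1 (adapted to the square-root edges as in \cite{GT:2014}), and that proof is exactly your Cauchy--Poisson smoothing with absorption of $\Delta$ via the choice $\tau=\frac34$, the inflated height chosen for each fixed $x$, and the edge estimate $\widetilde G_y(x+t)-\widetilde G_y(x)\le C\bigl(t\sqrt{\gamma(x)}+t^{3/2}\bigr)$ producing the $C_1v+C_2\varepsilon^{3/2}$ terms under $2vH\le\varepsilon^{3/2}$. Two small corrections to your bookkeeping: the height is $v/\sqrt{\gamma(x)}$, constant in the integration variable $u$ (not $v/\sqrt{\gamma(u)}$), and the $y$-dependence of $C_1,C_2$ does not come from a blow-up of $\|\widetilde G_y'\|_\infty$ (which is uniformly bounded, $\le y^{1/4}/\pi$) but from the coefficient of the square-root vanishing at the inner edges $\pm a=\pm(1-\sqrt y)$, while the reduction of $\sup_x$ to $\sup_{|x|\in\mathbb J'_{\varepsilon}}$ uses monotonicity of $F$ together with the constancy of $\widetilde G_y$ on the gap $(-a,a)$ and outside $[-b,b]$, rather than $F$ being close to $0$ or $1$ there.
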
	
\begin{rem}
 \begin{equation}\notag
  H=\text{\rm tg}\frac{3\pi}8=1+\sqrt 2.
 \end{equation}

\end{rem}

\begin{cor}\label{Cauchy}
 Under the conditions  of Proposition \ref{smoothing}, for any $V>v$, the following 
inequality holds
\begin{align}
 \sup_{x\in\mathbb J'_{\varepsilon}}&\left|\int_{-\infty}^x(\im(S_F(u+iv')
-S_{\widetilde G_y}(u+iv'))du\right|\notag\\&\qquad\le
\int_{-\infty}^{\infty}|S_F(u+iV)-S_{\widetilde G_y}(u+iV)|du\notag\\&\qquad\qquad+
\sup_{x\in\mathbb J'_{\varepsilon}}\left|\int_{v'}^V\left(S_F(x+iu)-S_{\widetilde G_y}(x+iu)\right)du
\right|.\notag
\end{align}

\end{cor}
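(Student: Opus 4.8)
\emph{Plan.} This is a contour-deformation estimate. Set $f(z):=S_F(z)-S_{\widetilde G_y}(z)$, which is holomorphic on the open upper half-plane. Fix $x$ with $|x|\in\mathbb J'_{\varepsilon}$, write $v':=v/\sqrt{\gamma(x)}$, and note that $v\le v'\le V$ in the range relevant to the application (where $v$ is at scale $n^{-1}$ and $V$ is of constant order). For $L>0$ the function $f$ is holomorphic inside the positively oriented rectangular contour with vertices $-L+iv'$, $x+iv'$, $x+iV$, $-L+iV$, so Cauchy's theorem gives
\[
\int_{-L}^{x}f(u+iv')\,du=\int_{-L}^{x}f(u+iV)\,du-i\int_{v'}^{V}f(x+it)\,dt+i\int_{v'}^{V}f(-L+it)\,dt .
\]

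Taking imaginary parts and letting $L\to\infty$ is the heart of the argument. For any distribution function $H$, the map $u\mapsto\tfrac1\pi\im S_H(u+iw)$ is the density of $H$ convolved with the Cauchy kernel of scale $w>0$, hence a probability density; therefore $\int_{-L}^{x}\im S_H(u+iw)\,du$ increases to a finite limit in $[0,\pi]$ as $L\to\infty$. Applied to $H=F$ and $H=\widetilde G_y$ with $w=v'$ and with $w=V$, this shows both that the left-hand side of the corollary is well defined and that the first two integrals above converge. The left edge of the contour contributes nothing in the limit: for $t\ge v'$ and any distribution function $H$,
\[
|S_H(-L+it)|\le\int\bigl((x+L)^2+v'^2\bigr)^{-1/2}\,dH(x),
\]
which tends to $0$ as $L\to\infty$ by dominated convergence, uniformly in $t\in[v',V]$; hence $\re\int_{v'}^{V}f(-L+it)\,dt\to0$. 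Passing to the limit yields
\[
\int_{-\infty}^{x}\im f(u+iv')\,du=\int_{-\infty}^{x}\im f(u+iV)\,du-\re\int_{v'}^{V}f(x+it)\,dt .
\]

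Finally I would take absolute values, bound each summand on the right by its modulus via $|\im w|\le|w|$ and $|\re w|\le|w|$, estimate $\bigl|\int_{-\infty}^{x}\im f(u+iV)\,du\bigr|\le\int_{-\infty}^{\infty}|f(u+iV)|\,du$, and take the supremum over all $x$ with $|x|\in\mathbb J'_{\varepsilon}$; this is precisely the claimed inequality. The only point needing care---and hence the (modest) main obstacle---is the passage $L\to\infty$: the convergence of the improper $u$-integrals and the vanishing of the left edge of the rectangle. Both are handled by the positivity of $\im S_H$ and by dominated convergence as above, so no quantitative decay rate for $f$ is actually required.
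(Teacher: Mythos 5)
Your proof is correct and follows essentially the same route as the paper: Cauchy's theorem on the rectangle with vertices $-L+iv'$, $x+iv'$, $x+iV$, $-L+iV$, followed by showing the left edge vanishes as $L\to\infty$ and bounding the remaining terms by moduli. The only cosmetic differences are that you kill the left edge by dominating $|S_H(-L+it)|$ with $\bigl((x+L)^2+v'^2\bigr)^{-1/2}$ and dominated convergence, whereas the paper uses the split $|S_H(-L+iv')|\le v'^{-1}\Pr\{|\xi|>L/2\}+2/L$, and that you make explicit the (implicit in the paper) convergence of the improper integrals via positivity of $\im S_H$.
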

\begin{proof}Let $x:\4|x|\in\mathbb J'_{\varepsilon}$ be fixed. Let 
$\gamma=\gamma(x)=\min\{|x|-1+\sqrt y,\,1+\sqrt y-|x|\}$.
 Set $z=u+iv'$ with $v'=\frac v{\sqrt{\gamma}}$,
$v'\le V$. Since the functions of $S_F(z)$ and $S_{\widetilde G_y}(z)$ are analytic in 
the upper half-plane, it is enough to use Cauchy's theorem. We can write
\begin{equation}\notag
\int_{-\infty}^{x}\im(S_F(z)-S_{\widetilde G_y}(z))du=\lim_{L\to\infty}\int_{-L}^x
(S_F(u+iv')-S_{\widetilde G_y}(u+iv'))du,
\end{equation}
for $x\in \mathcal J'_{\varepsilon}$.
Since $v'=\frac v{\sqrt{\gamma}}\le \frac{\varepsilon}{2H}$, without loss of generality 
we may assume that $v'\le 2$.
By Cauchy's integral formula, we have
\begin{align}
 \int_{-L}^x(S_F(z)-S_{\widetilde G_y}(z))du&=\int_{-L}^x(S_F(u+iV)
-S_{\widetilde G_y}(u+iV))du\notag\\&\qquad
+\int_{v'}^V(S_F(-L+iu)-S_{\widetilde G_y}(-L+iu))du\notag\\&\qquad\qquad-\int_{v'}^V(S_F(x+iu)
-S_{\widetilde G_y}(x+iu))du.\notag
\end{align}
Denote by $\xi$ (resp. $\eta$) a random variable with distribution function $F(x)$ 
(resp. $\widetilde G_y(x)$). Then we have
\begin{equation}
 |S_F(-L+iv')|=\left|\E\frac1{\xi+L-iv'}\right|\le {v'}^{-1}\Pr\{|\xi|>L/2\}+\frac2L.
\end{equation}
Similarly,
\begin{equation}\notag
 |S_{\widetilde G_y}(-L+iv')|\le {v'}^{-1}\Pr\{|\eta|>L/2\}+\frac2L.
\end{equation}
These inequalities imply that
\begin{equation}
\left|\int_{v'}^V(S_F(-L+iu)-S_{G_y}(-L+iu))du\right|\to 0\quad\text{as}\quad L\to\infty,
\end{equation}
which completes the proof.
\end{proof}
Combining the results of Proposition \ref{smoothing} and Corollary \ref{Cauchy}, we get
\begin{cor}\label{smoothing2}
 Under the conditions of Proposition \ref{smoothing} the following inequality holds
\begin{align}\label{smoothing11}
 \Delta(F,\widetilde G_y)&\le 2\int_{-\infty}^{\infty}|S_F(u+iV)-S_{\widetilde G_y}(u+iV)|du
+C_1v+C_2\varepsilon^{\frac32}\notag\\&
  +2\sup_{x\in\mathbb J'_{\varepsilon}}\int_{v'}^V|S_F(x+iu)-S_{\widetilde G_y}(x+iu)|du,
\end{align}
where $v'=\frac v{\sqrt{\gamma}}$ with $\gamma=\min\{|x|-1+\sqrt y,\,1+\sqrt y-|x|\}$.

\end{cor}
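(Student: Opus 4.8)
The plan is to derive \eqref{smoothing11} by inserting the conclusion of Corollary \ref{Cauchy} into the bound supplied by Proposition \ref{smoothing}; no genuinely new analysis is needed. First I would apply Proposition \ref{smoothing}, whose hypotheses ($\tau=\tfrac34$ and $2vH\le\varepsilon^{3/2}$) coincide with the standing assumptions here. This gives
\[
\Delta(F,\widetilde G_y)\le 2\sup_{x:|x|\in\mathbb J'_{\varepsilon}}\Big|\im\int_{-\infty}^x\big(S_F(u+i\tfrac v{\sqrt{\gamma}})-S_{\widetilde G_y}(u+i\tfrac v{\sqrt{\gamma}})\big)\,du\Big|+C_1v+C_2\varepsilon^{\frac32},
\]
with $\gamma=\gamma(x)$. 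The two additive terms $C_1v$ and $C_2\varepsilon^{3/2}$ are already of the shape required in \eqref{smoothing11}, so it only remains to re-express the supremum.

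Next I would estimate that supremum term by term via Corollary \ref{Cauchy}. Fix $x$ with $|x|\in\mathbb J'_{\varepsilon}$; then $\gamma(x)\ge\tfrac\varepsilon2>0$, and the balance condition \eqref{avcond1} forces $v':=v/\sqrt{\gamma}\le\varepsilon/(2H)<\sqrt y<V$ for the relevant choice $V=4\sqrt y$, so Corollary \ref{Cauchy} is applicable — its proof only uses $v'\le V$, the analyticity of $S_F$ and $S_{\widetilde G_y}$ in the upper half-plane, and the uniform decay $|S_F(-L+iu)|,|S_{\widetilde G_y}(-L+iu)|=O(L^{-1})$ for $u\in[v',V]$, which kills the far vertical side of the rectangle $[-L,x]\times[v',V]$ as $L\to\infty$. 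For $x>0$, Corollary \ref{Cauchy} bounds the inner quantity by $\int_{-\infty}^\infty|S_F(u+iV)-S_{\widetilde G_y}(u+iV)|\,du$ plus $\big|\int_{v'}^V(S_F(x+iu)-S_{\widetilde G_y}(x+iu))\,du\big|$; the case $x<0$, $|x|\in\mathbb J'_\varepsilon$, reduces to this by the reflection symmetry of the symmetrized laws $F$ and $\widetilde G_y$ (equivalently $S_F(-\bar z)=-\overline{S_F(z)}$, and likewise for $\widetilde G_y$).

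Finally I would take the supremum over admissible $x$, use the trivial estimate $\big|\int_{v'}^V(\cdots)\,du\big|\le\int_{v'}^V|\cdots|\,du$, and multiply through by $2$, arriving exactly at \eqref{smoothing11} with $v'=v/\sqrt{\gamma(x)}$. There is no real obstacle: all the analytic content sits in Proposition \ref{smoothing} (the smoothing inequality, whose proof exploits the choice $\tau=\tfrac34$ and the scaling $2vH\le\varepsilon^{3/2}$ to control the contribution of the endpoints $a,b$ of the support) and in the contour shift of Corollary \ref{Cauchy}. The single point that deserves a line of checking is compatibility of the hypotheses — in particular that $v'=v/\sqrt{\gamma(x)}\le V$ holds uniformly over all $x$ with $|x|\in\mathbb J'_\varepsilon$ — which is precisely where \eqref{avcond1} and the lower bound $\gamma(x)\ge\tfrac\varepsilon2$ are used.
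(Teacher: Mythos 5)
Your proposal is correct and follows essentially the same route as the paper, which obtains Corollary \ref{smoothing2} directly by combining Proposition \ref{smoothing} with Corollary \ref{Cauchy} and taking absolute values inside the vertical-segment integral. Your extra remarks (checking $v'\le V$ via \eqref{avcond1} and handling $|x|\in\mathbb J'_{\varepsilon}$ with $x<0$ by the symmetry of the symmetrized distributions) are just explicit versions of steps the paper leaves implicit.
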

\section{The proof of Theorem \ref{main}}
We shall apply Corollary \ref{smoothing2} to bound \tc{the}
Kolmogorov distance between\tc{the} expected spectral distribution $ F_n$ and |tc{the}
Marchenko--Pastur distribution $G_y$. 
We denote the Stieltjes transform of $\mathcal F_n(x)$
by $m_n(z)$ and \tc{the} Stieltjes transform of the Marchenko--Pastur law by $s_y(z)$.
We shall use the ``symmetrization'' of the spectrum sample covariance matrix as in  
\cite{GT:2009}.
Introduce the $(p+n)\times (p+n)$ matrix
\begin{equation}
 \mathbf V=\begin{bmatrix}& \mathbf O\quad&\mathbf X\\&\mathbf X^*
\quad&\mathbf O\end{bmatrix},
\end{equation}
where $\mathbf O$ denotes a matrix with zero entries.
Note that the eigenvalues of the matrix $\mathbf V$ are $\pm s_1,\ldots,\pm s_n,$ and $0$ 
with multiplicity $p-n$.
Let
$\mathbf R=\mathbf R(z)$ denote the resolvent matrix of  $\mathbf V$ defined by the equality
$$
\mathbf R=(\mathbf V-z\mathbf I_{n+p})^{-1},
$$
for all $z=u+iv$ with $v\ne 0$. Here and in what follows $\mathbf I_k$ denotes the identity 
matrix of order $k$.
Sometimes we shall omit the  sub index in the notation of the identity  matrix.
If we consider the Stieltjes transforms $s_y(z)$ of the ``symmetrized'' Marchenko-Pastur 
distribution $\widetilde G_y(x)$
(see formula \eqref{symmetr}), then it is straightforward to check that $s_y(z)=z\4 S_y(z^2)$ 
and
\begin{equation}\label{reprst}
ys_y^2(z)+(\frac{y-1}z+z) s_y(z)+1=0
\end{equation}
(see Section 3 in \cite{GT:2009}).
Furthermore, for the
Stieltjes transform $\widetilde m_n(z)$  of the ``symmetrized`` empirical  spectral distribution function
\begin{equation}\label{symmetr0}
\widetilde{\mathcal F}_n(x)=\frac{1+\tc{\text{\rm sign} \4 (x)} \4 \mathcal F_n(x^2)}2,
\end{equation}
we have
\begin{equation}\notag \widetilde m_n(z)=\frac1n\sum_{j=1}^n
R_{jj}=\frac1n\sum_{j=n+1}^{n+p}R_{jj}+\frac{1-y}{yz}
\end{equation}
 (see, for instance, Section 3
in \cite{GT:2009}).
Note that \tc{by  definition of  a symmetrized distribution \eqref{symmetr0} we have}
\begin{equation}\notag
 \sup_x|\mathcal F_n(x)-G_y(x)|=2\sup_x|\widetilde {\mathcal F}_n(x)-\widetilde G_y(x)|.
\end{equation}
In  what follows we shall consider symmetrized random values   only.
We shall omit the symbol $''\,\widetilde{\cdot}\,''$ in the notation of the distribution 
function and its Stieltjes transform.
Let $\mathbb T_j=\{1,\ldots,n\}\setminus \{j\}$.
 For $j=1,\ldots, n$, introduce the matrices $\mathbf V^{(j)}$,
 obtained from $\mathbf V$ by deleting the $j$-th row and $j$-th column, and define the 
corresponding  resolvent matrix $\mathbf R^{(j)}$  by the equality
$\mathbf R^{(j)}=(\mathbf V^{(j)}-z\mathbf I_{n+p-1})^{-1}$.  Using \tc{the Schur} decomposition formula we may show that
\begin{align}\label{matrshur}
 \mathbf R=\begin{bmatrix}&z(\mathbf X\mathbf X^*-z^2\mathbf I)^{-1}&\mathbf X(\mathbf X^*\mathbf X-z^2\mathbf I)^{-1}\\
           &(\mathbf X^*\mathbf X-z^2\mathbf I)^{-1}\mathbf X^*& z(\mathbf X^*\mathbf X-z^2\mathbf I)^{-1}
           \end{bmatrix}.
\end{align}
From this representation it follows
\begin{equation}\notag
 \frac1p\sum_{k=1}^pR_{k+n,k+n}=ym_n(z)-\frac{1-y}z.
\end{equation}

We shall use the representation, for $j=1,\ldots, n$,
\be\label{diagres}
R_{jj}=\frac1{-z-\frac1p{{\sum_{k,l=1}^p}}X_{jk}X_{jl}R^{(j)}_{k+n,l+n}}
\en
(see,
for example, Section 3 in \cite{GT:2009}). We may rewrite it as
follows
\be\label{repr01}
 R_{jj}=-\frac1{z+ym_n(z)+\frac{y-1}z}+
\frac1{z+ym_n(z)+\frac{y-1}z}\varepsilon_jR_{jj}, \en
where
$\varepsilon_j=\varepsilon_{j1}+\varepsilon_{j2}+\varepsilon_{j3}$ and
\begin{align}
\varepsilon_{j1}&:=\frac1p{\sum_{k=1}^p}(X_{jk}^2-1)R^{(j)}_{k+n,k+n},\quad
\varepsilon_{j2}:=\frac1p{\sum_{1\le k\ne
l\le p}}X_{jk}X_{jl}R^{(j)}_{k+n,l+n},\notag\\
\varepsilon_{j3}&:=\frac1p\Bigl(\sum_{l=1}^p R^{(j)}_{l+n,l+n}-\sum_{l=1}^p R_{l+n,l+n}\Bigr).
\notag
\end{align}


 We choose $V=4\sqrt y$ and $v_0$ as defined in \eqref{v0} and introduce the quantity 
 $\varepsilon=(2av_0)^{\frac23}$ .
 We shall denote in what follows by $C$  a generic constant depending on $\mu_4$ and $D$ only.
\subsection{Estimation of the First Integral in \eqref{smoothing11}  for $V=4\sqrt y$}\label{firsttypeint}
Denote by $\mathbb T=\{1,\ldots,n\}$ and by $\mathbb T_{\mathbb A}=\mathbb T\setminus\mathbb A$ ($\mathbb A\subset\mathbb T$). In the following we shall systematically
  use for any  $n\times p $ matrix $\mathbf X$
 together with its resolvent $\mathbf R$, its Stieltjes transform $m_n$ etc. the corresponding quantities $\mathbf X^{(\mathbb A)}$, $\mathbf R^{(\mathbb A)}$ 
 and $m_n^{(\mathbb A)}$
 for the corresponding  sub matrix  with entries $X_{jk}, j \in \mathbb T_{\mathbb A}$ , $k=1,\ldots,p$.
Observe that
\begin{equation}\label{maj}
 m_n^{(\mathbb A)}(z)=\frac1n\sum_{j\in\mathbb T_{\mathbb A}}\frac z{(s^{(\mathbb A)}_j)^2-z^2}.
\end{equation}
By $\mathfrak M^{(\mathbb A)}$ we denote the $\sigma$-algebra generated by $X_{lk}$ with $l\in\mathbb T_{\mathbb A}$, $k=1,\ldots,p$.
If $\mathbb A=\emptyset$ we shall omit the set $\mathbb A$ as exponent index. 

In this Section we shall consider $z=u+iV$ with $V=4\sqrt y$. We shall use the representation \eqref{repr01}.

Let
\begin{equation}\notag
\Lambda_n:=\Lambda_n(z):=m_n(z)-s_y(z)=\frac1n\sum_{j=1}^nR_{jj}-s_y(z).    
\end{equation}
\tc{It follows from \eqref{stmarpas}, that for the symmetrized Marchenko -- Pastur law we have}
\begin{equation}\label{scmain0}
 s_y(z)=-\frac1{z+\frac{y-1}z+ys_y(z)}\text{ and }|s_y(z)|\le 1/\sqrt y.
\end{equation}
See, for instance \cite{GT:2013a}, Lemma 9.3.
Summing \tc{the equalities} \eqref{repr01} in $j=1,\ldots,n$ and solving with respect $\Lambda_n$, we get 
\begin{equation}\label{lambda'}
\Lambda_n=  m_n(z)-s_y(z)=\frac{T_n}{z+y(m_n(z)+s_y(z))+\frac{y-1}z},
\end{equation}
where
\begin{equation}\notag
 T_n=\frac1n\sum_{j=1}^n\varepsilon_jR_{jj}.
\end{equation}
Note that for $V=4\sqrt y$
\begin{align}\label{3.5}
y \max\{|s_y(z)|,|m_n(z)|\}&\le \frac{\sqrt y}{4}\le \frac12|z+ys_y(z)+\frac{y-1}z|, \notag\\ y|s_y(z)-m_n(z)|\le &\frac{\sqrt y}2\le \frac12|z+ys_y(z)+\frac{y-1}z| \text{ a.s.}
\end{align}
This implies
\begin{align}\label{inequal10}
 |z+y(m_n(z)+s_y(z))+\frac{y-1}z|&\ge\frac12|z+ys_y(z)+\frac{y-1}z|=\frac1{2|s_y(z)|},\notag\\  |z+ym_n(z)+\frac{y-1}z|&\ge \frac12|ys_y(z)+z+\frac{y-1}z|.
\end{align}
The last inequalities and equality \eqref{lambda'} imply as well that, for $V=4\sqrt y$,
\begin{equation}\label{mnz}
|m_n(z)|\le |s_y(z)|(1+2|T_n(z)|).
\end{equation}

Using  equality \eqref{lambda'}, we may write, 
\begin{align}\label{lal1}
 \E\Lambda_n&=\E\frac1{n(z+y(m_n(z)+s_y(z))+\frac{y-1}z)}\sum_{j=1}^n(\varepsilon_{j1}+\varepsilon_{j2})\notag\\&
+\E\frac1{n(z+y(m_n(z)+s_y(z))+\frac{y-1}z)}\sum_{j=1}^n\varepsilon_{j3}R_{jj} \notag\\&+\E\frac1{n(z+m_n(z)
+s_y(z)+\frac{y-1}z)^2}\sum_{j=1}^n(\varepsilon_{j1}+\varepsilon_{j2})\varepsilon_{j}R_{jj}.
\end{align}
First we note that, by \eqref{matrshur}, for $\nu=1,2$
\begin{align}
 \E&\frac1{n(z+y(m_n(z)+s_y(z))+\frac{y-1}z)}\sum_{j=1}^n\varepsilon_{j\nu}=
 \sum_{j=1}^n\E\frac1{n(z+y(m_n^{(j)}(z)+s_y(z))+\frac{y-1}z)}\varepsilon_{j\nu}\notag\\&+
 \E\frac1{n(z+y(m_n(z)+s_y(z))+\frac{y-1}z)}\sum_{j=1}^n\frac{\varepsilon_{j\nu}\varepsilon_{j3}}{z+y(m_n^{(j)}(z)+s_y(z))+\frac{y-1}z}.\notag
\end{align}
Observe that, for $\nu=1,2$, 
\begin{equation}\label{lal5}
 \E\frac1{n(z+y(m_n^{(j)}(z)+s_y(z))+\frac{y-1}z)}\varepsilon_{j\nu}=0.
\end{equation}
Furthermore, using relations \eqref{scmain0} and \eqref{3.5}, we get
\begin{align}
|\E\frac1{n(z+y(m_n(z)+s_y(z))+\frac{y-1}z)}&\sum_{j=1}^n\frac{\varepsilon_{j\nu}\varepsilon_{j3}}{z+y(m_n^{(j)}(z)+s_y(z))+\frac{y-1}z}| \notag\\&\qquad\qquad\qquad\le
\frac{C|s_y(z)|^2}n\sum_{j=1}^n\E|\varepsilon_{j\nu}\varepsilon_{j3}|.\notag
\end{align}
Applying Lemmas \ref{basic2}, \ref{basic5},  and \ref{basic8}, we conclude
\begin{align}\label{lal2}
 \Big|\E\frac1{n(z+y(m_n(z)+s_y(z))+\frac{y-1}z)}\sum_{j=1}^n&\frac{\varepsilon_{j\nu}
 \varepsilon_{j3}}{z+y(m_n^{(j)}(z)+s_y(z))+\frac{y-1}z}\Big|
 \notag\\&\qquad\qquad\qquad\le Cn^{-1}|s_y(z)|^2.
\end{align}
We note now that, for $j=1,\ldots,n$
\begin{align}
 \sum_{l=1}^pR_{l+n,l+n}=nm_n(z)-\frac{p-n}z,\quad
 \sum_{l=1}^pR^{(j)}_{l+n,l+n}=nm_n^{(j)}(z)-\frac{p-n+1}z.\notag
\end{align}
This implies that
\begin{equation}\label{rr21}
 \frac1p(\sum_{l=1}^pR_{l+n,l+n}-\sum_{l=1}^pR^{(j)}_{l+n,l+n})=y(m_n(z)-m_n^{(j)}(z))+\frac1{pz}.
\end{equation}
\tc{On the} other hand side
\begin{align}
 \Tr \mathbf R=2nm_n(z)-\frac{p-n}z,\quad
 \Tr \mathbf R^{(j)}=2nm_n^{(j)}(z)-\frac{p-n+1}z.\notag
\end{align}
We get
\begin{equation}\label{rr22}
 m_n(z)-m_n^{(j)}(z)=\frac1{2n}(\Tr \mathbf R-\Tr \mathbf R^{(j)})-\frac1{2nz}.
\end{equation}
Comparing \eqref{rr21} and \eqref{rr22}, we conclude
\begin{align}
 \varepsilon_{j3}=\frac1p(\sum_{l=1}^pR_{l+n,l+n}-\sum_{l=1}^pR^{(j)}_{l+n,l+n})=\frac y{2n}(\Tr \mathbf R-\Tr \mathbf R^{(j)})+\frac y{2nz}.
\end{align}
Multiply this equality by $R_{jj}$, summing in $j=1,\ldots,n$, and using that $(\Tr\mathbf R-\Tr\mathbf R^{(j)})R_{jj}=-\frac{d}{dz}R_{jj}$,
we get
\begin{align}\label{lal3}
 \frac1n\sum_{j=1}^n\varepsilon_{jj}R_{jj}=-\frac d{dz}(\frac yn\sum_{j=1}^nR_{jj})+\frac y{2nz}\frac1n\sum_{j=1}^nR_{jj}=
 -y\frac d{dz}m_n(z)+\frac y{2nz}m_n(z).
 \end{align}
Continuing with
\begin{equation}\notag
 |\frac d{dz}m_n(z)|=|\frac d{dz}(\frac1{2n}\Tr\mathbf R+\frac{p-n}{2nz})|=|\frac1{2n}\Tr\mathbf R^2-\frac{p-n}{2nz^2}|
\end{equation}
and
\begin{equation}\notag
 |\Tr \mathbf R^2|\le v^{-1}\im(\Tr\mathbf R),
\end{equation}
we get
\begin{equation}\notag
 |\frac d{dz}m_n(z)|\le \frac C{v}\im m_n(z)+\frac {C(1-y)}{|z|^2}.
\end{equation}
The last inequality and inequality \eqref{inequal10} together imply, 
for $V=4\sqrt y$, 
\begin{align}\notag
 |\frac1{n(z+m_n(z)+s_y(z)+\frac{y-1}z)}\sum_{j=1}^n\varepsilon_{j3}R_{jj} |\le \frac{C|s_y(z)|^2}{n}+\frac {C(1-y)|s_y(z)|}{n|z|^2}.
\end{align}
Applying inequalities \eqref{inequal10}, we get, for $V=4\sqrt y$,
\begin{align}
\Big|\E\frac1{n(z+m_n(z)+s_y(z)+\frac{y-1}z)}\sum_{j=1}^n
\frac{\varepsilon_{j\nu}\varepsilon_{j3}}{z+m_n(z)+s_y(z)+\frac{y-1}z}\Big|\notag\\ \le
\frac {C|s_y(z)|^2}n\sum_{j=1}^n\E|\varepsilon_{j\nu}\varepsilon_j|.\notag
\end{align}
According to Lemmas \ref{basic2}, \ref{basic5}, and \ref{basic8}, we obtain
\begin{align}\label{lal4}
 \Big|\E\frac1{n(z+y(m_n(z)+s_y(z))+\frac{y-1}z)}\sum_{j=1}^n
\frac{\varepsilon_{j\nu}\varepsilon_{j3}}{z+y(m_n(z)+s_y(z))+\frac{y-1}z}\Big| \le
\frac {C|s_y(z)|^2}n.
\end{align}
Combining now inequalities   \eqref{lal2}, \eqref{lal3}, \eqref{lal1}, and relations \eqref{lal1}, \eqref{lal5}, we conclude
\begin{equation}\notag
 |\E\Lambda_n|\le \frac{C|s_y(z)|^2}n+\frac{C|s_y(z)|}{n|z|^2}.
\end{equation}
Furthermore
\begin{equation}\notag
 \int_{-\infty}^{\infty}|s_y(u+iV)|^2du\le \int_{-\infty}^{\infty}(\int_{-\infty}^{\infty}\frac1{(x-u)^2+V^2}du)dG_y(x)\le \frac1{2\pi V},
\end{equation}
 and
 \begin{align}\notag
 \int_{-\infty}^{\infty}\frac{du}{u^2+V^2}\le \frac1{\pi V}.
 \end{align}

These inequalities  imply that
\begin{equation}\label{final!}
 \int_{-\infty}^{\infty}|\E\Lambda_n(u+iV)|du\le \frac Cn.
\end{equation}

\subsection{The Bound of the \tc{Second Integral} in \eqref{smoothing11}}
To finish the proof of Theorem \ref{main} we need to bound the second integral
in \eqref{smoothing2} for $z\in\mathbb G$, $v_0=C_7n^{-1}$ and $\varepsilon=C_8v_0^{\frac23}$, where the constant $C_8$ is chosen such that so that condition 
\eqref{avcond1} holds.
We shall use the results of Theorem \ref{stiltjesmain}.
According to these results we have, for $z\in\mathbb G$,
\begin{equation}\label{jpf}
 |\E\Lambda_n|\le\frac C{n v^{\frac34}}+\frac C{n^{\frac32}v^{\frac32}|(z+\frac{y-1}z)^2-4y|^{\frac14}}.
\end{equation}
We have
\begin{align}
\int_{v_0/\sqrt{\gamma}}^V|\E(m_n(x+iv)-s(x+iv))|dv&\le
 \frac1{n}\int_{\frac{v_0}{\sqrt{\gamma}}}^V\frac{dv}{v^{\frac34}}+\frac1{n\sqrt n\gamma^{\frac14}}\int_{\frac{v_0}{\sqrt{\gamma}}}^V\frac{dv}{v^{\frac32}}.\notag
\end{align}
After integrating we get
\begin{equation}\label{final!!}
 \int_{v_0/\sqrt{\gamma}}^V|\E(m_n(x+iv)-s_y(x+iv))|dv\le \frac C{n}+\frac{ C\gamma^{\frac14}}{n\sqrt n\gamma^{\frac14}v_0^{\frac12}}\le \frac Cn.
\end{equation}

Inequalities \eqref{final!} and \eqref{final!!} complete the proof of Theorem \ref{main}.
Thus Theorem \ref{main} is proved.

\section{Proof of Corollary \ref{cormain}}To prove the Corollary \ref{cormain}
we consider truncated random variables $\widehat X_{jl}$ defined by
\begin{equation}\label{trunc000}
 \widehat X_{jl}:=X_{jl}\mathbb I\{|X_{jl}|\le cn^{\frac14} \}
\end{equation}
and introduce  matrices
\begin{equation}\notag
 \widehat{\mathbf X}=\frac1{\sqrt p}(\widehat X_{jl}), \quad \widehat{\mathbf V}=\begin{bmatrix}&\mathbf O&\widehat{\mathbf X}\\&(\widehat{\mathbf X})^*&\mathbf O\end{bmatrix}.
\end{equation}
Let $\widehat {\mathcal F}_n(x)$ denote the empirical spectral distribution function of the matrix $\widehat{\mathbf V}$.
\begin{lem}\label{trunc}
 Assuming the conditions of Theorem \ref{main} there exists a  constant $C>0$ depending on $\mu_8$ only such that
 \begin{equation}\notag
\E\{\sup_x|\mathcal F_n(z)-\widehat {\mathcal F}_n(x)|\}\le \frac{C}{n}.
 \end{equation}

\end{lem}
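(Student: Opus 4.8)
The plan is to deduce the estimate from the well--known rank inequality for empirical spectral distribution functions of Hermitian matrices, combined with an eighth--moment tail bound for the entries; the argument is essentially bookkeeping.

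First I would record that, since $X_{jl}-\widehat X_{jl}=X_{jl}\mathbb I\{|X_{jl}|>cn^{\frac14}\}$, the nonzero entries of $\mathbf X-\widehat{\mathbf X}$ are precisely those indexed by the pairs $(j,l)$ at which truncation took place, and
$$
\mathbf V-\widehat{\mathbf V}=\begin{bmatrix}\mathbf O&\mathbf X-\widehat{\mathbf X}\\(\mathbf X-\widehat{\mathbf X})^*&\mathbf O\end{bmatrix}.
$$
Since $\mathbf V$ and $\widehat{\mathbf V}$ are Hermitian of the same order $n+p$, the rank inequality gives, for an absolute constant $C$ (the normalization $1/(n+p)$ costing at most a bounded factor, as $p\le Cn$ by \eqref{yn}),
$$
\sup_x|\mathcal F_n(x)-\widehat{\mathcal F}_n(x)|\le\frac{C}{n}\operatorname{rank}(\mathbf V-\widehat{\mathbf V})\le\frac{C}{n}\operatorname{rank}(\mathbf X-\widehat{\mathbf X}),
$$
where in the last step I use that an off--diagonal block matrix $\begin{bmatrix}\mathbf O&\mathbf Y\\\mathbf Y^*&\mathbf O\end{bmatrix}$ has rank at most $2\operatorname{rank}\mathbf Y$.

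Next I would bound $\operatorname{rank}(\mathbf X-\widehat{\mathbf X})$ by the number of its nonzero entries, take expectations, and apply Markov's inequality together with the eighth--moment assumption \eqref{moment1}:
$$
\E\operatorname{rank}(\mathbf X-\widehat{\mathbf X})\le\sum_{j=1}^n\sum_{l=1}^p\Pr\{|X_{jl}|>cn^{\frac14}\}\le\sum_{j=1}^n\sum_{l=1}^p\frac{\E|X_{jl}|^8}{c^8n^2}\le\frac{np\,\mu_8}{c^8n^2}.
$$
Using once more that $p\le Cn$, the right--hand side is at most $C\mu_8 c^{-8}$, and inserting this into the previous display yields $\E\{\sup_x|\mathcal F_n(x)-\widehat{\mathcal F}_n(x)|\}\le Cn^{-1}$, which is the assertion of Lemma \ref{trunc}.

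There is no genuine obstacle, but two points are worth attention. First, the eighth moment is exactly the right hypothesis: a $q$th--moment bound would only give $\Pr\{|X_{jl}|>cn^{\frac14}\}\le Cn^{-q/4}$, and since one sums over $np\asymp n^2$ entries and then divides by $n$, the result is of order $n^{-1}$ precisely for $q\ge8$; with the mere fourth moments of Theorem \ref{main} this truncation step would be far too lossy, which is why Lemma \ref{trunc} enters only in the proof of Corollary \ref{cormain}. Second, one must be careful that the quantity estimated is the rank of the whole perturbation $\mathbf V-\widehat{\mathbf V}$, controlled by twice $\operatorname{rank}(\mathbf X-\widehat{\mathbf X})$ and hence by the number of truncated entries --- not, for instance, the number of rows of $\mathbf V$ that the truncation touches.
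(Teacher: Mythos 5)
Your proposal is correct and follows essentially the same route as the paper: a rank inequality reducing the Kolmogorov distance to $\tfrac{C}{n}\operatorname{rank}(\mathbf X-\widehat{\mathbf X})$, followed by bounding the rank by the number of truncated entries and applying Markov's inequality with the eighth moment. The only cosmetic difference is that the paper invokes Bai's Theorem A.43 directly for $\mathbf X-\widehat{\mathbf X}$, whereas you pass through the Hermitized block matrix $\mathbf V-\widehat{\mathbf V}$ and the factor-two rank bound, which amounts to the same estimate.
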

\begin{proof}
 We shall use  the rank inequality of Bai. See \cite{BaiSilv:2010},  Theorem A.43, p. 503.
 According this inequality
 \begin{equation}\notag
  \E\{\sup_x|{\mathcal F}_n(x)-\widehat {\mathcal F}_n(x)|\}\le \frac 2n\E\{\text{\rm rank}(\mathbf X-\widehat{\mathbf X})\}.
 \end{equation}
Observing that the rank of a matrix is not larger then numbers of its non-zero entries, we may write
\begin{align}
 \E\{\sup_x|{\mathcal F}_n(x)-\widehat{\mathcal F}_n(x)|\}&\le \frac 2n\sum_{j=1}^n\sum_{k=1}^p\E\mathbb I\{|X_{jk}|\ge Cn^{\frac14}\}\notag\\&\le \frac1{n^3}\sum_{j,k=1}^n\E|X_{jk}|^8\le \frac{C\mu_8}{n}.\notag
\end{align}
Thus, the Lemma is proved.
\end{proof}
 
We shall compare the Stieltjes transform of the matrix 
$\widehat{\mathbf V}$ and the matrix obtained from $\widehat{\mathbf V}$ by centralizing and normalizing its entries. 
Introduce  $\widetilde X_{jk}=\widehat X_{jk}-\E\widehat X_{jk}$ and  $\widetilde{\mathbf X}=\frac1{\sqrt p}(\widetilde X_{jk})_{j,k=1}^n$. 
We normalize the r.v.'s $\widetilde {X}_{jk}$. 
Let $\sigma_{jk}^2=\E|\widetilde X_{jk}|^2$. We define the r.v.'s 
$\breve X_{jk}=\sigma_{jk}^{-1}\widetilde X_{jk}$. Let $\breve{\mathbf X}=
\frac1{\sqrt p}(\breve X_{jk})_{j,k=1}^n$.
Finally, let $\breve m_n(z)$ denote Stieltjes transform of empirical spectral distribution 
function of the matrix $\breve{\mathbf V}=\begin{bmatrix}&\mathbf O&\breve{\mathbf X}\\&{\breve{\mathbf X}}^*&\mathbf O\end{bmatrix}$.
\begin{rem}\label{trunc00}
Note that
 \begin{equation}\label{trunc2}
  |\breve X_{jl}|\le D_1n^{\frac14}, \quad\E \breve X_{jl}=0 \text{ and }\E {\breve X}_{jk}^2=1,
 \end{equation}
 for some absolute constant $D_1$. That means that the matrix $\breve{\mathbf X}$ satisfies the conditions of Theorem \ref{stiltjesmain}.
\end{rem}
\begin{lem}\label{trunc2*}There exists some absolute constant $C$ depending on $\mu_8$ such that
\begin{equation}\notag
 |\E(\widetilde m_n(z)-\breve m_n(z))|\le \frac{C}{n^{\frac32}v^{\frac32}}.
 \end{equation}
\end{lem}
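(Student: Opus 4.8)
The plan is to compare the two $(n+p)\times(n+p)$ block matrices $\widehat{\mathbf V}$ and $\breve{\mathbf V}$ at the level of Stieltjes transforms by interpolating the change in entries. Since $\breve X_{jk}=\sigma_{jk}^{-1}(\widehat X_{jk}-\E\widehat X_{jk})$ with $\sigma_{jk}^{-1}=1+O(n^{-3/2})$ (which follows from the $8$-th moment bound: $\E\widehat X_{jk}=-\E X_{jk}\mathbb I\{|X_{jk}|>cn^{1/4}\}=O(n^{-7/4})$ and $1-\sigma_{jk}^2=O(n^{-3/2})$), the matrices $\widehat{\mathbf X}$ and $\breve{\mathbf X}$ differ entrywise by something of order $n^{-3/2}$ times a bounded-plus-centered quantity. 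First I would write $\widehat{\mathbf V}-\breve{\mathbf V}$ as a sum of low-rank perturbations indexed by the entries $(j,k)$, and use the resolvent identity together with the fact that $\widetilde m_n(z)=\frac1{2n}\Tr\widetilde{\mathbf R}+\frac{1-y}{2zy}$, so that it suffices to bound $\E\big|\frac1n\Tr(\widehat{\mathbf R}-\breve{\mathbf R})\big|$.

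\textbf{Key steps.} The main device is the standard telescoping: for two Hermitian matrices with resolvents $\mathbf R_1,\mathbf R_2$ one has $\Tr(\mathbf R_1-\mathbf R_2)=\Tr\big(\mathbf R_1(\mathbf V_2-\mathbf V_1)\mathbf R_2\big)$, and iterating once more gives a ``main term'' linear in $\mathbf V_1-\mathbf V_2$ plus a quadratic remainder controlled by $\norm{\mathbf V_1-\mathbf V_2}$ and $v^{-1}$. I would (i) write $\mathbf V_1-\mathbf V_2 = \mathbf D + \mathbf E$, where $\mathbf D$ collects the deterministic shift (coming from $\E\widehat X_{jk}$ and from $\sigma_{jk}-1$ acting on the mean-zero part's mean, i.e.\ the purely deterministic rank contributions) and $\mathbf E$ collects the genuinely random fluctuation $(\sigma_{jk}^{-1}-1)\widehat X_{jk}$ centered; (ii) for the deterministic part $\mathbf D$, use $|\Tr(\mathbf R_1\mathbf D\mathbf R_2)|\le \norm{\mathbf D}_2\,\norm{\mathbf R_1}_2\,\cdots$ type bounds — more efficiently, bound $\frac1n|\Tr(\widehat{\mathbf R}\,\mathbf D\,\breve{\mathbf R})|$ by $\frac{1}{n v}\Tr^{1/2}(\widehat{\mathbf R}\widehat{\mathbf R}^*)\,\|\mathbf D\|$ is too crude; instead exploit that $\mathbf D$ has Frobenius norm $O(\sqrt{np}\cdot n^{-7/4})=O(n^{-5/4})$, which against the $v^{-2}$ from two resolvents gives $O(n^{-5/4}v^{-2})\ll n^{-3/2}v^{-3/2}$ only in part of $\mathbb G$, so one actually needs the finer bound using $\frac1n\Tr(\widehat{\mathbf R}\,\mathbf D\,\breve{\mathbf R}) $ expressed through diagonal-block structure and the a priori bounds $\E|R_{jj}|^q=O(1)$ from Section~\ref{firsttypeint}-type estimates (Lemma~\ref{basic2} etc.); (iii) for the random part $\mathbf E$, take expectation and use that $\E$ of the linear term vanishes by independence after conditioning out row $j$, reducing to a quadratic term of size $O(n^{-3})\cdot\#\{\text{entries}\}\cdot v^{-3} = O(n^{-1}v^{-3})$, which again must be sharpened — the correct route is to use $\|\mathbf E\|\le C n^{-3/2}\cdot n^{1/4}\cdot\sqrt{n} = Cn^{-1/4}$ (operator norm of a matrix with entries of size $n^{-3/2}\cdot n^{1/4}$) wait, more carefully $\|\mathbf E\|=O(n^{-3/2})\cdot O(\sqrt n)=O(n^{-1})$ using the bound $\|\mathbf X\|=O(1)$, so that the quadratic remainder in the telescoping is $O(\|\mathbf E\|^2 v^{-2})\cdot n^{-1}\cdot n = O(n^{-2}v^{-2})$, and the surviving linear term in expectation is handled by a martingale decomposition over the entries, each step contributing $O(n^{-3}v^{-2})$ with $np$ steps and a further $v^{-1}$ from differentiation, landing at $O(n^{-3/2}v^{-3/2})$ after Cauchy--Schwarz.

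\textbf{Main obstacle.} The delicate point is \emph{not} the deterministic shift (which is comfortably small, $O(n^{-7/4})$ per entry) but extracting the stated power $v^{-3/2}$ rather than a worse $v^{-2}$ or $v^{-3}$ for the random rescaling term: a naive operator-norm bound on $\mathbf E$ combined with two resolvent factors gives $v^{-2}$, and a naive entrywise treatment gives $v^{-3}$; obtaining $n^{-3/2}v^{-3/2}$ requires pairing \emph{one} factor of $\|\mathbf E\|_2$-type control (using $\sigma_{jk}^{-1}-1=O(n^{-3/2})$ and $\sum_{j,k}\E\widehat X_{jk}^2=O(np)$, so $\E\|\mathbf E\|_2^2=O(n^{-3})\cdot np=O(n^{-1})$) with the Hilbert--Schmidt bound $\|\widehat{\mathbf R}\|_2^2=\Tr\widehat{\mathbf R}\widehat{\mathbf R}^*\le \frac{1}{v}\Im\Tr\widehat{\mathbf R}= \frac{n}{v}\Im\widetilde m_n$ and using the a priori bound $\Im\widetilde m_n=O(1)$ on $\mathbb G$, so that $\E|\frac1n\Tr(\widehat{\mathbf R}\,\mathbf E\,\breve{\mathbf R})|\le \frac1n\,\E\|\mathbf E\|_2\,\|\widehat{\mathbf R}\|_2\,\|\breve{\mathbf R}\|\le \frac1n\cdot n^{-1/2}\cdot (n/v)^{1/2}\cdot v^{-1}=n^{-1}v^{-3/2}$, which is even better than claimed. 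I would therefore organize the proof so the dominant contribution is isolated to exactly this one cross term and absorb everything else into it; the bookkeeping of which of the $2v^{-1}$, $\|\cdot\|_2$, $\|\cdot\|_{\mathrm{op}}$ bounds to assign to each resolvent factor is the only real subtlety. $\square$
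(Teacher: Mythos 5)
Your overall route is the same as the paper's: use the resolvent identity to write the difference of the Stieltjes transforms as $\frac1n\Tr\big((\widetilde{\mathbf V}-\breve{\mathbf V})\widetilde{\mathbf R}\breve{\mathbf R}\big)$, then pair one resolvent in operator norm ($\le v^{-1}$) with the other in Hilbert--Schmidt norm ($\|\breve{\mathbf R}\|_2^2\le v^{-1}\im\Tr\breve{\mathbf R}$, controlled through the a priori bound $\E|\breve R_{jj}|\le C$ of Corollary \ref{cor8}, which applies because $\breve{\mathbf X}$ satisfies the hypotheses by Remark \ref{trunc00}), against the Frobenius norm of the matrix perturbation. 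However, your final quantitative step has a genuine gap. Note first that this lemma compares $\widetilde{\mathbf V}$ (already centered) with $\breve{\mathbf V}$, so the entrywise difference is $\frac1{\sqrt p}(\sigma_{jk}-1)\breve X_{jk}$ and there is no deterministic mean-shift term here at all; your $\mathbf D$-discussion belongs to Lemma \ref{trunc1} (and your crude bound $n^{-5/4}v^{-2}$ for it would in fact suffice nowhere in $\mathbb G$, not merely ``in part of'' it).

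The essential error is in $\E\|\mathbf E\|_2^2$: you drop the $1/\sqrt p$ normalization of the matrix entries. With $|\sigma_{jk}^{-1}-1|\le C\mu_8 n^{-3/2}$ and $\frac1p\sum_{j,k}\E\breve X_{jk}^2=n$, the correct bound is $\E\|\widetilde{\mathbf V}-\breve{\mathbf V}\|_2^2\le C\mu_8^2n^{-2}$ (this is \eqref{ss1}), not $O(n^{-1})$. Consequently your concluding chain produces $n^{-1}v^{-3/2}$, which you describe as ``even better than claimed'' but which is in fact \emph{weaker} than the target $n^{-3/2}v^{-3/2}$ by a factor of $\sqrt n$, so as written the argument does not prove the lemma. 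Once the normalization is restored, the identical chain — $n^{-1}\cdot\E^{\frac12}\|\widetilde{\mathbf V}-\breve{\mathbf V}\|_2^2\cdot\|\widetilde{\mathbf R}\|\cdot\|\breve{\mathbf R}\|_2$ estimated via the Cauchy--Schwarz inequality as in \eqref{dif1} — gives $Cn^{-1}\cdot n^{-1}\cdot v^{-1}\cdot(nv^{-1})^{\frac12}\cdot O(1)=Cn^{-\frac32}v^{-\frac32}$, which is exactly the paper's proof; the vague martingale/telescoping bookkeeping in your ``key steps'' is then unnecessary.
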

\begin{proof}Note that
\begin{align}
 \breve m_n(z)&=\frac1{2n}\Tr(\breve{\mathbf V}-z\mathbf I)^{-1}+\frac{p-n}{2nz}=:\frac1{2n}\Tr\breve{\mathbf R}+\frac{p-n}{2nz},\notag\\
 \widetilde m_n(z)&=\frac1{2n}\Tr(\widetilde{\mathbf V}-z\mathbf I)^{-1}+\frac{p-n}{2nz}=:\frac1{2n}\Tr\widetilde{\mathbf R}+\frac{p-n}{2nz}.\notag
\end{align}
Therefore,
\begin{equation}\label{su1}
 \widetilde m_n(z)-\breve m_n(z)=\frac1{2n}\Tr(\widetilde{\mathbf R}-\breve{\mathbf R})
 =\frac1n\Tr (\widetilde{\mathbf V}-\breve{\mathbf V})\widetilde{\mathbf R}\breve{\mathbf R}.
\end{equation}
Using the simple inequalities $|\Tr\mathbf A\mathbf B|\le \|\mathbf A\|_2\|\mathbf B\|_2$ and $\|\mathbf A\mathbf B\|_2\le\|\mathbf A\|\|\mathbf B\|_2$, we get
\begin{equation}\label{dif1}
|\E(\widetilde m_n(z)-\breve m_n(z))|\le n^{-1}\E^{\frac12}\|\widetilde{\mathbf R}\|^2\|\breve{\mathbf R}\|_2^2\E^{\frac12}\|\widetilde{\mathbf V}-\breve{\mathbf V}\|_2^2.
\end{equation}
Furthermore, we note that, 
\begin{align}\label{dif2}
 \widetilde{\mathbf V}-\breve{\mathbf V}=\frac1{\sqrt p}
 \begin{bmatrix}&\mathbf O&\widetilde{\mathbf X}-\breve{\mathbf X}\\&
(\widetilde{\mathbf X}-\breve{\mathbf X})^*&\mathbf O \end{bmatrix}
\end{align}
and
\begin{equation}\notag
 \|\widetilde{\mathbf V}-\breve{\mathbf V}\|_2\le 2\max_{1\le j,k\le n}\{1-\sigma_{jk}\}\|\breve{\mathbf X}\|_2.
\end{equation}
Since 
\begin{equation}\notag
 0<1-\sigma_{jk}\le 1-\sigma_{jk}^2\le Cn^{-\frac32}\mu_8,
\end{equation}
therefore
\begin{equation}\label{ss1}
\E \|\widetilde{\mathbf V}-\breve{\mathbf V}\|_2^2\le C\mu_8^2n^{-2}.
\end{equation}

Applying Lemma \ref{resol00}, inequality \eqref{res1}, in the Appendix and inequality \eqref{ss1}, we obtain
\begin{align}
 |\E(\widetilde m_n(z)-\breve m_n(z))|\le Cn^{-\frac32}v^{-\frac32}(\frac1n\sum_{j=1}^n\E|\breve{\mathbf R}_{jj}|)^{\frac12}.\notag
\end{align}
According Remark \ref{trunc00}, we may apply Corollary \ref{cor8} in Section \ref{diag} with $q=1$ to prove the claim.
Thus, Lemma \ref{trunc2*} is proved.
\end{proof}

Denote by $\widetilde m_n(z)$ the Stieltjes transform of the empirical distribution function of the matrix $\widetilde{\mathbf V}$ and let $\widehat m_n(z)$ 
denote the  Stieltjes transform of the matrix
$\widehat{\mathbf V}$.
\begin{lem}\label{trunc1}For some absolute constant $C>0$ we have
\begin{equation}\notag
 |\E(\widetilde m_n(z)-\widehat m_n(z))|\le \frac{C\mu_8}{n^{\frac32}v^{\frac32}}.
 \end{equation}
\end{lem}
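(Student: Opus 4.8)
The plan is to mimic the argument of Lemma \ref{trunc2*}, now comparing $\widehat{\mathbf V}$ (the truncated-but-not-centralized matrix) with $\widetilde{\mathbf V}$ (the truncated-and-centralized matrix). As in \eqref{su1}, I would write
\begin{equation}\notag
 \widetilde m_n(z)-\widehat m_n(z)=\frac1{2n}\Tr(\widetilde{\mathbf R}-\widehat{\mathbf R})=\frac1n\Tr(\widetilde{\mathbf V}-\widehat{\mathbf V})\widetilde{\mathbf R}\widehat{\mathbf R},
\end{equation}
where $\widehat{\mathbf R}=(\widehat{\mathbf V}-z\mathbf I)^{-1}$, and then use $|\Tr\mathbf A\mathbf B|\le\|\mathbf A\|_2\|\mathbf B\|_2$ together with $\|\mathbf A\mathbf B\|_2\le\|\mathbf A\|\,\|\mathbf B\|_2$ and Cauchy--Schwarz to get
\begin{equation}\notag
 |\E(\widetilde m_n(z)-\widehat m_n(z))|\le n^{-1}\E^{\frac12}\|\widetilde{\mathbf R}\|^2\|\widehat{\mathbf R}\|_2^2\,\E^{\frac12}\|\widetilde{\mathbf V}-\widehat{\mathbf V}\|_2^2.
\end{equation}
The operator norm $\|\widetilde{\mathbf R}\|\le v^{-1}$ is trivial, and the Hilbert--Schmidt factor $\E\|\widehat{\mathbf R}\|_2^2$ is controlled exactly as in Lemma \ref{trunc2*} via Lemma \ref{resol00}, reducing it to $v^{-1}\frac1n\sum_j\E|\widehat R_{jj}|$, which is $O(v^{-1})$ by Corollary \ref{cor8} (with $q=1$) applied to the matrix $\widehat{\mathbf X}$; note $\widehat X_{jk}$ satisfies the boundedness hypothesis by construction of the truncation, and its second moment and mean are close enough to $1$ and $0$ for that corollary to apply.

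The only genuinely new input is the bound on $\E\|\widetilde{\mathbf V}-\widehat{\mathbf V}\|_2^2$. Here $\widetilde{\mathbf X}-\widehat{\mathbf X}=\frac1{\sqrt p}(-\E\widehat X_{jk})_{j,k}$ is a deterministic rank-one-per-block perturbation, so $\|\widetilde{\mathbf V}-\widehat{\mathbf V}\|_2^2=\frac2p\sum_{j,k}|\E\widehat X_{jk}|^2$. Since $\E X_{jk}=0$, we have $\E\widehat X_{jk}=-\E X_{jk}\mathbb I\{|X_{jk}|>cn^{1/4}\}$, and by Cauchy--Schwarz (or a direct moment bound)
\begin{equation}\notag
 |\E\widehat X_{jk}|=|\E X_{jk}\mathbb I\{|X_{jk}|>cn^{\frac14}\}|\le (cn^{\frac14})^{-7}\E|X_{jk}|^8\le C\mu_8 n^{-\frac74}.
\end{equation}
Summing over the $np=O(n^2)$ pairs and dividing by $p=O(n)$ gives $\E\|\widetilde{\mathbf V}-\widehat{\mathbf V}\|_2^2\le C\mu_8^2 n^{2}\cdot n^{-\frac72}\cdot n^{-1}=C\mu_8^2 n^{-\frac52}$, which is even smaller than the $n^{-2}$ appearing in \eqref{ss1}. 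Plugging these estimates in yields
\begin{equation}\notag
 |\E(\widetilde m_n(z)-\widehat m_n(z))|\le n^{-1}\cdot v^{-\frac32}\cdot C\mu_8 n^{-\frac54}\le \frac{C\mu_8}{n^{\frac32}v^{\frac32}},
\end{equation}
which is the claimed bound (indeed with room to spare).

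I do not expect any serious obstacle: the structure is identical to Lemma \ref{trunc2*}, and the truncation tail estimate for $|\E\widehat X_{jk}|$ is a one-line moment bound using \eqref{moment1}. The only point requiring a little care is checking that the hypotheses of Corollary \ref{cor8} are met by $\widehat{\mathbf X}$ rather than $\breve{\mathbf X}$ — that is, that $\widehat X_{jk}$ is bounded by (a constant times) $n^{1/4}$, which holds by the very definition \eqref{trunc000}, and that the relevant moment assumptions of that corollary are robust under the $O(n^{-3/2})$ perturbations of mean and variance caused by truncation. Since Corollary \ref{cor8} with $q=1$ only needs a bound on $\E|R_{jj}|$, which is stable under such small perturbations, this causes no difficulty.
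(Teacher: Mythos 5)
Your starting identity, the tail bound $|\E\widehat X_{jk}|\le C\mu_8 n^{-\frac74}$, and the Hilbert--Schmidt estimate for the deterministic difference $\widetilde{\mathbf V}-\widehat{\mathbf V}$ all agree with the paper (cf.\ \eqref{dif2*}). The genuine gap is the step where you control $\E\|\widehat{\mathbf R}\|_2^2$ by reducing it to $\frac1n\sum_j\E|\widehat R_{jj}|$ and then invoking Corollary \ref{cor8} \emph{for the matrix $\widehat{\mathbf X}$}. That corollary is proved under the exact hypotheses of Theorem \ref{main}: $\E X_{jk}=0$, $\E X_{jk}^2=1$, $|X_{jk}|\le Dn^{\frac14}$. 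The truncated entries $\widehat X_{jk}$ are neither centered nor of unit variance, and exact centering is used throughout the machinery behind that corollary (the vanishing of conditional expectations, the martingale structure in Burkholder's inequality, the self-consistent equation with variance exactly $1$). Your remark that the conclusion is ``robust under the $O(n^{-3/2})$ perturbations of mean and variance'' is an assertion, not an argument: the natural perturbation bound $|\widehat R_{jj}-\widetilde R_{jj}|\le\|\widehat{\mathbf R}\|\,\|\widehat{\mathbf V}-\widetilde{\mathbf V}\|\,\|\widetilde{\mathbf R}\|\le v^{-2}\cdot Cn^{-\frac54}$ is of order $n^{\frac34}$ at the bottom of the range $v\sim n^{-1}$, so stability of $\E|\widehat R_{jj}|$ does not follow cheaply, and without a constant-order bound on the diagonal entries of $\widehat{\mathbf R}$ your chain only yields $\E\|\widehat{\mathbf R}\|_2^2\le Cnv^{-2}$, hence a final bound of order $n^{-\frac74}v^{-2}$, which is weaker than the claimed $n^{-\frac32}v^{-\frac32}$ when $v$ is near $n^{-1}$.

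The paper sidesteps exactly this difficulty by distributing the norms the other way around: it puts the trivial operator bound $\|\widehat{\mathbf R}\|\le v^{-1}$ on the resolvent of the truncated (non-normalized) matrix and the Hilbert--Schmidt bound, via Lemma \ref{resol00}, on $\widetilde{\mathbf R}$, so that the only diagonal-resolvent input needed is $\E|\widetilde m_n(z)|$; this is then transferred by Lemma \ref{trunc2*} to $\E|\breve m_n(z)|$, and Corollary \ref{cor8} is applied only to $\breve{\mathbf X}$, which by Remark \ref{trunc00} satisfies the hypotheses exactly. To repair your version you would either have to swap the roles of $\widehat{\mathbf R}$ and $\widetilde{\mathbf R}$ as the paper does (and still route the diagonal bound through $\breve{\mathbf X}$), or actually prove a version of Corollary \ref{cor8} for entries with mean $O(n^{-\frac74})$ and variance $1+O(n^{-\frac32})$, which is a nontrivial re-run of Sections 5--6 rather than a one-line observation.
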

\begin{proof}
Similar to \eqref{su1}, we write
\begin{equation}\notag
 \widetilde m_n(z)-\widehat m_n(z)=\frac1n\Tr(\widetilde{\mathbf R}-\widehat{\mathbf R})
 =\frac1n\Tr (\widetilde{\mathbf V}-\widehat{\mathbf V})\widetilde{\mathbf R}\widehat{\mathbf R}.
\end{equation}
This yields
\begin{equation}\label{dif1*}
\E|\widetilde m_n(z)-\widehat m_n(z)|\le n^{-1}\E\|\widehat{\mathbf R}\|\|\widetilde{\mathbf R}\|_2\|\E\widehat{\mathbf V}\|_2.
\end{equation}
Furthermore, we note that, by definition \eqref{trunc000} and condition \eqref{moment1}, we have
\begin{align}\label{dif2*}
 |\E \widehat X_{jk}|&\le Cn^{-\frac 74}\mu_8.
\end{align}
Applying Lemma \ref{resol00}, inequality \eqref{res1}, in the Appendix and inequality \eqref{dif2*}, we obtain using $\|\widehat{\mathbf R}\|\le v^{-1}$,
\begin{align}\notag
 \E|\widetilde m_n(z)-\widehat m_n(z)|\le n^{-\frac74}v^{-\frac32}\E^{\frac12}|\widetilde m_n(z)|.
\end{align}
By Lemma \ref{trunc2*},
\begin{equation}\notag
 \E|\widetilde m_n(z)|\le \E|\breve m_n(z)|+C,
\end{equation}
for some constant $C$ depending on $\mu_8$ and $A_0$. According to Corollary \ref{cor8} in Section \ref{diag} with $q=1$
\begin{equation}\notag
 \E|\breve m_n(z)|\le \frac1n\sum_{j=1}^n\E|\breve R_{jj}|\le C,
\end{equation}
with a constant $C$ depending on $\mu_4$, $D$.
Using these inequalities, we get
\begin{equation}\notag
|\E\widetilde m_n(z)-\widehat m_n(z)|\le\frac {C\mu_8}{n^{\frac74}v^{\frac32}}\le \frac {C\mu_8}{n^{\frac32}v^{\frac32}}.
\end{equation}
Thus Lemma \ref{trunc1} is proved.
\end{proof}
\begin{cor}
 Assuming the conditions of Corollary \ref{cormain}, we have for $z\in \mathbb G$,
 \begin{equation}\notag
  |\E\widehat m_n(z)-s_y(z)|\le \frac{C}{(nv)^{\frac32}}+\frac C{n^2v^2\sqrt{\gamma}}.
 \end{equation}

\end{cor}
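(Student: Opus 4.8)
The plan is to transfer the estimate from $\widehat m_n$ to the Stieltjes transform $\breve m_n$ of the fully regularized model $\breve{\mathbf X}$, for which Theorem~\ref{stiltjesmain} is available, the two comparison lemmas just proved being used to absorb the cost of truncation, centering and normalization. I would start from the decomposition
\begin{equation}\notag
\E\widehat m_n(z)-s_y(z)=\bigl(\E\widehat m_n(z)-\E\widetilde m_n(z)\bigr)+\bigl(\E\widetilde m_n(z)-\E\breve m_n(z)\bigr)+\bigl(\E\breve m_n(z)-s_y(z)\bigr).
\end{equation}
By Lemma~\ref{trunc1} the first bracket is at most $C\mu_8 n^{-\frac32}v^{-\frac32}$, and by Lemma~\ref{trunc2*} the second is at most $Cn^{-\frac32}v^{-\frac32}$; since $n^{-\frac32}v^{-\frac32}=(nv)^{-\frac32}$, these two contributions are already of the size of the first term in the asserted bound.

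For the last bracket I would appeal to Remark~\ref{trunc00}: the entries $\breve X_{jk}$ are centered, have unit variance and obey $|\breve X_{jk}|\le D_1 n^{\frac14}$, and their fourth moments are bounded by a constant depending on $\mu_8$ only --- here one uses that $1-\sigma_{jk}^2\le C\mu_8 n^{-\frac32}$ keeps $\sigma_{jk}$ bounded away from $0$ while $\E|\widehat X_{jk}|^4\le\mu_8^{\frac12}$. Consequently $\breve{\mathbf X}$ meets the hypotheses of Theorem~\ref{main}, hence those of Theorem~\ref{stiltjesmain}, which yields for $z\in\mathbb G$
\begin{equation}\notag
|\E\breve m_n(z)-s_y(z)|\le\frac{C}{nv^{\frac34}}+\frac{C}{n^{\frac32}v^{\frac32}\,|(z+\frac{y-1}{z})^2-4y|^{\frac14}}.
\end{equation}
On $\mathbb G$ one has $a+\varepsilon\le|u|\le b-\varepsilon$, so that $\gamma(z)\ge\varepsilon$; since $(z+\frac{y-1}{z})^2-4y$ has only simple zeros and is bounded below away from the spectral edges, a standard local-factorization plus compactness argument gives $|(z+\frac{y-1}{z})^2-4y|\ge c(y)\,\gamma(z)$ uniformly on $\mathbb G$, whence the second term above is at most $Cn^{-\frac32}v^{-\frac32}\gamma^{-\frac14}$. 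Collecting all the pieces and using the inequalities $v\ge v_0/\sqrt{\gamma}$ and $v\le V=4\sqrt y$ that define $\mathbb G$ to regroup, I would obtain the asserted bound $\frac{C}{(nv)^{\frac32}}+\frac{C}{n^2v^2\sqrt{\gamma}}$. In any case the cruder combined estimate $\frac{C}{(nv)^{\frac32}}+\frac{C}{nv^{\frac34}}+\frac{C}{n^{\frac32}v^{\frac32}\gamma^{\frac14}}$ integrates in $v$ over $[v_0/\sqrt{\gamma},V]$ to $O(n^{-1})$, which is all that is needed to deduce Corollary~\ref{cormain}.

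The triangle inequality and the two lemma invocations are routine. The one step that takes some care is the passage from the factor $|(z+\frac{y-1}{z})^2-4y|^{-\frac14}$ to a power of $\gamma(z)$ and the subsequent regrouping of the error terms on $\mathbb G$: this is the same edge analysis that turns Theorem~\ref{stiltjesmain} into the integrated estimate \eqref{final!!}, and it is exactly where the constraint $v\ge v_0/\sqrt{\gamma}$ in the definition of $\mathbb G$ is exploited. A secondary, purely technical point is the verification that the fourth-moment constant of $\breve{\mathbf X}$ depends on $\mu_8$ only, so that Theorem~\ref{stiltjesmain} genuinely applies to it.
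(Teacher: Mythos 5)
Your argument is the paper's own: the paper proves this corollary in one line by the triangle inequality $|\E\widehat m_n(z)-s_y(z)|\le |\E(\widehat m_n(z)-\breve m_n(z))|+|\E\breve m_n(z)-s_y(z)|$, invoking Lemmas \ref{trunc1} and \ref{trunc2*} for the first term and Remark \ref{trunc00} together with Theorem \ref{stiltjesmain} for the second, exactly as you do. Your additional remarks (the lower bound $|(z+\frac{y-1}z)^2-4y|\ge c(y)\gamma$ on $\mathbb G$, which is Lemma \ref{lemG}, and the observation that the cruder combined bound already integrates to $O(n^{-1})$ in the proof of Corollary \ref{cormain}) are consistent with how the estimate is actually used, so the proposal is correct and essentially identical in approach.
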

\begin{proof}
 The proof immediately follows from the inequality
 \begin{equation}\notag
  |\E\widehat m_n(z)-s_y(z)|\le |\E (\widehat m_n(z)-\breve m_n(z))|+|\E\breve m_n(z)-s(z)|,
 \end{equation}
Lemmas \ref{trunc2*} and \ref{trunc1} and Theorem \ref{stiltjesmain}.
\end{proof}

The proof of Corollary \ref{cormain} follows now from Lemma \ref{trunc}, Corollary \ref{smoothing2}, inequality \eqref{final!} and inequality
\begin{equation}\notag
 \sup_{x\in\mathbb J_{\varepsilon}}\int_{v_0/\sqrt{\gamma}}^V|\E\widehat m_n(x+iv)-s(x+iv)|dv\le \frac Cn.
\end{equation}

\section{The Crucial Results} The main problem in proving Theorem \ref{stiltjesmain} is the the derivation of the following bound 
\begin{equation}\notag
 \E|R_{jj}|^q\le C^q,
\end{equation}
for $j=1,\ldots,n$ and any $z\in\mathbb G$.
This bound in the case of Wigner matrices was shown in \cite{GT:2014}.
To prove this bound we used an approach similar to that of Lemma 3.4 in \cite{SchleinMaltseva:2013}. 
We succeeded in the case of finite moments only developing  new  bounds of quadratic forms of the following type
\begin{equation}\notag
 \E|\frac1n\sum_{1\le l\ne k\le p}X_{jl}X_{jk}R^{(j)}_{k+n,l+n}|^q\le\left(\frac {Cq}{\sqrt {nv}}\right)^q. 
\end{equation}
These estimates are based on a recursive scheme of using  Rosenthal's and Burkholder's inequalities.
\subsection{The Key Lemma}\label{key} In this Section we provide  auxiliary lemmas needed for the proof of Theorem \ref{main}.
Recall that the
Stieltjes transform of an empirical  spectral distribution function $\mathcal F_n(x)$, say $m_n(z)$, is given by
\begin{equation}\label{trace}
m_n(z)=\frac1n\sum_{j=1}^n
R_{jj}=\frac1{2n}\Tr\mathbf  R+\frac{p-n}{2nz}.
\end{equation}

For any $\mathbb J\subset \mathbb T=\{1,\ldots,n\}$ denote $\mathbb T_{\mathbb J}=\mathbb T\setminus\mathbb J$.
For any $\mathbb J\subset \mathbb T$ and $j\in\mathbb T_{\mathbb J}$ define  the quadratic form,
$$
Q^{(\mathbb J,j)}:=\frac1p\sum_{l=1}^{p-1}\Big|\sum_{k=1}^{l-1}X_{jk}R^{(\mathbb J,j)}_{k+n,l+n}\Big|^2.
$$
Similar, for $\widehat {\mathbb J}\subset\widehat {\mathbb T}=\{1,\ldots,p\}$ and $j\in \widehat{\mathbb T}_{\widehat{\mathbb J}}$,
$$
Q^{(\widehat{\mathbb J},j)}:=\frac1p\sum_{l=1}^{n-1}\Big|\sum_{k=1}^{l-1}X_{kj}R^{(\widehat{\mathbb J}+n,j+n)}_{kl}\Big|^2.
$$
%
\begin{thm}\label{bp1*} Assuming the conditions of Theorem \ref{main} there exist  constants $A_1, C, C_3$ depending on $\mu_4$ and $D$ only such that 
we have for $v\ge v_0$  and  $q\le A_1(nv)^{\frac14}$ and for any $\mathbb J\subset\mathbb T$ such that $|\mathbb J|\le C\log n$,

\begin{align}\label{q01}
 \E (Q^{(\mathbb J,j)})^q\le(C_3q)^{2q}v^{-q}.
\end{align}
Respectively, for any $\widehat{\mathbb J}\subset\widehat{\mathbb T}$,
\begin{align}\label{q2}
 \E (Q^{(\widehat{\mathbb J},j)})^q\le(C_3q)^{2q}v^{-q}.
\end{align}
 
\end{thm}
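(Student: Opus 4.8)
The plan is to bound the $q$-th moment of the quadratic form $Q^{(\mathbb J,j)}$ by a recursive scheme on the inner summation index, combining Burkholder's martingale inequality (in the outer variable $l$) with Rosenthal's inequality (in the inner variable $k$), exactly in the spirit of the estimates announced in Section \ref{key}. First I would fix $\mathbb J$ and $j$, write $a_l:=\frac1{\sqrt p}\sum_{k=1}^{l-1}X_{jk}R^{(\mathbb J,j)}_{k+n,l+n}$, so that $Q^{(\mathbb J,j)}=\sum_{l=1}^{p-1}|a_l|^2$, and condition on the $\sigma$-algebra generated by the matrix $\mathbf X^{(\mathbb J,j)}$ (equivalently on $\mathbf R^{(\mathbb J,j)}$), over which the resolvent entries $R^{(\mathbb J,j)}_{k+n,l+n}$ are measurable while the row $(X_{j1},\dots,X_{jp})$ is independent with the moment bounds \eqref{moment}, \eqref{trun}. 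Viewing $l\mapsto a_l$ as a martingale-difference-type structure (the partial sums $\sum_{l\le m}|a_l|^2$ do not literally form a martingale, but $a_l$ is built from an increasing filtration in $k$, so the standard decoupling/expansion of $|Q|^q$ into diagonal and off-diagonal pieces applies) I would expand $(Q^{(\mathbb J,j)})^q$ and treat the leading diagonal term via its conditional expectation, which is $\frac1p\sum_l\sum_{k<l}|R^{(\mathbb J,j)}_{k+n,l+n}|^2\le \frac1p\,\Tr\big(\widehat{\mathbf R}^{(\mathbb J,j)}(\widehat{\mathbf R}^{(\mathbb J,j)})^*\big)\le \frac1{pv}\,\im\Tr\widehat{\mathbf R}^{(\mathbb J,j)}=O(v^{-1})$, using $\mathbf R\mathbf R^*\le v^{-1}\im\mathbf R$ and the trivial bound $\im\Tr\widehat{\mathbf R}\le n v^{-1}\cdot v = O(nv^{-1}\cdot v)$—more precisely $\frac1p\im\Tr\widehat{\mathbf R}=y_n\im m_n$ which is $O(1)$ on $\mathbb G$ once the a priori bound on $\im m_n$ is available, but since at this stage we only want $(C_3q)^{2q}v^{-q}$ the crude bound $\im\Tr\widehat{\mathbf R}\le (n+p)/v$ already suffices.

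Next I would set up the recursion. Define $\mu_q:=\sup_{\mathbb J,j}\E(Q^{(\mathbb J,j)})^q$ and, for the off-diagonal contributions, apply Burkholder's inequality in $l$ to reduce a power of $Q$ to a sum of conditional quadratic variations, then Rosenthal's inequality in $k$ to each inner sum $\sum_{k<l}X_{jk}R^{(\mathbb J,j)}_{k+n,l+n}$; the Rosenthal bound produces two terms, a "Gaussian" term $\big(\sum_k|R_{k+n,l+n}|^2\big)^{q/2}$ and a "large-deviation" term $\sum_k\E|X_{jk}|^{2q}|R_{k+n,l+n}|^{2q}$ which, thanks to the truncation \eqref{trun}, is bounded by $(D n^{1/4})^{2q-4}\mu_4\sum_k|R_{k+n,l+n}|^{2q}$. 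The constraint $q\le A_1(nv)^{1/4}$ is precisely what is needed to absorb the factor $n^{(2q-4)/4}$ against $v^{-q}$-type gains and keep all constants of the form $(C_3 q)^{2q}$; I would organize the estimates so that the dominant balance is $\mu_q\le (\text{diagonal term})+ (Cq)^{q}\,v^{-q/2}\,\mu_{q/2}^{1/2}\cdot(\dots)$, i.e. a self-improving inequality whose solution is $\mu_q\le (C_3q)^{2q}v^{-q}$ by induction on (dyadic) $q$. Throughout, the removal of at most $C\log n$ rows (the set $\mathbb J$) costs only interlacing-type corrections that are $O(|\mathbb J|/(nv))=O(\log n/(nv))$ and hence harmless, which is exactly why the hypothesis $|\mathbb J|\le C\log n$ appears.

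The statement \eqref{q2} for $Q^{(\widehat{\mathbb J},j)}$ is entirely symmetric: one repeats the argument with the roles of rows and columns of $\mathbf X$ interchanged, using that $\widehat{\mathbf R}=(\mathbf X^*\mathbf X-z^2\mathbf I)^{-1}$ and $\mathbf R=(\mathbf X\mathbf X^*-z^2\mathbf I)^{-1}$ enter the block resolvent \eqref{matrshur} symmetrically, and that $\frac1p\Tr\widehat{\mathbf R}\widehat{\mathbf R}^*\le v^{-1}\cdot\frac1p\im\Tr\widehat{\mathbf R}$ again gives the right diagonal bound; no new ideas are needed, only the bookkeeping of which matrix dimension ($n$ versus $p$) appears, and since $y_n=n/p\in(0,1]$ the constants stay uniform.

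\textbf{Main obstacle.} The hard part will be making the recursion in $q$ actually close with constants of the clean form $(C_3 q)^{2q}v^{-q}$: one must track, at each application of Rosenthal's and Burkholder's inequalities, the interplay between the combinatorial constants $q^q$ coming from those inequalities, the factor $n^{(2q-4)/4}$ from the truncation level in \eqref{trun}, and the gain $v^{-1}$ per "contraction" $\sum_k|R_{k+n,l+n}|^2\le v^{-1}\im R^{(\mathbb J,j)}_{l+n,l+n}$. Getting these to balance is exactly where the restriction $q\le A_1(nv)^{1/4}$ is forced, and choosing $A_1$ (and $C_3$) correctly so that the inductive step does not lose a growing factor is the delicate point; everything else is a careful but routine iteration of the two named inequalities together with the resolvent identities and interlacing bounds from the Appendix.
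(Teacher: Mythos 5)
Your overall toolkit (conditioning on $\mathfrak M^{(\mathbb J,j)}$, splitting the quadratic form into diagonal and off-diagonal parts, Rosenthal for the diagonal, Burkholder for the off-diagonal, a further Rosenthal for the inner linear forms, dyadic halving of $q$, and the constraint $q\le A_1(nv)^{1/4}$ to absorb the truncation level $Dn^{1/4}$) is exactly the paper's, but your sketch has a genuine gap in the two places where the paper's proof actually does its work. First, your treatment of the deterministic/variance terms is quantitatively wrong: you bound the conditional expectation of the diagonal by $\frac1{pv}\im\Tr\widehat{\mathbf R}^{(\mathbb J,j)}$ and then claim the crude bound $\im\Tr\widehat{\mathbf R}\le (n+p)/v$ "already suffices". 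It does not: that crude bound gives $O(v^{-2})$ per such term, hence $v^{-2q}$ for the $q$-th moment, which is too weak by a factor $v^{-q}$ (fatal, since $v$ goes down to $A_0 n^{-1}$). To get the stated $(C_3q)^{2q}v^{-q}$ one needs $\im m_n^{(\mathbb J,j)}$ to be of order one in a moment sense, i.e.\ the a priori bounds $\E|R_{ll}^{(\mathbb J)}|^q\le C_0^q$ of condition \eqref{cond03}; in the paper this is why Theorem \ref{bp1*} is really proved through the conditional Lemma \ref{bp1} (which keeps the factor $\Psi^{(\mathbb J)}$ and controls its moments $A_q^{(\mathbb J)}\le C_4^q$ in Lemma \ref{lem01}) and the unconditional statement only emerges from the joint bootstrap descending in $v$ by factors $s_0$ via Lemma \ref{schlein}, Lemma \ref{lem9} and Corollary \ref{cor8}. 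Your proposal has no such structure and explicitly dismisses the need for it.

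Second, the recursion you propose does not close as stated. After Burkholder, the quadratic-variation term is the $q/2$-moment of a \emph{new} quadratic form whose coefficient matrix is the triangular part of $\mathbf R^{(\mathbb J,j)}(\mathbf R^{(\mathbb J,j)})^*$ (and, on iteration, of its further squarings), not of $Q^{(\mathbb J,j)}$ itself; so a self-improving inequality $\mu_q\le \dots+(Cq)^q v^{-q/2}\mu_{q/2}^{1/2}(\dots)$ for a single quantity $\mu_q=\sup\E(Q^{(\mathbb J,j)})^q$ is not what the argument produces. The paper handles this by introducing the hierarchy of arrays $a^{(\mathbb J,j,\nu)}$ in \eqref{defaq} and forms $Q^{(\mathbb J,j)}_\nu$, proving the norm-contraction estimates of Lemma \ref{arr0} and Corollaries \ref{arr1}--\ref{new1} (each level costs a factor $(v^{-1}\Psi^{(\mathbb J)})^{2^{\nu}-1}$-type quantity), and running the recursion $T_{\nu,q}\le (C_3q)^qT_{\nu+1,q/2}+\dots$ in \eqref{rewrite} down to $T_{L,1}$, with the product bookkeeping \eqref{rel00} giving the clean constant $(C_3q)^{2q}$. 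You would either need this hierarchy or an equivalent induction over a class of coefficient matrices with controlled row and Hilbert--Schmidt norms; and since those norms are again controlled only through moments of $\Psi^{(\mathbb J)}$, you are driven back to the a priori diagonal resolvent bounds of the first point. So the missing ingredients are precisely the intertwined induction in $v$ and the $\nu$-hierarchy of iterated coefficient arrays, not merely "careful bookkeeping".
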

\begin{cor}\label{q1*}Assuming the conditions of Theorem \ref{main} and for $z=u+iV$ with $V=4\sqrt y$, we have
 \begin{align}\notag
 \E (Q^{(\mathbb J,j)})^q\le C^qq^{2q},
\end{align}
and 
\begin{align}\notag
 \E (Q^{(\widehat{\mathbb J},j)})^q\le C^qq^{2q}.
\end{align}

\end{cor}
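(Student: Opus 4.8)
The plan is to deduce Corollary~\ref{q1*} from Theorem~\ref{bp1*} by specializing to the line $z=u+iV$ with $V=4\sqrt y$ and absorbing all $v$-dependence into the generic constant. First I would note that on this line $v=V=4\sqrt y$ is a fixed positive quantity depending on $y$ only, and since $y\in(0,1]$ we have $v=4\sqrt y\in(0,4]$; in particular $v\ge v_0=A_0 n^{-1}$ for all $n$ large enough (and after possibly enlarging $C$, for all $n$). Thus the hypothesis $v\ge v_0$ of Theorem~\ref{bp1*} is met. It remains to check the admissible range of $q$: Theorem~\ref{bp1*} applies for $q\le A_1(nv)^{\frac14}=A_1(4\sqrt y\,n)^{\frac14}$, which for fixed $y$ is of order $n^{\frac14}$; so the bound \eqref{q01}--\eqref{q2} is valid for all $q\le A_1'n^{\frac14}$ with $A_1'=A_1(4\sqrt y)^{\frac14}$.

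Given that, for any such $q$ and any $\mathbb J\subset\mathbb T$ with $|\mathbb J|\le C\log n$ (in particular for $\mathbb J=\emptyset$ and for the singletons and small index sets that occur in the applications), Theorem~\ref{bp1*} gives
\begin{equation}\notag
 \E (Q^{(\mathbb J,j)})^q\le (C_3q)^{2q}v^{-q}=(C_3q)^{2q}(4\sqrt y)^{-q}\le C^q q^{2q},
\end{equation}
where $C=C(\mu_4,D,y)$ is chosen so that $C\ge C_3^2(4\sqrt y)^{-1}$. The same computation with $Q^{(\widehat{\mathbb J},j)}$ in place of $Q^{(\mathbb J,j)}$ and \eqref{q2} in place of \eqref{q01} yields $\E (Q^{(\widehat{\mathbb J},j)})^q\le C^qq^{2q}$. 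This is exactly the claimed pair of inequalities.

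The only mild subtlety — and the closest thing to an obstacle — is bookkeeping on the constants and the range of $q$: one must make sure that the constant $C$ in the corollary is allowed to depend on $y$ (it is, since $V=4\sqrt y$ is part of the data), and that when the corollary is later invoked it is only with exponents $q=O(1)$ or $q=O(\log n)$, which comfortably satisfy $q\le A_1'n^{\frac14}$ for $n$ large; for the finitely many small $n$ the inequality can be arranged by further enlarging $C$. There is no new analytic content beyond Theorem~\ref{bp1*}: the corollary is simply the statement of that theorem on the fixed contour $\Im z=4\sqrt y$, where the factor $v^{-q}$ is harmless.
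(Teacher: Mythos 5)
Your proposal is correct and follows the same route as the paper, which simply observes that the corollary is immediate from Theorem \ref{bp1*}: at $V=4\sqrt y$ the factor $v^{-q}$ is a fixed constant to the power $q$ and is absorbed into $C^q$ (with $C$ depending on $y$). Your additional bookkeeping on the range of $q$ and on small $n$ is consistent with how the corollary is used later and does not change the argument.
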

\begin{proof}
 The result immediately follows from Theorem \ref{bp1*}
\end{proof}

\begin{Proof of} {\it Theorem \ref{bp1*}}. We proof  inequality \eqref{q01} only. The inequality \eqref{q2} is proved similar.
For the proof of Theorem \ref{bp1*} we prove several auxiliary Lemmas.
 %

For any $\mathbb J\subset \mathbb T$ introduce $\mathbb T_{\mathbb J}=\mathbb T\setminus\mathbb J$.
We introduce the quantity, for some $\mathbb J\subset \mathbb T$,
$$
B_q^{(\mathbb J)}:=\Big[\frac1p\sum_{l=2}^{p}\Big(\sum_{k=1}^{l-1}|R_{k+n,l+n}^{(\mathbb J)}|^2\Big)^q\Big].
$$
By Lemma \ref{resol00},  inequality \eqref{res2} in the Appendix, we have
\begin{equation}\label{ineq00n}
\E B_q^{(\mathbb J)}\le v^{-q}\frac1n\sum_{l=1}^p\E|R_{l+n,l+n}^{(\mathbb J)}|^q.
\end{equation}
Furthermore, introduce the quantities
\begin{align}\label{qf}
Q^{(\mathbb J,j)}_{\nu}&=\sum_{l=2}^{p}\Big|\sum_{k=1}^{l-1}X_{jk}a_{lk}^{(\mathbb J,j,\nu)}\Big|^2,\notag\\
Q^{(\mathbb J,j)}_{\nu1}&=\sum_{l=2}^pa_{ll}^{(\mathbb J,j,\nu+1)},\notag\\
Q^{(\mathbb J,j)}_{\nu2}&=\sum_{l=2}^p(X_{jl}^2-1)a_{ll}^{(\mathbb J,j,\nu+1)},\notag\\
Q^{(\mathbb J,j)}_{\nu3}&=\sum_{ 1\le l\ne k\le p}X_{jk}X_{jl}a_{kl}^{(\mathbb J,j,\nu+1)},
\end{align}
where, $a^{(\mathbb J,j,\nu)}_{kl}$ are defined recursively via 
\begin{align}\label{defaq}
a_{kl}^{(\mathbb J,j,0)}&=\frac1{\sqrt p} R^{(\mathbb J,j)}_{k+n,l+n},\notag\\
a_{kl}^{(\mathbb J,j,\nu+1)}&=\sum_{ r=k\vee l+1}^pa_{rl}^{(\mathbb J,j,\nu)}\overline a_{rk}^{(\mathbb J,j,\nu)},\text{ for }\nu=0,\ldots,L-1,
\end{align}
where $k\vee l:=\max\{k,l\}$.
Using these notations we have
\begin{equation}\label{r1}
 Q^{(\mathbb J,k)}_{\nu}=Q^{(\mathbb J,k)}_{\nu1}+Q^{(\mathbb J,k)}_{\nu2}+Q^{(\mathbb J,k)}_{\nu3}.
\end{equation}
\begin{lem}\label{arr0}
 Under the conditions of Theorem \ref{main} we have
 \begin{align}\label{ar1}
\sum_{l=1}^p |a_{kl}^{(\mathbb J,j,\nu+1)}|^2\le \Big(\sum_{r=1}^p|a_{kr}^{(\mathbb J,j,\nu)}|^2\Big)
\Big(\sum_{l,r=1}^p|a_{lr}^{(\mathbb J,j,\nu)}|^2\Big).
\end{align}
Moreover,
 \begin{align}\label{ar2}
  \sum_{l,k=1}^p|a_{kl}^{(\mathbb J,j,\nu+1)}|^2\le (\sum_{k,l=1}^p|a_{kl}^{(\mathbb J,j,\nu)}|^2)^2.
 \end{align}

\end{lem}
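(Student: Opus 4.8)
The plan is to derive \eqref{ar1} from a single application of the Cauchy--Schwarz inequality to the defining recursion \eqref{defaq}, and then to obtain \eqref{ar2} at once by summing \eqref{ar1} over the remaining index. First I would fix $k$ and $l$ and estimate, using \eqref{defaq} and Cauchy--Schwarz,
\begin{equation*}
|a_{kl}^{(\mathbb J,j,\nu+1)}|^2=\Big|\sum_{r=k\vee l+1}^p a_{rl}^{(\mathbb J,j,\nu)}\,\overline{a_{rk}^{(\mathbb J,j,\nu)}}\Big|^2\le\Big(\sum_{r=k\vee l+1}^p|a_{rl}^{(\mathbb J,j,\nu)}|^2\Big)\Big(\sum_{r=k\vee l+1}^p|a_{rk}^{(\mathbb J,j,\nu)}|^2\Big).
\end{equation*}
Since all summands are nonnegative, each factor only grows when the summation range is extended to $r=1,\dots,p$, so $|a_{kl}^{(\mathbb J,j,\nu+1)}|^2\le\big(\sum_{r=1}^p|a_{rl}^{(\mathbb J,j,\nu)}|^2\big)\big(\sum_{r=1}^p|a_{rk}^{(\mathbb J,j,\nu)}|^2\big)$. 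Summing over $l=1,\dots,p$ and pulling out the factor that depends only on $k$ yields
\begin{equation*}
\sum_{l=1}^p|a_{kl}^{(\mathbb J,j,\nu+1)}|^2\le\Big(\sum_{r=1}^p|a_{rk}^{(\mathbb J,j,\nu)}|^2\Big)\Big(\sum_{l,r=1}^p|a_{lr}^{(\mathbb J,j,\nu)}|^2\Big).
\end{equation*}

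To bring this to the exact form stated in \eqref{ar1}, I would observe that $|a_{kl}^{(\mathbb J,j,\nu)}|=|a_{lk}^{(\mathbb J,j,\nu)}|$ for every $\nu\ge0$. This holds by induction: it is true at $\nu=0$ because $\mathbf R^{(\mathbb J,j)}$ is symmetric, and it is inherited at each step since \eqref{defaq} gives directly $a_{lk}^{(\mathbb J,j,\nu+1)}=\overline{a_{kl}^{(\mathbb J,j,\nu+1)}}$ (here $k\vee l=l\vee k$). Consequently $\sum_{r}|a_{rk}^{(\mathbb J,j,\nu)}|^2=\sum_{r}|a_{kr}^{(\mathbb J,j,\nu)}|^2$, which turns the last display into \eqref{ar1}.

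Finally, summing \eqref{ar1} over $k=1,\dots,p$ and using that $\sum_{l,r}|a_{lr}^{(\mathbb J,j,\nu)}|^2$ does not depend on $k$, one obtains $\sum_{k,l=1}^p|a_{kl}^{(\mathbb J,j,\nu+1)}|^2\le\big(\sum_{k,r=1}^p|a_{kr}^{(\mathbb J,j,\nu)}|^2\big)\big(\sum_{l,r=1}^p|a_{lr}^{(\mathbb J,j,\nu)}|^2\big)=\big(\sum_{k,l=1}^p|a_{kl}^{(\mathbb J,j,\nu)}|^2\big)^2$, which is \eqref{ar2}. There is no genuine obstacle in this lemma; the only points needing a little care are that dropping the restriction $r\ge k\vee l+1$ present in \eqref{defaq} can only loosen the bound, and the symmetry observation used to identify the two equivalent forms of the right-hand side of \eqref{ar1}.
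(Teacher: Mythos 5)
Your proposal is correct and follows essentially the same route as the paper: apply Cauchy--Schwarz to the recursion \eqref{defaq}, enlarge the summation range, and sum over the free indices. The only addition is your explicit remark that $|a_{kl}^{(\mathbb J,j,\nu)}|=|a_{lk}^{(\mathbb J,j,\nu)}|$ (symmetry of the resolvent at $\nu=0$ and the Hermitian form of the recursion), which the paper leaves implicit when matching the index order in \eqref{ar1}.
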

\begin{proof}
 We apply Cauchy -- Schwartz inequality and obtain
 \begin{equation}\notag
  |a_{l,k}^{(\mathbb J,j,\nu+1)}|^2\le \sum_{r=k\vee\ l+1}^p|a_{rl}^{(\mathbb J,j,\nu)}|^2\sum_{r=k\vee l+1}^p|a_{kr}^{(\mathbb J,j,\nu)}|^2.
 \end{equation}
Summing in $k$ and $l$, \eqref{ar1} and \eqref{ar2} follow.

\end{proof}
\begin{cor}\label{arr1} Under the conditions of Theorem \ref{main} we have
\begin{align}
\sum_{k,l=1}^p|a_{kl}^{(\mathbb J,j,\nu)}|^2\le \left(\im\left( m_n^{(\mathbb J,j)}(z)- \frac{p-n+|\mathbb J|}{nz}\right)v^{-1}\right)^{2^{\nu}}
\end{align}
and 
 \begin{align}
\sum_{l=1}^p |a_{kl}^{(\mathbb J,j,\nu)}|^2\le \left(\im\left( m_n^{(\mathbb J,j)}(z)-
\frac{p-n+|\mathbb J|}{nz}\right)v^{-1}\right)^{2^{\nu}-1}n^{-1}v^{-1}\im R_{k+n,k+n}^{(\mathbb J,j)}.\notag
\end{align} 
\end{cor}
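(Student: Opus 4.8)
The plan is to derive both bounds by induction on $\nu$, feeding the two inequalities of Lemma \ref{arr0} into each other. For the first bound, set $s_\nu := \sum_{k,l=1}^p |a_{kl}^{(\mathbb J,j,\nu)}|^2$. The base case $\nu=0$ follows from $a_{kl}^{(\mathbb J,j,0)} = \frac1{\sqrt p}R^{(\mathbb J,j)}_{k+n,l+n}$: by the identity $\Tr \mathbf B\mathbf B^* = \sum |B_{kl}|^2$ applied to the lower-right block of $\mathbf R^{(\mathbb J,j)}$, together with the resolvent identity $\mathbf R\mathbf R^* = \frac1v\im\mathbf R$ (valid for Hermitian $\mathbf V$), one gets $s_0 = \frac1p\sum_{k,l}|R^{(\mathbb J,j)}_{k+n,l+n}|^2 \le \frac1p \cdot v^{-1}\im\big(\sum_{k=1}^p R^{(\mathbb J,j)}_{k+n,k+n}\big)$, and the sum of the bottom-block diagonal entries equals $p\big(m_n^{(\mathbb J,j)}(z) - \frac{p-n+|\mathbb J|}{nz}\big)$ by the block structure \eqref{matrshur} (adjusted for the removed rows). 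Hence $s_0 \le v^{-1}\im\big(m_n^{(\mathbb J,j)}(z) - \frac{p-n+|\mathbb J|}{nz}\big)$, which is the claim with exponent $2^0 = 1$. The inductive step is immediate from \eqref{ar2}: $s_{\nu+1} \le s_\nu^2$, so if $s_\nu \le L^{2^\nu}$ with $L := v^{-1}\im\big(m_n^{(\mathbb J,j)}(z) - \frac{p-n+|\mathbb J|}{nz}\big)$, then $s_{\nu+1} \le L^{2^{\nu+1}}$.

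For the second bound, set $t_\nu^{(k)} := \sum_{l=1}^p |a_{kl}^{(\mathbb J,j,\nu)}|^2$ (a row sum). The base case is again read off from $a_{kl}^{(\mathbb J,j,0)} = \frac1{\sqrt p}R^{(\mathbb J,j)}_{k+n,l+n}$ and $\mathbf R^{(\mathbb J,j)}(\mathbf R^{(\mathbb J,j)})^* = \frac1v\im\mathbf R^{(\mathbb J,j)}$: the $(k+n)$-th diagonal entry of the left side is $\frac1v\im R^{(\mathbb J,j)}_{k+n,k+n}$ while equalling $\sum_l |R^{(\mathbb J,j)}_{k+n,l+n}|^2$, so $t_0^{(k)} = \frac1p\sum_l|R^{(\mathbb J,j)}_{k+n,l+n}|^2 \le \frac1{pv}\im R^{(\mathbb J,j)}_{k+n,k+n}$, which matches the claimed bound with exponent $2^0 - 1 = 0$ (note $n^{-1}$ absorbs $p^{-1}$ up to the $y_n$ factor — one should check the precise normalization here, but the stated corollary writes $n^{-1}$, consistent with $p \sim n/y$). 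For the inductive step I use \eqref{ar1}: $t_{\nu+1}^{(k)} \le t_\nu^{(k)} \cdot s_\nu$, so combining the inductive hypothesis $t_\nu^{(k)} \le L^{2^\nu - 1} n^{-1}v^{-1}\im R^{(\mathbb J,j)}_{k+n,k+n}$ with the already-proved bound $s_\nu \le L^{2^\nu}$ gives $t_{\nu+1}^{(k)} \le L^{2^{\nu+1} - 1} n^{-1}v^{-1}\im R^{(\mathbb J,j)}_{k+n,k+n}$, as required.

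The two arguments are genuinely interlocked: the row-sum bound uses the full-sum bound at the same level $\nu$, so it is cleanest to prove the first inequality (for all $\nu$) first, then the second. The main point requiring care — the only place this is more than bookkeeping — is the base case identification of $\frac1p\sum_{k}R^{(\mathbb J,j)}_{k+n,k+n}$ with $m_n^{(\mathbb J,j)}(z) - \frac{p-n+|\mathbb J|}{nz}$: this is exactly the sample-covariance analogue of the relation $\frac1p\sum_k R_{k+n,k+n} = y m_n(z) - \frac{1-y}{z}$ derived from \eqref{matrshur} earlier in the paper, now applied to the submatrix $\mathbf V^{(\mathbb J,j)}$ of size $(n - |\mathbb J| - 1) + p$, which shifts the multiplicity of the zero eigenvalue and hence the coefficient of $1/z$. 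Everything else is a two-line induction powered by Lemma \ref{arr0}. I expect no real obstacle beyond keeping the normalization constants ($p$ versus $n$ versus $y_n$) straight, which under assumption \eqref{yn} only costs harmless bounded factors.
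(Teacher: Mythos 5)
Your proposal is correct and follows essentially the same route as the paper: the base case $\nu=0$ via the resolvent bounds of Lemma \ref{resol00} (identifying the lower-block trace of $\mathbf R^{(\mathbb J,j)}$ with $m_n^{(\mathbb J,j)}$ minus the $1/z$ term from the zero eigenvalues), and then the two-line induction powered by the two inequalities of Lemma \ref{arr0}, which is exactly what the paper's terse "the general case follows by induction in $\nu$" means. The normalization point you flag ($p$ versus $n$, and the off-by-one in $|\mathbb J|$) is indeed the only delicate spot, and it is harmless since $p\ge n$ and the subsequent quantity $\Psi^{(\mathbb J)}$ absorbs such discrepancies.
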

\begin{proof}
 By definition of $a_{kl}^{(\mathbb J,j,0)}$, see  \eqref{defaq}, applying Lemma \ref{resol00}, equality \eqref{res1}, in the Appendix, we get
 \begin{equation}\notag
  \sum_{l=1}^p|a_{kl}^{(\mathbb J,j,0)}|^2\le \frac1p\sum_{l=1}^p|R_{k+n,l+n}^{(\mathbb J,j)}|^2
  \le n^{-1}v^{-1}\left(\im R_{k+n,k+n}^{(\mathbb J,j)}\right),
 \end{equation}

The general case follows now by induction in $\nu$, Lemma \ref{arr0}, and inequality \eqref{res1}, Lemma \ref{resol00} in the Appendix.
\end{proof}
\begin{cor}\label{arr2}
 Under the conditions of Theorem
  \ref{main} we have
 \begin{align}\notag
  a_{ll}^{(\mathbb J,j,\nu+1)}\le (nv)^{-1}\left(v^{-1}\im \left(m_n^{(\mathbb J,j)}(z)-\frac{p-n+|\mathbb J|}{nz}\right)\right)^{2^{\nu}-1}\im R_{l+n,l+n}^{(\mathbb J,j)}.
 \end{align}

\end{cor}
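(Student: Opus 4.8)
The plan is to read the bound straight off the recursive definition \eqref{defaq} together with the estimate already established in Corollary \ref{arr1}. First I would specialize the recursion
\[
a_{kl}^{(\mathbb J,j,\nu+1)}=\sum_{r=k\vee l+1}^pa_{rl}^{(\mathbb J,j,\nu)}\overline a_{rk}^{(\mathbb J,j,\nu)}
\]
to the diagonal $k=l$, which gives
\[
a_{ll}^{(\mathbb J,j,\nu+1)}=\sum_{r=l+1}^p\bigl|a_{rl}^{(\mathbb J,j,\nu)}\bigr|^2\ges 0 .
\]
In particular $a_{ll}^{(\mathbb J,j,\nu+1)}$ is a nonnegative real number, so the asserted inequality makes sense without an absolute value, and it is bounded above by the full sum $\sum_{r=1}^p\bigl|a_{rl}^{(\mathbb J,j,\nu)}\bigr|^2$.

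Second, I would record that the arrays $a^{(\mathbb J,j,\nu)}$ are Hermitian. Indeed $a_{kl}^{(\mathbb J,j,0)}=\tfrac1{\sqrt p}R^{(\mathbb J,j)}_{k+n,l+n}$ and $R^{(\mathbb J,j)}_{k+n,l+n}=\overline{R^{(\mathbb J,j)}_{l+n,k+n}}$ since $\mathbf R^{(\mathbb J,j)}$ is the resolvent of a Hermitian matrix; moreover the recursion \eqref{defaq} propagates the identity $a_{kl}^{(\mathbb J,j,\nu)}=\overline{a_{lk}^{(\mathbb J,j,\nu)}}$ by a one-line induction in $\nu$. Consequently $\sum_{r=1}^p\bigl|a_{rl}^{(\mathbb J,j,\nu)}\bigr|^2=\sum_{r=1}^p\bigl|a_{lr}^{(\mathbb J,j,\nu)}\bigr|^2$, and the right-hand side is exactly the quantity bounded in the second inequality of Corollary \ref{arr1} (with the free index $k$ there renamed to $l$). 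Inserting that bound yields
\[
a_{ll}^{(\mathbb J,j,\nu+1)}\le\Bigl(\im\bigl(m_n^{(\mathbb J,j)}(z)-\tfrac{p-n+|\mathbb J|}{nz}\bigr)v^{-1}\Bigr)^{2^{\nu}-1}n^{-1}v^{-1}\,\im R_{l+n,l+n}^{(\mathbb J,j)},
\]
which is the claim after writing $n^{-1}v^{-1}=(nv)^{-1}$.

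I do not expect any genuine obstacle here: the two ingredients — nonnegativity of the diagonal term and the Hermitian-symmetry bookkeeping that lets one invoke Corollary \ref{arr1} with $k=l$ — are both immediate. If one prefers not to mention Hermitian symmetry at all, one can instead chain inequality \eqref{ar1} of Lemma \ref{arr0} with the first bound of Corollary \ref{arr1} and the base estimate $\sum_{l=1}^p|a_{kl}^{(\mathbb J,j,0)}|^2\le n^{-1}v^{-1}\im R_{k+n,k+n}^{(\mathbb J,j)}$ coming from \eqref{res1}; but the direct route above is the shortest, and that is the one I would write out.
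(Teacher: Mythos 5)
Your argument is correct and is essentially the paper's own proof, which simply reads the bound off the diagonal case $a_{ll}^{(\mathbb J,j,\nu+1)}=\sum_{r>l}|a_{rl}^{(\mathbb J,j,\nu)}|^2\le\sum_{r=1}^p|a_{rl}^{(\mathbb J,j,\nu)}|^2$ of \eqref{defaq} and then invokes Corollary \ref{arr1}. One small correction to your bookkeeping: for complex $z$ the resolvent of a Hermitian matrix satisfies $\mathbf R(z)^*=\mathbf R(\bar z)$, so the base-case identity is not $R^{(\mathbb J,j)}_{k+n,l+n}=\overline{R^{(\mathbb J,j)}_{l+n,k+n}}$ but rather $R^{(\mathbb J,j)}_{k+n,l+n}=R^{(\mathbb J,j)}_{l+n,k+n}$ (complex symmetry, since the $X_{jk}$ are real); this still gives $|a^{(\mathbb J,j,0)}_{kl}|=|a^{(\mathbb J,j,0)}_{lk}|$, and your own one-line computation shows the recursion \eqref{defaq} makes $a^{(\mathbb J,j,\nu)}$ Hermitian for all $\nu\ge1$ regardless of the base case, so the row/column-sum identification you need, and hence the proof, is unaffected.
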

\begin{proof}
 The result immediately follows from the definition of $a_{ll}^{(\mathbb J,j,\nu)}$ and Corollary \ref{arr1}.
\end{proof}
\begin{cor}\label{new1}
 Under the conditions of Theorem
  \ref{main} we have
 \begin{align}\notag
  \frac1{n^2}\sum_{l,k=1}^p|a_{lk}^{(\mathbb J,\nu+1)}|^q\le (nv)^{-q}\left(v^{-1}\im \left(m_n^{(\mathbb J,j)}(z)-\frac{p-n+|\mathbb J|}{nz}\right)\right)^{(2^{\nu}-1)q}
 (\frac1n\sum_{k=1}^p|R_{k+n,k+n}^{(\mathbb J,j)}|^{\frac q2})^2
 \end{align}
and
\begin{align}\notag
  \frac1{n}\sum_{l=1}^p(\sum_{k=1}^p|a_{lk}^{(\mathbb J,\nu+1)}|^2)^{\frac q2}\le (nv)^{-\frac q2}\left(v^{-1}\im \left(m_n^{(\mathbb J,j)}(z)-\frac{p-n+|\mathbb J|}{nz}\right)\right)^{(2^{\nu}-1)\frac q2}
 \frac1n\sum_{k=1}^p|R_{k+n,k+n}^{(\mathbb J,j)}|^{\frac q2}
 \end{align}
\end{cor}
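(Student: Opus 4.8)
The statement to prove is Corollary \ref{new1}, which gives $L^{q/2}$-norm bounds on sums of the recursively defined quantities $a_{lk}^{(\mathbb J,\nu+1)}$ in terms of the resolvent diagonal entries $R_{k+n,k+n}^{(\mathbb J,j)}$ and the imaginary part of $m_n^{(\mathbb J,j)}(z)-\frac{p-n+|\mathbb J|}{nz}$. This is a purely deterministic consequence of the preceding lemmas (Lemma \ref{arr0}, Corollary \ref{arr1}, Corollary \ref{arr2}); no probability is involved at this stage.

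\medskip

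The plan is to start from the two-factor bound in Corollary \ref{arr1}. For the first inequality, I would write
$|a_{lk}^{(\mathbb J,\nu+1)}|^q \le |a_{lk}^{(\mathbb J,\nu+1)}|^2 \cdot |a_{lk}^{(\mathbb J,\nu+1)}|^{q-2}$
and bound $|a_{lk}^{(\mathbb J,\nu+1)}|^{q-2}$ crudely by $\big(\sum_{r}|a_{lr}^{(\mathbb J,j,\nu+1)}|^2\big)^{(q-2)/2}$; actually it is cleaner to avoid the pointwise power and instead use that for a finite nonnegative sequence $\sum_m c_m^{q/2} \le (\sum_m c_m)^{q/2}$ when $q\ge 2$. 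So I would apply this with $c_m = |a_{lk}^{(\mathbb J,j,\nu+1)}|^2$ (in the appropriate index), getting
$\frac1{n^2}\sum_{l,k}|a_{lk}^{(\mathbb J,\nu+1)}|^q \le \frac1{n^2}\Big(\sum_{k}|a_{lk}^{(\mathbb J,j,\nu+1)}|^2\Big)^{q/2}$-type expressions, and then insert Corollary \ref{arr1}: the inner sum over $l$ of $|a_{kl}^{(\mathbb J,j,\nu+1)}|^2$ is at most $\big(v^{-1}\im(m_n^{(\mathbb J,j)}-\tfrac{p-n+|\mathbb J|}{nz})\big)^{2^{\nu+1}-2}\cdot n^{-1}v^{-1}\im R_{k+n,k+n}^{(\mathbb J,j)}$, where I used the exponent $2^{\nu+1}-1 = 2(2^\nu-1)+1$ and peeled off one power of the imaginary part together with the $R_{k+n,k+n}$ factor. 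Raising to the power $q/2$, summing over $k$, and collecting the $n^{-1}$ factors produces exactly $(nv)^{-q}\big(v^{-1}\im(\cdots)\big)^{(2^\nu-1)q}\big(\frac1n\sum_k|R_{k+n,k+n}^{(\mathbb J,j)}|^{q/2}\big)^2$, after also replacing $\im R_{k+n,k+n}$ by $|R_{k+n,k+n}|$.

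\medskip

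For the second inequality the argument is the same but one step shorter: I would apply the scalar inequality $\sum_m c_m^{q/2}\le(\sum_m c_m)^{q/2}$ directly to $c_k=|a_{lk}^{(\mathbb J,j,\nu+1)}|^2$ to get $(\sum_k|a_{lk}^{(\mathbb J,j,\nu+1)}|^2)^{q/2}$ already on the left, then invoke Corollary \ref{arr1} (the first displayed inequality there, bounding $\sum_{k}|a_{lk}^{(\mathbb J,j,\nu+1)}|^2$ by $(v^{-1}\im(\cdots))^{2^{\nu+1}-1}$) — or, more sharply and to get the $R_{l+n,l+n}$ factor, the second displayed inequality of Corollary \ref{arr1} applied at level $\nu+1$ using $2^{\nu+1}-1 = 2(2^\nu-1)+1$ once more. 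Then raise to power $q/2$, sum over $l$, and collect the $(nv)^{-1}$ factor to the power $q/2$.

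\medskip

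The only mild subtlety — and the step I would be most careful with — is the bookkeeping of the exponents $2^\nu$ versus $2^{\nu+1}$ and making sure that exactly one power of $v^{-1}\im(m_n^{(\mathbb J,j)}-\tfrac{p-n+|\mathbb J|}{nz})$ is absorbed into the $n^{-1}v^{-1}\im R_{k+n,k+n}$ term so that the residual exponent is $(2^\nu-1)q$ (resp. $(2^\nu-1)\tfrac q2$) rather than off by one; this is precisely what Corollary \ref{arr1}'s second inequality is set up to deliver, so it is a matter of applying it at the right level. Everything else is the elementary inequality $\sum c_m^{\theta}\le(\sum c_m)^{\theta}$ for $\theta\ge1$ and nonnegative $c_m$, together with $\im w \le |w|$. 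No obstacle of substance is expected.
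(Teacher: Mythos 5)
Your route is genuinely different from the paper's, and as set up it does not yield the stated bounds. The paper proves the first inequality by going back to the definition $a^{(\mathbb J,j,\nu+1)}_{lk}=\sum_{r}a^{(\mathbb J,j,\nu)}_{rl}\overline a^{(\mathbb J,j,\nu)}_{rk}$ and applying Cauchy--Schwarz pointwise,
$|a^{(\mathbb J,j,\nu+1)}_{lk}|^q\le\bigl(\sum_r|a^{(\mathbb J,j,\nu)}_{lr}|^2\bigr)^{\frac q2}\bigl(\sum_r|a^{(\mathbb J,j,\nu)}_{rk}|^2\bigr)^{\frac q2}$,
then using the second inequality of Corollary \ref{arr1} at level $\nu$ on \emph{each} factor; summing in $l,k$ the double sum factorizes, and this is exactly where the squared average $\bigl(\frac1n\sum_k|R^{(\mathbb J,j)}_{k+n,k+n}|^{\frac q2}\bigr)^2$, the prefactor $(nv)^{-q}$ and the exponent $(2^{\nu}-1)q$ come from (two level-$\nu$ factors, each contributing $(2^{\nu}-1)\frac q2$ together with one $(nv)^{-1}\im R$ raised to $\frac q2$). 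You instead collapse one index via $\sum_k|a^{(\nu+1)}_{lk}|^q\le\bigl(\sum_k|a^{(\nu+1)}_{lk}|^2\bigr)^{\frac q2}$ and then invoke Corollary \ref{arr1} at level $\nu+1$. That produces only one $R$-factor, the prefactor $n^{-1}(nv)^{-\frac q2}$, and the exponent $(2^{\nu+1}-1)\frac q2$ on $v^{-1}\im\bigl(m_n^{(\mathbb J,j)}(z)-\frac{p-n+|\mathbb J|}{nz}\bigr)$; this neither coincides with nor implies the stated right-hand side, and for $q>2$, $z\in\mathbb G$ it is weaker by factors that grow with $n$. The step you use to repair the exponent --- ``peeling off one power of the imaginary part together with the $R$-factor'' so as to claim $\sum_l|a^{(\nu+1)}_{kl}|^2\le\bigl(v^{-1}\im(\cdot)\bigr)^{2^{\nu+1}-2}(nv)^{-1}\im R_{k+n,k+n}$ --- is not what Corollary \ref{arr1} provides and cannot be extracted from the recursion: the inequality $\sum_l|a^{(\nu+1)}_{kl}|^2\le\bigl(\sum_r|a^{(\nu)}_{rk}|^2\bigr)\bigl(\sum_{l,r}|a^{(\nu)}_{lr}|^2\bigr)$ forces the full exponent $2^{\nu+1}-1$ alongside a single $(nv)^{-1}\im R$ term.

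The same issue persists, undetected, in your treatment of the second inequality: the honest application of Corollary \ref{arr1} at level $\nu+1$, raised to the power $\frac q2$ and averaged in $l$, gives the exponent $(2^{\nu+1}-1)\frac q2$, not $(2^{\nu}-1)\frac q2$, and your assertion that the off-by-one absorption is ``precisely what Corollary \ref{arr1} is set up to deliver'' is unsupported --- you would have to account for $2^{\nu}\cdot\frac q2$ extra powers of $v^{-1}\im(\cdot)$. (The stated exponent $(2^{\nu}-1)\frac q2$ is what one gets by raising Corollary \ref{arr1} at level $\nu$, i.e.\ with $a^{(\mathbb J,j,\nu)}$ on the left; taken at face value with $a^{(\nu+1)}$, the display requires either that reading or the larger exponent, a discrepancy your proposal neither notices nor resolves.) The missing idea is the paper's: keep the inner-product structure of $a^{(\nu+1)}$ in terms of two level-$\nu$ columns and estimate the two factors separately, rather than collapsing the sum and working at level $\nu+1$.
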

\begin{proof}
 By definition of $a_{lk}^{(\mathbb J,\nu+1)}$ and Cauchy's inequality, we have
 \begin{equation}
  |a_{lk}^{(\mathbb J,\nu+1)}|^q\le (\sum_{r=1}^p|a_{lr}^{(\mathbb J,\nu)}|^2)^{\frac q2}(\sum_{r=1}^p|a_{rk}^{(\mathbb J,\nu)}|^2)^{\frac q2}.
 \end{equation}
Using Corollary \ref{arr1} and summing in $l,k$, we get
\begin{align}
 \frac1{n^2}\sum_{l,k=1}^p|a_{lk}^{(\mathbb J,\nu+1)}|^q\le n^{-q}v^{-(2^{\nu}-1)q}\im^{(2^{\nu}-1)q}\left( m_n^{(\mathbb J)}(z)-
\frac{p-n+|\mathbb J|}{nz}\right)(\frac1n\sum_{k=1}^p|R_{k+n,k+n}^{(\mathbb J,j)}|^{\frac q2})^2.
\end{align}

\end{proof}

In what follows we shall use the notations
\begin{align}\label{not1}
 \Psi^{(\mathbb J)}&:=\im \left(m_n^{(\mathbb J)}(z)-\frac{p-n+|\mathbb J|}{nz}\right)+\frac1{nv},
 \quad (A_{\nu,q}^{(\mathbb J)})^2=\E(\Psi^{(\mathbb J)})^{(2^{\nu}-1)2 q}, \notag\\
 T_{\nu,q}^{(\mathbb J,j)}&:=\E|Q^{(\mathbb J,j)}_{\nu}|^q,\quad 
A_q^{(\mathbb J)} :=1+\E^{\frac14}|\Psi^{(\mathbb J)}|^{4q}.
\end{align}
Note that
\begin{equation}
 \im \left(m_n^{(\mathbb J,j)}(z)-\frac{p-n+|\mathbb J|}{nz}\right)\le\Psi^{(\mathbb J)}
\end{equation}

Let $s_0$ denote some fixed number (for instance $s_0=2^4$). Let $A_1$ be a constant (to be chosen later) and $0<v_1\le 4$
a constant such that $v_0=A_0n^{-1}\le v_1$ for all $n\ge 1$.
\begin{lem}\label{bp1} Assuming the conditions of Theorem \ref{main} and for $q\le A_1(nv)^{\frac14}$
\begin{equation}\label{cond03}
\E|R_{l+n,l+n}^{(\mathbb J)}|^q\le C_0^q,\text{ for }v\ge v_1,\text{ for all }l=1,\ldots,p,
\end{equation} 
we have for $v\ge v_1/s_0$  and  $q\le A_1(nv)^{\frac14}$, and $j\in\mathbb T_{\mathbb J}$

\begin{align}\label{q1}
 \E (Q^{(\mathbb J,j)}_0)^q\le6(\frac {C_3q}{\sqrt2})^{2q}v^{-q}A_q^{(\mathbb J)},
\end{align}
where $C_3$ is a  constant
depending on $C_0$. 
\end{lem}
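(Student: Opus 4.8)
\textbf{Strategy.} Fix $\mathbb J$ and $j\in\mathbb T_{\mathbb J}$ and work conditionally on $\mathfrak M^{(\mathbb J,j)}$, writing $\E_j(\cdot)=\E(\cdot\mid\mathfrak M^{(\mathbb J,j)})$ and abbreviating $Q_\nu:=Q^{(\mathbb J,j)}_\nu$, $a^{(\nu)}_{kl}:=a^{(\mathbb J,j,\nu)}_{kl}$ and $\Psi:=\Psi^{(\mathbb J)}$; throughout we use the pathwise inequality $\im\big(m_n^{(\mathbb J,j)}(z)-\tfrac{p-n+|\mathbb J|}{nz}\big)\le\Psi$ noted after \eqref{not1}, so that the coefficient estimates of Corollaries \ref{arr1}, \ref{arr2} and \ref{new1} are all expressed through $\Psi$. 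The plan is to use the splitting \eqref{r1} to derive a recursion in $\nu$ for $T_{\nu,r}:=\E_j Q_\nu^r$ along the dyadic exponents $r=q/2^\nu$, to iterate it from $\nu=0$ down to a truncation level $L=\lceil\log_2(q\log p)\rceil=O(\log\log n)$, and only at the end to take full expectations and apply Lyapunov's inequality $\E\Psi^q\le\E^{\frac14}\Psi^{4q}\le A_q^{(\mathbb J)}$.

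\textbf{One recursion step.} Fix $\nu<L$ and $r\ge1$. By \eqref{r1}, $T_{\nu,r}\le 3^{r-1}\big(\E_j Q_{\nu1}^r+\E_j|Q_{\nu2}|^r+\E_j|Q_{\nu3}|^r\big)$. Now $Q_{\nu1}=\sum_l a^{(\nu+1)}_{ll}=\sum_{k<l}|a^{(\nu)}_{lk}|^2\le(\Psi v^{-1})^{2^\nu}$ by Corollary \ref{arr1}. Next, $Q_{\nu2}=\sum_l(X_{jl}^2-1)a^{(\nu+1)}_{ll}$ is conditionally a sum of independent centred summands, so Rosenthal's inequality, $\E X_{jl}^2=1$, the bound $\E|X_{jl}^2-1|^2\le C\mu_4$ from \eqref{moment} and the truncation \eqref{trun} for the higher conditional moments give
\[
\E_j|Q_{\nu2}|^r\le (Cr)^{r}\Big[\big(\textstyle\sum_l|a^{(\nu+1)}_{ll}|^2\big)^{r/2}+(Dn^{\frac14})^{2r}\textstyle\sum_l|a^{(\nu+1)}_{ll}|^{r}\Big],
\]
whose inner sums are controlled by Corollaries \ref{arr2} and \ref{new1}; the admissible range $q\le A_1(nv)^{\frac14}$ is exactly what renders the second bracket no larger than the first. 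Finally, $Q_{\nu3}=\sum_{1\le k\ne l\le p}X_{jk}X_{jl}a^{(\nu+1)}_{kl}$; ordering the pairs by their larger index and using that $a^{(\nu+1)}$ is Hermitian, $Q_{\nu3}=\sum_{m=2}^p d_m$ with $d_m=2X_{jm}\sum_{k<m}X_{jk}a^{(\nu+1)}_{mk}$ a martingale-difference sequence for the filtration generated by $X_{j1},\dots,X_{jm}$, and since $\E X_{jm}^2=1$ one has $\sum_m\E_{m-1}d_m^2=4Q_{\nu+1}$ and $\max_m|d_m|\le 2Dn^{\frac14}Q_{\nu+1}^{1/2}$; Burkholder's inequality then gives $\E_j|Q_{\nu3}|^r\le (Cr)^{r/2}4^{r/2}T_{\nu+1,r/2}+(Cr)^{r}(Dn^{\frac14})^{r}T_{\nu+1,r/2}$, the second term again dominated by the first for $q\le A_1(nv)^{\frac14}$. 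Absorbing the $Q_{\nu2}$-bound into the $Q_{\nu1}$-term yields the recursion
\[
T_{\nu,r}\le 3^{r-1}\,\E_j\big[(\Psi v^{-1})^{2^\nu r}\big]+(C_3 r)^{r/2}T_{\nu+1,r/2},
\]
in which the factors $\E_j|R^{(\mathbb J,j)}_{l+n,l+n}|^{r'}$ arising from the Corollaries are controlled by hypothesis \eqref{cond03} and the elementary bound $|R^{(\mathbb J,j)}_{l+n,l+n}|\le v^{-1}$ valid for $v_1/s_0\le v<v_1$.

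\textbf{Iteration and conclusion.} Iterating from $(\nu,r)=(0,q)$ for $L$ steps and then taking the full expectation expresses $\E Q_0^q=\E T_{0,q}$ as a finite sum, over $\nu=0,\dots,L$, of terms of the form $C_\nu\,v^{-q}\,\E\Psi^{q}$, together with a residual from the bottom level which, by the crude estimate $Q_L\le\|X_{j\cdot}\|^2\max_l\sum_k|a^{(L)}_{lk}|^2$, Corollary \ref{arr1}, Jensen's inequality (the exponent $q/2^L\le1$), $\E_j\|X_{j\cdot}\|^2=p$ and $p^{q/2^L}\le e$, is of the same form. The point is that the exponent $2^\nu r$ of $\Psi v^{-1}$ at the $\nu$-th level equals $2^\nu\cdot(q/2^\nu)=q$, independently of $\nu$, while the attached constant $C_\nu=\prod_{i<\nu}(C_3 q/2^i)^{q/2^{i+1}}3^{q/2^i-1}\cdot 3^{q/2^\nu-1}$ is at most $(C_3q)^q3^{3q}3^{-\nu}$ and therefore summable in $\nu$. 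Summing the geometric series and absorbing absolute-constant powers into $C_3$ gives $\E Q_0^q\le 6(C_3q/\sqrt2)^{2q}v^{-q}\E\Psi^q$, and $\E\Psi^q\le A_q^{(\mathbb J)}$ by Lyapunov's inequality; this is \eqref{q1}.

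\textbf{Main obstacle.} The crux is the off-diagonal form $Q_{\nu3}$. Burkholder's inequality produces, besides the benign predictable-variation term proportional to $Q_{\nu+1}$, a maximal-jump contribution carrying the factor $n^{\frac14}$ inherited from the truncation \eqref{trun}; one must check, using the fourth-moment hypothesis \eqref{moment} and the exact window $q\le A_1(nv)^{\frac14}$, that this term never dominates at any level of the descent. Keeping track of the constants $C_3^{r}$, the Rosenthal/Burkholder factors $r^{r/2}$ and the dyadic weights $2^{-i}$ across the $O(\log\log n)$ levels, so that their product over $i$ stays $\le 3^{-\nu}(C_3q)^q3^{3q}$, is the second delicate point; it is this bookkeeping that produces the geometric factor $2^{-q}$ (equivalently the $\sqrt2$) and the absolute constant $6$ in \eqref{q1}.
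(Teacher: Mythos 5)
Your overall architecture is the same as the paper's (the splitting \eqref{r1}, Rosenthal for $Q_{\nu2}$, a martingale/Burkholder bound for $Q_{\nu3}$, and a dyadic descent in $\nu$ with exponents $q/2^\nu$), but two of your absorption claims are genuinely wrong, and they are exactly where the work in this lemma lies. First, the assertion that the "second" Rosenthal/Burkholder terms are dominated by the main terms because $q\le A_1(nv)^{\frac14}$ does not hold. Take your own treatment of $Q_{\nu3}$: with $d_m=2X_{jm}\sum_{k<m}X_{jk}a^{(\nu+1)}_{mk}$ you bound the jump term by $(Cr)^r(Dn^{\frac14})^r T_{\nu+1,r/2}$ against a variance term $(Cr)^{r/2}4^{r/2}T_{\nu+1,r/2}$; the ratio is of order $(Cr)^{r/2}n^{r/4}\gg1$, so the jump term is the \emph{larger} one, and carrying it through the descent would multiply the final bound by roughly $\prod_\nu n^{(q/2^\nu)/4}\asymp n^{q/2}$, which destroys \eqref{q1} (e.g.\ already at $v\asymp1$). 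The same issue, in milder form, affects your $Q_{\nu2}$ claim: $(Dn^{\frac14})^{2r}\sum_l|a^{(\nu+1)}_{ll}|^r$ is not majorized by $\big(\sum_l|a^{(\nu+1)}_{ll}|^2\big)^{r/2}$. These terms cannot be "dominated away"; they must be estimated separately. The paper does this by applying Rosenthal once more to the inner linear forms (as in \eqref{r4}--\eqref{r5}, \eqref{r01}), then using Corollary \ref{new1} to extract explicit factors $n^{-q/4}$, $n^{-q/2}$ together with moments $\frac1n\sum_l|R^{(\mathbb J,j)}_{l+n,l+n}|^{q/2}$, and finally showing that the resulting series over the levels converges because $q n^{\frac14}\ge$ const (this is where the choice $C_3\ge 2s_0C_0$ enters), each such term being bounded by $(C_3q)^{2q}v^{-q}A_q^{(\mathbb J)}$ in its own right rather than by the variance term.

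Second, your control of the diagonal resolvent moments on the window $v_1/s_0\le v<v_1$ by the trivial bound $|R^{(\mathbb J,j)}_{l+n,l+n}|\le v^{-1}$ is a gap: the lemma is used iteratively with $v_1$ descending to order $n^{-1}$, so $v^{-1}$ can be of order $n$ and $v^{-q}$ is hopelessly larger than the required $C_0^q$-type bound. The missing ingredient is precisely Lemma \ref{schlein} (the Cacciapuoti--Maltsev--Schlein monotonicity $|R_{ll}(u+iv/s)|\le s|R_{ll}(u+iv)|$), which transfers hypothesis \eqref{cond03} from $v\ge v_1$ down to $v\ge v_1/s_0$ at the cost of a factor $s_0$ only; this is how the paper gets $\Gamma_q\le s_0^{2q}C_0^{2q}$ and hence closes the recursion on the lower window. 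Without these two repairs your recursion step is not established, even though the skeleton (pathwise bound $Q_{\nu1}\le(\Psi^{(\mathbb J)}v^{-1})^{2^\nu}$ via Corollary \ref{arr1}, predictable variation $4Q_{\nu+1}$, truncation of the descent and Lyapunov's inequality $\E(\Psi^{(\mathbb J)})^q\le A_q^{(\mathbb J)}$) is sound and matches the paper.
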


\begin{proof}

Using the representation \eqref{r1} and the triangle inequality, we get
\begin{equation}\label{po0}
\E| Q^{(\mathbb J,j)}_{\nu}|^q\le 3^q\Big(\E|Q^{(\mathbb J,j)}_{\nu1}|^q+\E|Q^{(\mathbb J,j)}_{\nu2}|^q+\E|Q^{(\mathbb J,j)}_{\nu3}|^q\Big).
\end{equation}
Let $\mathfrak M^{(\mathbb A)}$ denote the $\sigma$-algebra generated by r.v.'s 
$X_{j,l}$ for $j\in\mathbb T_{\mathbb A}$, $l=1,\ldots,p$, for any set $\mathbb A$.
Conditioning on $\mathfrak M^{(\mathbb J,j)}$ ($\mathbb A=\mathbb J\cup \{j\}$) and applying Rosenthal's inequality (see Lemma \ref{Rosent}), we get
\begin{align}
 \E|Q^{(\mathbb J,j)}_{\nu2}|^q\le C_1^qq^q\bigg(\E\Big(\sum_{l=1}^p|a_{ll}^{(\mathbb J,j,\nu+1)}|^2\Big)^{\frac q2}
 +\sum_{l=1}^p\E|a_{ll}^{(\mathbb J,j,\nu+1)}|^{q}\E|X_{jl}|^{2q}\bigg),\notag
\end{align}
where $C_1$ denotes the  absolute constant in Rosenthal's inequality.
By Remark \ref{trunc00}, we get
\begin{align}\label{r01}
 \E|Q^{(\mathbb J,j)}_{\nu2}|^q&\le C_1^qq^q\Big(\E(\sum_{l=1}^p|a_{ll}^{(\mathbb J,j,\nu+1)}|^2)^{\frac q2}+p^{\frac q2}\frac1{p}
 \sum_{l=1}^p\E|a_{ll}^{(\mathbb J,j,\nu+1)}|^{q}\Big).
\end{align}
Analogously conditioning on $\mathfrak M^{(\mathbb J,j)}$ and applying Burkholder's inequality (see Lemma \ref{burkh1}), we get
\begin{align}\label{r3}
\E| Q^{(\mathbb J,j)}_{\nu3}|^q&\le C_2^qq^q\bigg(\E\Big(\sum_{l=2}^p|\sum_{k=1}^{l-1}X_{jk}
a^{(\mathbb J,j,\nu+1)}_{kl}|^2\Big)^{\frac q2} 
\notag\\&\qquad\qquad\qquad\qquad+\sum_{l=1}^{p-1}\E\big|\sum_{k=1}^{l-1}X_{jk}a^{(\mathbb J,j,\nu+1)}_{lk}\big|^q\E|X_{jl}|^q\bigg),
\end{align}
where $C_2$ denotes the absolute constant in Burkholder's inequality.
Conditioning again on $\mathfrak M^{(\mathbb J,j)}$  and applying Rosenthal's inequality, we obtain
\begin{align}\label{r4}
 \E|\sum_{l=1}^pX_{jl}a^{(\mathbb J,j,\nu+1)}_{kl}|^q&\le C_1^qq^q\Big(\E(\sum_{l=1}^{p}|a^{(\mathbb J,j,\nu+1)}_{lk}|^2)^{\frac q2}
 \notag\\&\qquad\qquad+\sum_{l=1}^p\E|a^{(\mathbb J,j,\nu+1)}_{lk}|^q\E|X_{jl}|^q\Big).
\end{align}
Combining inequalities \eqref{r3} and \eqref{r4}, we get
\begin{align}
 \E| Q^{(\mathbb J,j)}_{\nu3}|^q&\le C_2^qq^q\E|Q^{(\mathbb J,j)}_{\nu+1}|^{\frac q2}+C_1^qC_2^qq^{2q}\sum_{l=1}^p
 \E(\sum_{k=1}^p|a^{(\mathbb J,j,\nu+1)}_{kl}|^2)^{\frac q2}\E|X_{jk}|^q\notag\\&\qquad\qquad\qquad+
C_1^qC_2^q q^{2q}\sum_{l=1}^p\sum_{k=1}^p\E|a^{(\mathbb J,j,\nu+1)}_{kl}|^q\E|X_{jk}|^q\E|X_{jl}|^q.\notag
\end{align}
Using Remark \ref{trunc00}, this implies
\begin{align}\label{r5}
 \E| Q^{(\mathbb J,j)}_{\nu3}|^q&\le C_2^qq^q\E|Q^{(\mathbb J,j)}_{\nu+1}|^{\frac q2}+C_1^qC_2^qq^{2q}n^{\frac q4}
 \frac1p\sum_{l=1}^p\E(\sum_{k=1}^p|a^{(\mathbb J,j,\nu+1)}_{lk}|^2)^{\frac q2}\notag\\&\qquad\qquad\qquad+
 C_1^qC_2^qq^{2q}n^{\frac q2}\frac1{n^2}\sum_{l=1}^p\sum_{k=1}^p\E|a^{(\mathbb J,j,\nu+1)}_{lk}|^q.
\end{align}
Using the definition \eqref{qf} of $Q^{(\mathbb J,j)}_{\nu1}$ and the definition \eqref{defaq} of coefficients $ a^{(\mathbb J,j,\nu+1)}_{ll}$, 
it is straightforward to check that
\begin{equation}\label{r6}
 \E|Q^{(\mathbb J,l)}_{\nu1}|^q\le \E\Big[\sum_{k,l=1}^p|a_{kl}^{(\mathbb J,j,\nu)}|^2\Big]^q.
\end{equation}
Combining \eqref{r01}, \eqref{r5} and \eqref{r6}, we get by \eqref{po0}
\begin{align}
 \E|Q^{(\mathbb J,j)}_{\nu}|^q&\le C_2^qq^q\E|Q^{(\mathbb J,q)}_{\nu+1}|^{\frac q2}
 +C_1^qC_2^qq^{2q}n^{\frac q4}\frac1n\sum_{l=1}^p\E\Big(\sum_{k=1}^p|a^{(\mathbb J,j,\nu+1)}_{kl}|^2\Big)^{\frac q2}\notag\\&+
 C_1^qC_2^qq^{2q}n^{\frac q2}\frac1{n^2}\sum_{l,k=1}^p\E|a^{(\mathbb J,j,\nu+1)}_{kl}|^q\notag\\&
 +C_1^qC_2^q\E[\sum_{q,r\in\mathbb T_{\mathbb J,k}}|a_{qr}^{(\mathbb J,j,\nu)}|^2]^q\notag\\&+
 C_1^pC_2^qq^q\Big(\E\Big(\sum_{l=1}^p|a_{ll}^{(\mathbb J,j,\nu+1)}|^2\Big)^{\frac q2}
 +n^{\frac q2}\frac1{n}\sum_{l=1}^p\E|a_{ll}^{(\mathbb J,j,\nu+1)}|^{q}\Big).\notag
\end{align}

Applying now Lemma \ref{arr0} and Corollary \ref{new1}, we obtain 
\begin{align}
 \E|Q^{(\mathbb J,j)}_{\nu}|^q&\le C_3^qq^q\E|Q^{(\mathbb J,k)}_{\nu+1}|^{\frac q2}+
C_3^q\E(\Psi^{(\mathbb J)})^{(2^{\nu}-1)q}\;v^{-(2^{\nu}-1) q}\notag\\&+C_3^qq^{2q}\;v^{-2^{\nu-1}q}n^{-\frac q4}\E(\Psi^{(\mathbb J)})^{(2^{\nu}-1)\frac q2} 
 \Big(\frac1n\sum_{l=1}^p|R_{l+n,l+n}^{(\mathbb J,j)}|^{\frac q2}\Big)\notag\\
&+C_3^qq^{2q}n^{-\frac q2}\;v^{-2^{\nu}q}\E(\Psi^{(\mathbb J)})^{(2^{\nu}-1)q} 
 \Big(\frac1n\sum_{l=1}^p|R_{ll}^{(\mathbb J,j)}|^{\frac q2}\Big)^2,
\end{align}
where $C_3=3C_1C_2$.
Applying Cauchy--Schwartz and Jensen inequalities, we may rewrite the last inequality in the form
\begin{align}\label{last1}
 \E|Q&^{(\mathbb J,j)}_{\nu}|^q\le C_3^qq^q\E|Q^{(\mathbb J,j)}_{\nu+1}|^{\frac q2}+
 C_3^q\E(\Psi^{\mathbb J)})^{(2^{\nu}-1) q}\,v^{-(2^{\nu}-1) q}\notag\\
 &+C_3^qq^{2q}\;v^{-2^{\nu-1}q}n^{-\frac q4}\E^{\frac12}(\Psi^{(\mathbb J)})^{(2^{\nu}-1)q}\E^{\frac12}(\frac1n\sum_{l=1}^p|R_{l+n,l+n}^{(\mathbb J,j)}|^{q})\notag\\
 &+C_3^qq^{2q}n^{-\frac q2}\;v^{-2^{\nu}q}\E^{\frac12}(\Psi^{\mathbb J)})^{(2^{\nu}-1)2q}
 \E^{\frac12}(\frac1n\sum_{l=1}^p|R_{l+n,l+n}^{(\mathbb J,j)}|^{2q}).
\end{align}

 Introduce the notation
 \begin{align}
 \Gamma_q(z)&:=\E^{\frac12}\left(\frac1n\sum_{l=1}^p|R_{l+n,l+n}^{(\mathbb J,k)}|^{2q}\right).\notag
 \end{align}

We rewrite the inequality \eqref{last1} using $\Gamma_q(z)$ and the  notations of \eqref{not1} as follows
\begin{align}\label{rewrite}
 T_{\nu,q}^{(\mathbb J,j)}&\le (C_3q)^qT_{\nu+1,q/2}^{(\mathbb J,j)}+C_3^qA^{(\mathbb J)}_{\nu,q}v^{-(2^{\nu}-1)q}\notag\\&+
 (C_3q^2)^{q}\Big(v^{-2^{\nu}q}n^{-\frac q4}(A^{(\mathbb J)}_{\nu,\frac q2})^{\frac12}\Gamma_q^{\frac12}(z)+v^{-2^{\nu}q}n^{-\frac q2}A^{(\mathbb J)}_{\nu,q}
 \Gamma_q(z)\Big).
\end{align}

Note that
 \begin{equation}\notag
 A_{0,q}^{(\mathbb J)}=1,\quad
  A_{\nu,q/2^{\nu}}^{(\mathbb J)}\le \sqrt{1+\E(\Psi^{(\mathbb J)})^{2q}}\le 1+\E^{\frac14}(\Psi^{(\mathbb J)})^{4q},
 \end{equation}
 where $\Psi^{(\mathbb J)}=\im (m_n^{(\mathbb J)}(z)-\frac{p-n+|\mathbb J|}{nz})+\frac{1}{nv}$.
Furthermore,
\begin{equation}\notag
 \Gamma_{2q/2^{\nu}}\le \Gamma_{2q}^{\frac1{2^{\nu}}}.
\end{equation}

Without loss of generality we may assume $q=2^L$ and $\nu=0,\ldots,L$.
We may write
\begin{align}
 T_{0,q}^{(\mathbb J,j)}&\le (C_3q)^qT_{1,q/2}^{(\mathbb J,k)}+C_3^q+
 (C_3q^2)^{q}v^{- q}\Big(n^{-\frac q4}\Gamma_q^{\frac12}(z)+
 n^{-\frac q2}\Gamma_q(z)\Big).\notag
\end{align}
By induction we get
\begin{align}\label{t01}
 T_{0,q}^{(\mathbb J,j)}&\le \prod_{\nu=0}^L(C_3q/2^{\nu})^{q/2^{\nu}}T_{L,1}^{(\mathbb J,j)}+A_{q}^{(\mathbb J)}\sum_{l=1}^L
 \prod_{\nu=0}^{l-1}(C_3q/2^{\nu})^{q/2^{\nu}}v^{-(2^{l}-1)q/2^{l}}\notag\\&\qquad\qquad+
 A_{q}^{(\mathbb J)}v^{-q}\sum_{l=1}^L\Big(\prod_{\nu=0}^{l-1}(C_3q/2^{\nu})^{q/2^{\nu}}\Big)(n^{-q}\Gamma_q^2)^{\frac1{2^{l+1}}}
 \notag\\&\qquad\qquad + A_{q}^{(\mathbb J)}\sum_{l=1}^L\Big(\prod_{\nu=0}^{l-1}(C_3q/2^{\nu})^{q/2^{\nu}}\Big)(n^{-q}{\Gamma_q^2})^{\frac1{2^{l}}}.
\end{align}
It is straightforward to check that
\begin{align}\notag
 \sum_{\nu=1}^{l-1}\frac{\nu}{2^{\nu}}=2(1-\frac{2l+1}{2^{l}}).
\end{align}

Note that, for $l\ge 1$,
\begin{equation}\label{rel00}
\prod_{\nu=0}^{l-1}\big(C_3(q/2^{\nu})\big)^{q/2^{\nu}}=
\frac{(C_3q)^{2q(1-2^{-l})}}{2^{2q(1-\frac{2l+1}{2^{l}})}}=2^{4q\frac l{2^{l}}}
\Big(\frac{C_3q}{2}\Big)^{2q(1-2^{-l})}.
\end{equation}
Applying this relation, we get
\begin{align}
A_{q}^{(\mathbb J)}\sum_{l=0}^L\Big(\prod_{\nu=0}^{l-1}(C_3q/2^{\nu})^{q/2^{\nu}}\Big)v^{-(2^{l}-1)q/2^{l}}&\le A_q^{(\mathbb J)}(\frac{C_3q}2)^{2q}v^{-q}
\sum_{l=0}^{L-1}2^{\frac{4ql}{2^l}}\Big(\frac{4v}{C_3^2q^2}\Big)^{\frac q{2^l}}.\notag
\end{align}
Note that for $l\ge 0$, $\frac l{2^l}\le \frac12$ and recall that $q=2^L$. Using this observation, we get
\begin{align}
A_{q}^{(\mathbb J)}\sum_{l=0}^L\Big(\prod_{\nu=0}^{l-1}(C_3q/2^{\nu})^{q/2^{\nu}}\Big)v^{-(2^{l}-1)q/2^{l}}&\le A_q^{(\mathbb J)}(\frac{C_3q}2)^{2q}v^{-q}2^{2q}
\sum_{l=0}^{L-1}\Big(\frac{4v}{C_3^2q^2}\Big)^{2^{L-l}}.\notag
\end{align}
This implies that for 
$\frac {4v}{C_3^2q^2}\le \frac12$,
\begin{align}
A_{q}^{(\mathbb J)}\sum_{l=1}^L\Big(\prod_{\nu=0}^{l-1}(C_3q/2^{\nu})^{q/2^{\nu}}\Big)v^{-(2^{l}-1)q/2^{l}}\le
(C_3q)^{2q}A_q^{(\mathbb J)}v^{-q}.\notag
\end{align}
Furthermore, by definition of $T_{\nu,q}$, we have
\begin{equation}\notag
 T_{L,1}^{(\mathbb J,j)}=\E Q^{(\mathbb J,j)}_L\le \E\sum_{l,k=1}^p(a_{kl}^{(\mathbb J,j,L)})^2.
\end{equation}
Applying Corollary \ref{arr1} and H\"older's inequality, we get
\begin{align}\label{t02-}
 T_{L,1}^{(\mathbb J,j)}\le E(v^{-1}\Psi^{(\mathbb J)})^q\le v^{-q}A_q^{(\mathbb J)}.
\end{align}
By condition \eqref{cond03}, we have
\begin{equation}\notag
\Gamma_q:=\Gamma_q(u+iv)\le s_0^{2q}C_0^{2q}.
\end{equation}
Using this inequality, we get,
\begin{align}\label{t02+}
 A_{q}^{(\mathbb J)}v^{-q}\sum_{l=1}^L\Big(\prod_{\nu=0}^{l-1}&(C_3q/2^{\nu})^{q/2^{\nu}}\Big)n^{-\frac q{2^{l+1}}}\Gamma_q^{\frac1{2^{l}}}
 \notag\\
 &\le  A_{q}^{(\mathbb J)}v^{-q}\sum_{l=0}^L\Big(\prod_{\nu=0}^{l-1}(C_3q/2^{\nu})^{q/2^{\nu}}\Big)
 (s_0^4C_0^4n^{-1})^{\frac q{2^{l+1}}}.
\end{align}
Applying relation \eqref{rel00}, we obtain
\begin{align}
 A_{q}^{(\mathbb J)}v^{-q}\sum_{l=1}^L\Big(\prod_{\nu=0}^{l-1}&(C_3q/2^{\nu})^{q/2^{\nu}}\Big)n^{-\frac q{2^{l+2}}}\Gamma_q^{\frac1{2^{l+2}}}
 \notag\\
 &\le \Big(\frac{C_3q}{2}\Big)^{2q}A_{q}^{(\mathbb J)}v^{-q}\sum_{l=1}^L2^{2q\frac l{2^l}}
\Big(\frac{C_3q}{2}\Big)^{-\frac{2q}{2^{l}}}
 (s_0^2C_0^2n^{-1})^{\frac q{2^{l+2}}}\notag\\&=\Big(\frac{C_3q}{2}\Big)^{2q}A_{q}^{(\mathbb J)}v^{-q}\sum_{l=1}^L2^{q\frac l{2^{l-1}}}
\Big(\frac{C_3q}{2}\Big)^{-\frac{2q}{2^{l-1}}}
 ((s_0^4C_0^4n^{-1})^{\frac14})^{\frac q{2^{l-1}}}\notag\\&=\Big(\frac{C_3q}{\sqrt 2}\Big)^{2q}A_{q}^{(\mathbb J)}v^{-q}\sum_{l=1}^L
\Big(
 \frac{s_0C_0}{C_3qn^{\frac14}}\Big)^{{2^{L-l+1}}}.\notag
\end{align}
Without loss of generality we may assume that $C_3\ge 2(C_0s_0)$. Then we get 
\begin{align}
 A_{q}^{(\mathbb J)}v^{-q}\sum_{l=0}^L\Big(\prod_{\nu=0}^{l-1}&(C_3q/2^{\nu})^{q/2^{\nu}}\Big)n^{-\frac q{2^{l+1}}}\Gamma_q^{\frac1{2^{l}}}\le 
 (C_3q)^{2q}A_{q}^{(\mathbb J)}v^{-q}.\notag
\end{align}

Analogously we get
\begin{align}\label{t02*}
A_{q}^{(\mathbb J)}v^{-q}\sum_{l=1}^L\Big(\prod_{\nu=0}^{l-1}(C_3p/2^{\nu})^{q/2^{\nu}}\Big)n^{-\frac q{2^{l}}}\Gamma_q^{\frac1{2^{l-1}}}\le 
({C_3q})^{2q}v^{-q}A_q^{(\mathbb J)}.
\end{align}
Combining  inequalities \eqref{t01}, \eqref{t02-}, \eqref{t02+}, \eqref{t02*}, we finally arrive at
\begin{equation}\label{t0}
T_{0,q}^{(\mathbb J,j)}\le 6(C_3q)^{2q}v^{-q}A_q^{(\mathbb J)}.
\end{equation}

 Thus, Lemma \ref{bp1} is proved.
\end{proof}
\end{Proof of}
\subsection{Diagonal Entries of the Resolvent Matrix}\label{diag}

Recall that
\begin{equation}\label{repr001}
 R_{jj}=-\frac1{z+ym_n(z)+\frac{y-1}z}+\frac1{z+ym_n(z)\frac{y-1}z}\varepsilon_jR_{jj},
\end{equation}
or
\be\label{repr01*}
 R_{jj}=-\frac 1{z+ys_y(z)+\frac{y-1}z}+\frac{y\Lambda_nR_{jj}}{(z+ys_y(z)+\frac{y-1}z)}+
\frac1{z+ys_y(z)+\frac{y-1}z}\varepsilon_jR_{jj}, \en
where
$\varepsilon_j:=\varepsilon_{j1}+\varepsilon_{j2}+\varepsilon_{j3}$  with
\begin{align}
\varepsilon_{j1}&:=-\frac1p{\sum_{k\ne l
\in\mathbb T_j}}X_{jk}X_{jl}R^{(j)}_{kl},\quad
\varepsilon_{j2}:=-\frac1p{\sum_{k\in\mathbb T_j}}(X_{jk}^2-1)R^{(j)}_{kk},\notag\\
\varepsilon_{j3}&:=\frac1p(\sum_{l=1}^p R^{(j)}_{l+n,l+n}- \sum_{l=1}^pR_{l+n,l+n}),\notag\\
\Lambda_n&:=m_n(z)-s_y(z)=\frac1{2n}\Tr\mathbf R-s_y(z)+\frac{1-y}{2yz}.\label{epsjn}
\end{align}
Using equation \eqref{stmarpas}, we may rewrite representation \eqref{repr01*} as follows
\begin{equation}\label{repr01**}
  R_{jj}=s_y(z)+y\Lambda_nR_{jj}s_y(z)+
s_y(z)\varepsilon_jR_{jj}.
\end{equation}
Similar we have, for $j=1,\ldots,p$
\begin{align}\label{res22}
 R_{j+n,j+n}=-\frac1{z+ym_n(z)}+\frac1{z+ym_n(z)}\widehat\varepsilon_jR_{jj},
\end{align}
where
\begin{align}
\widehat\varepsilon_{j1}&:=-\frac1p{\sum_{k=1}^n}(X_{kj}^2-1)R^{(j+n)}_{kk},\quad
\widehat\varepsilon_{j2}:=-\frac1p{\sum_{1\le k\ne l\le n
}}X_{kj}X_{lj}R^{(j+n)}_{kl},\notag\\
\widehat\varepsilon_{j3}&:=\frac1p(\sum_{l=1}^n R^{(j+n)}_{l,l}- \sum_{l=1}^nR_{l,l})
.\label{epsjn10}
\end{align}
We may rewrite \eqref{res22} as
\begin{align}
 R_{j+n,j+n}&=-\frac1{z+ys_y(z)}+\frac{y\Lambda_nR_{j+n,j+n}}{z+ys_y(z)}+\frac1{z+ys_y(z)}
 \widehat{\varepsilon}_jR_{j+n,j+n}.\notag
\end{align}
where
\begin{align}
\Lambda_n&:=m_n(z)-s_y(z)=\frac1{2n}\Tr\mathbf R-s_y(z)+\frac{1-y}{2yz}.\label{epsjn1}
\end{align}
We shall consider the representation \eqref{repr01**} only, because \tc{arguments for the representation \eqref{res22} is similar. In the latter case
\eqref{res22} we need to use Lemma
\ref{resol1*} instead of the bound} $|s_y(z)|\le \frac1{\sqrt y}$.
Since $|s_y(z)|\le1/\sqrt y$, the representation \eqref{repr01**} yields, for any $q\ge1$,
\begin{equation}\label{in1}
|R_{jj}^{(\mathbb J)}|^q\le 3^qy^{-\frac q2}+3^qy^{-\frac q2}|\varepsilon_j^{(\mathbb J)}|^q|R_{jj}^{(\mathbb J)}|^q
+3^py^{-\frac q2}|\Lambda_n^{(\mathbb J)}|^q|R_{jj}^{(\mathbb J)}|^q.
\end{equation}
We shall use the equality
\begin{equation}\label{shur}
 \Tr\mathbf R-\Tr\mathbf R^{(j)}=(1+\frac1p\sum_{k,l=1}^pX_{jk}X_{jl}[(\mathbf R^{(j)})^2]_{k+n,l+n})R_{jj}.
 \end{equation}
See for instance  \cite{GT:2009}, Lemma  3.1.

Applying the Cauchy -- Schwartz inequality, we get
\begin{equation}\label{ineq10}
3^{-q}y^{\frac q2}\E|R_{jj}^{(\mathbb J)}|^q\le 1+\E^{\frac12}|\varepsilon_j^{(\mathbb J)}|^{2q}\E^{\frac12}|R_{jj}^{(\mathbb J)}|^{2q}+\E^{\frac12}|\Lambda_n^{(\mathbb J)}|^{2q}
\E^{\frac12}|R_{jj}^{(\mathbb J)}|^{2q}.
\end{equation}

We shall investigate now the behavior of $\E|\varepsilon_j^{(\mathbb J)}|^{2q}$ and $\E|\Lambda_n^{(\mathbb J)}|^{2q} $.
First we note,
\begin{equation}\notag
\E|\varepsilon_j^{(\mathbb J)}|^{2q}\le 3^{2q}\sum_{\nu=1}^3\E|\varepsilon_{j\nu}^{(\mathbb J)}|^{2q}.
\end{equation}

\begin{lem}\label{lem2*}
Assuming the conditions of Theorem \ref{stiltjesmain} we have, for any $q\ge1$, and for any $z=u+iv\in\mathbb C_+$,
\begin{equation}\notag
\E|\varepsilon_{j3}^{(\mathbb J)}|^{2q}\le \frac1{n^{2q}v^{2q}}.
\end{equation}
\end{lem}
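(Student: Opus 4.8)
The plan is to prove the far stronger \emph{pointwise} bound $|\varepsilon_{j3}^{(\mathbb J)}|\le (nv)^{-1}$, valid for every realization of $\mathbf X$ and every $z=u+iv\in\mathbb C_+$ (no moment assumptions are needed here beyond $y\in(0,1]$); the asserted moment bound then follows immediately by raising to the power $2q$. To this end I would first re-express $\varepsilon_{j3}^{(\mathbb J)}$ through a difference of \emph{full} traces of resolvents. Repeating the computation behind \eqref{rr21}--\eqref{rr22} for the submatrices obtained after deleting the indices in $\mathbb J$, resp.\ in $\mathbb J\cup\{j\}$, and using that the spectrum of any matrix $\mathbf V^{(\mathbb A)}$ consists of the pairs $\pm s_i^{(\mathbb A)}$ together with $0$ of multiplicity $p-n+|\mathbb A|$, one gets $\sum_{l=1}^pR^{(\mathbb A)}_{l+n,l+n}=nm_n^{(\mathbb A)}(z)-(p-n+|\mathbb A|)z^{-1}$ and $\Tr\mathbf R^{(\mathbb A)}=2nm_n^{(\mathbb A)}(z)-(p-n+|\mathbb A|)z^{-1}$; subtracting the two instances $\mathbb A=\mathbb J\cup\{j\}$ and $\mathbb A=\mathbb J$ and using $p^{-1}=y/n$ yields
\[
 \varepsilon_{j3}^{(\mathbb J)}=\frac y{2n}\bigl(\Tr\mathbf R^{(\mathbb J,j)}-\Tr\mathbf R^{(\mathbb J)}\bigr)-\frac y{2nz}.
\]

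The heart of the matter is the deterministic estimate $|\Tr\mathbf R^{(\mathbb J)}-\Tr\mathbf R^{(\mathbb J,j)}|\le v^{-1}$, which I would obtain from \eqref{shur}. Put $w:=R_{jj}^{(\mathbb J)}$ and $g(z):=\frac1p\sum_{k,l}X_{jk}X_{jl}R^{(\mathbb J,j)}_{k+n,l+n}=\mathbf b^*\mathbf R^{(\mathbb J,j)}\mathbf b$, where $\mathbf b$ is the vector with entries $p^{-1/2}X_{jk}$ in the $p$-block; then $w^{-1}=-z-g(z)$ by \eqref{diagres} (the diagonal of $\mathbf V$ vanishes), and $\Tr\mathbf R^{(\mathbb J)}-\Tr\mathbf R^{(\mathbb J,j)}=(1+g'(z))\,w$ by \eqref{shur}, with $g'(z)=\mathbf b^*(\mathbf R^{(\mathbb J,j)})^2\mathbf b$. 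By Cauchy--Schwarz and the normality of $\mathbf R^{(\mathbb J,j)}$, $|g'(z)|\le \mathbf b^*\mathbf R^{(\mathbb J,j)}(\mathbf R^{(\mathbb J,j)})^*\mathbf b=v^{-1}\im g(z)$, so
\[
 1+|g'(z)|\le v^{-1}\bigl(v+\im g(z)\bigr)=-v^{-1}\im\bigl(w^{-1}\bigr)=\frac{\im w}{v|w|^2},
\]
and therefore $|\Tr\mathbf R^{(\mathbb J)}-\Tr\mathbf R^{(\mathbb J,j)}|\le |w|\,\im w/(v|w|^2)\le v^{-1}$. (Equivalently, this is the classical interlacing bound comparing the Stieltjes transform of a Hermitian matrix with that of one of its principal submatrices of codimension one.)

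Finally, combining the two displays with $|z|\ge v$ and $y\le 1$,
\[
 |\varepsilon_{j3}^{(\mathbb J)}|\le \frac y{2nv}+\frac y{2n|z|}\le \frac y{nv}\le \frac1{nv},
\]
and raising to the power $2q$ finishes the proof. The one delicate point is that the constant in the trace bound has to be exactly $1$, not merely $O(1)$: when $y=1$ and $u=0$ the chain of inequalities above is tight, so any loss there would destroy the constant-free bound stated in the lemma. Everything else is routine bookkeeping.
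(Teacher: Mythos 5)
Your proof is correct and takes essentially the same route as the paper: the paper disposes of Lemma \ref{lem2*} by citing Lemma 3.2 of \cite{GT:2004}, and the argument behind that citation (reproduced in the paper's own Lemmas \ref{lem2} and \ref{basic8}) is precisely the deterministic pointwise bound $|\varepsilon_{j3}^{(\mathbb J)}|\le (nv)^{-1}$ obtained from the trace-difference representation (as in \eqref{rr21}--\eqref{rr22}) together with the interlacing-type bound $|\Tr\mathbf R^{(\mathbb J)}-\Tr\mathbf R^{(\mathbb J,j)}|\le v^{-1}$, after which one raises to the power $2q$. Your write-up simply makes this self-contained by proving the trace bound from the Schur-complement identity \eqref{shur} and the normality of the resolvent, which is fine and indeed yields the constant $1$ as stated.
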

\begin{proof}For a proof of this Lemma see Lemma 3.2 in \cite{GT:2004}.
\end{proof}
Let $A_1>0$ and $0\le v_1\le 4\sqrt y$ be a fixed.
\begin{lem}\label{lem5} Assuming the  conditions of Theorem \ref{main}, and assuming for all $\mathbb J\subset T$ with $|\mathbb J|\le L$ and 
all $l\in\{n+1,\ldots,n+p\}$
\begin{equation}\label{cond3}
\E|R_{ll}^{(\mathbb J)}|^{q}\le C_0^{q}, \text{ for } 1\le q\le A_1(nv)^{\frac14}\text{  and for }v\ge v_1,
\end{equation} 
and 
for all $\widehat{\mathbb J}\subset\widehat{\mathbb T}$ with $|\widehat{\mathbb J}|\le L$ and 
all $l\in\{n+1,\ldots,n+p\}$
\begin{equation}\label{cond31}
\E|R_{ll}^{(\mathbb J)}|^{q}\le C_0^{q}, \text{ for } 1\le q\le A_1(nv)^{\frac14}\text{  and for }v\ge v_1,
\end{equation} 
we have, for all $v\ge v_1/s_0$, and for all $\mathbb J\subset\mathbb T$ with $|\mathbb J|\le L-1$, 
\begin{equation}\notag
\max\{\E|\varepsilon_{j1}^{(\mathbb J)}|^{2q},\E|\widehat\varepsilon_{j1}^{(\mathbb J)}|^{2q}\}\le(C_1q)^{2q}n^{-q}s_0^{2q}C_0^{4q},\text{ for } 1\le q\le A_1(nv)^{\frac14}.
\end{equation}
\end{lem}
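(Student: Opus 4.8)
The plan is to exploit that, conditionally on the $\sigma$-algebra $\mathfrak M^{(\mathbb J\cup\{j\})}$ generated by all rows of $\mathbf X$ except the $j$-th and those indexed by $\mathbb J$, the quantity $\varepsilon_{j1}^{(\mathbb J)}=-\frac1p\sum_{k=1}^p(X_{jk}^2-1)R^{(\mathbb J,j)}_{k+n,k+n}$ is a sum of independent, mean-zero random variables with $\mathfrak M^{(\mathbb J\cup\{j\})}$-measurable coefficients $R^{(\mathbb J,j)}_{k+n,k+n}$ (and $\widehat\varepsilon_{j1}^{(\mathbb J)}$ has the same structure with rows replaced by columns). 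First I would apply Rosenthal's inequality (Lemma \ref{Rosent}) conditionally and take expectations; using $\E X_{jk}^2=1$ (so the conditional mean vanishes) and $\mathrm{Var}(X_{jk}^2)\le C\mu_4$ (cf.\ \eqref{moment} and Remark \ref{trunc00}) this reduces matters to bounding a ``variance'' term $\frac{(Cq)^{2q}}{p^{2q}}\E\big(\sum_{k=1}^p|R^{(\mathbb J,j)}_{k+n,k+n}|^2\big)^q$ and a ``large deviation'' term $\frac{(Cq)^{2q}}{p^{2q}}\sum_{k=1}^p\E|X_{jk}^2-1|^{2q}\,\E|R^{(\mathbb J,j)}_{k+n,k+n}|^{2q}$, where $C$ is the Rosenthal constant. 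Crucially, since $v\ge v_1/s_0$ is bounded below by a fixed constant, it suffices to invoke the hypotheses \eqref{cond3}, \eqref{cond31} only once, with no recursion of the kind needed in the proof of Theorem \ref{stiltjesmain}.

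For the variance term the idea is to compare the resolvent at $z=u+iv$ with the one at $u+is_0v$, where \eqref{cond3} applies. I would use the identity $\mathbf R^{(\mathbb J,j)}(\mathbf R^{(\mathbb J,j)})^{*}=v^{-1}\im\mathbf R^{(\mathbb J,j)}$ (Lemma \ref{resol00}) to get $|R^{(\mathbb J,j)}_{k+n,k+n}(u+iv)|^{2}\le v^{-1}\im R^{(\mathbb J,j)}_{k+n,k+n}(u+iv)$, and then the monotonicity of $v\mapsto v\,\im R^{(\mathbb J,j)}_{k+n,k+n}(u+iv)$ — each diagonal resolvent entry being the Stieltjes transform of a probability measure — to get $\im R^{(\mathbb J,j)}_{k+n,k+n}(u+iv)\le s_0\,\im R^{(\mathbb J,j)}_{k+n,k+n}(u+is_0v)$. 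Since $v\ge v_1/s_0$ forces $s_0v\ge v_1$, while $|\mathbb J\cup\{j\}|\le L$ and $q\le A_1(nv)^{\frac14}\le A_1(ns_0v)^{\frac14}$, condition \eqref{cond3} is applicable at $u+is_0v$, and Jensen's inequality then gives $\E\big(\sum_{k=1}^p|R^{(\mathbb J,j)}_{k+n,k+n}(u+iv)|^2\big)^q\le (s_0/v)^q p^{q}C_0^q$. Because $p\ge n$ and $v\ge v_1/s_0$, so $(pv)^q\ge (v_1/s_0)^q n^q$, the variance term is of order $(Cq)^{2q}(\mu_4 s_0^2 C_0/v_1)^q n^{-q}$, which is inside the asserted bound once $C_1$ is taken large.

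For the large deviation term I would use only the trivial bound $|R^{(\mathbb J,j)}_{k+n,k+n}|\le\|\mathbf R^{(\mathbb J,j)}\|\le v^{-1}\le s_0/v_1$ together with the truncation \eqref{trun} (equivalently \eqref{trunc2}): $\E|X_{jk}^2-1|^{2q}\le 2^{2q}(\E|X_{jk}|^{4q}+1)\le 2^{2q}\big((D_1 n^{\frac14})^{4q-4}\E|X_{jk}|^{4}+1\big)\le (C')^{q}n^{q-1}$ with $C'=C'(D_1,\mu_4)$. As there are at most $p$ summands and $p\ge n$, so $n^{q-1}p\cdot p^{-2q}=n^{q-1}p^{-(2q-1)}\le n^{-q}$, the large deviation term is of order $(Cq)^{2q}(C')^q(s_0/v_1)^{2q}n^{-q}$, again absorbed. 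The bound for $\widehat\varepsilon_{j1}^{(\mathbb J)}$ follows in the same way, now conditioning on the $\sigma$-algebra of all columns of $\mathbf X$ except the $j$-th and those in $\widehat{\mathbb J}$, bounding $\sum_{k=1}^n|R^{(\widehat{\mathbb J},j+n)}_{kk}|^2\le nv^{-1}\im m_n^{(\widehat{\mathbb J},j+n)}(z)$, and feeding \eqref{cond31} through $m_n=\frac1n\sum_{l=n+1}^{n+p}R_{ll}+\frac{1-y}{yz}$.

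I expect the main obstacle to be the large deviation term: the truncation level $Dn^{\frac14}$ gives only $\E|X_{jk}^2-1|^{2q}\lesssim n^{q-1}$, so the prefactor $p^{-2q}$ must be ``spent'' to recover the target power $n^{-q}$; this works exactly because $p\ge n$, i.e.\ it is the hypothesis $y\in(0,1]$ that drives the estimate. A secondary concern is keeping the $q$-dependence at $(C_1q)^{2q}$ rather than worse, which is why I would \emph{not} route the argument through the bound $\E(Q^{(\mathbb J,j)})^q\le(C_3q)^{2q}v^{-q}$ of Theorem \ref{bp1*}: that already carries a factor $q^{2q}$, and pairing it with a further Rosenthal or Burkholder step would produce $q^{3q}$ or $q^{4q}$, too lossy for a target of the form $(C_1q)^{2q}n^{-q}$. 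One should finally check that the admissible range $q\le A_1(nv)^{\frac14}$ is stable under the shift $u+iv\mapsto u+is_0v$ responsible for the factor $s_0$ in the statement — it is, since $A_1(nv)^{\frac14}\le A_1(ns_0v)^{\frac14}$.
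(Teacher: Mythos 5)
Your overall structure (conditional Rosenthal's inequality, a variance term plus an individual-moment term, the truncation bound $\E|X_{jk}^2-1|^{2q}\le C^q n^{q-1}$, and the use of $p\ge n$ to absorb the number of summands) is the same as in the paper, but there is a genuine gap in how you bound the resolvent factors. You treat $v\ge v_1/s_0$ as bounded below by a fixed constant and therefore allow yourself factors $v^{-q}$ (from $|R^{(\mathbb J,j)}_{k+n,k+n}|^{2}\le v^{-1}\im R^{(\mathbb J,j)}_{k+n,k+n}$ in the variance term) and $v^{-2q}$ (from $|R^{(\mathbb J,j)}_{k+n,k+n}|\le v^{-1}\le s_0/v_1$ in the large-deviation term). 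But the asserted bound $(C_1q)^{2q}n^{-q}s_0^{2q}C_0^{4q}$ contains no $v$ or $v_1$, and it must not: Lemma \ref{lem5} is one step of the descent carried out in Lemma \ref{lem9} and Corollary \ref{cor8}, where $v_1$ runs over $s_0^{-\nu}$, $\nu=0,\ldots,L$ with $L\sim\log n$, i.e.\ down to the scale $v_1\sim s_0v_0\sim n^{-1}$. With your estimates the variance term is only $O\big((Cq)^{2q}(nv)^{-q}\big)$ and the large-deviation term is $O\big((Cq)^{2q}C^{q}v_1^{-2q}n^{-q}\big)$, which at $v_1\sim n^{-1}$ is of order $n^{q}$; so the conclusion fails precisely in the regime where the lemma is needed, and the recursion it feeds would break down.

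The missing idea is to spend the hypothesis at the doubled moment rather than at moment $q$. The paper first observes that if $q\le A_1(nv)^{\frac14}$ with $v\ge v_1/s_0$, then $q'=2q\le 2A_1(nv)^{\frac14}=A_1(ns_0v)^{\frac14}$ because $s_0=2^4$; hence \eqref{cond3} is available for $q'=2q$ at the shifted point $v'=s_0v\ge v_1$. Combining Lemma \ref{schlein}, which gives $|R^{(\mathbb J,j)}_{l+n,l+n}(u+iv)|\le s_0|R^{(\mathbb J,j)}_{l+n,l+n}(u+is_0v)|$, with Jensen in the form $\big(\sum_{l}|R_{l+n,l+n}^{(\mathbb J,j)}|^{2}\big)^{q}\le p^{q-1}\sum_{l}|R_{l+n,l+n}^{(\mathbb J,j)}|^{2q}$, one gets $\E\big(\sum_l|R^{(\mathbb J,j)}_{l+n,l+n}(u+iv)|^2\big)^q\le p^{q}s_0^{2q}C_0^{2q}$ with no inverse power of $v$, and the same $2q$-moment bound $\E|R^{(\mathbb J,j)}_{l+n,l+n}|^{2q}\le s_0^{2q}C_0^{2q}$ replaces your crude $v^{-2q}$ in the large-deviation term; together with $\E|X_{jl}|^{4q}\le D^{4q-4}n^{q-1}\mu_4$ and $p\ge n$ this yields the stated $(C_1q)^{2q}n^{-q}s_0^{2q}C_0^{2q}\le(C_1q)^{2q}n^{-q}s_0^{2q}C_0^{4q}$. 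Your closing remark that the admissible range only needs to be checked for the same moment order $q$ after the shift is exactly where the argument loses the lemma: the doubling of the moment index under the shift is the whole reason for the choice $s_0=2^4$ and for stating \eqref{cond3}, \eqref{cond31} on the full range $1\le q\le A_1(nv)^{\frac14}$.
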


\begin{proof}Recall that $s_0=2^4$ and  note that if $q\le  A_1(nv)^{\frac14}$ for $v\ge v_1/s_0$ then $q'=2q\le A_1(nv)^{\frac14}$ for $v\ge v_1$.
 Let $v':=vs_0$. If $v\ge v_1/s_0$ then $v'\ge v_1$. We have
 \begin{equation}\label{rem1}
  q'=2q\le 2A_1(nv)^{\frac14}= 2A_1(nv's_0^{-1})^{\frac14}=A_1(nv')^{\frac14}.
 \end{equation}
 We apply now Rosenthal's inequality for the moments of sums of independent random variables and get
\begin{align}
\E|\varepsilon_{j1}^{(\mathbb J)}|^{2q}\le (C_1q)^{2q}n^{-2q}\Big(\E\big(\sum_{l=1^p}|R_{l+n,l+n}^{(\mathbb J,j)}|^{2}\big)^{q}
+\E|X_{jl}|^{4q}\sum_{l=1}^p\E|R^{(\mathbb J,j)}_{l+n,l+n}|^{2q}\Big).\notag
\end{align}
According to inequality \eqref{rem1}  we may apply Lemma \ref{schlein} and  
 condition \eqref{cond3} for $q'=2q$. We get, for $v\ge v_1/s_0$,
\begin{align}
\E|\varepsilon_{j1}^{(\mathbb J)}|^{2q}\le(C_1q)^{2q}n^{-q}s_0^{2q}C_0^{2q}.\notag
\end{align}
We use as well that by the conditions of Theorem \ref{main},
$ \E|X_{jl}|^{4q}\le D^{4q-4}n^{q-1}\mu_4$,
and by Jensen's inequality,
$(\frac1n\sum_{l=1}^p|R_{l+n,l+n}^{(\mathbb J,j)}|^{2})^{q}\le \frac1n\sum_{l=1}^p|R_{l+n,l+n}^{(\mathbb J,j)}|^{2q}$.
Similar we get he estimation for $\E|\widehat\varepsilon_{j1}^{(\mathbb J)}|^{2q}$. 
Thus, Lemma \ref{lem5} is proved.
\end{proof}
\begin{lem}\label{lem6}Assuming the  conditions of Theorem \ref{main}, 
condition \eqref{cond3}, 
for $v\ge v_1$ and $q\le A_1(nv)^{\frac14}$,
we have, for any $v\ge v_1/s_0$ and $q\le A_1(nv)^{\frac14}$,
\begin{align}
\max\{\E|\varepsilon_{j2}^{(\mathbb J)}|^{2q},\E|\widehat\varepsilon_{j2}^{(\widehat{\mathbb J})}|^{2q}\}\le 6(C_3q)^{4q}n^{-q}v^{-q}A_q^{(\mathbb J)}+
2(C_3q)^{4q}n^{-q}v^{-q}(C_0s_0)^q.\notag
\end{align}

\end{lem}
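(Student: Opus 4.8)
The plan is to treat $\varepsilon_{j2}^{(\mathbb J)}$ (the off-diagonal bilinear form $\tfrac1p\sum_{1\le k\ne l\le p}X_{jk}X_{jl}R^{(\mathbb J,j)}_{k+n,l+n}$) as a martingale in the entries of the $j$-th row of $\mathbf X$ and to feed the estimate of Lemma \ref{bp1} into a Burkholder bound; the estimate for $\widehat\varepsilon_{j2}^{(\widehat{\mathbb J})}$ is obtained identically with rows and columns interchanged, using \eqref{q2} and \eqref{cond31} in place of the ingredients coming from \eqref{q01} and \eqref{cond3}. Since $\mathbf R^{(\mathbb J,j)}$ is real symmetric we first write $\varepsilon_{j2}^{(\mathbb J)}=\tfrac2p\sum_{l=2}^pX_{jl}b_l$ with $b_l:=\sum_{k=1}^{l-1}X_{jk}R^{(\mathbb J,j)}_{k+n,l+n}$. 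Conditionally on $\mathfrak M^{(\mathbb J,j)}$, the terms $X_{jl}b_l$ form a martingale-difference sequence with respect to the filtration generated by $X_{j1},\dots,X_{jl}$, because $b_l$ is measurable with respect to $\mathfrak M^{(\mathbb J,j)}$ and $X_{j1},\dots,X_{j,l-1}$, while $\E X_{jl}=0$, $\E X_{jl}^2=1$; hence its conditional quadratic variation equals $\sum_{l=2}^pb_l^2=p\,Q^{(\mathbb J,j)}_0$, where $Q^{(\mathbb J,j)}_0$ is the quantity in \eqref{qf} built from $a_{lk}^{(\mathbb J,j,0)}$ as in \eqref{defaq}.

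Next I would apply Burkholder's inequality (Lemma \ref{burkh1}) conditionally on $\mathfrak M^{(\mathbb J,j)}$ and then take expectations, together with the conditional independence of $X_{jl}$ and $b_l$, to get
\begin{equation}\notag
\E|\varepsilon_{j2}^{(\mathbb J)}|^{2q}\le\frac{(C_2q)^{2q}}{p^{2q}}\Big(p^q\,\E\big(Q^{(\mathbb J,j)}_0\big)^q+\sum_{l=2}^p\E|X_{jl}|^{2q}\,\E|b_l|^{2q}\Big).
\end{equation}
For the first term I invoke Lemma \ref{bp1}: since $|\mathbb J|\le L$, $q\le A_1(nv)^{\frac14}$ and $v\ge v_1/s_0$, it gives $\E(Q^{(\mathbb J,j)}_0)^q\le 6(\tfrac{C_3q}{\sqrt2})^{2q}v^{-q}A_q^{(\mathbb J)}$, which together with $p\asymp n$ contributes exactly the summand $6(C_3q)^{4q}n^{-q}v^{-q}A_q^{(\mathbb J)}$. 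For the second term I estimate $\E|b_l|^{2q}$ by Rosenthal's inequality (Lemma \ref{Rosent}) applied to $b_l$ conditionally on $\mathfrak M^{(\mathbb J,j)}$, using the resolvent identities $\sum_{k<l}|R^{(\mathbb J,j)}_{k+n,l+n}|^2\le v^{-1}\im R^{(\mathbb J,j)}_{l+n,l+n}$ and $\sum_{k<l}|R^{(\mathbb J,j)}_{k+n,l+n}|^{2q}\le v^{-2q+1}\im R^{(\mathbb J,j)}_{l+n,l+n}$ from Lemma \ref{resol00}, the moment bound $\E|X_{jl}|^{2q}\le D^{2q-2}n^{(q-1)/2}$ coming from \eqref{trun} and $\E X_{jl}^2=1$, and the bound $\E|R^{(\mathbb J,j)}_{l+n,l+n}|^{q'}\le(s_0C_0)^{q'}$ for $1\le q'\le 2q$. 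The last bound follows from condition \eqref{cond3} at the height $v'=s_0v\ge v_1$ — the exponent is admissible since $2q\le 2A_1(nv)^{\frac14}=2s_0^{-1/4}A_1(nv')^{\frac14}=A_1(nv')^{\frac14}$ because $s_0=2^4$, cf.\ \eqref{rem1} — followed by Lemma \ref{schlein} to transfer the moment bound from $v'$ down to $v$, exactly as in the proof of Lemma \ref{lem5}. Summing over the at most $p\asymp n$ values of $l$ and collecting powers of $n$ then bounds the second term by $2(C_3q)^{4q}n^{-q}v^{-q}(C_0s_0)^q$ after renaming $C_3$.

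Adding the two contributions yields the asserted estimate, and the same argument with rows and columns exchanged gives the bound for $\widehat\varepsilon_{j2}^{(\widehat{\mathbb J})}$. The main obstacle I expect is bookkeeping rather than a new idea: one has to check that every moment exponent (up to $2q$) and every height ($v'=s_0v\ge v_1$) at which condition \eqref{cond3} and Lemma \ref{bp1} are invoked remains admissible after the rescaling $v\mapsto s_0v$, and that the powers of $n$ produced by the truncation bound in the Rosenthal tail terms are genuinely no larger than $n^{-q}$ (the surplus factors $v^{-q+1}$ are absorbed using $v\ge v_1/s_0$, and one of the two tail pieces even gains an extra factor $n^{-q/2+1/2}\le1$). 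This is precisely where the choice $s_0=2^4$ and the truncation level $Dn^{1/4}$ are used.
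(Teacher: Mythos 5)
Your overall route is the same as the paper's: the decomposition $\varepsilon_{j2}^{(\mathbb J)}=\tfrac2p\sum_l X_{jl}b_l$ with Burkholder applied conditionally is exactly Lemma \ref{burkh1}, the quadratic-variation term is $pQ_0^{(\mathbb J,j)}$ and is handled by Lemma \ref{bp1}, and the tail term is handled by Rosenthal, the truncation bound on $\E|X_{jl}|^{2q}$, and condition \eqref{cond3} transferred from height $s_0v\ge v_1$ down to $v$ via Lemma \ref{schlein}, with the exponent check \eqref{rem1}. Up to that point the bookkeeping matches the paper's proof.

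There is, however, one step that fails as written: in the Rosenthal tail of $\E|b_l|^{2q}$ you use $\sum_{k<l}|R^{(\mathbb J,j)}_{k+n,l+n}|^{2q}\le v^{-2q+1}\im R^{(\mathbb J,j)}_{l+n,l+n}$ and then claim the resulting surplus $v^{-q+1}$ "is absorbed using $v\ge v_1/s_0$". That absorption is not legitimate: $v_1$ is not a fixed constant but an induction parameter, and in the bootstrap of Corollary \ref{cor8} the lemma is invoked with $v_1=s_0^{-\nu}$ all the way down to $v_1\sim v_0=A_0n^{-1}$, while the constants $C_0,C_3,s_0$ in the conclusion must be uniform in $v_1$. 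With $v\sim n^{-1}$ your tail piece is of size $(Cq)^{4q}n^{-q}v^{-2q+1}\sim (Cq)^{4q}n^{q-1}$, which is far larger than the claimed $n^{-q}v^{-q}(C_0s_0)^q\sim n^{-q}\cdot n^{q}$ only up to the missing factor $v^{-q+1}\sim n^{q-1}$; this would also wreck the downstream use in Lemmas \ref{lem8} and \ref{lem9}. The fix is the one the paper uses: since $\sum_k a_k^q\le(\sum_k a_k)^q$ for $q\ge1$, bound
\begin{equation}\notag
\sum_{k<l}\bigl|R^{(\mathbb J,j)}_{k+n,l+n}\bigr|^{2q}\le\Bigl(\sum_{k<l}\bigl|R^{(\mathbb J,j)}_{k+n,l+n}\bigr|^{2}\Bigr)^{q}\le v^{-q}\bigl(\im R^{(\mathbb J,j)}_{l+n,l+n}\bigr)^{q},
\end{equation}
and then apply \eqref{cond3} with Lemma \ref{schlein} to $\E|R^{(\mathbb J,j)}_{l+n,l+n}|^{q}$; with this replacement your powers of $n$ and $v$ close up and give the stated bound, and the companion estimate for $\widehat\varepsilon_{j2}^{(\widehat{\mathbb J})}$ follows as you say by exchanging rows and columns (using \eqref{q2}, \eqref{cond31} and Lemma \ref{resol1*} where needed).
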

\begin{proof}We consider the quantity $\E|\varepsilon_{j2}^{(\mathbb J)}|^{2q}$ only. The other \tc{one}  is similar.
We apply Burkholder's inequality for  quadratic forms. See Lemma \ref{burkh1} in the Appendix. We obtain
\begin{align}
\E|\varepsilon_{j2}^{(\mathbb J)}|^{2q}&\le (C_1q)^{2q}n^{-2q}\Bigg(\E\big(\sum_{l=1}^p\big|
\sum_{k=1}^{l-1}X_{jk}
R^{(\mathbb J,j)}_{l+n,k+n}\big|^2\big)^q\notag\\&+\max_{j,l}\E|X_{jl}|^{2q}
\sum_{l=1}^p
\E\big|\sum_{k=1}^{l-1}X_{jk}
R^{(\mathbb J,j)}_{l+n,k+n}\big|^{2q}\Bigg).\notag
\end{align}
Using now the quantity $Q_0^{(\mathbb J,j)}$ for the first term and Rosenthal's inequality and condition \eqref{moment} for the second term, we obtain
with Lemma \ref{resol00}, inequality \eqref{res2}, in the Appendix and $C_3=C_1C_2$
\begin{align}
\E|\varepsilon_{j2}^{(\mathbb J)}|^{2q}&
\le (C_2q)^{2q}n^{-q}\E|Q^{(\mathbb J,j)}_{0}|^{q}\notag\\&\qquad\qquad\qquad+(C_3q)^{4q}n^{-\frac{3q}2}\frac1n\sum_{l=1}^p\E\big(
\sum_{k=1}^{l-1}|
R^{(\mathbb J,j)}_{l+n,k+n}|^2\big)^{q}\notag\\&\qquad\qquad\qquad+(C_3q)^{4q}n^{-q}\frac1{n^2}
\sum_{l=1}^p\sum_{k=1}^{l-1}\E|R^{(\mathbb J,j)}_{l+n,k+n}|^{2q}\notag\\&\qquad
\le 
(C_2q)^{2q}n^{-q}\E|Q^{(\mathbb J,j)}_{0}|^{q}+(C_3q)^{4q}n^{-\frac{3q}2}v^{-q}\frac1n\sum_{l=1}^p\E|R_{l+n,l+n}^{(\mathbb J,j)}|^q\notag\\&
\qquad\qquad\qquad
+(C_3q)^{4q}n^{-q}v^{-q}\frac1{n^2}
\sum_{l=1}^p\E|R_{l+n,l+n}^{(\mathbb J,j)}|^q.\notag
\end{align}
By Lemma \ref{schlein} and condition \eqref{cond3}, we get
\begin{align}
\E|\varepsilon_{j2}^{(\mathbb J)}|^{2q}&\le 
(C_3q)^{2q}n^{-q}\E|Q^{(\mathbb J,j)}_{0}|^{q}
+2(C_3q)^{3q}n^{-q}v^{-q}(C_0s_0)^q.\notag
\end{align}
Applying now Lemma \ref{bp1}, we get the claim.
Thus, Lemma \ref{lem6} is proved.
\end{proof}
Recall that
\begin{equation}\notag
\Lambda_n^{(\mathbb J)}=\frac1n\sum_{j\in \mathbb T_{\mathbb J}} R^{(\mathbb J)}_{jj}-s_y(z),\quad\text{and}
\quad T_n^{(\mathbb J)}(z)=\frac1n\sum_{j\in\mathbb T_{\mathbb J}}\varepsilon_j^{(\mathbb J)}R_{jj}^{(\mathbb J)}.
\end{equation}
and
\begin{equation}\notag
 \Lambda_n^{(\widehat{\mathbb J})}=\frac1n\sum_{j=1}^n R^{(\mathbb J)}_{jj}-s_y(z)+\frac{(p-|J|-n)_+}{nz},\quad\text{and}
\quad T_n^{(\mathbb J)}(z)=\frac1n\sum_{j=1}^n\varepsilon_j^{(\widehat{\mathbb J})}R_{jj}^{(\mathbb J)}.
\end{equation}

\begin{lem}\label{lem7}Assuming the conditions of Theorem \ref{main}, we have
\begin{equation}\notag
|\Lambda_n^{(\mathbb J)}|\le C(\sqrt{|T_n^{(\mathbb J)}(z)|}+\frac{\sqrt{|\mathbb J|}}{\sqrt n}).
\end{equation}

\end{lem}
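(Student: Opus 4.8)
The plan is to start from the summed self-consistency equation. Recall that summing \eqref{repr01**} over $j\in\mathbb T_{\mathbb J}$ and dividing by $n$ gives, after collecting the terms proportional to $\Lambda_n^{(\mathbb J)}$,
\begin{equation}\notag
\Lambda_n^{(\mathbb J)}\Big(1-\frac{|\mathbb J|}{n}\Big)=\frac{|\mathbb J|}{n}s_y(z)+y\,\Lambda_n^{(\mathbb J)}s_y(z)\cdot\frac1n\sum_{j\in\mathbb T_{\mathbb J}}R_{jj}^{(\mathbb J)}+s_y(z)\,T_n^{(\mathbb J)}(z),
\end{equation}
so that, writing $\frac1n\sum_{j\in\mathbb T_{\mathbb J}}R_{jj}^{(\mathbb J)}=s_y(z)+\Lambda_n^{(\mathbb J)}$ and moving all $\Lambda_n^{(\mathbb J)}$-terms to the left, one obtains a relation of the shape
\begin{equation}\notag
y\,s_y(z)\,(\Lambda_n^{(\mathbb J)})^2+\Big(1-ys_y^2(z)-\tfrac{|\mathbb J|}{n}\Big)\Lambda_n^{(\mathbb J)}=s_y(z)T_n^{(\mathbb J)}(z)+\tfrac{|\mathbb J|}{n}s_y(z).
\end{equation}
First I would identify the coefficient $1-ys_y^2(z)$: from the quadratic equation \eqref{reprst} for $s_y(z)$ one gets $1-ys_y^2(z)=-(z+\tfrac{y-1}z)s_y(z)-2ys_y^2(z)+(\text{lower order})$, but more usefully $|1-ys_y^2(z)|=|s_y(z)|\cdot|z+\tfrac{y-1}z+2ys_y(z)|=|s_y(z)|\cdot|(z+\tfrac{y-1}z)^2-4y|^{1/2}$, which is exactly the quantity $\gamma$-controlled on the relevant region. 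The key structural point is that $ys_y(z)$ is the double-root coefficient, so this is a quadratic in $\Lambda_n^{(\mathbb J)}$ whose leading coefficient is bounded ($|s_y(z)|\le 1/\sqrt y$, hence $|ys_y(z)|\le\sqrt y$) and whose right-hand side is $O(|T_n^{(\mathbb J)}|+|\mathbb J|/n)$.

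The second step is to solve the quadratic. From $|a\zeta^2+b\zeta|=|c|$ with $|a|$ bounded one deduces $\min\{|\zeta|,|\zeta+b/a|\}\le C\sqrt{|c|/|a|}$ and also $|\zeta|\le |b/a|+C\sqrt{|c|/|a|}$; but to get the clean bound $|\Lambda_n^{(\mathbb J)}|\le C(\sqrt{|T_n^{(\mathbb J)}|}+\sqrt{|\mathbb J|}/\sqrt n)$ without the $|b/a|$ term one needs to rule out the ``wrong root''. For $\mathbb J=\emptyset$ this is the standard argument: on the region $\mathbb G$, at $v=V=4\sqrt y$ one has $|\Lambda_n|$ small (by \eqref{3.5}), and $\Lambda_n(z)$ is continuous in $v$, so it stays in the basin of the correct root $0$ rather than jumping to $-b/a$; the discriminant $|b^2-4ac|$ stays bounded below along the way. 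For general $\mathbb J$ with $|\mathbb J|\le C\log n$ the same continuity-in-$v$ argument applies to $\mathbb V^{(\mathbb J)}$ verbatim, since deleting $O(\log n)$ rows perturbs everything negligibly; alternatively one invokes the already-established a priori bound $\E|\Lambda_n^{(\mathbb J)}|^2\le C(nv)^{-2}$ (the analogue of Lemma \ref{lam1*}) together with a union/Markov argument to pin the root.

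I expect the main obstacle to be precisely this root-selection step: algebraically the quadratic gives only $\min\{|\Lambda_n^{(\mathbb J)}|,|\Lambda_n^{(\mathbb J)}+b/a|\}\le C\sqrt{|c|/|a|}$, and one must argue that $|\Lambda_n^{(\mathbb J)}+b/a|$ is the large quantity. The cleanest route is to carry the continuity argument in $v$ from the segment $v=V$ (where boundedness of $R_{jj}$ and hence smallness of $\Lambda_n$ is unconditional, Section \ref{firsttypeint}) down into $\mathbb G$, noting that the discriminant is bounded away from zero on $\mathbb G$ so the two roots never collide and $\Lambda_n^{(\mathbb J)}$ cannot switch branches. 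Once the correct root is selected, the bound $|\Lambda_n^{(\mathbb J)}|\le C\sqrt{|c|/|a|}\le C\sqrt{|s_y(z)||T_n^{(\mathbb J)}(z)|+\tfrac{|\mathbb J|}{n}|s_y(z)|}\le C(\sqrt{|T_n^{(\mathbb J)}(z)|}+\sqrt{|\mathbb J|}/\sqrt n)$ follows from $|s_y(z)|\le 1/\sqrt y=O(1)$, which is the claim.
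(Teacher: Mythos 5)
Your skeleton is the same as the paper's (the self-consistency quadratic in $\Lambda_n^{(\mathbb J)}$ with leading coefficient $ys_y(z)$, linear coefficient $-s_y(z)\sqrt{(z+\frac{y-1}z)^2-4y}$ via $1-ys_y^2(z)=-s_y(z)\sqrt{(z+\frac{y-1}z)^2-4y}$, and right-hand side of size $|T_n^{(\mathbb J)}|+|\mathbb J|/n$), and your final algebra $\sqrt{|c|/|a|}\le C(\sqrt{|T_n^{(\mathbb J)}|}+\sqrt{|\mathbb J|/n})$ is fine. The gap is exactly the step you flag: root selection, and both of your proposed resolutions fail. Invoking Lemma \ref{lam1*} is circular: its proof uses Corollary \ref{cor8}, which is obtained from Lemmas \ref{lem8}--\ref{lem9}, and Lemma \ref{lem8} is proved \emph{from} Lemma \ref{lem7}; moreover Lemma \ref{lam1*} only controls $\E|\Lambda_n|^2$ for $\mathbb J=\emptyset$, whereas here a pointwise (a.s.) bound for general $\mathbb J$ is required. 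The continuity-in-$v$ argument fares no better: to exclude branch switching you need the two roots to stay separated along the whole vertical segment, i.e.\ the discriminant $(z+\frac{y-1}z)^2-4y-4y\widetilde T_n^{(\mathbb J)}(z)$ with $\widetilde T_n^{(\mathbb J)}=s_y(T_n^{(\mathbb J)}-\frac{|\mathbb J|}n)$ bounded away from zero, but $T_n^{(\mathbb J)}$ is precisely the random quantity over which you have no a priori control at this stage of the bootstrap, so the ``roots never collide'' claim is unjustified; in addition the lemma is used at all scales $v\ge v_1/s_0$ in the induction, not only for $z\in\mathbb G$ where your geometric input would live.

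The paper's proof removes the difficulty pointwise and algebraically: solving \eqref{lambda00} one writes $\Lambda_n^{(\mathbb J)}(z)=\frac12\bigl(-\sqrt{(z+\frac{y-1}z)^2-4y}+\sqrt{(z+\frac{y-1}z)^2-4y-4y\widetilde T_n^{(\mathbb J)}(z)}\bigr)$ with the branch $\im\sqrt{\cdot}\ge0$; this branch is forced instantaneously (no continuity needed) because the combination fixed by the quadratic, essentially $z+2ym_n^{(\mathbb J)}(z)+\frac{y-1}z$, has nonnegative imaginary part for every $z\in\mathbb C_+$ and every realization ($\im z>0$, $\im m_n^{(\mathbb J)}(z)>0$, $\im\frac{y-1}z\ge0$ for $y\le1$). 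Then the elementary inequality $|\sqrt{a+b}-\sqrt a|\le C\frac{|b|}{\sqrt{|a|+|b|}}\le C\sqrt{|b|}$ gives $|\Lambda_n^{(\mathbb J)}|\le C\sqrt{|\widetilde T_n^{(\mathbb J)}|}\le C(\sqrt{|T_n^{(\mathbb J)}|}+\sqrt{|\mathbb J|}/\sqrt n)$ with no lower bound on the discriminant whatsoever --- the estimate is insensitive to the roots colliding, which is what makes the lemma valid for all $z$ and all $\omega$. So: keep your quadratic, but pin the branch by the sign of the imaginary part and use the square-root perturbation inequality, rather than continuity in $v$ or an a priori moment bound on $\Lambda_n$.
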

\begin{proof}It is similar to the  inequality (2.10) in \cite{SchleinMaltseva:2013}. For completeness we include short proof here. Obviously
\begin{align}\label{lambda00}
\Lambda_n^{(\mathbb J)}(z)&=
\frac{(T_n^{(\mathbb J)}(z)-\frac{|\mathbb J|}n)}{z+2ys_y(z)+\frac{y-1}z-y\Lambda_n^{(\mathbb J)}(z)}.
\end{align}
Note that
\begin{equation}\notag
 z+2ys_y(z)+\frac{y-1}z=\sqrt{(z+\frac{y-1}z)^2-4y}.
\end{equation}
Solving this equation \eqref{lambda00} w.r.t. $\Lambda_n^{(\mathbb J)}(z)$, we get
\begin{equation}
\Lambda_n^{(\mathbb J)}(z)=\frac{-\sqrt{(z+\frac{y-1}z)^2-4y}+\sqrt{(z+\frac{y-1}z)^2-4y-4y\widetilde T_n^{(\mathbb J)}(z)}}2,
\end{equation}
where
\begin{equation}\notag
 \widetilde T_n(z)=s_y(z)(T_n^{(\mathbb J)}(z)-\frac{|\mathbb J|}n).
\end{equation}
We take here the branch of $\sqrt z$ such that $\im \sqrt z\ge 0$.
Since for any $a,b\in\mathbb C$
$|\sqrt{a+b}-\sqrt a|\le C\frac{|b|}{\sqrt{|a|+|b|}}$,
we get
\begin{equation}\notag
|\Lambda_n^{(\mathbb J)}(z)|\le C(\sqrt{|T_n^{(\mathbb J)}(z)|}+\frac{\sqrt{|\mathbb J|}}{\sqrt n}).
\end{equation}
Thus, Lemma \ref{lem7} is proved.
\end{proof}
\begin{lem}\label{lem8}Assuming the  conditions of Theorem \ref{main} and 
condition \eqref{cond3},  we obtain, for $|\mathbb J|\le Cn^{\frac12}$
\begin{align}
\E|\Lambda_n^{(\mathbb J)}|^{2q}&\le\frac{C^q}{n^{\frac q4}}+ \Big(\frac{\mu_4}
{n^{\frac q2+1}}+\frac1{n^{2q}v^{2q}}+(C_1q)^{2q}n^{-q}s_0^{2q}C_0^{4q}\notag\\&\quad+6(C_3q)^{4q}n^{-q}v^{-q}A_{q}^{(\mathbb J)}+
2(C_3q)^{4q}n^{-q}v^{-q}(C_0s_0)^q\Big)^{\frac12}(C_0s_0)^q. \notag
\end{align}
\end{lem}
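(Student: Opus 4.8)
The plan is to start from the representation in Lemma \ref{lem7}, namely the bound
$|\Lambda_n^{(\mathbb J)}|\le C(\sqrt{|T_n^{(\mathbb J)}(z)|}+\sqrt{|\mathbb J|}/\sqrt n)$, raise it to the $2q$-th power and take expectations, so that
\[
\E|\Lambda_n^{(\mathbb J)}|^{2q}\le C^q\,\E|T_n^{(\mathbb J)}(z)|^{q}+C^q\Big(\frac{|\mathbb J|}{n}\Big)^q.
\]
The second term is handled directly by the hypothesis $|\mathbb J|\le C n^{1/2}$, which gives $(|\mathbb J|/n)^q\le C^q n^{-q/2}\le C^q n^{-q/4}$, absorbed into the first summand $C^q n^{-q/4}$ of the claimed bound. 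So the work is to estimate $\E|T_n^{(\mathbb J)}(z)|^{q}$, where $T_n^{(\mathbb J)}(z)=\frac1n\sum_{j\in\mathbb T_{\mathbb J}}\varepsilon_j^{(\mathbb J)}R_{jj}^{(\mathbb J)}$.

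Next I would split $\varepsilon_j^{(\mathbb J)}=\varepsilon_{j1}^{(\mathbb J)}+\varepsilon_{j2}^{(\mathbb J)}+\varepsilon_{j3}^{(\mathbb J)}$ and treat the three contributions to $T_n^{(\mathbb J)}$ separately. For the $\varepsilon_{j3}$ part, using that $(\Tr\mathbf R-\Tr\mathbf R^{(j)})R_{jj}$ telescopes into a derivative (as in \eqref{lal3}), one sees $\frac1n\sum_j\varepsilon_{j3}^{(\mathbb J)}R_{jj}^{(\mathbb J)}$ is $O(n^{-1})$ times a bounded resolvent-trace quantity, contributing $n^{-2q}v^{-2q}$-type terms via Lemma \ref{lem2*}. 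For the $\varepsilon_{j1}$ and $\varepsilon_{j2}$ parts I would apply the Cauchy–Schwarz inequality in the form $\E|T_n^{(\mathbb J)}|^q\le \big(\frac1n\sum_j\E^{1/2}|\varepsilon_{j\nu}^{(\mathbb J)}|^{2q}\,\E^{1/2}|R_{jj}^{(\mathbb J)}|^{2q}\big)$-type bounds (more precisely, bound $|\frac1n\sum_j\varepsilon_{j\nu}R_{jj}|^q$ by $\frac1n\sum_j|\varepsilon_{j\nu}|^q|R_{jj}|^q$ using Jensen, then Cauchy–Schwarz on each term), and then feed in Lemma \ref{lem5} for $\E|\varepsilon_{j1}^{(\mathbb J)}|^{2q}$, Lemma \ref{lem6} for $\E|\varepsilon_{j2}^{(\mathbb J)}|^{2q}$, and condition \eqref{cond3} for $\E|R_{jj}^{(\mathbb J)}|^{2q}\le s_0^{2q}C_0^{2q}$ (the extra factor $s_0$ because we pass from $v$ to $v\ge v_1/s_0$). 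Collecting the three estimates under one square root (since each of $\varepsilon_{j1},\varepsilon_{j2},\varepsilon_{j3}$ enters as $\E^{1/2}(\cdot)$ against $\E^{1/2}|R_{jj}|^{2q}\le (C_0s_0)^q$) produces exactly the bracketed sum
$\frac{\mu_4}{n^{q/2+1}}+\frac1{n^{2q}v^{2q}}+(C_1q)^{2q}n^{-q}s_0^{2q}C_0^{4q}+6(C_3q)^{4q}n^{-q}v^{-q}A_q^{(\mathbb J)}+2(C_3q)^{4q}n^{-q}v^{-q}(C_0s_0)^q$ multiplied by $(C_0s_0)^q$.

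The main obstacle I anticipate is bookkeeping rather than any single hard estimate: one must be careful that the moment condition $q\le A_1(nv)^{1/4}$ is preserved when passing from the range $v\ge v_1$ (in which \eqref{cond3} and Lemmas \ref{lem5}, \ref{lem6} are stated) to the slightly larger range $v\ge v_1/s_0$ (in which the conclusion of Lemma \ref{lem8} is asserted), exactly the rescaling argument carried out in \eqref{rem1}. One must also keep track of the quantity $A_q^{(\mathbb J)}=1+\E^{1/4}|\Psi^{(\mathbb J)}|^{4q}$ which is left implicit on the right-hand side and not yet bounded at this stage — it is propagated forward unchanged, to be estimated in a later self-consistent induction. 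Finally, care is needed so that the $\mu_4 n^{-q/2-1}$ term (coming from the diagonal/self-contribution $\frac1n\sum_j(X_{jj}^2-1)$-type pieces, or more precisely the second Rosenthal term in $\varepsilon_{j1}$ after using $\E|X_{jl}|^{4q}\le D^{4q-4}n^{q-1}\mu_4$) appears with the stated exponent; this is the only place where the fourth-moment constant $\mu_4$ rather than the truncation constant $D$ shows up explicitly.
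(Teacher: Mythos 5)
Your plan is essentially the paper's own proof: you reduce $\E|\Lambda_n^{(\mathbb J)}|^{2q}$ via Lemma \ref{lem7} to $C^q\E|T_n^{(\mathbb J)}(z)|^q$ plus the $\bigl(|\mathbb J|/n\bigr)^q$ term (which, with $|\mathbb J|\le Cn^{\frac12}$, is absorbed into $C^q n^{-\frac q4}$), then bound $\E|T_n^{(\mathbb J)}(z)|^q$ by Jensen and Cauchy--Schwarz against $\E|R_{jj}^{(\mathbb J)}|^{2q}\le (C_0s_0)^{2q}$ (condition \eqref{cond3} plus the $s_0$-rescaling of Lemma \ref{schlein}) and feed in Lemmas \ref{lem2*}, \ref{lem5}, \ref{lem6} for the three pieces of $\varepsilon_j^{(\mathbb J)}$, carrying $A_q^{(\mathbb J)}$ forward unestimated exactly as the paper does. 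This matches the paper's argument in structure and in all the constants, so nothing further is needed.
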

\begin{proof}
By Lemma \ref{lem6}, we have
\begin{equation}\notag
\E|\Lambda_n^{(\mathbb J)}|^{2q}\le C^p\E|T_n^{(\mathbb J)}(z)|^q+\frac {|\mathbb J|^{\frac q2}}{n^{\frac q2}}\le C^q\E|T_n^{(\mathbb J)}(z)|^q+\frac {C}{n^{\frac q4}}.
\end{equation}
Furthermore,
\begin{align}
\E|T_n^{(\mathbb J)}(z)|^q\le\Big(\frac1n\sum_{j\in\mathbb T_{\mathbb J}}\E|\varepsilon_{j}^{(\mathbb J)}|^{2q}\Big)^{\frac12}
\Big(\frac1n\sum_{j\in\mathbb T_{\mathbb J}}\E|R_{jj}^{(\mathbb J)}|^{2q}\Big)^{\frac12}.\notag
\end{align}
Lemmas \ref{lem2*} -- \ref{lem6} together with Lemma \ref{schlein} imply
\begin{align}\label{ineq1}
\frac1n\sum_{j\in\mathbb T_{\mathbb J}}\E|\varepsilon_{j}^{(\mathbb J)}|^{2q} \le  & 4^{2q-1}\Big( \frac{\mu_4}
{n^{\frac q2+1}}+\frac1{n^{2q}v^{2q}}+(C_1 q)^{2q}n^{-q}s_0^{2q}C_0^{4q}\notag\\&
+6(C_3q)^{4q}n^{-q}v^{-q}A_{q}^{(\mathbb J)}+2(C_3q)^{4q}n^{-q}v^{-q}(C_0s_0)^q\Big).
\end{align}

Thus, Lemma \ref{lem8} is proved.
\end{proof}
\begin{lem}\label{lem01}Assuming the  conditions of Theorem \ref{main} and 
condition \eqref{cond3}, there exists an absolute constant $C_4$ such that, for
$q\le A_1(nv)^{\frac14}$ and $v\ge v_1/s_0$, we have, uniformly in  $\mathbb J\subset \mathbb T$ such that $|\mathbb J|\le Cn^{\frac12}$, and for $z\in\mathbb G$,
\begin{align}\notag
A_q^{(\mathbb J)}\le  C_4^q.
\end{align}
\end{lem}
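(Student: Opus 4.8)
The plan is to reduce $\E(\Psi^{(\mathbb J)})^{4q}$ to the a priori moment bound \eqref{cond3} for the diagonal entries of $\mathbf R^{(\mathbb J)}$ on the ``$p$-side''.

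First I would put $\Psi^{(\mathbb J)}$ in a manifestly non-negative form. A direct count of the $p-n+|\mathbb J|$ zero eigenvalues of $\mathbf V^{(\mathbb J)}$ together with the symmetry of its nonzero spectrum (cf. Section 3 of \cite{GT:2009}) gives
\begin{equation}\notag
\frac1n\sum_{l=1}^pR^{(\mathbb J)}_{l+n,l+n}=m_n^{(\mathbb J)}(z)-\frac{p-n+|\mathbb J|}{nz},
\end{equation}
and therefore $\Psi^{(\mathbb J)}=\im\big(m_n^{(\mathbb J)}(z)-\frac{p-n+|\mathbb J|}{nz}\big)+\frac1{nv}=\frac1n\sum_{l=1}^p\im R^{(\mathbb J)}_{l+n,l+n}+\frac1{nv}$. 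Since $\mathbf V^{(\mathbb J)}$ is Hermitian and $v>0$, each $\im R^{(\mathbb J)}_{l+n,l+n}\ge0$; hence $\Psi^{(\mathbb J)}\ge0$ and
\begin{equation}\notag
0\le\Psi^{(\mathbb J)}\le\frac1y\,\frac1p\sum_{l=1}^p\big|R^{(\mathbb J)}_{l+n,l+n}\big|+\frac1{nv}.
\end{equation}
Moreover for $z\in\mathbb G$ one has $v\ge v_0/\sqrt{\gamma(z)}\ge v_0$, because $\gamma(z)\le\sqrt y\le1$, so $\frac1{nv}\le1/A_0$. In particular the bound for $\Psi^{(\mathbb J)}$ makes no use of the size of $(p-n+|\mathbb J|)/n$, and the whole problem is reduced to controlling the moments $\E|R^{(\mathbb J)}_{l+n,l+n}|^{4q}$.

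Next, by Minkowski's inequality in $L^{4q}$ together with Jensen's inequality,
\begin{equation}\notag
\big(\E(\Psi^{(\mathbb J)})^{4q}\big)^{\frac1{4q}}\le\frac1y\,\frac1p\sum_{l=1}^p\big(\E|R^{(\mathbb J)}_{l+n,l+n}|^{4q}\big)^{\frac1{4q}}+\frac1{A_0},
\end{equation}
so it is enough to prove $\E|R^{(\mathbb J)}_{l+n,l+n}|^{4q}\le C^{4q}$ with $C$ independent of $n,v,\mathbb J,l$. Here I would invoke \eqref{cond3}. Since $q\le A_1(nv)^{\frac14}$, the point $z'=u+is_0^2v$ satisfies $s_0^2v\ge s_0v_1\ge v_1$ and, because $s_0=2^4$ gives $(s_0^2)^{1/4}=4$, also $4q\le 4A_1(nv)^{\frac14}=A_1(ns_0^2v)^{\frac14}$; hence \eqref{cond3} applies at $z'$ with exponent $4q$ and yields $\E|R^{(\mathbb J)}_{l+n,l+n}(z')|^{4q}\le C_0^{4q}$. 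Transferring this bound from level $s_0^2v$ back down to level $v$ by a bounded number of applications of Lemma \ref{schlein}, each costing only a multiplicative constant, gives $\E|R^{(\mathbb J)}_{l+n,l+n}(z)|^{4q}\le(CC_0)^{4q}$. Consequently $\big(\E(\Psi^{(\mathbb J)})^{4q}\big)^{1/(4q)}\le CC_0/y+1/A_0=:B$, and since $(1+B)^q\ge1+B^q$,
\begin{equation}\notag
A_q^{(\mathbb J)}=1+\E^{\frac14}|\Psi^{(\mathbb J)}|^{4q}\le1+B^q\le(1+B)^q=:C_4^q .
\end{equation}

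The main obstacle is the bookkeeping in the previous paragraph: one must raise the imaginary part just enough that the a priori bound \eqref{cond3} becomes available at the higher exponent $4q$, while keeping the return trip to level $v$ within the range covered by Lemma \ref{schlein}, and one must make sure that \eqref{cond3} is applicable for sets $\mathbb J$ as large as $|\mathbb J|\le Cn^{\frac12}$ (which is why this lemma is placed right before Lemma \ref{lem8}). This is exactly what the calibration of the constants $A_1$, $s_0$, $L$ and $v_1$ fixed throughout Section \ref{diag} is designed to guarantee; everything else is the identity for $\Psi^{(\mathbb J)}$ above together with Jensen's and Minkowski's inequalities.
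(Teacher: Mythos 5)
Your proof is correct, but it follows a genuinely different route from the paper's. The paper bounds $\Psi^{(\mathbb J)}$ by $|s_y(z)|$ plus the small terms $\frac{v|\mathbb J|}{n|z|^2}$, $\frac{v(1-y)}{y|z|^2}$, $(nv)^{-1}$ and $|\Lambda_n^{(\mathbb J)}|$, and then inserts the fluctuation bound of Lemma \ref{lem8} for $\E|\Lambda_n^{(\mathbb J)}|^{2q}$; since that bound itself contains $A_q^{(\mathbb J)}$ (through Lemma \ref{lem6}), this yields an implicit inequality of the form $A_q^{(\mathbb J)}\le C^q\big(1+\cdots+q^{q}n^{-\frac q4}v^{-\frac q4}(A_q^{(\mathbb J)})^{\frac14}\big)$, which is then solved using $q\le A_1(nv)^{\frac14}$; the restriction $|\mathbb J|\le C\sqrt n$ enters only through the term $\frac{v|\mathbb J|}{n|z|^2}$. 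You instead use the exact trace identity $\frac1n\sum_{l=1}^pR^{(\mathbb J)}_{l+n,l+n}=m_n^{(\mathbb J)}(z)-\frac{p-n+|\mathbb J|}{nz}$, so that $\Psi^{(\mathbb J)}$ is a nonnegative average of $\im R^{(\mathbb J)}_{l+n,l+n}$ plus $\frac1{nv}$, and reduce the claim via Minkowski to $\E|R^{(\mathbb J)}_{l+n,l+n}|^{4q}\le C^{4q}$, which you obtain from condition \eqref{cond3} at the shifted point $u+is_0^2v$ (where $s_0^2v\ge v_1$ and the admissible exponent is $A_1(ns_0^2v)^{\frac14}=4A_1(nv)^{\frac14}\ge 4q$) and transport back down with Lemma \ref{schlein}. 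This moment-upgrading device is exactly the one the paper itself employs (with a single $s_0$-shift and exponent $2q$) in Lemmas \ref{lem5}, \ref{lem6} and \ref{bp1}, so your use of \eqref{cond3} at exponent $4q$ is covered by the hypothesis as stated and is compatible with the induction of Corollary \ref{cor8}. What your route buys: it bypasses Lemmas \ref{lem7} and \ref{lem8} entirely for this statement, needs no restriction $|\mathbb J|\le C\sqrt n$, and avoids solving an implicit inequality in $A_q^{(\mathbb J)}$; the only cost is that your constant $C_4$ involves $s_0^2C_0/y$ and $A_0$ rather than the combination of constants the paper produces, which is immaterial.
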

\begin{proof}

We start from the obvious inequality, using $|s_y(z)|\le 1/\sqrt y$,
\begin{align}\notag
\E|\Psi^{(\mathbb J)}|^{2q}\le 3^{2q}(\frac1{y^q}+\left(\frac{v|\mathbb J|}{n|z|^2}\right)^{2q}+\left(\frac{v(1-y)}{y|z|^2}\right)^{2q}+(nv)^{-2q}+\E|\Lambda_n^{(\mathbb J)}(z)|^{2q}).
\end{align}
Note that, for $z\in\mathbb G$,
\begin{equation}
 \frac{v|\mathbb J|}{n|z|^2}\le \frac{|\mathbb J|}{n|z|}\le \frac C{\sqrt n|z|}\le \frac C{\sqrt{nv}}\le \frac1{2\sqrt y}.
\end{equation}
It is straightforward to check that, for $z\in\mathbb G$,
\begin{equation}
 \frac{v(1-y)}{y|z|^2}\le \frac{1-y}{y|z|}\le \frac{1+\sqrt y}y.
\end{equation}
Furthermore, applying Lemma \ref{lem8}, we get
\begin{align}\label{a10}
\E|\Psi^{(\mathbb J)}|^{2q}&\le 3^{2q}\Bigg(\left(\frac 2{y}\right)^{2q}+(nv)^{-2q}+\Big(\frac{\mu_4}
{n^{\frac q2+1}}+\frac1{n^{2q}v^{2q}}+(C_1q)^{2q}n^{-q}s_0^{2q}C_0^{4q}\notag\\&+6(C_3q)^{4q}n^{-q}v^{-q}A_{q}^{(\mathbb J)}+
2(C_3q)^{4q}n^{-q}v^{-q}(C_0s_0)^q\Big)^{\frac12}(C_0s_0)^q\Bigg).
\end{align}
By definition,
\begin{equation}\label{a1}
A_q^{(\mathbb J)}\le 1+\E^{\frac12}(\Psi^{(\mathbb J)})^{2q}.
\end{equation}
Inequalities \eqref{a1} and \eqref{a10} together imply
\begin{align}
A_q^{(\mathbb J)}&\le1+3^{q}\Bigg(1+(nv)^{-\frac{q}2}+(C_0s_0)^{\frac q2}\Big(\mu_4^{\frac14}n^{-\frac q8}+\frac1{n^{\frac q2}v^{\frac q2}}
+(C_1q)^{\frac q2}n^{-\frac q4}s_0^{\frac q2}C_0^{q}\notag\\&+3(C_3q)^{q}n^{-\frac q4}v^{
-\frac q4}(A_q^{(\mathbb J)})^{\frac14}+
2(C_3q)^{q}n^{-\frac q4}v^{-\frac q4}(C_0s_0)^{\frac q4}\Big)\Bigg).\notag
\end{align}
Let $C'=s_0\max\{9,C_3^{\frac12}, C_0^{\frac32}C_1^{\frac12}, 3C_3C_0^{\frac12}, C_3C_0^{\frac34}\}$.
The last inequality implies that
\begin{align}
A_q^{(\mathbb J)}&\le {C'}^q\Big(1+(nv)^{-\frac{q}2}+\mu_4^{\frac12}n^{-\frac q8}+\frac1{n^{\frac q2}v^{\frac q2}}\notag\\&\qquad\qquad\qquad
+q^{\frac q2}n^{-\frac q4}+q^{q}n^{-\frac q4}v^{-\frac q4}+
q^{\frac{4q}3}n^{-\frac q3}v^{-\frac q3}\Big).\notag
\end{align}
For $q\le A_1(nv)^{\frac14}$, we get, for $z\in\mathbb G$,
\begin{equation}\notag
A_q^{(\mathbb J)}\le C_4^q,
\end{equation}
where $C_4$ is some absolute constant. We may take $C_4=2C'$.
\end{proof}
\begin{cor}\label{cor4.8}Assuming the  conditions of Theorem \ref{main} and  
condition \eqref{cond3}, we have , for $v\ge v_1/s_0$, and for any $\mathbb J\subset \mathbb T$ such that $|\mathbb J|\le \sqrt n$
\begin{align}\label{lll}
\E|\Lambda_n^{(\mathbb J)}|^{2q}&\le  C_0^{2q}\Big(\frac{4^{\frac q4}\mu_4^{\frac12}s_0^{q}}{n^{\frac q4}v^{\frac q4}}+\frac{s_0^{\frac q2}}{n^qv^q}
+\frac{C_5^qq^{2q}}{n^{\frac q2}v^{\frac q2}}\Big),
\end{align}
where
\begin{align}\notag
 C_5:=4C_1^2s_0^{4}+6^{\frac1q}C_3^4C_4+2^{\frac1q}C_3^4s_0^3.
\end{align}

\end{cor}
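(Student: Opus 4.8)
The estimate \eqref{lll} is obtained by feeding the bound $A_q^{(\mathbb J)}\le C_4^q$ of Lemma \ref{lem01} into the estimate of Lemma \ref{lem8} and then simplifying; there is no new idea, only bookkeeping of the constants.

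First I would recall that Lemma \ref{lem8} gives, for $|\mathbb J|\le\sqrt n$,
\[
\E|\Lambda_n^{(\mathbb J)}|^{2q}\le C^qn^{-q/4}+\sqrt{S^{(\mathbb J)}}\,(C_0s_0)^q,
\]
where $S^{(\mathbb J)}$ is the five-term sum $\mu_4n^{-q/2-1}+n^{-2q}v^{-2q}+(C_1q)^{2q}n^{-q}s_0^{2q}C_0^{4q}+6(C_3q)^{4q}n^{-q}v^{-q}A_q^{(\mathbb J)}+2(C_3q)^{4q}n^{-q}v^{-q}(C_0s_0)^q$. On $\mathbb G$, with $v\ge v_1/s_0$, $q\le A_1(nv)^{1/4}$ and $|\mathbb J|\le\sqrt n\le Cn^{1/2}$, Lemma \ref{lem01} applies and replaces the factor $A_q^{(\mathbb J)}$ in $S^{(\mathbb J)}$ by the absolute-constant power $C_4^q$.

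Next I would split the square root by subadditivity, $\sqrt{a_1+\dots+a_5}\le\sqrt{a_1}+\dots+\sqrt{a_5}$, and multiply each of the five pieces by $(C_0s_0)^q$. The pieces coming from the third, fourth and fifth summands of $S^{(\mathbb J)}$ all have the form (absolute constant)$^q\,q^{2q}\,n^{-q/2}v^{-q/2}$ once one uses $v\le V=4\sqrt y\le 4$ to introduce the missing factor $v^{-q/2}$ in the third piece and $q^q\le q^{2q}$ there; collecting them and factoring out $C_0^{2q}$ produces the single term $C_0^{2q}C_5^qq^{2q}n^{-q/2}v^{-q/2}$ with $C_5=4C_1^2s_0^4+6^{1/q}C_3^4C_4+2^{1/q}C_3^4s_0^3$. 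The piece from the second summand is $\sqrt{n^{-2q}v^{-2q}}\,(C_0s_0)^q=(C_0s_0)^qn^{-q}v^{-q}\le C_0^{2q}s_0^{q/2}n^{-q}v^{-q}$, the middle term of \eqref{lll}. Finally the piece from the first summand, namely $\sqrt{\mu_4 n^{-q/2-1}}\,(C_0s_0)^q\le\mu_4^{1/2}n^{-q/4}(C_0s_0)^q$, together with the leading $C^qn^{-q/4}$, is bounded by $C_0^{2q}4^{q/4}\mu_4^{1/2}s_0^qn^{-q/4}v^{-q/4}$ using once more $v\le4$ (which gives $1\le4^{q/4}v^{-q/4}$, the source of the harmless factor $4^{q/4}$). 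Summing the three contributions gives \eqref{lll}.

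I do not expect a genuine obstacle: every step is subadditivity of $t\mapsto\sqrt t$, monotonicity, and the two already-proved lemmas. The only point requiring attention is the choice of the absolute constants — one must check that $C_0,s_0,C_3$ (consistently with their earlier occurrences in Lemmas \ref{bp1}, \ref{lem5}, \ref{lem6}, \ref{lem01}) and $C_4$ can be arranged so that the parasitic factors $C^q$, $(C_1q)^qs_0^{2q}C_0^{3q}$, $\sqrt6(C_3q)^{2q}C_4^{q/2}C_0^{q}s_0^q$ and $\sqrt2(C_3q)^{2q}(C_0s_0)^{3q/2}$ are all absorbed into $C_0^{2q}C_5^qq^{2q}$ with exactly the $C_5$ stated; and one should remember that, via Lemma \ref{lem01}, the inequality is asserted only for $z\in\mathbb G$ with $v\ge v_1/s_0$ and $q\le A_1(nv)^{1/4}$.
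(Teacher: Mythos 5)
Your proposal is correct and coincides with the paper's own argument: the paper proves Corollary \ref{cor4.8} simply by combining Lemma \ref{lem8} with the bound $A_q^{(\mathbb J)}\le C_4^q$ of Lemma \ref{lem01} (assuming $C_0>1$), exactly the substitution-plus-bookkeeping you describe. Your explicit tracking of the square-root subadditivity, the use of $v\le V\le 4$ to insert the factors $v^{-q/4}$, $v^{-q/2}$, and the caveat that the bound is inherited on $\mathbb G$ with $q\le A_1(nv)^{\frac14}$ only spell out details the paper leaves implicit.
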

\begin{proof}
 Without loss of generality we may assume that $C_0>1$. The bound \eqref{lll} follows now from Lemmas \ref{lem8} and \ref{lem01}.
\end{proof}

\begin{lem}\label{lem9}Assuming the  conditions of Theorem \ref{main} and
condition \eqref{cond3} for $\mathbb J\subset \mathbb T$ such that $|\mathbb J|\le L\le \sqrt n$, there exist positive constant $A_0, C_0, A_1$ depending on 
$\mu_4, D$ only 
, such that we have, for $q\le A_1(nv)^{\frac14}$ and $v\ge v_1/s_0$  uniformly in $\mathbb J$ and $v_1$, for $j\in\mathbb T_{\mathbb J}\cup\{n+1,\ldots,n+p\}$
\begin{align}\notag
\E|R_{jj}^{(\mathbb J)}|^q\le C_0^q
\end{align}
and 
for $\widehat{\mathbb J}\subset \widehat{\mathbb T}$ such that $|\widehat{\mathbb J}|\le L\le \sqrt n$, there exist positive constant $A_0, C_0, A_1$ 
depending on $\mu_4, D$ only 
, such that we have, for $q\le A_1(nv)^{\frac14}$ and $v\ge v_1/s_0$  uniformly in $\widehat {\mathbb J}$ and $v_1$, for any 
$j\in\{1,\ldots,n\}\cup\widehat{\mathbb T}_{\widehat{\mathbb J}}$,
\begin{align}\notag
\E|R_{jj}^{(\widehat{\mathbb J})}|^q\le C_0^q
\end{align}
with $|\widehat{\mathbb J}|\le L-1$.
\end{lem}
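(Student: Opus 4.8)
The plan is to read Lemma \ref{lem9} as a single inductive step in a descent over the imaginary part $v$. Starting from $v\ge V=4\sqrt y$, where $|R_{jj}^{(\mathbb J)}|\le v^{-1}\le(4\sqrt y)^{-1}$ makes the bound immediate for every $q$, one lowers $v$ by the fixed factor $s_0$ at each application and reaches $v_0=A_0n^{-1}$ after $O(\log n)$ steps, with a single value of $C_0$ surviving all steps. The engine is the self-consistency identity \eqref{repr01**} (and \eqref{res22} for the bottom indices) combined with the a priori moment bounds on $\varepsilon_j$ and $\Lambda_n$ gathered in Lemmas \ref{lem2*}, \ref{lem5}, \ref{lem6}, \ref{lem8} and Corollaries \ref{cor4.8}, \ref{lem01}, all of which are proved under hypothesis \eqref{cond3}. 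What legitimizes the step down is observation \eqref{rem1}: if $q\le A_1(nv)^{\frac14}$ and $v\ge v_1/s_0$, then $2q\le A_1(nvs_0)^{\frac14}$ with $vs_0\ge v_1$, so \eqref{cond3} is available at the doubled moment order and the raised level $vs_0$; together with Lemma \ref{schlein} this gives $\E^{\frac12}|R_{jj}^{(\mathbb J)}(u+iv)|^{2q}\le s_0^qC_0^q$.

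\textbf{The steps I would carry out, in order.} First, from \eqref{in1} (equivalently \eqref{ineq10}), using $|s_y(z)|\le y^{-\frac12}$ and Cauchy--Schwarz,
\[
3^{-q}y^{\frac q2}\,\E|R_{jj}^{(\mathbb J)}|^q\le 1+\bigl(\E^{\frac12}|\varepsilon_j^{(\mathbb J)}|^{2q}+\E^{\frac12}|\Lambda_n^{(\mathbb J)}|^{2q}\bigr)\,\E^{\frac12}|R_{jj}^{(\mathbb J)}|^{2q}.
\]
Second, I bound $\E^{\frac12}|R_{jj}^{(\mathbb J)}|^{2q}\le s_0^qC_0^q$ as just described. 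Third, I bound $\E|\varepsilon_j^{(\mathbb J)}|^{2q}\le 3^{2q}\sum_{\nu=1}^3\E|\varepsilon_{j\nu}^{(\mathbb J)}|^{2q}$ via Lemmas \ref{lem2*}, \ref{lem5}, \ref{lem6} (the quadratic piece $\varepsilon_{j2}$ being controlled through $Q^{(\mathbb J,j)}_0$ and Lemma \ref{bp1}), and $\E|\Lambda_n^{(\mathbb J)}|^{2q}$ by Corollary \ref{cor4.8}, where $A_q^{(\mathbb J)}\le C_4^q$ has already been absorbed by Lemma \ref{lem01} (for $z\in\mathbb G$). Fourth, I observe that for $q\le A_1(nv)^{\frac14}$ every factor so produced has the shape $\theta^q\,C_0^{cq}$ with a fixed exponent $c\le 3$ and $\theta=\theta(A_0,A_1,\mu_4,D,y)$ that tends to $0$ as $A_0\to\infty$ and $A_1\to0$: the bound $q^{2q}\le A_1^{2q}(nv)^q$ cancels the $(nv)^{-q}$ in Lemmas \ref{lem5}--\ref{lem6}, and the $(nv)^{-q/4}$--type remainders of Corollary \ref{cor4.8} are small since $nv\ge A_0/s_0$. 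Fifth, I fix constants in the order: $C_0$ large (so $3^qy^{-\frac q2}\cdot1\le\tfrac12C_0^q$, i.e.\ $C_0>3y^{-\frac12}$, and $C_0\ge(4\sqrt y)^{-1}$), which determines $C_1,C_3,C_4,C_5$; then $A_0$ large and $A_1$ small so that the residual term $\theta^q s_0^qC_0^{cq}\le\tfrac12C_0^q$. Adding the two halves gives $\E|R_{jj}^{(\mathbb J)}|^q\le C_0^q$ for top indices $j\in\mathbb T_{\mathbb J}$. For $j\in\{n+1,\dots,n+p\}$ I repeat the argument starting from \eqref{res22}, replacing $|s_y(z)|\le y^{-\frac12}$ by the two-sided estimate of Lemma \ref{resol1*} (the scalar $z+ym_n(z)$ there lacks the $\tfrac{y-1}z$ term and is not bounded below by a $y$-only constant); the second assertion, for $\widehat{\mathbb J}\subset\widehat{\mathbb T}$, follows by the symmetric argument with $\mathbf X$ and $\mathbf X^*$ interchanged and the paired bounds for $\widehat\varepsilon_{j\nu}$.

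\textbf{Main obstacle.} The crux is the self-referential nature of the estimate: the right-hand side of the basic inequality involves $\E|R_{jj}^{(\mathbb J)}|^{2q}$ and, through Lemmas \ref{lem5}, \ref{lem6}, \ref{lem8}, \ref{lem01}, the very constant $C_0$, producing terms of order $C_0^{cq}$ with $c>1$ rather than $C_0^q$. The argument closes only because each such term carries the small factor $\theta^q$, so it can be dominated by $\tfrac12C_0^q$ once the constants are chosen in the correct order ($C_0$ first, then $A_0\uparrow$, $A_1\downarrow$); checking that this ordering is non-circular and that $\theta$ can be driven below the resulting threshold is the real work. A secondary point is uniformity in the removed set: none of the cited bounds may depend on $|\mathbb J|$ (only on $|\mathbb J|\le\sqrt n$), which is precisely what lets a single $C_0$ persist through all $O(\log n)$ iterations; the bookkeeping $|\mathbb J|\le L\Rightarrow|\mathbb J|\le L-1$ merely records the extra index removed when passing to $\mathbf R^{(\mathbb J,j)}$.
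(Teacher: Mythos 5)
Your proposal is correct and takes essentially the same route as the paper: the self-consistency bound \eqref{ineq10} with $\E^{\frac12}|R_{jj}^{(\mathbb J)}|^{2q}\le s_0^qC_0^q$ obtained from \eqref{cond3}, \eqref{rem1} and Lemma \ref{schlein}, the moment bounds of Lemmas \ref{lem2*}--\ref{lem6} and Corollary \ref{cor4.8} (with Lemma \ref{lem01}) for $\varepsilon_j^{(\mathbb J)}$ and $\Lambda_n^{(\mathbb J)}$, absorption of the resulting $C_0^{cq}$ terms through the smallness forced by $q\le A_1(nv)^{\frac14}$ with constants fixed in the same order ($C_0$ first, then $A_0$, $A_1$, cf.\ \eqref{a0}), and the bottom-block and hatted cases handled exactly as the paper indicates via \eqref{res22}, Lemma \ref{resol1*} and symmetry. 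The only framing difference is that the $s_0$-descent in $v$ you describe is the content of Corollary \ref{cor8}, while Lemma \ref{lem9} itself is just the single inductive step, which you carry out as in the paper.
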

\begin{proof}According to inequality \eqref{ineq10}, we have
\begin{align}
\E|R_{jj}^{(\mathbb J)}|^q&\le4^q(1+(\E^{\frac12}|\Lambda_n^{(\mathbb J)}|^{2q}+\E^{\frac12}|\varepsilon_j^{(\mathbb J)}|^{2q})
\E^{\frac12}|R_{jj}^{(\mathbb J)}|^{2q}).
\end{align}
Applying  condition \eqref{cond3}, we get
\begin{align}
\E|R_{jj}^{(\mathbb J)}|^q&\le 4^q(1+(\E^{\frac12}|\Lambda_n^{(\mathbb J)}|^{2q}+\E^{\frac12}|\varepsilon_{j1}^{(\mathbb J)}|^{2q}
+\cdots+\E^{\frac12}|\varepsilon_{j4}^{(\mathbb J)}|^{2q})s_0^qC_0^q).\notag
\end{align} 
Combining results of Lemmas \ref{lem2*} -- \ref{lem6} and Corollary \ref{cor4.8},  we obtain
\begin{align}
\E|&R_{jj}^{(\mathbb J)}|^q\le5^q\Bigg(1+s_0^qC_0^{2q}\Big(\frac{4^{\frac q4}\mu_4^{\frac14}s_0^{q}}{n^{\frac q4}v^{\frac q4}}+\frac{s_0^{q}}{n^qv^q}
+\frac{C_5^qq^{2q}}{n^{\frac q2}v^{\frac q2}}\Big)^{\frac12}\notag\\&\qquad\qquad\qquad+
s_0^qC_0^{3q}\bigg(\frac{4^{\frac q4}\mu_4^{\frac14}}{n^{\frac q4}v^{\frac q4}}
+\frac{s_0^{q}}{n^qv^q}
+\frac{C_5^qq^{2q}}{n^{\frac q2}v^{\frac q2}}
 \bigg)\Bigg).
\end{align}
We may rewrite the last inequality as follows
\begin{align}
\E|R_{jj}^{(\mathbb J)}|^q\le C_0^q\Big(\frac{5^q}{C_0^q}+\frac{{\widehat C}_1^{\frac q8}}{(nv)^{\frac q8}} +\frac {({\widehat C}_2q^4)^{\frac q4}}{(nv)^{\frac q4}}+
\frac {({\widehat C}_3q^4)^{\frac q2}}{(nv)^{\frac q2}}+
\frac {{\widehat C}_4^{ q}}{(nv)^{q}}\Big),\notag
\end{align}
where
\begin{align}
 {\widehat C}_1&=5^8s_0^{12}C_0^8\mu_4^{\frac1p},\notag\\
 {\widehat C}_2&=5^4s_0^4C_0^4C_5^2(1+2C_0^4\mu_4^{\frac2p}),\notag\\
 {\widehat C}_3&=5^2s_0^2C_0^2(s_0+C_5^2),\notag\\
 {\widehat C}_4&=5C_0^2s_0^2.\notag
\end{align}
Note that for 
\begin{equation}\label{a0}
 A_0\ge 2^8A_1^4\max\{{\widehat C}_1,\ldots,{\widehat C}_4\}
\end{equation}
 and $C_0\ge 25$, we obtain
 that  
\begin{equation}\notag
\E|R_{jj}^{(\mathbb J)}|^q\le C_0^q.
\end{equation}
Thus Lemma \ref{lem9} is proved.
\end{proof}

\begin{cor}\label{cor8}Assuming the  conditions of Theorem \ref{main}, 
  we have, for $q\le 8$ and $v\ge v_0= A_0n^{-1}$
there exist a  constant $C_0>0$ depending on $\mu_4$ and $D$ only such that for all $1\le j\le n+p$ and all $z\in\mathbb G$
\begin{equation}\label{r11}
\E|R_{jj}|^q\le C_0^q,
\end{equation}
and
\begin{equation}\label{r12}
 \E\frac1{|z+ym_n(z)+\frac{y-1}z|^q}\le C_0^q.
\end{equation}

\end{cor}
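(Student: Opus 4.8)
\emph{Proof proposal.} The plan is to obtain \eqref{r11} by a downward induction on the spectral scale $v$, using Lemma~\ref{lem9} to propagate the bound $\E|R_{jj}^{(\mathbb J)}|^q\le C_0^q$ from one scale to the next, and then to deduce \eqref{r12} from \eqref{r11} via the self-consistency relation \eqref{repr001}.

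\textit{The bound \eqref{r11}.} At the top scale $v_1=V=4\sqrt y$ the hypothesis \eqref{cond3}--\eqref{cond31} of Lemma~\ref{lem9} holds trivially: for $v\ge V$ one has $|R_{\ell\ell}^{(\mathbb J)}|\le v^{-1}\le V^{-1}$ almost surely, so $\E|R_{\ell\ell}^{(\mathbb J)}|^q\le V^{-q}\le C_0^q$ (here $L$ is a parameter to be fixed). Lemma~\ref{lem9} then gives $\E|R_{jj}^{(\mathbb J)}|^q\le C_0^q$ for all $v\ge V/s_0$, all $q\le A_1(nv)^{\frac14}$, and all $|\mathbb J|\le L-1$. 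But this conclusion is exactly the hypothesis \eqref{cond3}--\eqref{cond31} read at the scale $V/s_0$ with $L$ replaced by $L-1$, so Lemma~\ref{lem9} applies again; after $k$ iterations we obtain the bound for $v\ge V s_0^{-k}$ and $|\mathbb J|\le L-k$, with one and the same triple $(A_0,C_0,A_1)$ all along, since these constants are produced once by Lemma~\ref{lem9}. Choosing $k_0:=\lceil\log_{s_0}(Vn/A_0)\rceil$, so that $V s_0^{-k_0}\le v_0=A_0n^{-1}$, and $L:=k_0$ (which is $O(\log n)$, hence $\le n^{\frac12}$ for $n$ large, as Lemma~\ref{lem9} requires), we arrive at $\E|R_{jj}|^q\le C_0^q$ for $\mathbb J=\emptyset$, every $v\ge v_0$, every $q\le A_1(nv)^{\frac14}$, and all indices $j\in\{1,\dots,n+p\}$. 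Finally, for $z\in\mathbb G$ one has $nv\ge A_0/\sqrt{\gamma(z)}\ge A_0 y^{-\frac14}$, so the range $q\le 8$ is admissible once $A_0$ is taken large enough, consistently with \eqref{a0}. This proves \eqref{r11}.

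\textit{The bound \eqref{r12}.} Write $A:=z+ym_n(z)+\frac{y-1}z$. Fixing $j=1$ and rearranging \eqref{repr001} gives $R_{jj}(A-\varepsilon_j)=-1$, hence
\begin{equation}\notag
 \frac1A=-R_{jj}+\varepsilon_jR_{jj}\,\frac1A .
\end{equation}
On the event $E:=\{|\varepsilon_jR_{jj}|\le\tfrac12\}$ one may solve for $A^{-1}$ to get $|A|^{-1}\le 2|R_{jj}|$, so $\E[|A|^{-q}\mathbf 1_E]\le(2C_0)^q$ by the already established \eqref{r11}. On $E^{c}$ one uses the crude bound $|A|^{-1}\le(\Im A)^{-1}\le v^{-1}\le v_0^{-1}=n/A_0$, together with $\Pr(E^{c})\le 2^{2Q}\,\E^{\frac12}|\varepsilon_j|^{4Q}\,\E^{\frac12}|R_{jj}|^{4Q}$ for $Q$ taken as large as $A_1(nv)^{\frac14}$ permits, bounding $\E|\varepsilon_j|^{4Q}$ by Lemmas~\ref{lem2*},~\ref{lem5},~\ref{lem6} and $\E|R_{jj}|^{4Q}$ by \eqref{r11}. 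The \textbf{main obstacle} is to make $\E[|A|^{-q}\mathbf 1_{E^{c}}]\le(n/A_0)^q\,\Pr(E^{c})$ of order $C_0^q$ \emph{uniformly over} $\mathbb G$: the bounds for $\E|\varepsilon_j|^{4Q}$ carry a gain $(nv)^{-Q}$, which for $Q\asymp(nv)^{\frac14}$ defeats the fixed power $n^{q}$ wherever $nv$ is large, but near the lower boundary of $\mathbb G$, where $nv$ is merely bounded below by a constant, one must exploit in addition the $q^{-O(q)}$ factors implicit in Lemmas~\ref{lem5}--\ref{lem6} and choose $A_1$ small (hence $A_0$ large) so as to push $(n/A_0)^q\,\Pr(E^{c})$ below $C_0^q$. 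Adding the two contributions yields \eqref{r12}.
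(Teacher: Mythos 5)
Your proof of \eqref{r11} is essentially the paper's argument: a trivial bound at a fixed scale, then downward propagation by repeated application of Lemma~\ref{lem9} over the scales $v\ge s_0^{-\nu}$ with one fixed triple $(A_0,C_0,A_1)$ satisfying \eqref{a0}, until $v\ge v_0$, $q\le 8$, $\mathbb J=\emptyset$. (Minor point: the paper starts the induction at $v=1$ rather than $v=V=4\sqrt y$, which keeps $C_0$ free of $y$; your start at $V$ forces $C_0\ge (4\sqrt y)^{-1}$, a cosmetic difference.)

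Your proof of \eqref{r12}, however, has a genuine gap, and it is exactly at the point you flag as "the main obstacle''. On the bad event $E^c=\{|\varepsilon_jR_{jj}|>\tfrac12\}$ you pay the deterministic price $|A|^{-q}\le v^{-q}\le (n/A_0)^q$, so you need $\Pr(E^c)\lesssim n^{-q}$ uniformly on $\mathbb G$. But in the bulk of the spectrum ($\gamma(z)\asymp\sqrt y$) the region $\mathbb G$ reaches down to $v\asymp A_0n^{-1}$, where $nv\asymp A_0$ is a constant independent of $n$. There the admissible moment order is $Q\le A_1(nv)^{\frac14}=O(1)$, and the available bounds (Lemmas~\ref{lem5},~\ref{lem6}) give at best $\E|\varepsilon_j|^{4Q}\le (CQ)^{8Q}(nv)^{-2Q}$, so Markov's inequality yields only $\Pr(E^c)\le \mathrm{const}$, a quantity that does not decay in $n$ at all; no choice of "$A_1$ small, $A_0$ large'' changes this, since the exponent $Q$ itself is then bounded by a constant. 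Hence $(n/A_0)^q\Pr(E^c)$ diverges and the splitting argument cannot close. The paper avoids the bad-event/trivial-bound pairing altogether: from \eqref{stmarpas} it derives the pointwise inequality $\frac1{|z+ym_n(z)+\frac{y-1}z|}\le |s_y(z)|\bigl(1+\frac{|\Lambda_n|}{|z+ym_n(z)+\frac{y-1}z|}\bigr)$, proves the scale-comparison bound \eqref{inverse} (via $\bigl|\frac{d}{dz}\log(z+ym_n(z)+\frac{y-1}z)\bigr|\le v^{-1}$, in the spirit of Lemma~\ref{schlein}), and then runs the same multi-scale induction as for $R_{jj}$: Cauchy--Schwarz plus \eqref{inverse} bounds $\E|z+ym_n(z)+\frac{y-1}z|^{-q}$ at scale $v$ by the corresponding quantity at scale $s_0v$ times $\E^{\frac12}|\Lambda_n|^{2q}$, which is controlled by Corollary~\ref{cor4.8}; iterating over $v\ge s_0^{-\nu}$ gives \eqref{r12} down to $v_0$. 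To repair your argument you would need to replace the $E/E^c$ dichotomy by this (or an equivalent) deterministic propagation-in-$v$ device.
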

\begin{proof}Let $L=[-\log_{s_0}v_0]+1$. Note that
$s_0^{-1}v_0\le s_0^{-L}\le v_0$ and $A_1 {n^{\frac14}}{s_0^{-\frac L4}}\ge s_0^{-\frac14}A_1(nv_0)^{\frac14}$.
We may choose $C_0=25$ and $A_0, A_1$ such that \eqref{a0} holds and
\begin{equation}
A_1(nv)^{\frac14}\ge 8.
\end{equation}
 Then, 
for $v=1$, and for any $q\ge 1$, for any set $\mathbb J\subset\mathbb T$ such that $|\mathbb J|\le L$
\begin{equation}\label{in10}
\E|R_{jj}^{(\mathbb J)}|^q\le C_0^q.
\end{equation}
By Lemma \ref{lem9}, inequality \eqref{in10} holds for $v\ge 1/s_0$ and for
$q\le A_1n^{\frac14}/s_0^{\frac14}$ and for $\mathbb J\subset\mathbb T$ such that $|\mathbb J|\le L-1$. After repeated
application of Lemma  \ref{lem9} (with  \eqref{in10} as assumption valid for $v \ge 1/s_0$) 
we arrive at the conclusion that the
inequality \eqref{in10} holds for $v\ge 1/s_0^{2}$,
$p\le A_1n^{\frac14}/s_0^{\frac12}$ and all $\mathbb J\subset\mathbb T$ such that $|\mathbb J|\le L-2$. 
Continuing this iteration inequality\tc{, ienequlity}  \eqref{in10} finally  holds
for $v\ge A_0n^{-1}$, $q\le 8$  and $\mathbb J=\emptyset$.\\
The proof of inequality of \eqref{r12} is similar.  We have by \eqref{stmarpas} ,
\begin{align}\label{recip}
 \frac1{|z+ym_n(z)+\frac{y-1}z|}&\le \frac1{|ys(z)+z+\frac{y-1}z|}+\frac{|\Lambda_n|}{|z+ym_n(z)+\frac{y-1}z||z+ys(z)+\frac{y-1}z|}\notag\\&
 \le |s(z)|(1+\frac{|\Lambda_n|}{|z+ym_n(z)+\frac{y-1}z|}).
\end{align}
Furthermore, using that $|m_n'(z)|\le \frac1n\sum_{j=1}^n\E|R_{jj}|^2$ and 
$z+ym_n(z)+\frac{y-1}z|\ge v+y\im m_n(z)+\frac{v(1-y)}{|z|^2}$
, we get
\begin{align}
 |\frac d{dz}\log(z+ym_n(z)+\frac{y-1}z)|&\le \frac{|1+ym_n'(z)-\frac{y-1}{z^2}|}{|z+ym_n(z)+\frac{y-1}z|}\notag\\&
 \le \frac1v\frac{v+y\im m_n(z)+\frac{(1-y)v}{|z|^2}}{|v+y\im m_n(z)+\frac{v(1-y)}{|z|^2}|}\le \frac1v.\notag
\end{align}
By integration, this implies that (see the proof of Lemma \ref{schlein})
\begin{equation}\label{inverse}
\frac1{|(u+iv/s_0)+ym_n(u+iv/s_0)+\frac{y-1}{u+iv/s_0}|}\le \frac {s_0}{|(u+iv)+ym_n(u+iv)+\frac{y-1}{u+iv}|}.
\end{equation}
Inequality \eqref{recip} and the Cauchy--Schwartz inequality together imply 
\begin{align}\notag
 \E\frac1{|z+ym_n(z)+\frac{y-1}z|^q}\le 2^q|s(z)|^q(1+\E^{\frac12}|\Lambda_n|^{2q}\E^{\frac12}\frac1{|z+ym_n(z)+\frac{y-1}z|^{2q}}).
\end{align}
Applying inequality \eqref{inverse}, we obtain
\begin{align}\notag
 \E\frac1{|z+ym_n(z)+\frac{y-1}z|^q}&\le 2^q|s(z)|^q(1+\E^{\frac12}|\Lambda_n|^{2q}s_0^qC_0^q).
\end{align}
Using Corollary \ref{cor4.8}, we get, for $v\ge 1/s_0$
\begin{align}
 \E\frac1{|z+ym_n(z)+\frac{y-1}z|^q}&\le 2^q|s(z)|^q\Big(1+\Big(\frac{4^{\frac q8}\mu_4^{\frac14}s_0^{\frac q2}}{n^{\frac q8}v^{\frac q8}}
 +\frac{s_0^{\frac q4}}{n^{\frac q2}v^{\frac q2}}
+\frac{C_5^qq^{q}}{n^{\frac q4}v^{\frac q4}}\Big)s_0^qC_0^{2q}\Big).\notag
\end{align}
Thus inequality \eqref{r12} holds for $v\ge 1/s_0$ as well. Repeating this argument inductively with $A_0,A_1,C_)$ satisfying \eqref{a0}
for the regions $v \ge s_0^{-\nu}$, for $\nu=1,\ldots,L$ and  $z \in \mathbb G$, we get the claim.
Thus, Corollary \ref{cor8} is proved.

\end{proof}


\section{{Proof} of Theorem \ref{stiltjesmain}}\label{expect}
We return now to the representation \eqref{repr01} which implies that
\begin{align}\label{lambda}
s_n(z)&=\frac1n\sum_{j=1}^n\E R_{jj}=s_y(z)+\E\Lambda_n=s_y(z)+\E\frac{T_n(z)}{z+y(s(z)+m_n(z))+\frac{y-1}z}.
\end{align}
We may continue the last equality as follows
\begin{align}\label{eq00}
s_n(z)=s_y(z)+\E\frac{\frac1n\sum_{j=1}^n\varepsilon_{j3}R_{jj}}{z+y(s_y(z)+m_n(z))+\frac{y-1}z}+
\E\frac{\widehat T_n(z)}{z+y(s(z)+m_n(z))+\frac{y-1}z},
\end{align}
where 
$$
\widehat T_n=\frac1n\sum_{j=1}^n(\varepsilon_{j1}+\varepsilon_{j2})R_{jj}.
$$


Note that the definition of $\varepsilon_{j3}$ in \eqref{repr01} and equality \eqref{rrr7} in the Appendix together imply 
\begin{equation}\label{7.3}
\frac1n\sum_{j=1}^n\varepsilon_{j3}R_{jj}=\frac y{2n}(-m_n'(z)+\frac{m_n(z)}z).
\end{equation}

Thus we may rewrite \eqref{eq00} as
\begin{align}\label{eq01}
s_n(z)=s_y(z)&-\frac y{2n}\E\frac{ m_n'(z)-\frac{m_n(z)}z}{z+y(s_y(z)+m_n(z))+\frac{y-1}z}\notag\\&+
\E\frac{\widehat T_n(z)}{z+y(s_y(z)+m_n(z))+\frac{y-1}z}.
\end{align}

Denote by
\begin{equation}\mathfrak T=\E\frac{\widehat T_n(z)}{z+y(s_y(z)+m_n(z))+\frac{y-1}z}.\notag
\end{equation}

\subsection{Estimation of  $\mathfrak T$} We represent $\mathfrak T$ 
\begin{align}
\mathfrak T=\mathfrak T_{1}+\mathfrak T_{2},\notag
\end{align}
where
\begin{align}
\mathfrak T_{1}&=-\frac1n\sum_{j=1}^n\E\frac{(\varepsilon_{j1}+\varepsilon_{j2})
\frac1{z+ym_n^{(j)}(z)+\frac{y-1}z}}{z+y(m_n(z)+s_y(z))+\frac{y-1}z},\notag\\
\mathfrak T_{2}&=\frac1n\sum_{j=1}^n
\E\frac{(\varepsilon_{j1}+\varepsilon_{j2})(R_{jj}+\frac1{z+ym_n^{(j)}(z)+\frac{y-1}z})}{z+y(m_n(z)+s_y(z))+\frac{y-1}z}.\notag
\end{align}
\subsubsection{Estimation  of $\mathfrak T_{1}$}
We may decompose  $\mathfrak T_{1}$ as
\begin{align}\label{tii}
\mathfrak T_{1}=\mathfrak T_{11}+\mathfrak T_{12},
\end{align} 
where
\begin{align}
\mathfrak T_{11}&=-\frac1n\sum_{j=1}^n\E\frac{(\varepsilon_{j1}+\varepsilon_{j2})
\frac1{z+ym_n^{(j)}(z)+\frac{y-1}z}}{z+y(m_n^{(j)}(z)+s_y(z))+\frac{y-1}z},\notag\\
\mathfrak T_{12}&=-\frac1n\sum_{j=1}^n
\E\frac{(\varepsilon_{j1}+\varepsilon_{j2})\widetilde\varepsilon_{j3}
\frac1{z+ym_n^{(j)}(z)+\frac{y-1}z}}{(z+y(m_n^{(j)}(z)+s_y(z))+\frac{y-1}z)(z+y(m_n(z)+s_y(z))+\frac{y-1}z)},\notag
\end{align}
where
\begin{equation}\notag
\widetilde\varepsilon_{j3}=y\varepsilon_{j3}-\frac1{nz}.
\end{equation}

It is easy to see that,  by conditional expectation
\begin{equation}\label{fin104}
\mathfrak T_{11}=0.
\end{equation}
Applying the Cauchy--Schwartz inequality, we get, for $\nu=1,2$,
\begin{align}\label{fin100}
\Bigg|\E&\frac{\varepsilon_{j\nu}\widetilde\varepsilon_{j3}
\frac1{z+ym_n^{(j)}(z)+\frac{y-1}z}}{(z+y(m_n^{(j)}(z)+s_y(z))+\frac{y-1}z)(z+y(m_n(z)+s_y(z))+\frac{y-1}z)}\Bigg|\notag\\
&\le 
\E^{\frac12}\Bigg|\frac{\varepsilon_{j\nu}}{(z+ym_n^{(j)}(z)+\frac{y-1}z)(z+y(m_n^{(j)}(z)+s_y(z))+\frac{y-1}z)}\Bigg|^2\\
&\qquad\qquad\qquad\times\E^{\frac12}\Bigg|\frac{\widetilde\varepsilon_{j3}}{z+y(m_n(z)+s(z))+\frac{y-1}z}\Bigg|^2.
\end{align}
Applying the Cauchy -- Schwartz inequality again, we get
\begin{align}\label{fin200}
 \E^{\frac12}\Bigg|&\frac{\varepsilon_{j\nu}}{(z+ym_n^{(j)}(z)+\frac{y-1}z)(z+y(m_n^{(j)}(z)+s_y(z))+\frac{y-1}z)}\Bigg|^2
 \notag\\&\le 
 \E^{\frac14}\frac{|\varepsilon_{j\nu}|^4}{|z+y(m_n^{(j)}(z)+s_y(z))+\frac{y-1}z|^4}\E^{\frac14}\frac1{|z+ym_n^{(j)}(z)+\frac{y-1}z|^4}.
\end{align}
Inequalities \eqref{fin100}, \eqref{fin200}, Corollary  \ref{cor8} together imply
\begin{align}\label{f19}
 \Bigg|\E&\frac{\varepsilon_{j\nu}\widetilde\varepsilon_{j3}
\frac1{z+ym_n^{(j)}(z)+\frac{y-1}z}}{(z+y(m_n^{(j)}(z)+s(z))+\frac{y-1}z)(z+y(m_n(z)+s(z))+\frac{y-1}z)}\Bigg|\notag\\
&\qquad\qquad\qquad\qquad\qquad\qquad\le \frac C{nv}\E^{\frac14}\frac{|\varepsilon_{j\nu}|^4}{|z+ym_n^{(j)}(z)+\frac{y-1}z+s(z)|^4}.
\end{align}

By Corollary \ref{corgot}, inequality \eqref{ingot} with $\alpha=0$ and $\beta=4$ in the Appendix we have for $\nu=1,2$
\begin{align}\label{f21}
 \E^{\frac14}\frac{|\varepsilon_{j\nu}|^4}{|z+y(m_n^{(j)}(z)+s(z))+\frac{y-1}z|^4}\le \frac {C}{\sqrt {nv}|(z+\frac{y-1}z)^2-4y|^{\frac14}}
\end{align}
with some constant $C>0$ depending on $\mu_4$ and $D$ only.
We get from \eqref{f19},  and  \eqref{f21} that
for $z\in\mathbb G$, 
\begin{align}\label{finisch1}
|\mathfrak T_{1}|\le \frac C{(nv)^{\frac32}|(z+\frac{y-1}z)^2-4y|^{\frac14}}
\end{align}
with some constant $C>0$ depending on $\mu_4$ and $D$ only.
\subsubsection{Estimation of $\mathfrak T_{2}$}Using the representation \eqref{repr01}, we rewrite
 $\mathfrak T_2$ in the form
 \begin{equation}\notag
 \mathfrak T_2=\frac1n\sum_{j=1}^n\E\frac{(\varepsilon_{j1}+\varepsilon_{j2})^2R_{jj}}
 {(z+ym_n^{(j)}(z)+\frac{y-1}z)(z+y(s_y(z)+m_n(z))+\frac{y-1}z)}.
 \end{equation}
 We decompose $\mathfrak T_2$ as follows
\begin{equation}
 \mathfrak T_{2}=\mathfrak T_{21}+\mathfrak T_{22},
\end{equation}
where
\begin{align}
\mathfrak T_{21}&=\frac1n\sum_{j=1}^n\E\frac{\varepsilon_{j2}^2R_{jj}}{(z+ym^{(j)}(z)+\frac{y-1}z)(z+y(s_y(z)+m_n(z))+\frac{y-1}z)},\notag\\
 \mathfrak T_{22}&=\frac1n\sum_{j=1}^n
 \E\frac{(\varepsilon_{j1}^2+2\varepsilon_{j1}\varepsilon_{j2})R_{jj}}{(z+ym^{(j)}(z)+\frac{y-1}z)(z+y(s_y(z)+m_n(z))+\frac{y-1}z)}.\notag
\end{align}
Applying the Cauchy -- Schwartz inequality and inequality \eqref{lar1} in the Appendix, we obtain, for $z\in\mathbb G$,
\begin{align}\label{lang2}
|\mathfrak T_{22}|&\le \frac Cn\sum_{j=1}^n\E^{\frac12}\frac{|{\varepsilon}_{j1}|^2|{\varepsilon}_{j1}+2\varepsilon_{j2}|^2}{|z+ym^{(j)}(z)+\frac{y-1}z|^2
|z+y(s_y(z)+m_n^{(j)}(z))+\frac{y-1}z|^2}\E^{\frac12}|R_{jj}|^2.
\end{align}
We note that
\begin{align}
 \E\{|{\varepsilon}_{j1}|^2{|\varepsilon}_{j1}+2\varepsilon_{j2}|^2\Big|\mathfrak M^{(j)}\}&\le C(\E^{\frac12}\{|\varepsilon_{j1}|^4\Big|\mathfrak M^{(j)}\}
 (\E^{\frac12}\{|\varepsilon_{j1}|^4\Big|\mathfrak M^{(j)}\}+\E^{\frac12}\{|\varepsilon_{j2}|^4\Big|\mathfrak M^{(j)}\}).\notag
\end{align}
Using Lemmas  \ref{basic2}, \ref{basic5},  we get
\begin{align} 
\E\{|{\varepsilon}_{j1}|^2{|\varepsilon}_{j1}+2\varepsilon_{j2}|^2\Big|\mathfrak M^{(j)}\}
\le \frac C{n^2}+\frac C{n^2v}(\im m_n^{(j)}(z)+\frac{(1-y)v}{|z|^2}).\notag
\end{align}
This implies that
\begin{align}
|\mathfrak T_{22}|&\le (\frac C{n|(z^2+\frac{y-1}z)^2-4y|^{\frac12}}+\frac C{n\sqrt v|(z^2+\frac{y-1}z)^2-4y|^{\frac14}})\notag\\&\qquad\qquad\qquad\qquad\times\frac1n\sum_{j=1}^n\E^{\frac12}\frac1{|z+ym_n^{(j)}(z)+\frac{y-1}z|^2}\E|R_{jj}|^2.\notag
\end{align}
Applying  Corollary \ref{cor8}, we get
\begin{equation}\label{finish2}
|\mathfrak T_{22}|\le \frac C{n v^{\frac34}}.
\end{equation}


We continue now with $\mathfrak T_{21}$. We represent it in the form
\begin{equation}\label{t21ii}
\mathfrak T_{21}=H_1+H_2,
\end{equation}
where
\begin{align}
H_1&=-\frac1n\sum_{j=1}^n\E\frac{{\varepsilon}_{j2}^2}{(z+ym^{(j)}(z)+\frac{y-1}z)^2(z+y(s(z)+m_n(z))+\frac{y-1}z)},\notag\\
H_2&=\frac1n\sum_{j=1}^n\E\frac{{\varepsilon}_{j2}^2(R_{jj}+\frac1{z+ym_n^{(j)}(z)+\frac{y-1}z})}{(z+ym^{(j)}(z)+\frac{y-1}z)(z+y(s(z)+m_n(z))+\frac{y-1}z)}.\notag
\end{align}
Furthermore, using the representation
\begin{equation}\notag
 R_{jj}=-\frac1{z+ym_n^{(j)}(z)+\frac{y-1}z}+\frac1{z+ym_n^{(j)}(z)+\frac{y-1}z}(\varepsilon_{j1}+\varepsilon_{j2})R_{jj}
\end{equation}
(compare with  \eqref{repr001}), 
we bound $H_2$ in the following way
\begin{align}\notag
|H_2|\le H_{21}+H_{22},
\end{align}
where
\begin{align}
H_{21}&=\frac1n\sum_{j=1}^n\E\frac{4|{\varepsilon}_{j1}|^3|R_{jj}|}{|z+ym^{(j)}(z)+\frac{y-1}z|^2|z+y(s(z)+m_n(z))+\frac{y-1}z|},\notag\\
H_{22}&=\frac1n\sum_{j=1}^n\E\frac{2|{\varepsilon}_{j2}|^3|R_{jj}|}{|z+ym^{(j)}(z)+\frac{y-1}z|^2|z+y(s(z)+m_n(z))+\frac{y-1}z|}.\notag
\end{align}
Using inequality \eqref{lar1} in the Appendix and H\"older  inequality, we get, for $\nu=1,2$
\begin{align}\label{inr3}
H_{2\nu}&\le \frac1n\sum_{j=1}^n\E\frac{4|{\varepsilon}_{j\nu}|^3|R_{jj}|}{|z+ym^{(j)}(z)+\frac{y-1}z|^2|z+y(s(z)+m_n^{(j)}(z))+\frac{y-1}z|}\notag\\&
+\frac1n\sum_{j=1}^n\E\frac{4|{\varepsilon}_{j\nu}|^3|R_{jj}||\varepsilon_{j3}|}{|z+ym^{(j)}(z)+\frac{y-1}z|^2|z+y(s(z)+m_n^{(j)}(z))+\frac{y-1}z|}\notag\\&
\qquad\qquad\qquad\times\frac1{
|z+y(s(z)+m_n(z))+\frac{y-1}z|}\notag\\&
\le \frac Cn\sum_{j=1}^n\E^{\frac34}\frac{|{\varepsilon}_{j\nu}|^4}{|z+ym^{(j)}(z)+\frac{y-1}z|^{\frac83}|z+y(s(z)+m_n^{(j)}(z))+\frac{y-1}z|^{\frac43}}
\E^{\frac14}|R_{jj}|^4.
\end{align}
Applying  Corollary \ref{corgot} with $\beta=\frac43$ and $\alpha=\frac83$, we obtain,  for $z\in\mathbb G$, and for $\nu=1,2$
\begin{align}
\E^{\frac34}\frac{|{\varepsilon}_{j2}|^4}{|z+ym^{(j)}(z)+\frac{y-1}z|^{\frac83}|z+y(s(z)+m_n^{(j)}(z))+\frac{y-1}z|^{\frac43}}\le
\frac C{(nv)^{\frac32}}.\notag
\end{align}
This yields together with Corollary \ref{cor8} and inequality \eqref{inr3}
\begin{equation}\label{finisch4}
 H_2\le \frac C{(nv)^{\frac32}}.
\end{equation}

Consider now $H_1$. Using the equality
\begin{align}
 &\frac1{z+y(m_n(z)+s(z))+\frac{y-1}z}=\frac1{z+2s_y(z)+\frac{y-1}z}\notag\\&\qquad\qquad\qquad\qquad-\frac{\Lambda_n(z)}{(z+2s_y(z)+\frac{y-1}z)(z+y(m_n(z)+s(z))+\frac{y-1}z)}\notag
\end{align}
and
\begin{equation}\label{lepsj4}
 \Lambda_n=\Lambda_n^{(j)}+\widetilde\varepsilon_{j3},
\end{equation}
we represent it in the form
\begin{align}\label{h1}
H_1=H_{11}+H_{12}+H_{13},
\end{align}
where
\begin{align}
H_{11}&=-\frac1{(z+ys(z)+\frac{y-1}z)^2}\frac1n\sum_{j=1}^n\E\frac{\varepsilon_{j2}^2}{z+y(s(z)+m_n(z))+\frac{y-1}z}\notag\\&
=-s_y^2(z)\frac1n\sum_{j=1}^n\E\frac{\varepsilon_{j2}^2}{z+y(s_y(z)+m_n(z))+\frac{y-1}z},\notag\\
H_{12}&=-\frac1{(z+ys_y(z)+\frac{y-1}z)}\notag\\&\times\frac1n\sum_{j=1}^n\E\frac{\varepsilon_{j2}^2\Lambda_n^{(j)}
}{(z+ym_n^{(j)}(z)+\frac{y-1}z)^2(z+y(s_y(z)+m_n(z))+\frac{y-1}z)},\notag\\
H_{13}&=-\frac1{(z+ys_y(z)+\frac{y-1}z)^2}\notag\\&\times\frac1n\sum_{j=1}^n\E\frac{\varepsilon_{j2}^2
\Lambda_n^{(j)}
}{(z+ym_n^{(j)}(z)+\frac{y-1}z)(z+y(s_y(z)+m_n(z))+\frac{y-1}z)}.\notag
\end{align}
In order to apply conditional independence, we write 
\begin{align}\notag
H_{11}=H_{111}+H_{112},
\end{align}
where
\begin{align}
H_{111}&=-s_y^2(z)\frac1n\sum_{j=1}^n\E\frac{\varepsilon_{j2}^2}{z+y(m_n^{(j)}(z)+s(z))+\frac{y-1}z},\notag\\
H_{112}&=\frac{s_y^2(z)}n\sum_{j=1}^n\E\frac{\varepsilon_{j2}^2\varepsilon_{j3}}{(z+y(s_y(z)+m_n(z))+\frac{y-1}z)(z+y(m_n^{(j)}(z)+s(z))+\frac{y-1}z)}.\notag
\end{align}
It is straightforward to check that
\begin{align}
\E\{\varepsilon_{j2}^2|\mathfrak M^{(j)}\}=\frac1{n^2}\sum_{l,k=1}^p(R^{(j)}_{l+n,k+n})^2-\frac1{n^2}\sum_{l=1}^p(R^{(j)}_{l+n,l+n})^2.\notag
\end{align}
By representation \eqref{matrshur}, we have 
\begin{align}
 \E\{\varepsilon_{j2}^2|\mathfrak M^{(j)}\}&=\frac{y^2z^2}{n^2}\Tr(\mathbf X^*\mathbf X-z^2\mathbf I)^{-2}-\frac {y^2}{n^2}\sum_{l=1}^p(R^{(j)}_{l+n,l+n})^2\notag\\
 &=\frac{y^2z^2}{n^2}\sum_{k\in\mathbb T_j}\frac1{((s_k^{(j)})^2-z^2)^2}-\frac {y^2}{n^2}\sum_{l=1}^p(R^{(j)}_{l+n,l+n})^2.
  \end{align}
Applying equality \eqref{maj}, we get
\begin{equation}\label{majderiv}
 (m_n{(j)}(z))'=\frac 1{n}\sum_{k\in\mathbb T_j}\frac1{((s_k^{(j)})^2-z^2)}-\frac{2z^2}{n}\sum_{k\in\mathbb T_j}\frac1{((s_k^{(j)})^2-z^2)^2}.
\end{equation}
Combining the last two equalities, we arrive
\begin{align}
\E\{\varepsilon_{j2}^2|\mathfrak M^{(j)}\}&=-\frac {y^2}{2n}(m_n^{(j)}(z))'+ \frac1{nz}m_n^{(j)}(z)-\frac{ y^2}{n^2}\sum_{l=1}^p(R^{(j)}_{l+n,l+n})^2.\notag
\end{align}

Using equality \eqref{7.3} for $m_n'(z)$ and the corresponding relation for ${m_n^{(j)}}'(z)$, we may write
\begin{equation}\notag
H_{111}=L_1+\cdots+L_5,
\end{equation}
where
\begin{align}
L_1&=y^2s^2(z)\frac1{2n}\E\frac{m_n'(z)-\frac{m_n(z)}z}{z+y(m_n(z)+s(z))+\frac{y-1}z}\notag\\
L_2&=-y^2s^2(z)\frac{m_n^{(j)}(z)-m_n(z)}{nz(z+y(m_n^{(j)}(z)+s_y(z))+\frac{y-1}z)},\notag\\
L_3&=y^2s^2(z)\frac1n\sum_{j=1}^n\E\frac{\frac1{n^2}\sum_{l=1}^p
(R^{(j)}_{l+n,l+n})^2}{z+y(m_n^{(j)}(z)+s(z))+\frac{y-1}z},\notag\\
L_4&=y^2s^2(z)\frac1n\sum_{j=1}^n\E\frac{\frac1{n}((m_n^{(j)}(z))'-m_n'(z))}{z+y(m_n^{(j)}(z)+s(z))+\frac{y-1}z},\notag\\
L_5&=y^2s^2(z)\frac1n\sum_{j=1}^n\E\frac{\frac1{n}((m_n^{(j)}(z))'-m_n'(z))\widetilde{\varepsilon}_{j3}}
{(z+y(m_n(z)+s(z))+\frac{y-1}z)(z+y(m_n^{(j)}(z)+s(z))+\frac{y-1}z)},\notag\\
L_6&=-y^2s^2(z)\frac1n\sum_{j=1}^n\E\frac{\frac1{n}(m_n^{(j)}(z)-m_n(z))\widetilde{\varepsilon}_{j3}}
{z(z+y(m_n(z)+s(z))+\frac{y-1}z)(z+y(m_n^{(j)}(z)+s(z))+\frac{y-1}z)}.\notag
\end{align}
First we note that
\begin{equation}\notag
 |m_n(z)-m_n^{(j)}(z)|\le \frac C{nv}.
\end{equation}
This inequality together with Lemma \ref{lem00}, inequality \eqref{lem00.2}, imply that
\begin{align}\notag
 |L_2|\le \frac C{n^2v^2\sqrt{|(z+\frac{y-1}z)^2-4y|}}
\end{align}

Using Lemma \ref{lem00}, inequality \eqref{lem00.2}, \ref{basic8},  and Corollary \ref{cor8}, it is 
straightforward to check that
\begin{align}
|L_3|&\le \frac C{n\sqrt{|(z+\frac{y-1}z)^2-4y|}},\notag\\
|L_4|&\le \frac C{n^2v^2{\sqrt{|(z+\frac{y-1}z)^2-4y|}}},\notag\\
|L_5|&\le \frac C{n^3v^3{|(z+\frac{y-1}z)^2-4y|}},\notag\\
|L_6|&\le \frac C{n^3v^3{|(z+\frac{y-1}z)^2-4y|}}.\notag
\end{align}
Applying inequality \eqref{lar1}, we may write
\begin{align}
 |H_{12}|\le \frac Cn\sum_{j=1}^n \E\frac{|\varepsilon_{j2}|^2|\Lambda_n^{(j)}|}
 {|z+ym_n^{(j)}(z)+\frac{y-1}z||z+y(m_n^{(j)}(z)+s(z))+\frac{y-1}z|}.\notag
\end{align}
Conditioning on $\mathfrak M^{(j)}$ and applying Lemma \ref{basic2}, Lemma \ref{lem00}, 
inequality \eqref{lem00.2}, Corollary \ref{cor8} 
and equality \eqref{lepsj4}, we get
\begin{align}
 |H_{12}|&\le \frac C{{nv}}\frac 1n\sum_{j=1}^n \E\frac{|\Lambda_n^{(j)}|}{|z+ym_n^{(j)}(z)+\frac{y-1}z|}\notag\\&
 \le\frac C{{nv}}\frac 1n\sum_{j=1}^n (\E\frac{|\Lambda_n|}{|z+ym_n^{(j)}(z)+\frac{y-1}z|}+ \E\frac{|\Lambda_n-\Lambda_n^{(j)}|}{|z+ym_n^{(j)}(z)+\frac{y-1}z|}\notag\\&
 \le 
 \frac C{nv}\E^{\frac12}|\Lambda_n|^2+\frac C{n^2v^2}.\notag
\end{align}
By Lemma \ref{lam1*}, we get
\begin{equation}\label{h12}
 |H_{12}|\le\frac C{n^2v^2}.
\end{equation}
Similar we get
\begin{equation}\label{h13}
 |H_{13}|\le \frac C{n^2v^2}.
\end{equation}

We rewrite now the equations \eqref{eq00}  and \eqref{eq01} as follows,
\begin{align}\label{final00}
\E\Lambda_n(z)=\E m_n(z)-s(z)=-\frac{y(1-ys_y^2(z))}{2n}\E\frac{m_n'(z)-\frac{m_n(z)}z}{z+y(m_n(z)+s(z))+\frac{y-1}z}+\mathfrak T_3,
\end{align}
where
\begin{align}
|\mathfrak T_3|\le \frac C{n\sqrt v\sqrt{|(z+\frac{y-1}z)^2-4y|}}+\frac C{n^{\frac32}v^{\frac32}|(z+\frac{y-1}z)^2-4y|^{\frac14}}.\notag
\end{align}
We use here  inequalities \eqref{finisch1}, \eqref{finish2}, \eqref{finisch4}, \eqref{h12}, \eqref{h13} to bound $|\mathfrak T_3|$.
Note that
\begin{equation}\notag
1-ys_y^2(z)=-s_y(z)\sqrt{(z+\frac{y-1}z)^2-4y}.
\end{equation}
In \eqref{final00} we estimate now the remaining quantity 
\begin{equation}\notag
\mathfrak T_4=-\frac{ys_y(z)\sqrt{(z+\frac{y-1}z)^2-4y}}{2n}\E\frac{m_n'(z)-\frac{m_n(z)}z}{z+y(m_n(z)+s(z))+\frac{y-1}z}.
\end{equation}

\subsection{Estimation of $\mathfrak T_4$}\label{better}
Using that $\Lambda_n=m_n(z)-s(z)$ we rewrite $\mathfrak T_4$ as
\begin{equation}\notag
\mathfrak T_4=\mathfrak T_{41}+\cdots+\mathfrak T_{45},
\end{equation}
where
\begin{align}
\mathfrak T_{41}&=-\frac{ys_y(z)(s_y'(z)-\frac{s_y(z)}z)}{2n},\notag\\
\mathfrak T_{42}&=\frac{ys_y(z)\sqrt{(z+\frac{y-1}z)^2-4y}}{2n}\E\frac{m_n'(z)-s_y'(z)}{z+y(m_n(z)+s_y(z))+\frac{y-1}z},\notag\\
\mathfrak T_{43}&=\frac{ys_y(z)}{2n}\E\frac{(m_n'(z)-s_y'(z))\Lambda_n}{z+y(m_n(z)+s_y(z))+\frac{y-1}z},\notag\\
\mathfrak T_{44}&=\frac{ys_y(z)\sqrt{(z+\frac{y-1}z)^2-4y}}{2n}\E\frac{m_n(z)-s_y(z)}{z+y(m_n(z)+s_y(z))+\frac{y-1}z},\notag\\
\mathfrak T_{45}&=\frac{ys_y(z)}{2n}\E\frac{(m_n(z)-s_y(z))\Lambda_n}{z+y(m_n(z)+s_y(z))+\frac{y-1}z}.\notag
\end{align}

\subsubsection{Estimation of  $\mathfrak T_{42}$}
First we investigate $m_n'(z)$. The following equality holds
\begin{equation}\label{mn'}
m_n'(z)=-\frac1n\sum_{j=1}^n R_{jj}^2=\frac1n\sum_{j=1}^n (\Tr \mathbf R-\Tr\mathbf R^{(j)})R_{jj}=
\frac{s_y(z)}n\sum_{j=1}^n(\Tr \mathbf R-\Tr\mathbf R^{(j)}) +D_1,
\end{equation}
where
\begin{align}\label{d1}
D_1&=\frac1n\sum_{j=1}^n (\Tr \mathbf R-\Tr\mathbf R^{(j)})(R_{jj}-s(z))\notag\\&=
\sum_{j=1}^n(\varepsilon_{j3}+\frac1{pz})(R_{jj}-s_y(z)).
\end{align}
Using equality \eqref{shur}, we may write
\begin{align}
m_n'(z)&=\frac{s_y(z)}n\sum_{j=1}^n\E (1+\frac1p\sum_{l,k=1}^pX_{jl}X_{jk}[(\mathbf R^{(j)})^2]_{l+n,k+n})R_{jj}+D_1\notag\\&=
\frac{s_y^2(z)}n\sum_{j=1}^n \E(1+\frac1p\sum_{l,k=1}^pX_{jl}X_{jk}[(\mathbf R^{(j)})^2]_{l+n,k+n})+D_1+D_2,\notag
\end{align}
where
\begin{equation}\notag
 D_2=\frac{s_y(z)}n\sum_{j=1}^n\E (1+\frac1p\sum_{l,k=1}^pX_{jl}X_{jk}[(\mathbf R^{(j)})^2]_{l+n,k+n})(R_{jj}-s_y(z))
\end{equation}

Denote by
\begin{align}\beta_{j1}&=\frac1p\sum_{l=1}^p\E[(R^{(j)})^2]_{l+n,l+n}-\frac1p\sum_{l=1}^p[(R)^2]_{l+n,l+n}
=\frac1p\sum_{l\in\mathbb T_j}\E[(R^{(j)})^2]_{ll}-ym_n'(z)\notag\\&=\frac1p\frac d{dz}(\Tr \mathbf R-\Tr\mathbf R^{(j)}),\notag\\
\beta_{j2}&=\frac1p\sum_{l=1}^p\E(X^2_{jl}-1)[(R^{(j)})^2]_{l+n,l+n},\notag\\
\beta_{j3}&=\frac1p\sum_{1\le l\ne k\le p}\E X_{jl}X_{jk}[(R^{(j)})^2]_{l+n,k+n}.\notag
\end{align}
Using  these notation we may write
\begin{align}\notag
m_n'(z)=s_y^2(z)(1+ym_n'(z)-\frac{1-y}{z^2})+\frac{s_y^2(z)}n\sum_{j=1}^n (\beta_{j1}+\beta_{j2}+\beta_{j3})+D_1+D_2.
\end{align}
Solving this equation with respect to $m_n'(z)$ we obtain
\begin{equation}\label{semi1}
m_n'(z)=\frac{s_y^2(z)(1-\frac{1-y}{z^2})}{1-ys_y^2(z)}+\frac{1}{1-s_y^2(z)}(D_3+D_1+D_2),
\end{equation}
where
\begin{align}\notag
D_3=\frac{s_y^2(z)}n\sum_{j=1}^n (\beta_{j1}+\beta_{j2}+\beta_{j3}).
\end{align}
Note that for the Marchenko -- Pastur  law
\begin{equation}\notag
\frac{s_y^2(z)(1-\frac{1-y}{z^2})}{1-ys_y^2(z)}=-\frac{s_y(z)(1-\frac{1-y}{z^2})}{z+2ys_y(z)+\frac{y-1}z}=
s_y'(z).
\end{equation}
Applying this relation we rewrite equality \eqref{semi1} as
\begin{equation}\label{mnderiv}
m_n'(z)-s'(z)=-\frac{1}{s_y(z)(z+2s_y(z)+\frac{y-1}z)}(D_1+D_2+D_3).
\end{equation}
Using the last equality, we may represent $\mathfrak T_{42}$ now as follows
\begin{equation}\notag
\mathfrak T_{42}=\mathfrak T_{421}+\mathfrak T_{422}+\mathfrak T_{423},
\end{equation}
where
\begin{align}
\mathfrak T_{421}&=\frac1n\E \frac{D_1}{z+y(m_n(z)+s_y(z))+\frac{y-1}z},\notag\\
\mathfrak T_{422}&=\frac1n\E \frac{D_2}{z+y(m_n(z)+s_y(z))+\frac{y-1}z},\notag\\
\mathfrak T_{423}&=\frac1n\E \frac{D_3}{z+y(m_n(z)+s_y(z))+\frac{y-1}z}.\notag
\end{align}



Recall that, by \eqref{d1},
\begin{align}\mathfrak T_{421}&=\frac1{n}\sum_{j=1}^n
\E\frac{\varepsilon_{j3}(R_{jj}-s(z))}{(z+y(s(z)+m_n(z))+\frac{y-1}z)}.
\end{align}
Applying the Cauchy -- Schwartz inequality, we get for $z\in\mathbb G$,
\begin{equation}\notag
|\mathfrak T_{421}|\le \frac1{n}\sum_{j=1}^n\E^{\frac12}|R_{jj}-s_y(z)|^2
\E^{\frac12}\frac{|\varepsilon_{j3}|^2}{|z+y(s(z)+m_n(z))+\frac{y-1}z|^2}.
\end{equation}
Using Corollary \ref{corgot}, inequality \eqref{raz*} and Corollary \ref{cor8}, we get
\begin{align}\label{finisch7*}
|\mathfrak T_{421}|\le\frac C{n^{\frac32}v^{\frac32}}.
\end{align}
\subsubsection{Estimation of $\mathfrak T_{423}$}
We represent now $\mathfrak T_{423}$ in the form
\begin{align}\label{t51}
\mathfrak T_{423}=\mathfrak T_{51}+\mathfrak T_{52}+\mathfrak T_{53},
\end{align}
where
\begin{align}\notag
\mathfrak T_{5\nu}&=\frac1{n^2}\sum_{j=1}^n\E\frac{\beta_{j\nu}}{z+y(m_n(z)+s(z))+\frac{y-1}z},\quad\text{for}\quad \nu=1,2,3.
\end{align}

We consider the quantity $\mathfrak T_{5\nu}$, for $\nu=1,2,3$.
Applying the Cauchy-Schwartz inequality and inequality \eqref{lar1} in the Appendix as well, 
we get
\begin{align}\notag
|\mathfrak T_{5\nu}|\le \frac C{n^2}\sum_{j=1}^n\E^{\frac12}\frac{|\beta_{j\nu}|^2}{|z+y(m_n^{(j)}(z)+s_y(z))+\frac{y-1}z|^2}.
\end{align}
By Lemma \ref{bet1*} together with Lemma \ref{lem00} in the Appendix, we obtain
\begin{align}\notag
\E^{\frac12}\frac{|\beta_{j\nu}|^2}{|z+y(m_n^{(j)}(z)+s(z))+\frac{y-1}z|^2}\le \frac C{n^{\frac12}v^{\frac32}|(z+\frac{y-1}z)^2-4y|^{\frac14}}.
\end{align}
This implies that
\begin{align}\label{finisch7}
|\mathfrak T_{5\nu}|\le \frac C{n^{\frac32}v^{\frac32}|(z+\frac{y-1}z)^2-4y|^{\frac14}}.
\end{align}
Equality \eqref{t51} and inequality \eqref{finisch7} yield
\begin{equation}\label{finisch7+}
 |\mathfrak T_{423}|\le \frac C{n^{\frac32}v^{\frac32}|(z+\frac{y-1}z)^2-4y|^{\frac14}}.
\end{equation}

\subsubsection{Estimation of $\mathfrak T_{422}$} By the definitions of $\mathfrak T_{423}$ and $D_2$, we have
\begin{align}
 \mathfrak T_{422}&=\frac{s_y(z)}{n^2}\sum_{j=1}^n\E \frac{ (1+\frac1p\sum_{l,k=1}^pX_{jl}X_{jk}[(\mathbf R^{(j)})^2]_{l+n,k+n})(R_{jj}-s_y(z))}
 {z+y(m_n(z)+s_y(z))+\frac{y-1}z}.\notag
\end{align}
We have
\begin{equation}\notag
 1+\frac1p\sum_{l,k=1}^pX_{jl}X_{jk}[(\mathbf R^{(j)})^2]_{l+n,k+n}=1+ym_n'(z)+\beta_{j1}+\beta_{j2}+\beta_{j3}.
\end{equation}
This implies
\begin{equation}\notag
 \mathfrak T_{422}=\mathfrak T_{60}+\cdots+\mathfrak T_{63},
\end{equation}
where
\begin{align}
 \mathfrak T_{60}&=\frac{s_y(z)}{n}\E\frac{\Lambda_n(1+ym_n'(z))}{z+y(m_n(z)+s_y(z))+\frac{y-1}z},\notag\\
 \mathfrak T_{6\nu}&=\frac{s_y(z)}{n^2}\sum_{j=1}^n\E\frac{\beta_{j\nu}(R_{jj}-s_y(z))}{z+y(m_n(z)+s_y(z))+\frac{y-1}z}, \text{ for }\nu=1,2,3.\notag
\end{align}
Applying \tc{the Cauchy -- Schwartz} inequality, we get
\begin{equation}\notag
 |\mathfrak T_{60}|\le \frac C{n\sqrt y}\E^{\frac12}|\Lambda_n|^2\E^{\frac12}\frac{|1+m_n'(z)|^2}{|z+y(m_n(z)+s_y(z))+\frac{y-1}z|^2}
\end{equation}
By inequality $|m_n'(z)|\le v^{-1}\im m_n(z)+\frac{1-y}{|z|^2}$ and Lemma \ref{lem00}, we get
\begin{equation}\notag
 |\mathfrak T_{60}|\le \frac C{nv\sqrt y}\E^{\frac12}|\Lambda_n|^2.
\end{equation}
Applying Lemma \ref{lam1*} below, we get
\begin{equation}\notag
 |\mathfrak T_{60}|\le \frac C{n^{\frac32}v^{\frac32}}.
\end{equation}
Similar to inequality \eqref{finisch7+}, we get, for $\nu=1,2,3$,
\begin{equation}\label{finisch7++}
 |\mathfrak T_{6\nu}|\le \frac C{n^2v^2|(z+\frac{y-1}z)^2-4y|^{\frac14}}.
\end{equation}

Combining \eqref{finisch7*}, \eqref{finisch7+} and \eqref{finisch7++}, we get, for $z\in\mathbb G$,
\begin{align}\label{finisch7^}
 |\mathfrak T_{42}|\le \frac C{n^{\frac32}v^{\frac32}|(z+\frac{y-1}z)^2-4y|^{\frac14}}.
\end{align}




\subsubsection{Estimation of $\mathfrak T_{43}$} Recall that
\begin{equation}\notag
\mathfrak T_{43}=\frac{s(z)}n\E\frac{(m_n'(z)-s_y'(z))\Lambda_n}{z+y(m_n(z)+s_y(z))+\frac{y-1}z}.
\end{equation}
Applying equality \eqref{mnderiv}, we obtain
\begin{align}\notag
 \mathfrak T_{43}=\mathfrak T_{431}+\mathfrak T_{432}+\mathfrak T_{433},
\end{align}
where
\begin{align}
 \mathfrak T_{431}&=\frac{1}{2n(z+2s_y(z)+\frac{y-1}z)}\E\frac{D_1\Lambda_n}{z+y(m_n(z)+s_y(z))+\frac{y-1}z},\notag\\
 \mathfrak T_{432}&=\frac{1}{2n(z+2s_y(z)+\frac{y-1}z)}\E\frac{D_2\Lambda_n}{z+y(m_n(z)+s_y(z))+\frac{y-1}z},\notag\\
 \mathfrak T_{433}&=\frac{1}{2n(z+2s(z)+\frac{y-1}z)}\E\frac{D_3\Lambda_n}{z+y(m_n(z)+s_y(z))+\frac{y-1}z}.\notag
\end{align}
Applying the Cauchy -- Schwartz inequality, we get
\begin{align}\notag
|\mathfrak T_{431}|\le \frac{1}{n(z+2s_y(z)+\frac{y-1}z)}\E^{\frac12}\frac{|D_1|^2}{|z+y(m_n(z)+s_y(z))+\frac{y-1}z|^2}\E^{\frac12}|\Lambda_n|^2.
\end{align}
By definition of $D_1$ and Lemma \ref{lam1*} , we get
\begin{align}
|\mathfrak T_{431}|
&\le
\frac{C}{n^{2} v|(z+\frac{y-1}z)^2-4y|^{\frac12}}\sum_{j=1}^n\E^{\frac14}\frac{|\varepsilon_{j3}+\frac1{pz}|^4}{|z+y(m_n(z)+s(z))+\frac{y-1}z|^4}\notag\\
&\qquad\qquad\qquad\times\E^{\frac14}|R_{jj}-s(z)|^4.\notag
\end{align}
Applying now Corollary \ref{cor8} and Lemma \ref{lem14}, we get
\begin{align}
|\mathfrak T_{431}|
 \le
\frac{4}{n^{2} v^{2}|(z+\frac{y-1}z)^2-4y|^{\frac12}}.\notag
\end{align}
For $z\in\mathbb G$ this yields
\begin{equation}\notag
|\mathfrak T_{431}|
 \le
\frac{4}{n^{\frac32} v^{\frac32}|(z+\frac{y-1}z)^2-4y|^{\frac14}}.
\end{equation}
Applying again the Cauchy -- Schwartz inequality, we get for $\mathfrak T_{432}$ accordingly
\begin{align}\notag
 |\mathfrak T_{432}|\le \frac C{n|(z+\frac{y-1}z)^2-4y|^{\frac12}}\E^{\frac12}|D_2|^2\E^{\frac12}|\Lambda_n|^2.
\end{align}
By Lemma \ref{lam1*}, we have
\begin{equation}\label{krak}
 |\mathfrak T_{432}|\le \frac C{n^2v|(z+\frac{y-1}z)^2-4y|^{\frac12}}\E^{\frac12}|D_2|^2.
\end{equation}
By definition of $D_2$,
\begin{equation}\notag
 \E|D_2|^2\le \frac1n\sum_{j=1}^n(\E|\beta_{j1}|^2+\E|\beta_{j2}|^2+\E|\beta_{j3}|^2).
\end{equation}
Applying Lemmas \ref{bet1*} with $\nu=2,3$, and \ref{beta}, we get
\begin{align}\label{krak1}
 \E|D_2|^2\le \frac{C}{n^2v^4}+\frac C{nv^3}.
\end{align}
Inequalities \eqref{krak} and \eqref{krak1} together imply, for $z\in\mathbb G$,
\begin{align}
 |\mathfrak T_{432}|&\le \frac C{n^3v^3|(z+\frac{y-1}z)^2-4y|^{\frac12}}+\frac C{n^{\frac52}v^{\frac32}|(z+\frac{y-1}z)^2-4y|^{\frac12}}\notag\\
 &\le \frac C{n^{\frac32}v^{\frac32}|(z+\frac{y-1}z)^2-4y|^{\frac14}}.\notag
\end{align}
\subsubsection{Estimation of $\mathfrak T_{44}, \mathfrak T_{45}$}
Note that
\begin{align}
\mathfrak T_{44}&=\frac{ys_y(z)\sqrt{(z+\frac{y-1}z)^2-4}}{2nz}\E\frac{\Lambda_n}{z+y(m_n(z)+s_y(z))+\frac{y-1}z},\notag\\
\mathfrak T_{45}&=\frac{ys_y(z)}{2nz}\E\frac{\Lambda_n^2}{z+y(m_n(z)+s_y(z))+\frac{y-1}z}.\notag
\end{align}
Applying Lemma \ref{lem00}, inequality \eqref{lem00.2}, we get
\begin{align}\notag
 |\mathfrak T_{44}|\le \frac{C}{2n|z|}\E|\Lambda_n|.
\end{align}
Therefore, by Lemma \ref{lam1*}, we have
\begin{equation}\notag
 |\mathfrak T_{44}|\le \frac{C}{2n^2v^2}.
\end{equation}
Using Lemma \ref{lem00}, inequality \eqref{lem00.2},  Lemma \ref{lam1*},  below, we get
\begin{equation}\notag
 |\mathfrak T_{45}|\le \frac{C}{n^3v^2|z\sqrt{(z+\frac{y-1}z)^2-4y}|}.
\end{equation}
Applying now inequality \eqref{cy} below, we get
\begin{equation}\notag
 |\mathfrak T_{45}|\le \frac{C(y)}{n^3v^{\frac52}}.
\end{equation}
\subsubsection{Estimation of $\mathfrak T_{41}$}

Finally we observe that
\begin{equation}\notag
 s_y'(z)=-\frac{s_y(z)(1-\frac{y-1}{z^2})}{\sqrt{(z+\frac{y-1}z)^2-4y}}
\end{equation}
and
\begin{align}
 s'_y(z)-\frac{s_y(z)}z=-\frac{2s_y(z)(z+ys_y(z))}{z\sqrt{(z+\frac{y-1}z)^2-4y}}.\notag
\end{align}
Therefore
\begin{equation}\notag
 |\mathfrak T_{41}|\le \frac C{n|(z^2+y-1)^2-4yz^2|^{\frac12}}. 
\end{equation}
  We have
\begin{align}\notag
 (z^2+y-1)^2-4yz^2=(z+\sqrt y-1)(z+1\sqrt y-1)(z-\sqrt y+1)(z-\sqrt y-1).
\end{align}
For $z\in\mathbb G$ we get
\begin{equation}\label{cy}
 |(z^2+y-1)^2-4yz^2|^{\frac12}\ge C\sqrt{1-\sqrt y}\sqrt v.
\end{equation}

We may rewrite now
\begin{equation}\label{final001}
 |\mathfrak T_{41}|\le \frac {C(y)}{n\sqrt v},
\end{equation}
where
\begin{equation}\notag
 C(y)=\begin{cases}&C, \text{ if }y=1,\\&\frac{C}{\sqrt{1-\sqrt y}}, \text{ if }y<1.\end{cases}
\end{equation}

Combining now relations \eqref{final00}, \eqref{h1}, \eqref{finisch4}, \eqref{t51}, \eqref{finisch7^}, \eqref{final001}, we get for $z\in\mathbb G$,
\begin{equation}
 |\E\Lambda_n|\le \frac C{n v^{\frac34}}+\frac C{n^{\frac32}v^{\frac32}|(z+\frac{y-1}z)^2-4y|^{\frac14}}.
\end{equation}
The last inequality completes the proof of Theorem \ref{stiltjesmain}.


\section{Appendix}
\subsection{Rosenthal's and Burkholder's Inequalities}
In this subsection we state the Rosenthal and Burkholder inequalities starting with Rosenthal's inequality.
Let $\xi_1,\ldots,\xi_n$ be independent random variables with $\E\xi_j=0$, $\E\xi_j^2=1$ and for $p\ge 1$ $\E|\xi_j|^p\le \mu_p$ for $j=1,\ldots,n$.
\begin{lem}\label{Rosent}{\rm (Rosenthal's inequality)}

 There exists an absolute constant $C_1$ such that
 \begin{equation}\notag
 \E|\sum_{j=1}^na_j\xi_j|^q\le C_1^qq^q\Big(\big(\sum_{j=1}^n|a_j|^2\big)^{\frac q2}+\mu_p\sum_{j=1}^n|a_j|^q\Big)
 \end{equation}
\end{lem}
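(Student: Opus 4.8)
The plan is to establish the bound first for even integers $q=2k$ by a direct moment expansion, and then obtain the general case from monotonicity of $L^{q}$-norms. For $1\le q\le 2$ nothing is needed: since the $\xi_{j}$ are independent with $\E\xi_{j}=0$ and $\E\xi_{j}^{2}=1$, Jensen's inequality gives $\E|\sum_{j}a_{j}\xi_{j}|^{q}\le(\E|\sum_{j}a_{j}\xi_{j}|^{2})^{q/2}=(\sum_{j}|a_{j}|^{2})^{q/2}$. For $q>2$ let $2k$ be the smallest even integer with $2k\ge q$, so $q\le 2k<q+2\le 2q$; then $\|\sum_{j}a_{j}\xi_{j}\|_{q}\le\|\sum_{j}a_{j}\xi_{j}\|_{2k}$, and raising the $2k$-bound below to the power $q/(2k)$, using $(x+y)^{q/(2k)}\le x^{q/(2k)}+y^{q/(2k)}$, the $\ell^{p}$-norm monotonicity $(\sum_{j}|a_{j}|^{2k})^{q/(2k)}\le\sum_{j}|a_{j}|^{q}$, and $(2k)^{q}\le(2q)^{q}$, gives the claim for all $q>2$ (the resulting moment factor is controlled, for the bounded variables of this paper, by $\mu_{4}$ and a power of $Dn^{1/4}$ exactly as in Lemma \ref{lem5}). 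It remains to treat $q=2k$ with $k\ge 2$.

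For such $q$, expanding the $2k$-th moment and bounding each summand by its modulus,
\begin{equation}\notag
\E\Big|\sum_{j=1}^{n}a_{j}\xi_{j}\Big|^{2k}\le\sum_{(j_{1},\dots,j_{2k})}|a_{j_{1}}|\cdots|a_{j_{2k}}|\,\big|\E[\xi_{j_{1}}\cdots\xi_{j_{2k}}]\big|.
\end{equation}
By independence together with $\E\xi_{j}=0$, a tuple $(j_{1},\dots,j_{2k})$ contributes nothing unless every value it takes is repeated at least twice. Grouping the tuples by the set partition of $\{1,\dots,2k\}$ recording which coordinates are equal, and writing $Y_{j}:=|a_{j}|^{2}|\xi_{j}|^{2}$, the total contribution of a partition into $\ell$ blocks of sizes $\beta_{1},\dots,\beta_{\ell}$ (each $\beta_{i}\ge 2$, $\sum_{i}\beta_{i}=2k$) is at most
$$
\sum_{\substack{v_{1},\dots,v_{\ell}\\ \text{pairwise distinct}}}\prod_{i=1}^{\ell}\E Y_{v_{i}}^{\beta_{i}/2}\ \le\ \prod_{i=1}^{\ell}\Big(\sum_{j=1}^{n}\E Y_{j}^{\beta_{i}/2}\Big).
$$

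Now each exponent $b_{i}:=\beta_{i}/2$ lies in $[1,k]$, so log-convexity of $t\mapsto\E Y_{j}^{t}$ and then Hölder's inequality give $\sum_{j}\E Y_{j}^{b_{i}}\le A^{1-\lambda_{i}}B^{\lambda_{i}}$ with $\lambda_{i}:=(b_{i}-1)/(k-1)$, where $A:=\sum_{j}\E Y_{j}=\sum_{j}|a_{j}|^{2}$ and $B:=\sum_{j}\E Y_{j}^{k}=\sum_{j}|a_{j}|^{2k}\E|\xi_{j}|^{2k}$. Because $\sum_{i}b_{i}=k$ and $1\le\ell\le k$, the exponents collapse to $\prod_{i}A^{1-\lambda_{i}}B^{\lambda_{i}}=(A^{k})^{\theta}B^{1-\theta}$ with $\theta:=(\ell-1)/(k-1)\in[0,1]$, and the weighted arithmetic--geometric mean inequality bounds this by $\theta A^{k}+(1-\theta)B\le A^{k}+B$. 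Hence every partition contributes at most $A^{k}+B$; since there are at most $(2k)^{2k}$ set partitions of $\{1,\dots,2k\}$,
$$
\E\Big|\sum_{j=1}^{n}a_{j}\xi_{j}\Big|^{2k}\le(2k)^{2k}\Big(\big(\textstyle\sum_{j}|a_{j}|^{2}\big)^{k}+\sum_{j}|a_{j}|^{2k}\E|\xi_{j}|^{2k}\Big),
$$
which is the assertion for $q=2k$ (with $C_{1}=1$ here, hence $C_{1}=2$ suffices for all $q$ after the reduction).

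The only genuinely delicate points are combinatorial: one must check that the tuples surviving the vanishing-expectation cancellation are exactly those whose index partition has all blocks of size $\ge 2$, and that the number of such partitions is still crudely $\le(2k)^{2k}$ so that the constant stays of the form $(Cq)^{q}$; and one must verify the exponent identity $\prod_{i}A^{1-\lambda_{i}}B^{\lambda_{i}}=(A^{k})^{\theta}B^{1-\theta}$, which is precisely what produces the two Rosenthal terms $(\sum|a_{j}|^{2})^{q/2}$ and $\sum|a_{j}|^{q}\E|\xi_{j}|^{q}$. An alternative that trades the combinatorics for softer tools is symmetrization, $\|\sum a_{j}\xi_{j}\|_{q}\le 2\|\sum a_{j}\varepsilon_{j}\xi_{j}\|_{q}$ with Rademacher $\varepsilon_{j}$, followed by Khintchine's hypercontractive inequality conditionally on the $\xi_{j}$, which reduces matters to the same estimate for the non-negative sum $\sum_{j}|a_{j}|^{2}|\xi_{j}|^{2}$; I would keep the direct expansion as the self-contained route.
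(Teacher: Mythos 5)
Your argument is necessarily a different route from the paper, which does not prove this lemma at all but only cites Rosenthal (1970) and Johnson--Schechtman--Zinn (1985). For even exponents $q=2k$ your proof is correct and self-contained: the expansion of the $2k$-th moment, the observation that a tuple contributes only if every index is repeated, the reduction to set partitions with all blocks of size $\ge 2$, the bound $\sum_j\E Y_j^{b_i}\le A^{1-\lambda_i}B^{\lambda_i}$ via Lyapunov's log-convexity plus H\"older, and the exponent bookkeeping all check out: with $\sum_i b_i=k$ and $\theta=(\ell-1)/(k-1)$ one indeed gets $A^{\,k(\ell-1)/(k-1)}B^{(k-\ell)/(k-1)}\le A^k+B$, and the Bell number of $\{1,\dots,2k\}$ is at most $(2k)^{2k}=q^q$. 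This yields the stated inequality, with $\mu_{2k}$ playing the role of $\mu_q$, for every even $q$, and it also covers complex coefficients $a_j$ as used in the paper.

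The genuine gap is the passage to non-even $q>2$. Raising the $2k$-bound (with $2k=2\lceil q/2\rceil$) to the power $q/(2k)$ produces the term $\sum_j|a_j|^q\,(\E|\xi_j|^{2k})^{q/(2k)}$, and since $2k\ge q$ one has $(\E|\xi_j|^{2k})^{q/(2k)}\ge\E|\xi_j|^q$; so what you actually prove is the inequality with $\mu_{2k}$ (equivalently $\max_j\|\xi_j\|_{2k}^q$) in place of $\mu_q$. That is strictly weaker than the lemma as stated: it is vacuous when only moments of order $q$ are finite, and the truncation $|X_{jk}|\le Dn^{1/4}$ you invoke to repair it is a hypothesis of Theorem \ref{main}, not of this lemma, whose $\xi_j$ are arbitrary centered, normalized independent variables. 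Even under that truncation the substitution costs a factor up to $(Dn^{1/4})^{2k-q}\le D^2\sqrt n$, which would have to be tracked through every application with a non-even exponent (many uses in the paper do take even exponents $2q$ or dyadic $q=2^L$, but the lemma is invoked for general $q\le A_1(nv)^{\frac14}$). To close this, either prove the general-$q$ case directly --- for instance your own alternative (symmetrization plus conditional Khintchine) reduces it to a Rosenthal bound for the nonnegative sum $T=\sum_j Y_j$, which holds for all real $r\ge1$ via $\E T^r=\sum_j\E Y_jT^{r-1}$, the decoupling $T\le T^{(j)}+Y_j$ with $T^{(j)}$ independent of $Y_j$, and induction on $\lceil r\rceil$ --- or state the lemma for even integer $q$ and verify that this suffices wherever it is applied, or simply retain the literature citation as the paper does.
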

\begin{proof}
 For the proof of this inequality see \cite{Rosenthal:1970} and \cite{Johnson:1985}.
\end{proof}
Let $\xi_1,\ldots\xi_n$ be martingale-difference with respect to $\sigma$-algebras $\mathfrak M_j=\sigma(\xi_1,\ldots,\xi_{j-1})$.
Assume that  $\E\xi_j^2=1$ and $\E|\xi_j|^q<\infty$, for $q\ge2$.
\begin{lem}\label{burkh}{\rm (Burkholder's inequality)}
 There exist an absolute constant $C_2$ such that
 \begin{equation}\notag
  \E|\sum_{j=1}^n\xi_j|^q\le C_2^qq^q\Big(\Big(\E(\sum_{k=1}^n\E\{\xi_k^2|\mathfrak M_{k-1}\}\Big)^{\frac q2}+\sum_{k=1}^p\E|\xi_k|^q\Big).
 \end{equation}

\end{lem}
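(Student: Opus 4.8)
\medskip
\noindent The plan is to derive this estimate from the square-function form of the Burkholder--Davis--Gundy inequality: for every $q\ge 2$ there is an absolute constant $C_0$ such that $\E|\sum_{j=1}^n\xi_j|^q\le(C_0q)^{q/2}\,\E(\sum_{k=1}^n\xi_k^2)^{q/2}$. This ``$\sqrt{q}$'' version is classical; it follows from Doob's maximal inequality together with Burkholder's martingale-transform argument (see e.g. \cite{Burkholder:1973}). Granting it, the remaining task is to replace the quadratic variation $V_n:=\sum_{k=1}^n\xi_k^2$ by its predictable compensator $A_n:=\sum_{k=1}^n\E\{\xi_k^2|\mathfrak M_{k-1}\}$, at the cost of the error term $\sum_{k=1}^n\E|\xi_k|^q$.

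First I would write $\xi_k^2=\E\{\xi_k^2|\mathfrak M_{k-1}\}+\eta_k$, where $\eta_k$ is again a martingale difference for $(\mathfrak M_k)$, so that $V_n=A_n+M_n$ with $M_n:=\sum_{k=1}^n\eta_k$, whence $\E V_n^{q/2}\le2^{q/2}(\E A_n^{q/2}+\E|M_n|^{q/2})$. To the martingale $M_n$ I would apply the square-function inequality a second time, now with exponent $q/2$, and then estimate its quadratic variation by the crude bounds $\sum_k\eta_k^2\le(\max_k|\eta_k|)(V_n+A_n)$ and $|\eta_k|\le\xi_k^2+\E\{\xi_k^2|\mathfrak M_{k-1}\}$, together with $\max_k|\eta_k|^{q/2}\le\sum_k|\eta_k|^{q/2}$ and the conditional Jensen inequality $\E(\E\{\xi_k^2|\mathfrak M_{k-1}\})^{q/2}\le\E|\xi_k|^q$. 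After a Cauchy--Schwarz splitting this yields a bound of the form $\E|M_n|^{q/2}\le(C_1q)^{q/4}\big(\sum_{k=1}^n\E|\xi_k|^q\big)^{1/2}\big(\E V_n^{q/2}+\E A_n^{q/2}\big)^{1/2}$.

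Substituting this into the previous line and using Young's inequality $ab\le\frac14a^2+b^2$ to absorb $(\E V_n^{q/2})^{1/2}$ into the left-hand side, one arrives at the self-improving estimate $\E V_n^{q/2}\le C_2^q\,\E A_n^{q/2}+(C_2q)^{q/2}\sum_{k=1}^n\E|\xi_k|^q$; combining it with the first paragraph and bounding $q^{q/2}\le q^q$ gives the claim after relabelling the constant. The range $2\le q<4$ needs no extra care, the square-function inequality being valid for all exponents $\ge1$. The only genuinely delicate point is the bookkeeping of the constants: the $q$-dependence must be tracked through both applications of the square-function inequality and through the Young step, and one has to check that it does not exceed $q^q$ --- which, as the exponents $q/2$ and $q/4$ above indicate, it does not. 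Alternatively, since the displayed bound is exactly the Burkholder--Rosenthal inequality, it may simply be quoted from \cite{Burkholder:1973}.
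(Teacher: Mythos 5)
Your proposal is correct, but it takes a different route from the paper: the paper does not prove this lemma at all, it simply quotes it as the Burkholder--Rosenthal inequality from \cite{Burkholder:1973} and \cite{Hitczenko:1990} (the latter supplying the $O(q)$ growth of the constant), which is also what your closing sentence offers. Your sketched derivation --- square-function (Burkholder--Davis--Gundy) bound with constant $(C\sqrt q)^q$, decomposition of the quadratic variation $V_n$ into the predictable compensator $A_n$ plus a martingale $M_n$, a second application of the square-function bound at exponent $q/2$, the estimate $\sum_k\eta_k^2\le(\max_k|\eta_k|)(V_n+A_n)$ combined with Cauchy--Schwarz and conditional Jensen, and the final absorption via Young's inequality --- is the standard self-improving argument and the bookkeeping does land at $C^qq^q$. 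Two caveats. First, the argument genuinely hinges on the $O(\sqrt q)$-constant form of the square-function inequality: with the $O(q)$ (or exponential) constants that the elementary Doob--plus--martingale-transform or good-$\lambda$ arguments actually deliver, the same scheme produces $(Cq)^{2q}$, which overshoots the stated bound; the $\sqrt q$ order is a finer fact (Davis's Hermite-zero constant, or Hitczenko/Pinelis), so it should be quoted as such rather than presented as a consequence of Doob's inequality and the transform argument. Second, the absorption step tacitly requires $\E V_n^{q/2}<\infty$, which is immediate from $V_n^{q/2}\le n^{q/2-1}\sum_k|\xi_k|^q$ under the assumed moment condition but deserves a word. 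What your route buys is a self-contained proof with explicit constant tracking; what the paper's citation buys is brevity, since the displayed inequality is verbatim the result of the cited references.
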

\begin{proof}
 For the proof of this inequality see \cite{Burkholder:1973} and \cite{Hitczenko:1990}.
\end{proof}
We rewrite the Burkholder inequality for quadratic forms in independent random variables.
Let $\zeta_1,\ldots,\zeta_n$ be independent random variables such that $\E\zeta_j=0$, $\E|\eta_j|^2=1$ and $\E|\zeta_j|^q\le \mu_q$. 
Let $a_{ij}=a_{ji}$ for all $i,j=1,\ldots n$.
Consider the quadratic form
\begin{equation}\notag
 Q=\sum_{1\le j\ne k\le n}a_{jk}\zeta_j\zeta_k.
\end{equation}
\begin{lem}\label{burkh1}
 There exists an absolute constant $C_2$ such that
 \begin{equation}\notag
  \E|Q|^q\le C_2^q\Big(\E\big(\sum_{j=2}^{n}(\sum_{k=1}^{j-1}a_{jk}\zeta_k)^2\big)^{\frac q2}+\mu_q\sum_{j=2}^n\E|\sum_{k=1}^{j-1}a_{jk}\zeta_k|^q\Big).
 \end{equation}

\end{lem}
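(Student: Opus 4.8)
The plan is to deduce this from the martingale form of Burkholder's inequality, Lemma~\ref{burkh}, after rewriting $Q$ as a sum of martingale differences. First I would use the symmetry $a_{jk}=a_{kj}$ to fold the sum over the off-diagonal pairs $j\ne k$ onto the pairs $k<j$: writing
\begin{equation}\notag
 Q=2\sum_{j=2}^n Y_j,\qquad Y_j:=\zeta_j\sum_{k=1}^{j-1}a_{jk}\zeta_k,
\end{equation}
and letting $\mathfrak M_j:=\sigma(\zeta_1,\dots,\zeta_j)$, each $Y_j$ is $\mathfrak M_j$-measurable. Since $\zeta_j$ is independent of $\mathfrak M_{j-1}$ and $\E\zeta_j=0$, we have $\E\{Y_j\mid\mathfrak M_{j-1}\}=\big(\sum_{k=1}^{j-1}a_{jk}\zeta_k\big)\E\zeta_j=0$, so $(Y_j)_{j\ge2}$ is a martingale-difference sequence adapted to $(\mathfrak M_j)$, and Lemma~\ref{burkh} applies (for $q\ge2$).

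Next I would compute the two ingredients on the right-hand side of Lemma~\ref{burkh}. Using independence of $\zeta_j$ from $\mathfrak M_{j-1}$ together with $\E\zeta_j^2=1$,
\begin{equation}\notag
 \E\{Y_j^2\mid\mathfrak M_{j-1}\}=\Big(\sum_{k=1}^{j-1}a_{jk}\zeta_k\Big)^2\E\zeta_j^2=\Big(\sum_{k=1}^{j-1}a_{jk}\zeta_k\Big)^2,
\end{equation}
so that $\sum_{j=2}^n\E\{Y_j^2\mid\mathfrak M_{j-1}\}=\sum_{j=2}^n\big(\sum_{k=1}^{j-1}a_{jk}\zeta_k\big)^2$; and, factoring out the independent variable $\zeta_j$,
\begin{equation}\notag
 \E|Y_j|^q=\E|\zeta_j|^q\cdot\E\Big|\sum_{k=1}^{j-1}a_{jk}\zeta_k\Big|^q\le\mu_q\,\E\Big|\sum_{k=1}^{j-1}a_{jk}\zeta_k\Big|^q.
\end{equation}
Substituting both into Lemma~\ref{burkh} and absorbing the harmless factor $2^q$ (together with the factor $q^q$ produced by Lemma~\ref{burkh}, which is the form actually used later, e.g.\ in Lemma~\ref{lem6}) into the constant gives the claimed inequality.

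The argument is essentially bookkeeping; the only points demanding attention are the choice of filtration making $Y_j$ simultaneously $\mathfrak M_j$-measurable and a martingale difference, and the repeated use of the independence of $\zeta_j$ from $\zeta_1,\dots,\zeta_{j-1}$ to split the relevant expectations. I would deliberately leave the inner moments $\E|\sum_{k<j}a_{jk}\zeta_k|^q$ unexpanded, since in the applications they are subsequently estimated by Rosenthal's inequality, Lemma~\ref{Rosent}.
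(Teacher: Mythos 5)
Your proposal is correct and follows essentially the same route as the paper: write $Q=2\sum_{j\ge2}\zeta_j\sum_{k<j}a_{jk}\zeta_k$, observe these are martingale differences for $\mathfrak M_j=\sigma(\zeta_1,\dots,\zeta_j)$, compute the conditional second moment and the $q$-th moment by independence, and invoke Lemma \ref{burkh}. Your remark about the $q^q$ factor inherited from Lemma \ref{burkh} (which the lemma's stated bound suppresses but the later applications retain) is an accurate observation and does not affect the validity of the argument.
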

\begin{proof}
 Introduce the random variables 
 \begin{equation}\notag
  \xi_j=\zeta_j\sum_{k=1}^{j-1}a_{jk}\zeta_k, \q j=2,\ldots,n.
 \end{equation}
 It is straightforward to check that
 \begin{equation}\notag
  \E\{\xi_j|\mathfrak M_{j-1}\}=0,
 \end{equation}
 and that $\xi_j$ are $\mathfrak M_{j}$ measurable.
 Hence $\xi_1,\ldots,\xi_n$ are martingale-differences.
 We may write
 \begin{equation}\notag
  Q=2\sum_{j=2}^n\xi_j
 \end{equation}
Applying now Lemma \ref{burkh} and using
\begin{align}
 \E\{|\xi_j|^2|\mathfrak M_{j-1}\}&=(\sum_{k=1}^{j-1}a_{jk}\eta_k)^2\E\zeta_j^2,\notag\\
 \E|\xi_j|^q&=\E|\eta_j|^q\E|\sum_{k=1}^{j-1}a_{jk}\zeta_j|^q,\notag
\end{align}
we get the claim.
Thus, Lemma \ref{burkh1} is proved.

\end{proof}

\subsection{Auxiliary Inequalities for Resolvent Matrices} We shall use the following relation between resolvent matrices. Let $\mathbb A$ and $\mathbb B$ be two Hermitian matrices and 
let $\mathbf R_{\mathbf A}=(\mathbb A-z\mathbf I)^{-1}$ and $\mathbf R_{\mathbf B}=(\mathbb B-z\mathbf I)^{-1}$ denote their resolvent matrices.
Recall the resolvent equality
\begin{equation}\label{reseq}
 \mathbf R_{\mathbf A}-\mathbf R_{\mathbf B}=\mathbf R_{\mathbf A}(\mathbf B-\mathbf A)\mathbf R_{\mathbf B}=
 -\mathbf R_{\mathbf B}(\mathbf B-\mathbf A)\mathbf R_{\mathbf A}.
\end{equation}
Recall the equation, for $j\in\mathbb T_{\mathbb J}$, and $\mathbb J\subset\mathbb T$ (compare with \eqref{repr001})
\begin{equation}
 R_{jj}^{(\mathbb J)}=-\frac1{z+ym_n^{(\mathbb J)}(z)+\frac{n-p+|\mathbb J|}{nz}}+\frac1{z+ym_n^{(\mathbb J)}(z)+\frac{n-p+|\mathbb J|}{nz}}
 \varepsilon_j^{(\mathbb J)}R^{(\mathbb J)}_{jj},
\end{equation}
where $\varepsilon_j^{(\mathbb J)}=\varepsilon_{j1}^{(\mathbb J)}+\varepsilon_{j2}^{(\mathbb J)}+\varepsilon_{j3}^{(\mathbb J)}$ and 
\begin{align}
\varepsilon_{j1}^{(\mathbb J)}&=\frac1n\sum_{l=1}^p(X_{jl}^2-1)R^{(\mathbb J,j)}_{l+n,l+n},\notag\\
\varepsilon_{j2}^{(\mathbb J)}&=\frac1n\sum_{1\le l\ne k\le p}X_{jl}X_{jk}R^{(\mathbb J,j)}_{k+n,l+n},\notag\\
\varepsilon_{j3}^{(\mathbb J)}&=m_n^{(\mathbb J)}(z)-m_n^{(\mathbb J,j)}(z).\notag
\end{align}
Summing these equations for $j\in\mathbb T_{\mathbb J}$, we get
\begin{equation}\label{main1}
m_n^{(\mathbb J)}(z)=-\frac{n-|\mathbb J|}{n(z+ym_n^{(\mathbb J))}(z)+\frac{n-p+|\mathbb J|}{pz})}
+\frac{T_n^{(\mathbb J)}}{z+ym_n^{(\mathbb J)}(z)+\frac{n-p+|\mathbb J|}{pz}},
\end{equation}
where 
\begin{equation}\notag
 T_n^{(\mathbb J)}=\frac1n\sum_{j\in\mathbb T_{\mathbb J}}\varepsilon_j^{(\mathbb J)}R_{jj}^{(\mathbb J)}.
\end{equation}
Note that
\begin{align}\label{main2}
 \frac1{z+ym_n^{(\mathbb J)}(z)+\frac{n-p+|\mathbb J|}{pz}}&=\frac1{z+ys_y(z)+\frac{p-n+|\mathbb z}{pz}}\notag\\&
 -\frac{m_n^{(\mathbb J)}(z)-s_y(z)}{(z+ys_y(z)+\frac{n-p+|\mathbb J|}{pz})(z+ym_n^{(\mathbb J)}(z)+\frac{n-p+|\mathbb J|}{pz})}\notag\\&=
 -s_y(z)+\frac{s_y(z)\Lambda_n^{(\mathbb J)}(z)}{z+ym_n^{(\mathbb J)}(z)+\frac{n-p+|\mathbb J|}{pz}},
\end{align}
where 
\begin{equation}\notag
 \Lambda_n^{(\mathbb J)}=\Lambda_n^{(\mathbb J)}(z)=m_n^{(\mathbb J)}(z)-s_y(z).
\end{equation}
Equalities \eqref{main1} and \eqref{main2} together imply 
\begin{align}
 \Lambda_n^{(\mathbb J)}&=-\frac{s_y(z)\Lambda_n^{(\mathbb J)}}{z+ym_n^{(\mathbb J)}(z)+\frac{n-p+|\mathbb J|}{pz}}
\notag\\& +\frac{T_n^{(\mathbb J)}}{z+ym_n^{(\mathbb J)}(z)+\frac{n-p+|\mathbb J|}{pz}}+\frac{|\mathbb J|}{n(z+ym_n^{(\mathbb J)}(z)+\frac{n-p+|\mathbb J|}{pz})}.
\notag
\end{align}
Solving this with respect to $\Lambda_n^{(\mathbb J)}$, we get
\begin{equation}\label{main3}
 \Lambda_n^{(\mathbb J)}=\frac{T_n^{(\mathbb J)}}{z+y(m_n^{(\mathbb J)}(z)+s_y(z))+\frac{n-p+|\mathbb J|}{pz}}+\frac{|\mathbb J|}{n(z+y(m_n^{(\mathbb J)}(z)+s_y(z))
 +\frac{n-p+|\mathbb J|}{pz})}.
\end{equation}

\begin{lem}\label{resol00}
For any $z=u+iv$ with $v>0$ and for any $\mathbb J\subset \mathbb T$, we have
\begin{align}\label{res1}
 \frac1n\sum_{l=1}^p\sum_{k=1}^p|R^{(\mathbb J)}_{l+n,k+n}|^2\le v^{-1}\im m_n^{(\mathbb J)}(z)+\frac{1-y}{|z|^2},
\end{align}
and, for any $\mathbb J\subset \mathbb L=\{1,\ldots,p\}$
\begin{align}\label{res1a}
 \frac1n\sum_{l=1}^n\sum_{k=1}^n|R^{(\mathbb J)}_{l,k}|^2\le v^{-1}\im m_n^{(\mathbb J)}(z)+\frac{1-y}{|z|^2},
\end{align}
For any  $\mathbb J\subset\mathbb T$, $l=1,\ldots,p$,
\begin{equation}\label{res2}
 \sum_{k=1}^p|R^{(\mathbb J)}_{l+n,k+n}|^2\le v^{-1}\im R^{(\mathbb J)}_{l+n,l+n}.
\end{equation}
and, for any $\mathbb J\subset \mathbb L=\{1,\ldots,p\}$, $l=1,\ldots,n$,
\begin{equation}\label{res2a}
 \sum_{k=1}^n|R^{(\mathbb J)}_{l,k}|^2\le v^{-1}\im R^{(\mathbb J)}_{l,l}.
\end{equation}
For any  $\mathbb J\subset\mathbb T$, $l=1,\ldots,p$,
\begin{equation}\label{res20}
 \sum_{k=1}^p|[(\mathbf R^{(\mathbb J)}])^2_{l+n,k+n}|^2\le Cv^{-3}\im R^{(\mathbb J)}_{l+n,l+n}.
\end{equation}
and, for any $\mathbb J\subset \mathbb L=\{1,\ldots,p\}$, $l=1,\ldots,n$,
\begin{equation}\label{res20a}
 \sum_{k=1}^n|[(\mathbf R^{(\mathbb J)})^2]_{l,k}|^2\le Cv^{-3}\im R^{(\mathbb J)}_{l,l}.
\end{equation}
Moreover, for any $\mathbb J\subset T$ and for any $l\in\mathbb T_{\mathbb J}$ we have
\begin{align}\label{res3}
\frac1n\sum_{l=1}^p|[(\mathbf R^{(\mathbb J)})^2]_{l+n,l+n}|^2\le v^{-3}\im m_n^{(\mathbb J)}(z),
\end{align}
and, for any $\mathbb J\subset \mathbb L=\{1,\ldots,p\}$,
\begin{align}\label{res3a}
\frac1n\sum_{l=1}^n|[(R^{(\mathbb J)})^2]_{l,l}|^2\le v^{-3}\im m_n^{(\mathbb J)}(z).
\end{align}

For any $q\ge1$,
\begin{align}\label{res4}
\frac1n\sum_{l=1}^p|[(R^{(\mathbb J)})^2]_{l+n,l+n}|^q\le v^{-q}\frac1n\sum_{l=1}^p\im^qR^{(\mathbb J)}_{l+n,l+n},
\end{align}
and
\begin{align}\label{res4a}
\frac1n\sum_{l=1}^n|[(R^{(\mathbb J)})^2]_{l,l}|^q\le v^{-q}\frac1n\sum_{l=1}^n\im^qR^{(\mathbb J)}_{l,l}.
\end{align}

Finally,
\begin{align}\label{res5}
\frac1n\sum_{l,k=1}^p|[(R^{(\mathbb J)})^2]_{l+n,k+n}|^2\le v^{-3}\im m_n^{(\mathbb J)}(z),
\end{align}
and
\begin{align}\label{res6}
\frac1n\sum_{l,k=1}^p|[(R^{(\mathbb J)})^2]_{l+n,k+n}|^{2q}\le v^{-3q}\frac1n\sum_{l=1}^p\im^qR^{(\mathbb J)}_{l+n,l+n},
\end{align}
We have as well
\begin{align}\label{res7}
\frac1{n^2}\sum_{l,k=1}^p|[(R^{(\mathbb J)})^2]_{l+n,k+n}|^{2p}\le v^{-2q}(\frac1n\sum_{l\in\mathbb T_{\mathbb J}}\im^pR^{(\mathbb J)}_{l+n,l+n})^2.
\end{align}
\end{lem}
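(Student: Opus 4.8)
The plan is to reduce every one of these bounds to the spectral decomposition of $\mathbf R^{(\mathbb J)}$ together with two elementary ``Ward-type'' identities. Fix $\mathbb J$ and write $\mathbf V^{(\mathbb J)}=\sum_m\lambda_m\mathbf u_m\mathbf u_m^*$ with real orthonormal eigenvectors $\mathbf u_m=(u_{ma})_a$ (possible since $\mathbf V^{(\mathbb J)}$ is real symmetric), the eigenvalue $0$ occurring with multiplicity $p-n+|\mathbb J|$. Then $R^{(\mathbb J)}_{ab}=\sum_m\frac{u_{ma}u_{mb}}{\lambda_m-z}$, $[(\mathbf R^{(\mathbb J)})^2]_{ab}=\sum_m\frac{u_{ma}u_{mb}}{(\lambda_m-z)^2}$, $\sum_mu_{ma}u_{mb}=\delta_{ab}$, and $|\lambda_m-z|\ge v$.

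First I would record the resolvent Ward identity: for every index $a$, summing over the full index set,
\[\sum_b|R^{(\mathbb J)}_{ab}|^2=[\mathbf R^{(\mathbb J)}(\mathbf R^{(\mathbb J)})^*]_{aa}=\sum_m\frac{u_{ma}^2}{|\lambda_m-z|^2}=v^{-1}\im R^{(\mathbb J)}_{aa}.\]
Since restricting the range of $b$ only decreases the left-hand side, \eqref{res2} and \eqref{res2a} follow at once. Summing this over $a$ in the $\mathbf X^*\mathbf X$-block and using $\frac1n\sum_{l=1}^pR^{(\mathbb J)}_{l+n,l+n}=m_n^{(\mathbb J)}(z)-\frac{p-n+|\mathbb J|}{nz}$ (which comes from $\Tr\mathbf R^{(\mathbb J)}=2nm_n^{(\mathbb J)}(z)-\frac{p-n+|\mathbb J|}z$ together with $\frac1n\sum_{l\in\mathbb T_{\mathbb J}}R^{(\mathbb J)}_{ll}=m_n^{(\mathbb J)}(z)$, both read off from \eqref{matrshur}), and then taking imaginary parts, gives \eqref{res1}; the $\mathbf X\mathbf X^*$-block computation gives \eqref{res1a}.

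Next, the same computation applied to $(\mathbf R^{(\mathbb J)})^2$ together with $|\lambda_m-z|^2\ge v^2$ gives, for every $a$,
\[\sum_b|[(\mathbf R^{(\mathbb J)})^2]_{ab}|^2=\sum_m\frac{u_{ma}^2}{|\lambda_m-z|^4}\le v^{-2}\sum_m\frac{u_{ma}^2}{|\lambda_m-z|^2}=v^{-3}\im R^{(\mathbb J)}_{aa},\]
which yields \eqref{res20}, \eqref{res20a}, and after summation over $a$ and the trace identity above also \eqref{res3}, \eqref{res3a} and \eqref{res5}. For \eqref{res4} and \eqref{res4a} I would instead use the crude diagonal bound $|[(\mathbf R^{(\mathbb J)})^2]_{aa}|\le\sum_m\frac{u_{ma}^2}{|\lambda_m-z|^2}=v^{-1}\im R^{(\mathbb J)}_{aa}$, raise to the power $q$ and average. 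Finally, \eqref{res6} follows from \eqref{res20} and the monotonicity of $\ell^p$-norms, $\sum_k|a_k|^{2q}\le(\sum_k|a_k|^2)^q$ for $q\ge1$; and \eqref{res7} follows by writing $[(\mathbf R^{(\mathbb J)})^2]_{l+n,k+n}=\sum_bR^{(\mathbb J)}_{l+n,b}R^{(\mathbb J)}_{b,k+n}$, applying Cauchy--Schwarz together with \eqref{res2} to get $|[(\mathbf R^{(\mathbb J)})^2]_{l+n,k+n}|\le v^{-1}(\im R^{(\mathbb J)}_{l+n,l+n})^{1/2}(\im R^{(\mathbb J)}_{k+n,k+n})^{1/2}$, then raising to the power $2q$ and summing over $l$ and $k$ separately, which decouples the two indices.

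There is no genuine analytic difficulty here: everything is a direct consequence of the spectral calculus. The only points that need care are the bookkeeping of the zero eigenvalue of $\mathbf V^{(\mathbb J)}$ of multiplicity $p-n+|\mathbb J|$ --- this is exactly what produces the shift terms $\frac{p-n+|\mathbb J|}{nz}$, i.e.\ the feature distinguishing the Marchenko--Pastur case from the Wigner case --- and keeping track of which index ranges are being summed, so that the passage from the Ward identities to the stated inequalities always discards terms in the right direction.
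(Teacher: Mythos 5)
Your proposal follows essentially the same route as the paper's own proof: the spectral decomposition of $\mathbf V^{(\mathbb J)}$, the Ward-type identity $\sum_b|R^{(\mathbb J)}_{ab}|^2=v^{-1}\im R^{(\mathbb J)}_{aa}$, the Cauchy--Schwarz bound for entries of $(\mathbf R^{(\mathbb J)})^2$, the elementary diagonal and $\ell^p$-monotonicity estimates for the higher-power inequalities, and the trace identity keeping track of the zero eigenvalue of multiplicity $p-n+|\mathbb J|$. The only (harmless) divergence is that your careful kernel bookkeeping produces \eqref{res3}, \eqref{res3a}, \eqref{res5} with an extra additive term of order $\bigl((1-y)+|\mathbb J|/n\bigr)/|z|^2$ on the right-hand side, which the stated inequalities omit but which the paper's own proof implicitly incurs as well; since the lemma is only invoked up to such additive terms and generic constants, this changes nothing downstream.
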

\begin{proof}Firstly we note that
\begin{align} 
 \max\{\frac1n\sum_{l=1}^p\sum_{k=1}^p|R^{(\mathbb J)}_{l+n,k+n}|^2,\frac1n\sum_{l=1}^n\sum_{k=1}^n|R^{(\mathbb J)}_{l,k}|^2\}\le \frac1n\Tr|\mathbf R ^{(\mathbb J)}|^2.
\end{align}
Furthermore,
\begin{equation}\notag
 \frac1n\Tr|R^{(\mathbb J)}|^2=v^{-1}\frac1n\im\Tr \mathbf R^{(\mathbf J)}=2v^{-1}\im m_n^{(\mathbb J)}(z)+\frac{1-y-\frac{|\mathbb J|}n}{|z|^2}
\end{equation}
These relations imply inequalities \eqref{res1} and \eqref{res1a}.
Note that
\begin{align}
 \max\{\sum_{k=1}^p|R^{(\mathbb J)}_{l+n,k+n}|^2,\sum_{k=1}^n|R^{(\mathbb J)}_{l,k}|^2\}\le {\sum}^*_{k}|R^{(\mathbb J)}_{l,k}|^2.\notag
\end{align}
Here we denote by ${\sum^*}_{k}$ the sum over $k\in\mathbb T_{\mathbb J}\cup\{1,\ldots,p\}$ if $\mathbb J\subset \mathbb T$, and 
the sum over $k\in\{1,\ldots,n\}\cup\{\mathbb L\setminus\mathbb J\}$.
Let us denote by $\lambda^{(\mathbb J)}_k$  the eigenvalues of the matrix $\mathbf W^{(\mathbb J)}$.
Let denote now by $\mathbf u_k^{(\mathbb J)}=(u^{(\mathbb J)}_{kl})$ the eigenvector of the matrix $\mathbf W^{(\mathbb J)}$ 
 corresponding to the eigenvalue $\lambda^{(\mathbb J)}_k$.
Using  this notation we may write
 \begin{equation}\label{orth1}
  R^{(\mathbb J)}_{lk}={\sum}^*_{q}\frac1{\lambda_q^{(\mathbb J)}-z}u^{(\mathbb J)}_{lq}u^{(\mathbb J)}_{kq}.
 \end{equation}
It is straightforward to check that the following inequality holds
\begin{align}
 {\sum}^*_{k}|R^{(\mathbb J)}_{kl}|^2&\le{\sum}^*_{q}\frac1{|\lambda^{(\mathbb J)}_q-z|^2}|u^{(\mathbb J)}_{lq}|^2\notag\\&=
 v^{-1}\im\Big({\sum}^*_{q}\frac1{\lambda^{(\mathbb J)}_q-z}|u^{(\mathbb J)}_{lq}|^2\Big)=v^{-1}\im R_{ll}^{(\mathbb J)}.\notag
\end{align}
Thus, inequalities \eqref{res2} and \eqref{res2a} are proved.
Similarly we get
\begin{equation}\notag
 {\sum}^*_{k}|[(R^{(\mathbb J)})^2]_{kl}|^2\le{\sum}^*_{}\frac1{|\lambda^{(\mathbb J)}_q-z|^4}|u^{(\mathbb J)}_{lq}|^2\le
 v^{-3}\im R^{(\mathbb J)}_{ll}.
\end{equation}
This implies inequalities \eqref{res20} and \eqref{res20a}.   
To prove  inequality \eqref{res3} and \eqref{res3a} we observe that
\begin{equation}\label{resol1} 
 |[(\mathbf R^{(\mathbb J)})^2]_{ll}|\le {\sum}^*_{k}|R^{(\mathbb J)}_{lk}|^2.
\end{equation}
This inequality implies
\begin{equation}\notag
 \frac1n{\sum}^*_{l}|[(\mathbf R^{(\mathbb J)})^2]_{ll}|^2\le \frac1n{\sum}^*_{l}
 ({\sum}^*_{k}|R^{(\mathbb J)}_{lk}|^2)^2.
\end{equation}
Applying now inequality \eqref{res2}, we get
\begin{align}
\frac1n{\sum}^*_{l}|[(\mathbf R^{(\mathbb J)})^2]_{ll}|^2\le v^{-2}\frac1n{\sum}^*_{l}\im^2R^{(\mathbb J)}_{ll}.\notag
\end{align}
Using  $|R^{(\mathbb J)}_{ll}|\le v^{-1}$ this leads to the following bound
\begin{align}
\frac1n{\sum}^*_{l}|[(\mathbf R^{(\mathbb J)})^2]_{ll}|^2\le v^{-3}\frac1n{\sum}^*_{l}\im R^{(\mathbb J)}_{ll}=
v^{-3}\im m_n^{(\mathbb J)}(z).
\end{align}
Thus inequalities \eqref{res3} and \eqref{res3a} are proved.
Furthermore, applying inequality \eqref{resol1}, we may write
\begin{align}
\frac1n{\sum}^*_{l}|[(\mathbf R^{(\mathbb J)})^2]_{ll}|^4\le\frac1n{\sum}^*_{l}
 ({\sum}^*_{k}|R^{(\mathbb J)}_{lk}|^2)^4.\notag
\end{align}
Applying \eqref{res2}, this inequality yields
\begin{align}
\frac1n\sum_{l}|[(R^{(\mathbb J)})^2]_{ll}|^4\le v^{-4}\frac1n{\sum}^*_{l}
 \im^4R^{(\mathbb J)}_{ll}.\notag
\end{align}
The last inequality proves  inequality \eqref{res4}. Similarly we get inequality \eqref{res4a}.
Note that
\begin{align}
\frac1n{\sum}^*_{l,k}|[(\mathbf R^{(\mathbb J)})^2]_{lk}|^2&\le \frac1n\Tr|\mathbf R^{(\mathbb J)}|^4=\frac1n{\sum}^*_{l}
\frac1{|\lambda^{(\mathbb J)}_l-z|^4}\notag\\&\le v^{-3}\im\frac1n{\sum}^*_{l}\frac1{\lambda^{(\mathbb J)}_l-z}\le
v^{-3}(\im m_n^{(\mathbb J)}(z)+\frac{1-y-\frac{v|\mathbb J|}n}{|z|^2}).\notag
\end{align}
Thus, inequality \eqref{res5} is proved.
To finish we note that
\begin{align}
 \frac1n{\sum}^*_{l,k}|[(\mathbf R^{(\mathbb J)})^2]_{lk}|^4\le \frac1n{\sum}^*_{l}
 ({\sum}^*_{k}|[(\mathbf R^{(\mathbb J)})^2]_{lk}|^2)^2.\notag
\end{align}
Applying inequality \eqref{res20}, we get
\begin{align}
 \frac1n{\sum}^*_{l,k}|[(\mathbf R^{(\mathbb J)})^2]_{lk}|^4\le v^{-6}\frac1n{\sum}^*_{l}
 (\im R^{(\mathbb J)}_{ll})^2.\notag
\end{align}
To prove inequality \eqref{res7}, we note 
\begin{equation}\notag
|[(\mathbf R^{(\mathbb J)})^2]_{lk}|^2\le({\sum}^*_{q}|R^{(\mathbb J)}_{lq}|^2)({\sum}^*_{q}|R^{(\mathbb J)}_{kq}|^2). 
\end{equation}
This inequality implies
\begin{align}
 \frac1{n^2}{\sum}^*_{l,k}|[(\mathbf R^{(\mathbb J)})^2]_{lk}|^{2p}\le 
 (\frac1n{\sum}^*_{l,k\in}({\sum}^*_{q}|R^{(\mathbb J)}_{lq}|^2)^p)^2
 (\im R^{(\mathbb J)}_{ll})^2.\notag
\end{align}
Applying inequality \eqref{res1}, we get the claim.
Thus, Lemma \ref{resol00} is proved.
\end{proof}
\begin{lem}\label{schlein}For any $s\ge 1$, and for any $z=u+iv$ and for any $\mathbb J\subset \mathbb T$, and $l=1,\ldots,p$,
\begin{equation}\label{ddd1}
 |R_{jj}^{(\mathbb J)}(u+iv/s)|\le s|R_{l+n,l+n}^{(\mathbb J)}(u+iv)|.
\end{equation}
and 
\begin{equation}\label{ddd2}
 |\frac1{u+iv/s+ym_n(u+iv/s)+\frac{y-1}{u+iv/s}}|\le s|\frac1{u+iv+ym_n(u+iv)+\frac{y-1}{u+iv}}|.
\end{equation}
\end{lem}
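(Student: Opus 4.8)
The plan is to deduce both inequalities from a single one-variable estimate: along any vertical half-line $z=u+it$, $t>0$, each of the functions $t\mapsto\log|R_{jj}^{(\mathbb J)}(u+it)|$ and $t\mapsto\log|u+it+ym_n(u+it)+\tfrac{y-1}{u+it}|$ has derivative bounded in modulus by $1/t$. Granting this, integrating from $v/s$ to $v$ (note $s\ge1$, hence $v/s\le v$) gives $|\log|F(u+iv/s)|-\log|F(u+iv)||\le\int_{v/s}^{v}t^{-1}\,dt=\log s$ for $F$ equal to either of these functions, that is $s^{-1}\le|F(u+iv/s)|/|F(u+iv)|\le s$. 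Taking $F=R_{jj}^{(\mathbb J)}$ and the upper side of this two-sided bound gives \eqref{ddd1}; the argument applies verbatim to any diagonal entry, in particular to $R^{(\mathbb J)}_{l+n,l+n}$. Taking $F(z)=z+ym_n(z)+\tfrac{y-1}z$ and the lower side gives $|F(u+iv/s)|\ge s^{-1}|F(u+iv)|$, which is \eqref{ddd2}.

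For the first derivative bound I will use $\tfrac{d}{dz}\mathbf R^{(\mathbb J)}=(\mathbf R^{(\mathbb J)})^2$, so that $\tfrac{d}{dz}R_{jj}^{(\mathbb J)}=[(\mathbf R^{(\mathbb J)})^2]_{jj}$, together with $|[(\mathbf R^{(\mathbb J)})^2]_{jj}|\le\sum_k|R^{(\mathbb J)}_{jk}|^2\le v^{-1}\im R_{jj}^{(\mathbb J)}$, which is inequality \eqref{res2} of Lemma \ref{resol00} (valid for every diagonal entry, as its proof shows). Hence $|\tfrac{d}{dz}\log R_{jj}^{(\mathbb J)}(z)|=|[(\mathbf R^{(\mathbb J)})^2]_{jj}|\,/\,|R_{jj}^{(\mathbb J)}(z)|\le v^{-1}\im R_{jj}^{(\mathbb J)}(z)\,/\,|R_{jj}^{(\mathbb J)}(z)|\le v^{-1}$, and since $\tfrac{d}{dt}=i\tfrac{d}{dz}$ on the vertical line, also $|\tfrac{d}{dt}\log|R_{jj}^{(\mathbb J)}(u+it)||\le|\tfrac{d}{dt}\log R_{jj}^{(\mathbb J)}(u+it)|\le t^{-1}$. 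For $F(z)=z+ym_n(z)+\tfrac{y-1}z$, writing $m_n(z)=\int(x-z)^{-1}\,d\nu_n(x)$ with $\nu_n$ the (symmetrized) spectral probability measure gives $|m_n'(z)|\le\int|x-z|^{-2}\,d\nu_n=v^{-1}\im m_n(z)$, so $|F'(z)|\le1+yv^{-1}\im m_n(z)+\tfrac{1-y}{|z|^2}=v^{-1}\big(v+y\im m_n(z)+\tfrac{(1-y)v}{|z|^2}\big)=v^{-1}\im F(z)\le v^{-1}|F(z)|$ (using $y\le1$), i.e.\ $|\tfrac{d}{dz}\log F(z)|\le v^{-1}$; this is exactly the estimate already used in the proof of Corollary \ref{cor8}.

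The only point deserving a remark is that the logarithms above make sense, i.e.\ that $R_{jj}^{(\mathbb J)}(z)\ne0$ and $F(z)\ne0$ on $\mathbb C_+$: indeed $\im R_{jj}^{(\mathbb J)}(z)>0$ since it is a diagonal entry of the resolvent of a Hermitian matrix, and $\im F(z)=v+y\im m_n(z)+\tfrac{(1-y)v}{|z|^2}\ge v>0$ (which also supplies the bound $|F(z)|\ge\im F(z)$ used above). Granting these, the integration step in the first paragraph is legitimate and the lemma follows. I do not expect any genuine obstacle here; the whole content is the inequality $|\tfrac{d}{dz}\log(\cdot)|\le1/v$, which itself rests only on \eqref{res2} and the integral representation of $m_n$.
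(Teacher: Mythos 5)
Your proof is correct and follows essentially the same route as the paper: bound the logarithmic derivative of the diagonal resolvent entry (via $\frac{d}{dz}R^{(\mathbb J)}_{jj}=[(\mathbf R^{(\mathbb J)})^2]_{jj}$ and \eqref{res2}) and of $z+ym_n(z)+\frac{y-1}{z}$ (via $|m_n'(z)|\le v^{-1}\im m_n(z)$ and $\im F(z)\ge v$) by $1/v$, then integrate along the vertical segment from $v/s$ to $v$ to pick up a factor $\log s$. Your write-up is in fact slightly more careful than the paper's (explicit two-sided bound, non-vanishing of the functions, and the clean identity $1+yv^{-1}\im m_n(z)+\frac{1-y}{|z|^2}=v^{-1}\im F(z)$), but there is no substantive difference in method.
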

\begin{proof} See  Lemma 3.4 in \cite{SchleinMaltseva:2013}.
 For the readers convenience we include the short argument here.
 Note that, for any $l=1,\ldots,p$,
 \begin{equation}\notag
  |\frac d{dv}\log R^{(\mathbb J)}_{l+n,l+n}(u+iv)|\le \frac1{|R_{l+n,l+n}^{(\mathbb J)}(u+iv)|}|\frac d{dv} R_{l+n,l+n}^{(\mathbb J)}(u+iv)|.
 \end{equation}
Furthermore,
\begin{equation}\notag
 \frac d{dv} R_{l+n,l+n}^{(\mathbb J)}(u+iv)=[(\mathbf R^{(\mathbb J)})^2]_{l+n,l+n}(u+iv)
\end{equation}
and
\begin{equation}\notag
 |[(\mathbf R^{(\mathbb J)})^2]_{l+n,l+n}(u+iv)|\le v^{-1}\im R^{(\mathbb J)}_{l+n,l+n}.
\end{equation}
From here it follows that
\begin{equation}\notag
  |\frac d{dv}\log R_{l+n,l+n}^{(\mathbb J)}(u+iv)|\le v^{-1}.
 \end{equation}
 We may write now
 \begin{equation}\notag
  |\log R_{l+n,l+n}^{(\mathbb J)}(u+iv)-\log R_{l+n,
  l+n}^{(\mathbb J)}(u+iv/s)|\le \int_{v/s}^v\frac{du}u=\log s.
 \end{equation}
The last inequality yields \eqref{ddd1}. Similarly we have
\begin{align}\notag
 |-\frac d{dz}\log\{z+ym_n(z)+\frac{y-1}z\}|\le \frac{|1+y\frac d{dz}m_n(z)-\frac{1-y}{z^2}|}{|z+ym_n(z)+\frac{y-1}z|}.
\end{align}
Using that
\begin{align}
 |z+ym_n(z)+\frac{y-1}z|&\ge |\im\{z+ym_n(z)+\frac{y-1}z\}|=v(1+v^{-1}y\im m_n(z)+\frac{(1-y)v}{|z|^2}),\notag\\
 |1+y\frac d{dz}m_n(z)-\frac{1-y}{z^2}|&\le 1+y|\frac d{dz}m_n(z)|+\frac{1-y}{|z^2|}\le 1+yv^{-1}\im m_n(z)+\frac{(1-y)v}{|z|^2}.\notag
\end{align}
The last inequalities together imply
\begin{align}
 |-\frac d{dz}\log\{z+ym_n(z)+\frac{y-1}z\}|\le v^{-1}.\notag
\end{align}
From here \tc{\eqref{ddd2} follows}.
Thus Lemma \ref{schlein} is proved.
\end{proof}

\subsection{Some Auxiliary Bounds for Resolvent Matrices for $z=u+iV$ with $V=4\sqrt y$}
We shall use the bound for the $\varepsilon_{j\nu}$ for $V=\sqrt y$.
\begin{lem}\label{eps2}Assuming the conditions of Theorem \ref{main}, we get
\begin{align}\notag
 \E|\varepsilon_{j2}|^q\le \frac {C^qq}{n^{\frac q2}}.
\end{align}

\end{lem}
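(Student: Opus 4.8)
The plan is to exploit that on the horizontal line $\Im z=V=4\sqrt y$ the resolvent $\mathbf R^{(j)}=(\mathbf V^{(j)}-z\mathbf I)^{-1}$ is uniformly bounded, $\|\mathbf R^{(j)}\|\le 1/(4\sqrt y)$, so that $|R^{(j)}_{k+n,l+n}|\le v^{-1}$ and, by inequality \eqref{res2} of Lemma \ref{resol00}, $\sum_{k=1}^p|R^{(j)}_{k+n,l+n}|^2\le v^{-1}\im R^{(j)}_{l+n,l+n}\le v^{-2}$; for $V=4\sqrt y$ all of these are of order $O(1)$. Since $\mathbf V^{(j)}$ is real symmetric, $\mathbf R^{(j)}$ is symmetric, and I would first rewrite $\varepsilon_{j2}$ from \eqref{repr01} as a martingale sum in the entries $X_{j1},\dots,X_{jp}$,
\[
\varepsilon_{j2}=\frac1p\sum_{1\le k\ne l\le p}X_{jk}X_{jl}R^{(j)}_{k+n,l+n}=\frac2p\sum_{l=2}^p X_{jl}W_l,\qquad W_l:=\sum_{k=1}^{l-1}X_{jk}R^{(j)}_{k+n,l+n},
\]
noting that, conditionally on the $\sigma$-algebra $\mathfrak M^{(j)}$ generated by $\mathbf X^{(j)}$, the variables $\xi_l:=X_{jl}W_l$ form a martingale difference sequence for the filtration $\mathcal F_l:=\mathfrak M^{(j)}\vee\sigma(X_{j1},\dots,X_{jl})$, with $\E\{\xi_l^2\mid\mathcal F_{l-1}\}=W_l^2$ because $\E X_{jl}=0$ and $\E X_{jl}^2=1$.

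Next I would apply Burkholder's inequality (Lemma \ref{burkh}) to $\sum_{l=2}^p\xi_l$, which gives
\[
\E|\varepsilon_{j2}|^q\le\frac{2^qC_2^qq^q}{p^q}\Big(\E\Big(\sum_{l=2}^p W_l^2\Big)^{q/2}+\sum_{l=2}^p\E|\xi_l|^q\Big).
\]
For the quadratic variation term the key observation is that $\sum_l W_l^2=p\,Q^{(j)}$, where $Q^{(j)}$ is the quadratic form $Q^{(\mathbb J,j)}$ of Section \ref{key} with $\mathbb J=\emptyset$; hence, on the line $\Im z=4\sqrt y$, Corollary \ref{q1*} yields $\E(\sum_l W_l^2)^{q/2}=p^{q/2}\E(Q^{(j)})^{q/2}\le p^{q/2}(Cq)^q$. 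It is essential to invoke the sharp bound of Corollary \ref{q1*} here rather than to estimate $\sum_l W_l^2$ term by term, since a direct estimate would lose a power of $n$ through the truncation-inflated moments of the $X_{jk}$.

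For the jump term I would write $\E|\xi_l|^q=\E|X_{jl}|^q\,\E|W_l|^q$ and bound $\E|W_l|^q$ by conditioning on $\mathfrak M^{(j)}$ and applying Rosenthal's inequality (Lemma \ref{Rosent}) to $W_l$: using $\sum_k|R^{(j)}_{k+n,l+n}|^2\le v^{-2}$, $\sum_k|R^{(j)}_{k+n,l+n}|^q\le v^{-q}$ and the truncation bound \eqref{trun}, which gives $\E|X_{jk}|^q\le D^{q-2}n^{(q-2)/4}$ for $q\ge 2$, one obtains $\E|W_l|^q\le(Cq)^qn^{(q-2)/4}$, hence $\E|\xi_l|^q\le(Cq)^qn^{(q-2)/2}$ and, summing over $l$ and using $p\asymp n$, $\sum_{l=2}^p\E|\xi_l|^q\le(Cq)^qn^{q/2}$. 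Substituting the two estimates into the Burkholder bound and dividing by $p^q\asymp n^q$ then produces a bound of the form $C^qq/n^{q/2}$ (the precise power of $q$ in the constant being immaterial for the range of $q$ to which the lemma is applied), which is the asserted estimate. The one point requiring care is the bookkeeping of the truncated moments $\E|X_{jk}|^q$ against the normalization $p^{-q}$; everything else reduces to the moment inequalities of the Appendix and to the estimate for $Q^{(j)}$ on the line $\Im z=4\sqrt y$ already furnished by Corollary \ref{q1*}.
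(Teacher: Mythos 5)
Your argument is correct and is essentially the paper's own proof: the paper applies Burkholder's inequality for quadratic forms (Lemma \ref{burkh1}, which is exactly your martingale decomposition in $X_{jl}W_l$ fed into Lemma \ref{burkh}), bounds the quadratic-variation term through Corollary \ref{q1*}, and handles the remaining term by Rosenthal's inequality together with the resolvent bounds at $V=4\sqrt y$ and the truncation estimate for $\E|X_{jl}|^q$. As in the paper, the bound actually produced is of the form $(Cq)^{2q}n^{-q/2}$, which suffices since the lemma is only invoked for bounded $q$.
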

\begin{proof}
 Conditioning on $\mathfrak M^{(j)}$  and applying Burkholder's inequality \newline(see Lemma \ref{burkh1}), we get
 \begin{align}
  \E|\varepsilon_{j2}|^q\le C_2^qq^qn^{-q}(\E|\sum_{k=2}^p(\sum_{l=1}^{k-1}R^{(j)}_{k+n,l+n}X_{jk})^2|^{\frac q2}+
  \mu_q\sum_{k=2}^p\E|\sum_{l=1}^{k-1}R^{(j)}_{k+n,l+n}X_{jl}|^q).\notag
 \end{align}
Applying now Corollary \ref{q1*} and Rosenthal's inequality, we get
\begin{align}
 \E|\varepsilon_{j2}|^q\le C_2^qq^{2q}n^{-\frac q2}+\mu_qn^{-q}q^{2q}\sum_{l\in\mathbb T_j}\E(\sum_{k\in\mathbb T_j}|R^{(j)}_{kl}|^2)^{\frac q2}+
 \mu_q^2n^{-q}q^{2q}\sum_{k,l=1}^p\E |R^{(j)}_{k+n,l+n}|^q.\notag
\end{align}
Using that $|R^{(j)}_{k+n,l+n}|\le\frac14$ and $\sum_{l=1}^p|R^{(j)}_{kl}|^2\le \frac1{16}$ and $\mu_q\le D^{\frac q4}n^{\frac q4-1}\mu_4$, we get
\begin{align}
 \E|\varepsilon_{j2}|^q\le C_2^qq^{2q}n^{-\frac q2}.\notag
\end{align}
Thus Lemma \ref{eps2} is proved.
\end{proof}
\begin{lem}\label{eps3}Assuming the conditions of Theorem \ref{main}, we get
\begin{align}
 \E|\varepsilon_{j1}|^q\le \frac {C^qq}{n^{\frac q2}}.\notag
\end{align}

\end{lem}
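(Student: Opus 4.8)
The plan is to follow the proof of Lemma~\ref{eps2}, but the argument is simpler because $\varepsilon_{j1}$ is a \emph{linear} form, not a quadratic form, in the entries of the $j$-th row of $\mathbf X$, so a single application of Rosenthal's inequality will suffice. Recall from \eqref{repr01} that $\varepsilon_{j1}=\frac1p\sum_{k=1}^p(X_{jk}^2-1)R^{(j)}_{k+n,k+n}$. First I would condition on the $\sigma$-algebra $\mathfrak M^{(j)}$ generated by all rows of $\mathbf X$ other than the $j$-th; then the coefficients $R^{(j)}_{k+n,k+n}$ are $\mathfrak M^{(j)}$-measurable, while the summands $X_{jk}^2-1$, $k=1,\dots,p$, are independent and, since $\E X_{jk}^2=1$, centered. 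Since here $z=u+iV$ with $V=4\sqrt y$, we have $\|\mathbf R^{(j)}\|\le V^{-1}$, so $|R^{(j)}_{k+n,k+n}|\le (4\sqrt y)^{-1}$ for every $k$, a constant depending on $y$ only.

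Next I would apply Rosenthal's inequality (Lemma~\ref{Rosent}) conditionally on $\mathfrak M^{(j)}$, obtaining a bound on $\E\{|\varepsilon_{j1}|^q\mid\mathfrak M^{(j)}\}$ of the form $C_1^qq^qp^{-q}$ times the sum of a ``variance'' term $\bigl(\sum_{k=1}^p|R^{(j)}_{k+n,k+n}|^2\,\E(X_{jk}^2-1)^2\bigr)^{q/2}$ and a ``large moment'' term $\sum_{k=1}^p|R^{(j)}_{k+n,k+n}|^q\,\E|X_{jk}^2-1|^q$. For the variance term I would use $\E(X_{jk}^2-1)^2\le\mu_4+1$ from \eqref{moment} together with the bound on $R^{(j)}_{k+n,k+n}$ and the fact that there are $p$ summands, which gives a bound $\le (Cn)^{q/2}$. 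For the large-moment term I would invoke the truncation \eqref{trun}: since $|X_{jk}^2-1|\le D^2\sqrt n+1$, one has $\E|X_{jk}^2-1|^q\le (C\sqrt n)^{q-2}(\mu_4+1)$, and summing $p$ terms yields a bound $\le C^qn^{q/2}$. Combining the two contributions with the $p^{-q}$ normalization and using $p\ge cn$ (a consequence of \eqref{yn} and $y\in(0,1]$), both are $\le C^qq^qn^{-q/2}$; taking the unconditional expectation then gives $\E|\varepsilon_{j1}|^q\le C^qq^qn^{-q/2}$, which in particular implies the asserted bound (the precise power of $q$ being immaterial in the applications, exactly as in Lemma~\ref{eps2}).

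The only point that requires a little care is the bookkeeping of powers of $n$: the truncation inflates $\E|X_{jk}^2-1|^q$ by a factor $n^{(q-2)/2}$, but this is exactly compensated by the $p^{-q}\asymp n^{-q}$ prefactor combined with the $p\asymp n$ summands, leaving the desired $n^{-q/2}$. Thus I do not expect any real obstacle here; this is the most elementary among the estimates for the $\varepsilon_{j\nu}$, the quadratic-form case $\varepsilon_{j2}$ (Lemma~\ref{eps2}) and the more delicate estimates on the region $\mathbb G$ being genuinely harder.
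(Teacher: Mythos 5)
Your proof is correct and follows essentially the same route as the paper: conditioning on $\mathfrak M^{(j)}$, one application of Rosenthal's inequality, the bound $|R^{(j)}_{k+n,k+n}|\le V^{-1}$ for $V=4\sqrt y$, and the truncation \eqref{trun} together with \eqref{moment} to control the high moments of $X_{jk}^2-1$ (the paper bounds $\mu_{2q}\le D^{2q-4}n^{q/2-1}\mu_4$ instead, which is the same bookkeeping). The resulting $C^qq^qn^{-q/2}$ rather than the displayed $C^qqn^{-q/2}$ matches what the paper's own proof actually yields, so no further adjustment is needed.
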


\begin{proof}
 Conditioning and applying  Rosenthal's inequality, we obtain
 \begin{equation}\notag
  \E|\varepsilon_{j1}|^q\le C^qq^qn^{-q}(\mu_4^{\frac q2}\E(\sum_{l=1}^p|R^{(j)}_{l+n,l+n}|^2)^{\frac q2}+\mu_{2q}\sum_{l=1}^p\E|R^{(j)}_{l+n,l+n}|^q).
 \end{equation}
Using that $|R^{(j)}_{l+n,l+n}|\le \frac14$ and $\mu_{2q}\le D^{2q-4}n^{\frac q2-1}\mu_4$, we get
\begin{equation}\notag
 \E|\varepsilon_{j1}|^q\le C^qq^qn^{-\frac q2}.
\end{equation}
Thus Lemma \ref{eps3} is proved.
 
\end{proof}

\begin{lem}\label{lem2} Assuming the conditions of Theorem \ref{main}, we get, for any $q\ge1$,
\begin{align}\notag
 |\varepsilon_{j3}|^q\le \frac {C^q}{n^{ q}}.
\end{align}

\end{lem}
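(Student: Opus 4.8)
The plan is to reduce the claimed deterministic bound on $\varepsilon_{j3}$ to a deterministic bound on the difference of resolvent traces $\Tr\mathbf R-\Tr\mathbf R^{(j)}$, and then to exploit that on the segment $\im z=V=4\sqrt y$ this difference is bounded by an absolute constant. First I would recall the identity already recorded above (combine \eqref{rr21} and \eqref{rr22}), namely that, up to sign, $\varepsilon_{j3}=\frac{y}{2n}\big(\Tr\mathbf R-\Tr\mathbf R^{(j)}\big)+\frac{y}{2nz}$, so that $|\varepsilon_{j3}|\le \frac{y}{2n}\,\big|\Tr\mathbf R-\Tr\mathbf R^{(j)}\big|+\frac{y}{2n|z|}$. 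On the contour $\im z=V$ we have $|z|\ge V$, and by \eqref{yn} we have $p\ge cn$ for some absolute $c>0$; hence the second summand is already $O(1/n)$, and the whole matter comes down to controlling $\big|\Tr\mathbf R-\Tr\mathbf R^{(j)}\big|$ by a constant.

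For that step I would use the purely linear-algebraic observation that $\mathbf V^{(j)}$ is an $(n+p-1)$-dimensional principal submatrix of the Hermitian matrix $\mathbf V$. By Cauchy's interlacing theorem the eigenvalues $\lambda_1\le\cdots\le\lambda_{n+p}$ of $\mathbf V$ and $\mu_1\le\cdots\le\mu_{n+p-1}$ of $\mathbf V^{(j)}$ interlace, $\lambda_k\le\mu_k\le\lambda_{k+1}$, so the function $H(x):=\#\{k:\lambda_k\le x\}-\#\{k:\mu_k\le x\}$ satisfies $0\le H(x)\le 1$ for all $x\in\R$. Writing $\Tr\mathbf R-\Tr\mathbf R^{(j)}=\int \frac{1}{x-z}\,d\big(\sum_k\delta_{\lambda_k}-\sum_k\delta_{\mu_k}\big)(x)$ and integrating by parts (the boundary contributions vanish since $\frac1{x-z}\to 0$ as $x\to\pm\infty$ while $H$ stays bounded) gives $\Tr\mathbf R-\Tr\mathbf R^{(j)}=\int \frac{H(x)}{(x-z)^2}\,dx$, whence $\big|\Tr\mathbf R-\Tr\mathbf R^{(j)}\big|\le \int \frac{dx}{(x-u)^2+v^2}=\frac{\pi}{v}$. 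This is precisely the deterministic estimate $|\varepsilon_{j3}|\le C(nv)^{-1}$ underlying Lemma \ref{lem2*} (cf. Lemma 3.2 in \cite{GT:2004}).

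Finally I would specialize to $z=u+iV$ with $V=4\sqrt y$: there $\big|\Tr\mathbf R-\Tr\mathbf R^{(j)}\big|\le \pi/(4\sqrt y)$ and $|z|\ge 4\sqrt y$, so the identity above yields $|\varepsilon_{j3}|\le C/n$ with $C=C(y)$, and raising to the $q$-th power gives $|\varepsilon_{j3}|^q\le C^q n^{-q}$, which is the claim. I do not anticipate a genuine obstacle: the statement is essentially the fixed-$v$ instance of the rank/interlacing bound for $\varepsilon_{j3}$, and the only place asking for a touch of care is justifying the integration-by-parts boundary terms, which is immediate from boundedness of $H$ and decay of $(x-z)^{-1}$.
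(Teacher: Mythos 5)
Your argument is correct and essentially identical to the paper's: the paper also reduces $\varepsilon_{j3}$ to $\frac{1}{2n}\bigl(\Tr\mathbf R-\Tr\mathbf R^{(j)}\bigr)+\frac{1}{2nz}$ and invokes the standard bound $|\Tr\mathbf R-\Tr\mathbf R^{(j)}|\le C/v$ (citing Lemma 3.3 of \cite{GT:2003}), which at $v=V=4\sqrt y$ gives $|\varepsilon_{j3}|\le C/n$. The only difference is that you prove the trace-difference bound via Cauchy interlacing and integration by parts rather than citing it, which is exactly the content of the cited lemma.
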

\begin{proof}Recall that
\begin{align}
 \varepsilon_{j3}=\frac1n\sum_{l=1}^pR_{l+n,l+n}-\frac1n\sum_{l=1}^pR^{(j)}_{l+n,l+n}.\notag
\end{align}
It is straightforward to check that
\begin{equation}\notag
 \frac1n\sum_{l=1}^pR_{l+n,l+n}-\frac1n\sum_{l=1}^pR^{(j)}_{l+n,l+n}=\frac1{2n}\Tr\mathbf R-\frac1{2n}\Tr\mathbf R^{(j)}+\frac1{2nz}
\end{equation}
From here follows immediately  the bound
 \begin{equation}\notag
  |\varepsilon_{j3}|\le \frac1{nv}, \text{ a. s.}
 \end{equation}
See for instance \cite{GT:2003},  Lemma 3.3.
The last bound implies the claim.
\end{proof}

\subsection{Some Auxiliary Bounds for Resolvent Matrices for $z\in\mathbb G$}We start from the simple lemma on the behavior of Stieltjes transform of symmetrizing
Marchenko -- Pastur distribution with parameter $y$.
\begin{lem}\label{resol1*}For all $z=u+iv\in\mathbb C_+$ with $1-\sqrt y\le |u|\le 1+\sqrt y$ and for nay $0<y\le1$, we have
\begin{equation}\notag
 |z+ys_y(z)|\ge \frac1{1+\sqrt y}.
\end{equation}

\end{lem}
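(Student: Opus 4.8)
The plan is to turn the lower bound into an \emph{upper} bound for a related Stieltjes transform, to dispose of the bulk of the range by the triangle inequality, and then to isolate one delicate neighbourhood of the left spectral edge. First I would use the defining equation \eqref{reprst}. Putting $W:=z+ys_y(z)$, so that $s_y(z)=(W-z)/y$, and substituting into $ys_y^2(z)+(z+\tfrac{y-1}{z})s_y(z)+1=0$, one gets after clearing $z$ the quadratic
\begin{equation}\notag
W^2-\Bigl(z+\tfrac{1-y}{z}\Bigr)W+1=0 .
\end{equation}
In particular $W\neq0$ (the constant term is $1$), and since the two roots have product $1$ we obtain $\tfrac1W=\bigl(z+\tfrac{1-y}{z}\bigr)-W=\tfrac{1-y}{z}-ys_y(z)$. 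Hence $|z+ys_y(z)|=\bigl|\tfrac{1-y}{z}-ys_y(z)\bigr|^{-1}$, and the lemma is equivalent to
\begin{equation}\notag
\Bigl|\tfrac{1-y}{z}-ys_y(z)\Bigr|\le 1+\sqrt y\qquad(z=u+iv\in\mathbb C_+,\ 1-\sqrt y\le|u|\le1+\sqrt y).
\end{equation}

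For the sub-range $|z|\ge\sqrt{1-y}$ this is immediate from the triangle inequality together with $|s_y(z)|\le1/\sqrt y$ (see \eqref{scmain0}):
\begin{equation}\notag
\Bigl|\tfrac{1-y}{z}-ys_y(z)\Bigr|\le\frac{1-y}{|z|}+y|s_y(z)|\le\frac{1-y}{\sqrt{1-y}}+\sqrt y=\sqrt{1-y}+\sqrt y\le1+\sqrt y ,
\end{equation}
using $\sqrt{1-y}\le1$. (Equivalently, these are the boundary pieces $\{|u|=1+\sqrt y,\ v>0\}$, the bulk $\{v=0,\ 1-\sqrt y\le|u|\le1+\sqrt y\}$ where $|z+ys_y(z)|=1$ exactly, and the far part $v\to\infty$.) What remains is the range $1-\sqrt y\le|u|\le|z|<\sqrt{1-y}$; since $|u|\ge1-\sqrt y$ this forces $|u|$ close to $a:=1-\sqrt y$ and $v^2<(1-y)-a^2=2\sqrt y(1-\sqrt y)$, i.e.\ $z$ varies over a fixed bounded neighbourhood of $\pm a$.

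In this residual region I would argue from the explicit resolvent of the quadratic. With $a=1-\sqrt y$, $b=1+\sqrt y$ one has the factorisations $z+\tfrac{1-y}{z}\mp2=\tfrac{(z\mp a)(z\mp b)}{z}$; combining these with $(z^2+1-y)^2-(z^2-a^2)(z^2-b^2)=4z^2$ gives the closed form $W=z+ys_y(z)=\dfrac{2z}{\,z^2+1-y-\sqrt{(z^2-a^2)(z^2-b^2)}\,}$ (the branch of the square root being the one with $s_y(z)\to0$ as $v\to\infty$), and in particular $W-1=\tfrac12\sqrt{\tfrac{(z-a)(z-b)}{z}}\bigl(\sqrt{\tfrac{(z-a)(z-b)}{z}}-\sqrt{\tfrac{(z+a)(z+b)}{z}}\bigr)$ near $z=a$. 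Since here $z$ stays close to $a$ and bounded away from $0,-a,b,-b$, the factor $|z-a|$ is small and $W$ is close to the edge value $W=1$; the remaining task is the quantitative estimate $|W|\ge 1/(1+\sqrt y)$ uniform over this neighbourhood, carried out by computing $|W|^2=W\overline W$ from the formula above and controlling the error terms in $|z-a|$ and $v$. The main obstacle is exactly this step: the crude triangle bound degrades to $\tfrac{1-y}{|z|}+y|s_y(z)|\le1+2\sqrt y$ precisely at $z=a$, where $\tfrac{1-y}{z}=1+\sqrt y$ and $ys_y(z)=\sqrt y$ are aligned, so the factor-of-two improvement down to $1+\sqrt y$ cannot be obtained by separating the two terms — one must track the near-cancellation $\tfrac{1-y}{z}-ys_y(z)\to1$ as $z\to a$ (equivalently, the square-root behaviour of $W$ at the edge). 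A cleaner packaging is via the maximum principle: $F(z):=1/(z+ys_y(z))=\tfrac{1-y}{z}-ys_y(z)$ is analytic in $\mathbb C_+$, satisfies $|F|=1$ on the support segment $\{v=0,\ a\le|u|\le b\}$, $|F|\le1$ on the lines $|u|=b$, and $|F|\to0$ as $v\to\infty$, so it suffices to bound $|F|$ (by $1+\sqrt y$, or even by $1$) on the two vertical lines $|u|=a$, where $|s_y|$ reaches its maximal value $1/\sqrt y$ at the bottom endpoint; the whole difficulty of the lemma is concentrated there.
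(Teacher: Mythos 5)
Your reduction is correct as far as it goes: with $W=z+ys_y(z)$ the quadratic $W^2-\bigl(z+\tfrac{1-y}{z}\bigr)W+1=0$ does follow from \eqref{reprst}, hence $W\neq0$ and $1/W=\tfrac{1-y}{z}-ys_y(z)$, and your triangle-inequality bound $\bigl|\tfrac{1-y}{z}\bigr|+y|s_y(z)|\le\sqrt{1-y}+\sqrt y\le 1+\sqrt y$ genuinely settles the claim on the part of the strip where $|z|\ge\sqrt{1-y}$. But for the residual region $1-\sqrt y\le|u|\le|z|<\sqrt{1-y}$ you prove nothing: you write the closed form for $W$, announce that "the remaining task" is to compute $|W|^2$ and control the error terms, and in the maximum-principle repackaging you state explicitly that the needed bound on the vertical lines $|u|=1-\sqrt y$ is still open and that "the whole difficulty of the lemma is concentrated there." That missing estimate \emph{is} the lemma: as you yourself observe, at $z\to 1-\sqrt y$ the two terms of the triangle inequality align and only give $1+2\sqrt y$, so no soft argument of the kind you have actually carried out can close it. As it stands the proposal is an outline of the easy half plus a correct diagnosis of where the work lies, not a proof.

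For comparison, the paper finishes along essentially the route you sketch. It sets $w=z+\tfrac{1-y}{z}$, so that $z+ys_y(z)=w+s(w)$ with $s$ the semicircular Stieltjes transform; since $\im w\ge0$ exactly when $|z|\ge\sqrt{1-y}$, the bound $|w+s(w)|\ge1$ handles that range (this is the analogue of your first step, with constant $1$ instead of $1/(\sqrt{1-y}+\sqrt y)$). On the leftover set $\{z\in\mathbb C_+:\ |z|\le\sqrt{1-y},\ |u|\ge 1-\sqrt y\}$ it applies the minimum-modulus principle to the zero-free analytic function $f(z)=z+ys_y(z)$, so only boundary values matter: $|f|=1$ on the arc $|z|=\sqrt{1-y}$ and on the real pieces $1-\sqrt y\le|u|\le\sqrt{1-y}$, and on the two segments $z=\pm(1-\sqrt y)+iv$, $|z|\le\sqrt{1-y}$, it writes $f=\tfrac{2}{\,w-\sqrt{w^2-4}\,}$ and bounds $|w|$ and $|\sqrt{w^2-4}|$ there (using, e.g., that $t\mapsto t+(1-y)/t$ is decreasing on $(0,\sqrt{1-y}]$ so $|w|\le2$, and the location of $w^2-4$ in the third quadrant) to get $|w-\sqrt{w^2-4}|\le 2(1+\sqrt y)$, i.e. $|f|\ge\tfrac1{1+\sqrt y}$ on those segments. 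Either your direct computation of $|W|^2$ in the residual region or this one-dimensional boundary estimate would complete your argument, but one of them has to be actually carried out; that is the step you still owe.
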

\begin{proof}We consider representation
\begin{align}\notag
 s_y(z)=\frac{-(z+\frac{y-1}z)+\sqrt{(z+\frac{y-1}z)^2-4y}}{2y}.
\end{align}
We may rewrite this equality as follows
\begin{equation}\notag
 s_y(z)=\frac{-(z+\frac{y-1}z)+\sqrt{(z+\frac{1-y}z)^2-4}}{2y}.
\end{equation}
Introduce the notation
\begin{equation}\notag
 w=z+\frac{1-y}z.
\end{equation}
In these notation we may write
\begin{equation}\notag
 z+ys_y(z)=w+s(w),
\end{equation}
where $s(w)=\frac{-w+\sqrt{w^2-4}}2$ denotes the Stieltjes transform of the semi-circular law. Here we choose the branch of the root such that $\im\sqrt\{\cdot\}\ge0$
Furthermore, we note 
\begin{equation}\notag
 \im w\q\begin{cases}\ge 0, \text{ if } |z|\ge\sqrt{1-y},\\
                     <0,\text{ if } |z|\le \sqrt{1-y}\end{cases}.
                     \end{equation}
Therefore, since  for  Stieltjes transform of the semi-circular law $|w+s(w)|\ge 1$ for any $w\in\mathbb C_+$, we get
\begin{equation}\notag
 |z+ys_y(z)|\ge1, \text{ for all } |z|\ge\sqrt{1-y}.
\end{equation}
Consider now the function $f(z):=z+ys_y(z)$ in the domain \tc{$\mathbb K:=\{z\in\mathbb C_+:\q|z|\le \sqrt{1-y} \}$} . 
 This function is analytic \tc{in the open domain $\mathbb K$ and continuous in the domain $\mathbb K$.  By the Maximum Principle, the 
 minimum of modulus of the function $f(z)$ in the domain $\mathbb K$ is attained  on its boundary.} 
 It is straightforward to check that $|f(z)|=1$, for all $z\in\mathbb C_+:\q|z|=\sqrt{1-y}$. Moreover,
 $|f(z)|=1$, for $z=u$ with $-\sqrt{1-y}\le u\le -1+\sqrt y$ or $1-\sqrt y\le u\le \sqrt{1-y}$.
 \tc{Let $z=\pm(1-\sqrt y)+iv$ with $|z|\le \sqrt{1-y}$ and $v>0$. We consider the case $z=1-\sqrt y+iv$ only.} It is straightforward to check that
 \begin{equation}\notag
  \re(w^2-4)\le 0,\quad \im(w^2-4)\le 0.
 \end{equation}
Moreover,
\begin{align}\notag
 |w|\le 2, \text{ and } |\sqrt{w^2-4}|\le 2\sqrt y
\end{align}

 We may write
 \begin{equation}\notag
  f(z)=\frac{2}{w-\sqrt{w^2-4}}.
 \end{equation}
These relations together imply that, for $z=1-\sqrt y+iv$ with $|z|\le \sqrt{1-y}$ and $v>0$
\begin{equation}\notag
 |f(z)|\ge \frac1{1+\sqrt y}.
\end{equation}

This inequality proves the Lemma.
\end{proof}

Recall that
\begin{align}\label{region}
 \mathbb G &:=\{z=u+iv\in\mathbb C^+:\,u\in\mathbb J_{\varepsilon}, v\ge v_0/\sqrt{\gamma}\},\;\; \text{where} \; v_0= A_0 n^{-1},\\
  \mathbb J_{\varepsilon} &=\{u: (1-\sqrt y)+\varepsilon\le |u|\le 1+\sqrt y-\varepsilon\}, \quad \varepsilon:=c_1n^{-\frac23}, \notag\\ \gamma&=\gamma(u)=
  \min\{|u|-1+\sqrt y,1+\sqrt y-|u| \}.\notag
\end{align}
In the next lemma we prove some simple inequalities for the region $\mathbb G$.

\begin{lem}\label{lemG}
 For any $z\in\mathbb G$ we have
 \begin{align}
  |(z+\frac{y-1}z)^2-4y|&\ge \frac{4y}{5}\max\{\gamma,v\},\notag\\
  {nv}\sqrt{|(z+\frac{y-1}z)^2-4y|}&\ge A_0\frac{4y}{5}.
 \end{align}

\end{lem}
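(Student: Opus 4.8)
The plan is to reduce both inequalities to a factorization of the polynomial $(z+\frac{y-1}z)^2-4y$ and then use the geometry of the region $\mathbb G$. First I would write $w:=z+\frac{y-1}z$ and observe the algebraic identity
\[
w^2-4y = \frac{(z^2+y-1)^2-4yz^2}{z^2} = \frac{(z+1-\sqrt y)(z-1+\sqrt y)(z+1+\sqrt y)(z-1-\sqrt y)}{z^2},
\]
which is exactly the factorization already recorded in the excerpt in the estimation of $\mathfrak T_{41}$. Thus $|w^2-4y| = |z|^{-2}\,|z-(1-\sqrt y)|\,|z+(1-\sqrt y)|\,|z-(1+\sqrt y)|\,|z+(1+\sqrt y)|$. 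On $\mathbb G$ we have $a+\varepsilon\le|u|\le b-\varepsilon$ with $a=1-\sqrt y$, $b=1+\sqrt y$, so $|u|$ is bounded away from $0$; hence $|z|\le|u|+v$ is bounded above by an absolute constant times $\sqrt y$ on the relevant range (recall $v\le V=4\sqrt y$), and the two "far" factors $|z\pm(1+\sqrt y)|$ and the factor $|z+(1-\sqrt y)|$ are each bounded below by a constant multiple of $\sqrt y$.

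The only delicate factor is $|z-(1-\sqrt y)|$ when $|u|$ is near $a$, and symmetrically $|z-(1+\sqrt y)|$ when $|u|$ is near $b$; but in each case the remaining distance is controlled by $\gamma(z)=\min\{|a-|u||,|b-|u||\}$ together with the imaginary part $v$: one has $|z-(1\mp\sqrt y)|^2 = (|u|-(1\mp\sqrt y))^2+v^2 \ge \max\{\gamma^2,v^2\}$ on the appropriate half of the region, while on the other half that same factor is bounded below by a constant times $\sqrt y$. Combining these bounds, collecting the absolute constants, and using $y\in(0,1]$ to normalize, one obtains $|w^2-4y|\ge \tfrac{4y}{5}\max\{\gamma,v\}$; the constant $\tfrac45$ is not sharp and is chosen merely for definiteness, so I would simply verify that the product of the crude lower bounds on the four factors divided by the crude upper bound on $|z|^2$ exceeds $\tfrac{4y}{5}\max\{\gamma,v\}$ on all of $\mathbb G$.

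For the second inequality, I would use the definition $v\ge v_0/\sqrt{\gamma}$ on $\mathbb G$ with $v_0=A_0 n^{-1}$, so that $v\sqrt{\gamma}\ge v_0$, i.e. $nv\sqrt{\gamma}\ge A_0$. Then from the first inequality, $\sqrt{|w^2-4y|}\ge \sqrt{\tfrac{4y}{5}}\sqrt{\max\{\gamma,v\}}\ge \sqrt{\tfrac{4y}{5}}\sqrt{\gamma}$, so
\[
nv\sqrt{|w^2-4y|}\ \ge\ \sqrt{\tfrac{4y}{5}}\,nv\sqrt{\gamma}\ \ge\ \sqrt{\tfrac{4y}{5}}\,A_0,
\]
and since the statement asks for the bound $A_0\tfrac{4y}{5}$ with $y\le 1$ one has $\sqrt{4y/5}\ge 4y/5$, which finishes it. The main obstacle is bookkeeping: one must check that on the part of $\mathbb G$ where $|u|$ is close to $a$ the factor $|z-b|$ stays comparable to $\sqrt y$ (and vice versa), so that only one of the two "near" factors is ever small at a time; this case split, together with tracking which crude constant goes where, is the only real work, and it is routine once the factorization above is in hand.
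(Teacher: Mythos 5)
Your overall route is the same as the paper's: the factorization $(z+\frac{y-1}z)^2-4y=\frac{(z-(1+\sqrt y))(z+(1+\sqrt y))(z-(1-\sqrt y))(z+(1-\sqrt y))}{z^2}$, a case split according to which endpoint $|u|$ is close to, the near factor contributing $\max\{\gamma,v\}$, and the second inequality deduced from $nv\sqrt{\gamma}\ge nv_0=A_0$ together with the first one and $\sqrt{4y/5}\ge 4y/5$; that last deduction is correct and is exactly what the paper leaves implicit. However, two of the crude bounds you propose to ``simply verify'' are false, and one of them breaks the argument in a whole parameter regime. First, $|z|\le C\sqrt y$ with an absolute constant $C$ fails for small $y$: on $\mathbb G$ one has $|z|\ge |u|\ge 1-\sqrt y+\varepsilon$, which stays of order $1$ as $y\to 0$. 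What is true, and what the paper uses, is $|z|\le ((1+\sqrt y)^2+V^2)^{1/2}\le 5\le 5/\sqrt y$, so that the denominator contributes a factor $\sqrt y/5$ through $1/|z|$. Second, and more seriously, the claim that the factor $|z+(1-\sqrt y)|$ is bounded below by $c\sqrt y$ fails when $y$ is close to $1$ and $|u|$ is near $a=1-\sqrt y$: there $a\approx 0$, $|u|$ may be as small as $a+\varepsilon$ with $\varepsilon\sim n^{-2/3}$, and then $|z+(1-\sqrt y)|\le |z|+(1-\sqrt y)$ is of order $\varepsilon$, not of order $\sqrt y\approx1$. In that regime the product of your stated crude bounds does not dominate $\frac{4y}{5}\max\{\gamma,v\}$, so the deferred verification cannot succeed as written.

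The missing idea is the cancellation of this possibly small factor against the denominator: for $u>0$ one has $|z+(1-\sqrt y)|^2=(u+1-\sqrt y)^2+v^2\ge u^2+v^2=|z|^2$, i.e.\ $|z+(1-\sqrt y)|\ge |z|$ (and symmetrically $|z-(1-\sqrt y)|\ge|z|$ for $u<0$), so that only one power of $|z|$ survives in the denominator. This is precisely how the paper passes from the four-factor expression to a bound of the form $v\,\sqrt y\,|z|^{-1}$ (respectively the analogous bound with $\gamma$ in place of $v$), after which $|z|\le 5/\sqrt y$ produces the stated $\frac{4y}{5}\,v$ and $\frac{4y}{5}\,\gamma$. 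With these two corrections — replace $|z|\le C\sqrt y$ by $|z|\le 5/\sqrt y$, and replace the lower bound on $|z\pm(1-\sqrt y)|$ by the comparison with $|z|$ — your plan coincides with the paper's proof; without them it has a genuine gap in the regimes $y\to 0$ and, more critically, $y$ near $1$ with $|u|$ near $a$.
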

\begin{proof}For $z\in\mathbb G$ we have
\begin{equation}
 |z|\le ((1+\sqrt y)^2+v^2)^{\frac12}\le 5/\sqrt y.
\end{equation}
Furthermore, we observe that
 \begin{align}
  |(z+\frac{y-1}z)^2-4y|&
 =|(z+\frac{y-1}z-2\sqrt y)(z+\frac{y-1}z+2\sqrt y)|\notag\\&=
 \frac{|z-(1+\sqrt y))(z+(1+\sqrt y))(z-(1-\sqrt y))(z+(1-\sqrt y))|}{|z|^2}.
 \end{align}
 Assume that $1-\sqrt y\le u\le 1$. Then, for $z\in\mathbb G$,
 \begin{align}
 |(z+\frac{y-1}z)^2-4y|&\ge v \frac{|u-(1+\sqrt y))(u+(1+\sqrt y))||(z+(1-\sqrt y))|}{|z|^2}\notag\\&\ge4v\sqrt y\frac1{|z|}\ge 
 v\frac{4y}{5},\notag
 \end{align}
 and
 \begin{align}
 |(z+\frac{y-1}z)^2-4y|&\ge \gamma \frac{|u-(1+\sqrt y))(u+(1+\sqrt y))(u+(1-\sqrt y))|}{|z|^2}\notag\\&\ge
 \gamma\frac{4 y}{5}.\notag
 \end{align}
Similarly  we get the lower bound for $-1-\sqrt y\le u\le -1$, $-1\le u\le -1+\sqrt y$ and for  $1\le u\le 1+\sqrt y$.
This inequality proves the Lemma.
\end{proof}

\begin{lem}\label{lem00}Assuming the conditions  of Theorem \ref{main}, there exists an absolute constant $c_0>0$ such that for any $\mathbb J\subset \mathbb T$,
\begin{equation}\label{lem00.1}
 |z+ym_n^{(\mathbb J)}(z)+ys_y(z)+\frac{y-1}z|\ge y\im m_n^{(\mathbb J)}(z)+\im\{\frac{y-1}z\},
\end{equation}
moreover, for $z\in\mathbb G$,
\begin{equation}\label{lem00.2}
 |z+ym_n^{(\mathbb J)}(z)+ys_y(z)+\frac{y-1}z|\ge c_0\sqrt{|(z+\frac{y-1}z)^2-4y|}.
\end{equation}

\end{lem}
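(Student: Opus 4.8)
The plan is to treat the two inequalities separately. Inequality \eqref{lem00.1} needs essentially no work: both $m_n^{(\mathbb J)}(z)$ and $s_y(z)$ are Stieltjes transforms of positive measures on $\mathbb R$, so they have nonnegative imaginary part for $z=u+iv$ with $v>0$. Bounding the modulus on the left of \eqref{lem00.1} below by its imaginary part $v+y\im m_n^{(\mathbb J)}(z)+y\im s_y(z)+\im\frac{y-1}z$ and discarding the nonnegative summands $v$ and $y\im s_y(z)$ gives the claim immediately.

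For \eqref{lem00.2} I would introduce $D(z):=\sqrt{(z+\frac{y-1}z)^2-4y}$ with the branch $\im\sqrt{\cdot}\ge0$. From \eqref{reprst} one reads off two identities: $z+2ys_y(z)+\frac{y-1}z=D(z)$ (the left side has positive imaginary part, which pins down the sign) and $z+ys_y(z)+\frac{y-1}z=-1/s_y(z)$. Hence $z+y(m_n^{(\mathbb J)}(z)+s_y(z))+\frac{y-1}z$ equals both $D(z)+y\Lambda_n^{(\mathbb J)}(z)$ and $y\,m_n^{(\mathbb J)}(z)-1/s_y(z)$. Now pass to imaginary parts once more. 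Comparing $\im(-1/s_y)=v+y\im s_y+\frac{(1-y)v}{|z|^2}$ with $\im D=v+2y\im s_y+\frac{(1-y)v}{|z|^2}$ gives $\im(-1/s_y)=\im D-y\im s_y$, and since $\frac12\im D\ge y\im s_y$ trivially, dropping the nonnegative term $y\im m_n^{(\mathbb J)}$ yields the clean bound
\[
\bigl|z+y(m_n^{(\mathbb J)}(z)+s_y(z))+\tfrac{y-1}z\bigr|\ \ge\ \tfrac12\,\im D(z),
\]
valid for every $\mathbb J$ and every $z\in\mathbb C_+$ (in particular, no estimate on $\Lambda_n^{(\mathbb J)}$ is used, so there is no circularity with Corollaries \ref{cor4.8} and \ref{cor8}).

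It then remains to prove the purely deterministic inequality $\im D(z)\ge\frac1{\sqrt2}|D(z)|$ for $z\in\mathbb G$; this gives \eqref{lem00.2} with $c_0=\frac1{2\sqrt2}$. Because $\im D\ge0$, this is equivalent to $|\re D|\le\im D$, i.e. to $\re(D(z)^2)=\re\bigl((z+\frac{y-1}z)^2-4y\bigr)\le0$. Using $\re z^2=u^2-v^2$ and $\re z^{-2}=(u^2-v^2)/|z|^4$ one computes $\re\bigl((z+\frac{y-1}z)^2-4y\bigr)=(u^2-v^2)(1+\frac{(1-y)^2}{|z|^4})-2(1+y)$, and I would show this is negative on $\mathbb G$ by contradiction: if it were $\ge0$ then $\mu:=u^2-v^2>0$, and clearing denominators and using $\rho:=u^2+v^2\ge\mu$ forces first $\mu^2-2(1+y)\mu+(1-y)^2\ge0$, whose roots are $a^2$ and $b^2$; since $\mu\le u^2\le(b-\varepsilon)^2<b^2$ this gives $\mu\le a^2$, and feeding $\mu\le a^2$ back, together with the elementary identities $2(1+y)-a^2=b^2$ and $(1-y)^2=a^2b^2$, produces $u^2+v^2=\rho\le a^2$, contradicting $u^2\ge(a+\varepsilon)^2>a^2$.

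The only place where anything actually has to be computed is this last step, which is exactly the statement that the whole region $\mathbb G$ lies strictly on the side where $\re(D^2)<0$, and it is here that the defining restriction $|u|\ge a+\varepsilon$ of $\mathbb G$ is used; I expect this algebraic verification to be the (mild) obstacle, while everything else is bookkeeping with imaginary parts. The case $y=1$ is contained in the same argument (there $1-y=0$, so $\re\bigl(z^2-4\bigr)=u^2-v^2-4<0$ is immediate from $|u|\le b-\varepsilon<2$).
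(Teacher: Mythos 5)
Your proof is correct and follows essentially the same route as the paper: bound the modulus from below by $\im\{z+ys_y(z)+\frac{y-1}z\}\ge\frac12\im\sqrt{(z+\frac{y-1}z)^2-4y}$ (using $\im m_n^{(\mathbb J)}(z)\ge0$ and $2y\im s_y(z)\le\im\sqrt{(z+\frac{y-1}z)^2-4y}$), and then use $\re\{(z+\frac{y-1}z)^2-4y\}\le0$ on $\mathbb G$ to pass from the imaginary part to the modulus. The only difference is that you carry out explicitly (and correctly) the algebraic verification of this sign condition on $\mathbb G$, which the paper merely asserts as simple to check.
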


\begin{proof}
 Firstly we note
 \begin{align}
 |z+y(m_n^{(\mathbb J)}(z)+s_y(z))+\frac{y-1}z|&\ge\im\{ys_y(z)+z+\frac{y-1}z\}\notag\\&
 \ge\frac12\im\sqrt{(z+\frac{y-1}z)^2-4y}.\notag
 \end{align}
Furthermore, it is simple to check that, for $z\in \mathbb G$
\begin{equation}\notag
 \re\{(z+\frac{y-1}z)^2-4y\}\le 0.
\end{equation}
This implies that
\begin{equation}\notag
 \im\sqrt{(z+\frac{y-1}z)^2-4}\}\ge \frac{\sqrt2}2\sqrt{|(z+\frac{y-1}z)^2-4y|}.
\end{equation}
Thus Lemma \ref{lem00} is proved.
\end{proof}



\begin{lem}\label{basic2}Assuming the conditions of Theorem \ref{main}, there exists an absolute constant $C>0$ such that for any $j=1,\ldots,n$, 
 \begin{equation}\label{basic3}
  \E\{|\varepsilon_{j2}|^2\big|\mathfrak M^{(j)}\}\le \frac C{n}(v^{-1}\im m_n^{(j)}(z)+\frac{1-y}{|z|^2}),
 \end{equation}
and
\begin{equation}\label{basic4}
 \E\{|\varepsilon_{j2}|^4\big|\mathfrak M^{(j)}\}\le \frac {C\mu_4^2}{n^2}(v^{-1}\im m_n^{(j)}(z)+\frac{1-y}{|z|^2})^2.
\end{equation}

\end{lem}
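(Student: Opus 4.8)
The plan is to work conditionally on the $\sigma$-algebra $\mathfrak M^{(j)}$. Conditioning on $\mathfrak M^{(j)}$ freezes the resolvent entries $R^{(j)}_{k+n,l+n}$, while the random variables $X_{j1},\dots,X_{jp}$ (which are independent of $\mathfrak M^{(j)}$) retain their distributions: mean $0$, variance $1$, and $\E|X_{jk}|^4\le\mu_4$; note also that $\mu_4\ge1$, since $\E X_{jk}^2=1$ forces $\E|X_{jk}|^4\ge(\E X_{jk}^2)^2=1$. Conditionally, $\varepsilon_{j2}$ is, up to the normalizing factor (which is $O(n^{-1})$ because $n/p=y_n$ is bounded by \eqref{yn}), the centred off-diagonal quadratic form $\sum_{1\le k\ne l\le p}R^{(j)}_{k+n,l+n}X_{jk}X_{jl}$ in these variables, with coefficient matrix $A:=(R^{(j)}_{k+n,l+n})_{k,l=1}^{p}$.

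For \eqref{basic3} I would expand $|\varepsilon_{j2}|^2$ and take the conditional expectation. Since $\E X_{jk}=0$, in the sum over the four indices $k,l,k',l'$ with $k\ne l$, $k'\ne l'$ only the patterns $(k',l')=(k,l)$ and $(k',l')=(l,k)$ survive, contributing $|A_{kl}|^2$ and $A_{kl}\overline{A_{lk}}$; bounding $|A_{kl}\overline{A_{lk}}|\le\tfrac12(|A_{kl}|^2+|A_{lk}|^2)$ gives $\E\{|\varepsilon_{j2}|^2\mid\mathfrak M^{(j)}\}\le C n^{-2}\sum_{k,l=1}^p|R^{(j)}_{k+n,l+n}|^2$. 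Inequality \eqref{res1} of Lemma \ref{resol00} bounds $\frac1n\sum_{k,l}|R^{(j)}_{k+n,l+n}|^2$ by $v^{-1}\im m_n^{(j)}(z)+(1-y)|z|^{-2}$, and the remaining factor $n\cdot n^{-2}=n^{-1}$ yields \eqref{basic3}.

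For \eqref{basic4} I would estimate the fourth moment of this quadratic form by Burkholder's inequality for quadratic forms, Lemma \ref{burkh1}, applied conditionally on $\mathfrak M^{(j)}$ with $q=4$. This reduces matters to the linear forms $S_l:=\sum_{k=1}^{l-1}R^{(j)}_{l+n,k+n}X_{jk}$, the bound being $C n^{-4}\big(\E\{(\sum_l|S_l|^2)^2\mid\mathfrak M^{(j)}\}+\mu_4\sum_l\E\{|S_l|^4\mid\mathfrak M^{(j)}\}\big)$. Rosenthal's inequality (Lemma \ref{Rosent}), $\E|X_{jk}|^4\le\mu_4$, and the crude estimate $\sum_{k<l}|R^{(j)}_{l+n,k+n}|^4\le\big(\sum_k|R^{(j)}_{l+n,k+n}|^2\big)^2$ give $\E\{|S_l|^4\mid\mathfrak M^{(j)}\}\le C\mu_4\big(\sum_k|R^{(j)}_{l+n,k+n}|^2\big)^2\le C\mu_4\big(v^{-1}\im R^{(j)}_{l+n,l+n}\big)^2$ by \eqref{res2}. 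Plugging this back (Cauchy--Schwarz for the first term, $\mu_4\ge1$ to merge the $\mu_4$-powers, then \eqref{res1}) produces $C n^{-4}\mu_4^2\big(\sum_{k,l=1}^p|R^{(j)}_{k+n,l+n}|^2\big)^2\le C\mu_4^2 n^{-2}\big(v^{-1}\im m_n^{(j)}(z)+(1-y)|z|^{-2}\big)^2$, which is \eqref{basic4}.

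I do not expect a genuine obstacle: both estimates are standard bounds for second-order chaos, and all the structural input — the resolvent inequalities $\sum_k|R^{(j)}_{l+n,k+n}|^2\le v^{-1}\im R^{(j)}_{l+n,l+n}$ and $\frac1n\sum_{k,l}|R^{(j)}_{k+n,l+n}|^2\le v^{-1}\im m_n^{(j)}(z)+(1-y)|z|^{-2}$ — is already supplied by Lemma \ref{resol00} in the Appendix. The only bookkeeping that needs a little care is tracking the powers of $\mu_4$ (made painless by $\mu_4\ge1$) and systematically absorbing the normalizing factors $1/p$ into $O(1/n)$ via \eqref{yn}.
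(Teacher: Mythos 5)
Your argument is correct and follows the paper's own route: condition on $\mathfrak M^{(j)}$ so that $\varepsilon_{j2}$ becomes a centred off-diagonal quadratic form with frozen coefficients $R^{(j)}_{k+n,l+n}$, compute/bound its second and fourth conditional moments, and conclude with the resolvent bounds \eqref{res1}--\eqref{res2} of Lemma \ref{resol00}. The only (inessential) deviation is that for \eqref{basic4} the paper bounds the fourth moment by a direct expansion of the quadratic form, whereas you route it through Lemma \ref{burkh1} and Rosenthal's inequality; both reduce to the same quantity $\bigl(\sum_{k,l}|R^{(j)}_{k+n,l+n}|^2\bigr)^2$ and give the stated bound.
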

\begin{proof}
 Note that r.v.'s $X_{jl}$, for $l=1,\ldots,p$ are independent of $\mathfrak M^{(j)}$ and that for $l,k=1,\ldots,p$,  $R^{(j)}_{l+n,k+n}$
 are measurable with respect to $\mathfrak M^{(j)}$.
 This implies that $\varepsilon_{j2}$ is a quadratic form with coefficients $R^{(j)}_{l+n,l+n}$ independent of $X_{jl}$. Thus
 its variance and fourth moment are easily available.
 \begin{equation}\notag
  \E\{|\varepsilon_{j2}|^2\big|\mathfrak M^{(j)}\}=\frac1{n^2}\sum_{l,k=1}^p|R^{(j)}_{l+n,k+n}|^2\le \frac1{n^2}\Tr\mathbb |\mathbf R^{(j)}|^2,
 \end{equation}
Here we use the notation $|\mathbf A|^2=\mathbf A\mathbf A^*$ for any matrix $\mathbf A$.
Applying Lemma \ref{resol00}, inequality \eqref{res1},
we get equality \eqref{basic3}.

Furthermore, direct calculations show that
\begin{align}
 \E\{|\varepsilon_{j2}|^4\big|\mathfrak M^{(j)}\}&\le \frac C{n^2}(\frac1n\sum_{1\le l\ne k\le p}|R^{(j)}_{l+n,k+n}|^2)^2
 +\frac {C\mu_4^2}{n^2}\frac1{n^2}\sum_{l,k=1}^p|R^{(j)}_{l+n,k+n}|^4\notag\\&\le \frac {C\mu_4^2}{n^2}(\frac1n\sum_{1\le l\ne k\le p}|R^{(j)}_{l+n,k+n}|^2)^2
 \le\frac{C\mu_4^2}{n^2}(v^{-1}\im m_n^{(j)}(z)+\frac{1-y}{|z|^2})^2.\notag
\end{align}
Here again we used Lemma \ref{resol00}, inequality \eqref{res1}.
Thus Lemma \ref{basic2} is proved.
\end{proof}
\begin{lem}\label{basic5}Assuming the conditions of Theorem \ref{main}, there exists an absolute constant $C>0$ such that for any $j=1,\ldots,n$,
 \begin{align}\label{basic6}
  \E\{|\varepsilon_{j1}|^2\big|\mathfrak M^{(j)}\}\le \frac {C\mu_4}{n}\frac1n\sum_{l=1}^p|R^{(j)}_{l+n,l+n}|^2,
 \end{align}
 and
 \begin{align}\label{basic7}
  \E\{|\varepsilon_{j1}|^4\big|\mathfrak M^{(j)}\}\le \frac {C\mu_4}{n^2}\frac1n\sum_{l=1}^p|R^{(j)}_{l+n,l+n}|^4.
 \end{align}

\end{lem}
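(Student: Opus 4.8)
The plan is to work conditionally on $\mathfrak M^{(j)}$. Once we condition, each diagonal entry $R^{(j)}_{l+n,l+n}$, $l=1,\dots,p$, is a fixed complex number, while the variables $X_{jl}$, $l=1,\dots,p$, are still independent with $\E X_{jl}=0$, $\E X_{jl}^2=1$ and $\E X_{jl}^4\le\mu_4$. Thus $\varepsilon_{j1}=\frac1n\sum_{l=1}^p\xi_l R^{(j)}_{l+n,l+n}$ is, conditionally, a weighted sum of the independent centred random variables $\xi_l:=X_{jl}^2-1$, and both \eqref{basic6} and \eqref{basic7} reduce to moment estimates for such sums.

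For \eqref{basic6} it is enough to expand the square: the cross terms vanish by independence and $\E\xi_l=0$, so
\begin{equation}\notag
\E\{|\varepsilon_{j1}|^2\,|\,\mathfrak M^{(j)}\}=\frac1{n^2}\sum_{l=1}^p\E\xi_l^2\,|R^{(j)}_{l+n,l+n}|^2\le\frac{\mu_4}{n^2}\sum_{l=1}^p|R^{(j)}_{l+n,l+n}|^2,
\end{equation}
where we used $\E\xi_l^2=\E X_{jl}^4-1\le\mu_4$; this is \eqref{basic6}.

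For \eqref{basic7} I would expand the fourth power of $\varepsilon_{j1}$: by independence and $\E\xi_l=0$ only those index quadruples in which every value occurs at least twice survive, and one is left with
\begin{equation}\notag
\E\{|\varepsilon_{j1}|^4\,|\,\mathfrak M^{(j)}\}\le\frac1{n^4}\sum_{l=1}^p\E\xi_l^4\,|R^{(j)}_{l+n,l+n}|^4+\frac{C}{n^4}\Big(\sum_{l=1}^p\E\xi_l^2\,|R^{(j)}_{l+n,l+n}|^2\Big)^2
\end{equation}
(the same bound also follows from Rosenthal's inequality, Lemma \ref{Rosent}). The first sum is where the truncation hypothesis \eqref{trun} enters: since $|X_{jl}|\le Dn^{1/4}$ we have $|\xi_l|\le D^2\sqrt n+1\le C\sqrt n$, hence $\E\xi_l^4\le Cn\,\E\xi_l^2\le Cn\mu_4$, so that this contribution is at most $\frac{Cn\mu_4}{n^4}\sum_l|R^{(j)}_{l+n,l+n}|^4=\frac{C\mu_4}{n^2}\cdot\frac1n\sum_l|R^{(j)}_{l+n,l+n}|^4$. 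For the second term I would use $\E\xi_l^2\le\mu_4$ together with the Cauchy--Schwarz inequality $\big(\sum_{l=1}^p|R^{(j)}_{l+n,l+n}|^2\big)^2\le p\,\sum_{l=1}^p|R^{(j)}_{l+n,l+n}|^4$ and the fact that $p\le Cn$ (which follows from \eqref{yn}); this again gives a bound of the form $\frac{C\mu_4^2}{n^2}\cdot\frac1n\sum_l|R^{(j)}_{l+n,l+n}|^4$, and adding the two contributions yields \eqref{basic7}.

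The only non-routine point is the inequality $\E\xi_l^4\le Cn\mu_4$: with only a fourth moment of $X_{jl}$ available one cannot bound $\E(X_{jl}^2-1)^4$ by an absolute constant, and it is precisely the truncation level $Dn^{1/4}$ in \eqref{trun} that saves the day, one factor $\xi_l^2\le Cn$ being estimated deterministically and the other by $\E\xi_l^2\le\mu_4$. Everything else is elementary bookkeeping with the normalisation $1/n$ and the inequality $\E X_{jl}^4\le\mu_4$; the identical argument with the roles of rows and columns exchanged applies to $\widehat\varepsilon_{j1}$ in the analogous statements.
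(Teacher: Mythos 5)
Your argument is correct and essentially the same as the paper's: the second-moment bound is the obvious variance computation, and for the fourth moment the paper likewise applies Rosenthal's inequality to the sum of the independent centred variables $X_{jl}^2-1$ and uses the truncation $|X_{jl}|\le Dn^{1/4}$ to bound the high moment (the paper via $\mu_8\le Cn\mu_4$, you via $\E(X_{jl}^2-1)^4\le Cn\mu_4$). Your explicit Cauchy--Schwarz step with $p\le Cn$ to pass from $\bigl(\frac1n\sum_l|R^{(j)}_{l+n,l+n}|^2\bigr)^2$ to $\frac1n\sum_l|R^{(j)}_{l+n,l+n}|^4$ is exactly what the paper leaves implicit, and the extra factor $\mu_4^2$ is harmlessly absorbed into the generic constant.
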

\begin{proof}The first inequality is obvious. To prove the second inequality, we apply
 Rosenthal's inequality. We obtain
\begin{align}
\E\{|\varepsilon_{j1}|^4\big|\mathfrak M^{(j)}\}\le \frac {C\mu_4}{n^2}(\frac1n\sum_{l=1}^p|R^{(j)}_{l+n,l+n}|^2)^2
+\frac {C\mu_8}{n^3}\frac1n\sum_{l=1}^p|R^{(j)}_{l+n,l+n}|^4. \notag
\end{align}
Using  $|X_{jl}|\le Cn^{\frac 14}$ we get 
$\mu_8\le Cn\mu_4$ and the claim.
Thus Lemma \ref{basic5} is proved.
\end{proof}
\begin{cor}\label{corgot}Assuming the conditions of Theorem \ref{main}, there exists an absolute constant $C>0$, depending on $\mu_4$ and $D$ only, 
such that for any $j=1,\ldots,n$,  $\nu=1,2$,
$z\in\mathbb G$, and $1\le \alpha\le \frac12A_1(nv)^{\frac14}$,
\begin{align}\label{ingot}
 \E\frac{|\varepsilon_{j\nu}|^2}{|z+y(m_n^{(j)}(z)+s_y(z))+\frac{y-1}z||z+ym_n^{(j)}(z)+\frac{y-1}z|^{\alpha}}\le \frac C{nv}
\end{align}
and 
 \begin{align}\label{corgot1}
 \E\frac{|\varepsilon_{j\nu}|^4}{|z+y(m_n^{(j)}(z)+s_y(z))+\frac{y-1}z|^2|z+ym_n^{(j)}(z)+\frac{y-1}z|^{\alpha}}\le \frac C{n^2v^2}.
\end{align}
\end{cor}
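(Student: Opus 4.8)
The plan is to condition on the $\sigma$-algebra $\mathfrak M^{(j)}$ generated by the rows $X_{lk}$ with $l\ne j$. Write $D_1:=z+y(m_n^{(j)}(z)+s_y(z))+\frac{y-1}z$ and $D_2:=z+ym_n^{(j)}(z)+\frac{y-1}z$ for the quantities appearing in the two denominators. Both $D_1,D_2$ and all the entries $R^{(j)}_{l+n,k+n}$ are $\mathfrak M^{(j)}$-measurable, whereas $X_{j1},\dots,X_{jp}$ are independent of $\mathfrak M^{(j)}$; hence after taking conditional expectation the numerators become the conditional moments $\E(|\varepsilon_{j\nu}|^2\mid\mathfrak M^{(j)})$ and $\E(|\varepsilon_{j\nu}|^4\mid\mathfrak M^{(j)})$, which are exactly what Lemmas \ref{basic2} and \ref{basic5} control. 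For $\nu=2$ these give $\E(|\varepsilon_{j2}|^2\mid\mathfrak M^{(j)})\le\frac Cn\big(v^{-1}\im m_n^{(j)}(z)+\frac{1-y}{|z|^2}\big)$ and $\E(|\varepsilon_{j2}|^4\mid\mathfrak M^{(j)})\le\frac C{n^2}\big(v^{-1}\im m_n^{(j)}(z)+\frac{1-y}{|z|^2}\big)^2$; for $\nu=1$ I would first apply Lemma \ref{basic5} and then Lemma \ref{resol00}, inequality \eqref{res1}, to replace $\frac1n\sum_l|R^{(j)}_{l+n,l+n}|^2$ by $v^{-1}\im m_n^{(j)}(z)+\frac{1-y}{|z|^2}$, obtaining the same bound for the second conditional moment. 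The fourth conditional moment of $\varepsilon_{j1}$ is the exception and will be handled separately.

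The next step is to absorb the factor $v^{-1}\im m_n^{(j)}(z)+\frac{1-y}{|z|^2}$ into the denominator. By Lemma \ref{lem00}, inequality \eqref{lem00.1}, and $\im\{\frac{y-1}z\}=\frac{(1-y)v}{|z|^2}$, one has $|D_1|\ge y\im m_n^{(j)}(z)+\frac{(1-y)v}{|z|^2}\ge y\big(\im m_n^{(j)}(z)+\frac{(1-y)v}{|z|^2}\big)$ since $0<y\le1$, so that $v^{-1}\im m_n^{(j)}(z)+\frac{1-y}{|z|^2}=v^{-1}\big(\im m_n^{(j)}(z)+\frac{(1-y)v}{|z|^2}\big)\le |D_1|/(yv)$. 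Cancelling this against the single power of $|D_1|$ reduces \eqref{ingot} (both $\nu$'s) to $\E\frac{|\varepsilon_{j\nu}|^2}{|D_1||D_2|^{\alpha}}\le\frac C{nyv}\E\frac1{|D_2|^{\alpha}}$, and cancelling it twice reduces the $\nu=2$ case of \eqref{corgot1} to $\E\frac{|\varepsilon_{j2}|^4}{|D_1|^2|D_2|^{\alpha}}\le\frac C{n^2y^2v^2}\E\frac1{|D_2|^{\alpha}}$. The required reciprocal-moment bound $\E|D_2|^{-\alpha}\le C^{\alpha}$ for $D_2=z+ym_n^{(j)}(z)+\frac{y-1}z$ is supplied by Corollary \ref{cor8}, inequality \eqref{r12}, applied with the $j$th row and column deleted; this is legitimate because $\alpha\le\frac12A_1(nv)^{1/4}$ lies in the admissible range of that argument.

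For the remaining case — the fourth moment of $\varepsilon_{j1}$ — Lemma \ref{basic5} yields only $\E(|\varepsilon_{j1}|^4\mid\mathfrak M^{(j)})\le\frac C{n^2}\frac1n\sum_l|R^{(j)}_{l+n,l+n}|^4$, with $\frac1n\sum|R|^4$ rather than $(\frac1n\sum|R|^2)^2$, so the cancellation of paragraph two does not apply verbatim. Here I would instead use the \emph{deterministic} lower bound $|D_1|\ge c_0\sqrt{|(z+\frac{y-1}z)^2-4y|}$ coming from Lemma \ref{lem00}, inequality \eqref{lem00.2}: pulling one power of $1/|D_1|^2$ out of the expectation against this bound, then applying H\"older's inequality with exponents $2,2$, leaves $\big(\E\frac1n\sum_l|R^{(j)}_{l+n,l+n}|^8\big)^{1/2}\big(\E|D_2|^{-2\alpha}\big)^{1/2}$ (using Jensen to pass from $(\frac1n\sum|R|^4)^2$ to $\frac1n\sum|R|^8$), both of which are $O(C^{\alpha})$ by Corollary \ref{cor8} (now with $q=8$ for the resolvent entries and $q=2\alpha$ for $D_2$). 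Finally Lemma \ref{lemG}, in the form $|(z+\frac{y-1}z)^2-4y|\ge\frac{4y}5v$, converts the leftover factor into $v^{-1}$, and since $v\le V=4\sqrt y$ on $\mathbb G$ we have $v^{-1}\le 4\sqrt y\,v^{-2}$, which upgrades $\frac C{n^2v}$ to $\frac C{n^2v^2}$. The same chain (Lemma \ref{lem00} plus Corollary \ref{cor8}) turns the bounds of paragraph two into $\frac C{nv}$ and $\frac C{n^2v^2}$, with $C$ depending only on $\mu_4$, $D$ (and, through $A_1$, on the admissible range of $\alpha$).

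The step I expect to be the main obstacle is precisely the $\nu=1$ fourth-moment estimate: because Lemma \ref{basic5} produces $\frac1n\sum_l|R^{(j)}_{l+n,l+n}|^4$ instead of the square of $\frac1n\sum_l|R^{(j)}_{l+n,l+n}|^2$, one cannot cancel both powers of $|D_1|$ against the conditional moment, and one is forced to surrender one power of $|D_1|$ to the deterministic edge bound of Lemma \ref{lemG} and then dispose of the surviving resolvent moments via the uniform bounds of Corollary \ref{cor8}. Bookkeeping of which power of $v$ this produces, and verifying it is no larger than $v^{-2}$ on $\mathbb G$, is the only point that needs real care; everything else is conditioning plus the already-established pointwise and moment inequalities.
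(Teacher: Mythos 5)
Your argument is correct and, for three of the four bounds, is exactly the paper's: condition on $\mathfrak M^{(j)}$, control the conditional moments by Lemmas \ref{basic2} and \ref{basic5} (with Lemma \ref{resol00}, \eqref{res1}, for $\nu=1$), cancel the resulting factor $v^{-1}\im m_n^{(j)}(z)+\frac{1-y}{|z|^2}$ against $|z+y(m_n^{(j)}(z)+s_y(z))+\frac{y-1}z|$ via Lemma \ref{lem00} (the paper's inequality \eqref{uuu1}), and finish with the reciprocal--moment bound of Corollary \ref{cor8}. The only place where you genuinely diverge is the fourth--moment bound for $\nu=1$, which the paper dispatches with the single word ``similar'': since \eqref{basic7} produces $\frac1n\sum_l|R^{(j)}_{l+n,l+n}|^4$ rather than the square of $\frac1n\sum_l|R^{(j)}_{l+n,l+n}|^2$, you instead spend the full factor $|z+y(m_n^{(j)}(z)+s_y(z))+\frac{y-1}z|^{-2}$ on the deterministic edge bound \eqref{lem00.2}, apply Cauchy--Schwartz to separate $\E\frac1n\sum_l|R^{(j)}_{l+n,l+n}|^8$ from $\E|z+ym_n^{(j)}(z)+\frac{y-1}z|^{-2\alpha}$ (both controlled by the Corollary \ref{cor8} machinery, the latter being the reason the statement restricts $\alpha\le\frac12A_1(nv)^{\frac14}$), and convert $|(z+\frac{y-1}z)^2-4y|^{-1}$ into $v^{-1}\le Cv^{-2}$ by Lemma \ref{lemG}. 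This yields the claimed $C/(n^2v^2)$ (in fact the stronger $C/(n^2v)$), so your filling of the paper's unstated case is sound; the only caveat, shared with the paper's own proof, is that invoking \eqref{r12} for exponents $\alpha$ (resp. $2\alpha$) larger than $8$ relies on the iterative argument behind Corollary \ref{cor8} rather than on its literal statement.
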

\begin{proof}
 For $\nu=1$, by Lemma \ref{lem00}, we have
 \begin{align}\label{rrr6}
  \E&\frac{|\varepsilon_{j1}|^2}{|z+y(m_n^{(j)}(z)+s_y(z))+\frac{1-y}{z}||z+ym_n^{(j)}(z)+\frac{1-y}{z}|^{\alpha}}\notag\\&\qquad\le
  \E\frac{y\im m_n^{(j)}+\frac {C(1-y)v}{|z|}}{nv|z+y(m_n^{(j)}(z)+s(z))+\frac{1-y}z||z+ym_n^{(j)}(z)+\frac{1-y}z|^{\alpha}}.
 \end{align}
 Note that, for $z\in\mathbb G$,
 \begin{equation}\label{uuu1}
  y\im m_n^{(j)}+\frac {C(1-y)v}{|z|}\le |z+y(m_n^{(j)}(z)+s(z))+\frac{1-y}z|.
 \end{equation}
This inequality and inequality \eqref{rrr6} together imply
\begin{align}
 \E&\frac{|\varepsilon_{j1}|^2}{|z+ym_n^{(j)}(z)+\frac{y-1}z+s_y(z)||z+ym_n^{(j)}(z)+\frac{y-1}z|^{\alpha}}\le \frac C{nv}\E\frac1{|z+ym_n^{(j)}(z)+\frac{1-y}z|^{\alpha}}.\notag
\end{align}

Applying now Corollary \ref{cor8}, we get the claim. The proof of the second inequality for $\nu=1$ is similar.
For $\nu=2$ we apply Lemma \ref{basic2}, inequality \eqref{basic3} and obtain, using \eqref{uuu1},
\begin{align}\notag
  \E&\frac{|\varepsilon_{j2}|^2}{|z+y(m_n^{(j)}(z)+s(z))+\frac{y-1}z||z+ym_n^{(j)}(z)+\frac{y-1}z|^{\alpha}}\le \frac C{nv}.
 \end{align}
Similarly, using Lemma \ref{basic2}, inequality \eqref{basic4}, we get
 \begin{align}
  \E&\frac{|\varepsilon_{j2}|^4}{|z+y(m_n^{(j)}(z)+s_y(z))+\frac{y-1}z|^2|z+ym_n^{(j)}(z)+\frac{y-1}z|^{\alpha}}\notag\\&\le 
  \frac C{n^2v^2}\E\frac{(\im m_n^{(j)}(z)+\frac{(1-y)v}{|z|^2})^2}{|z+y(m_n^{(j)}(z)+s_y(z))+\frac{y-1}z|^2|z
  +ym_n^{(j)}(z)+\frac{y-1}z|^{\alpha}}\notag\\&\qquad\qquad\le \frac C{n^2v^2}\E\frac{1}{|z+ym_n^{(j)}(z)+\frac{y-1}z|^{\alpha}}.\notag
 \end{align}
 Applying Corollary \ref{cor8}, we get the claim.
 
\end{proof}


\begin{lem}\label{basic8}Assuming the conditions of Theorem \ref{main}, there exists an absolute constant $C>0$ such that for any $j=1,\ldots,n$,
 \begin{align}
  |\varepsilon_{j3}|\le \frac C{nv}\quad\text{a.s.}
 \end{align}
 \end{lem}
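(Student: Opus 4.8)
The plan is to reduce $\varepsilon_{j3}$ to the difference of traces of two resolvents and then control that difference by Cauchy's interlacing theorem. First I would rewrite $\varepsilon_{j3}$ by means of the identities already derived in the proof of Theorem~\ref{main}: comparing \eqref{rr21} and \eqref{rr22} gives
\begin{equation}\notag
\varepsilon_{j3}=\frac{y}{2n}\bigl(\Tr\mathbf R-\Tr\mathbf R^{(j)}\bigr)+\frac{y}{2nz},
\end{equation}
so that
\begin{equation}\notag
|\varepsilon_{j3}|\le\frac{y}{2n}\,\bigl|\Tr\mathbf R-\Tr\mathbf R^{(j)}\bigr|+\frac{y}{2n|z|}.
\end{equation}
Since $|z|\ge v$ for every $z=u+iv\in\mathbb C_+$, the second term is already at most $\tfrac{y}{2nv}$, so it remains only to prove the deterministic bound $|\Tr\mathbf R-\Tr\mathbf R^{(j)}|\le C v^{-1}$.

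For this, I would use that $\mathbf V^{(j)}$ is a principal $(n+p-1)\times(n+p-1)$ submatrix of the Hermitian matrix $\mathbf V$, so its eigenvalues $\mu_1\le\cdots\le\mu_{n+p-1}$ interlace the eigenvalues $\lambda_1\le\cdots\le\lambda_{n+p}$ of $\mathbf V$. Hence the difference of the two eigenvalue counting functions, $F(x):=\#\{i:\lambda_i\le x\}-\#\{i:\mu_i\le x\}$, satisfies $0\le F(x)\le1$ for every $x\in\mathbb R$ and is of bounded variation. Writing $\Tr\mathbf R-\Tr\mathbf R^{(j)}=\int_{\mathbb R}(x-z)^{-1}\,dF(x)$ and integrating by parts (the boundary terms vanish because $(x-z)^{-1}\to0$ as $x\to\pm\infty$) turns this into $\int_{\mathbb R}F(x)(x-z)^{-2}\,dx$, so that
\begin{equation}\notag
\bigl|\Tr\mathbf R-\Tr\mathbf R^{(j)}\bigr|\le\int_{\mathbb R}\frac{dx}{(x-u)^2+v^2}=\frac{\pi}{v}.
\end{equation}
Alternatively one may pad $\mathbf V^{(j)}$ with a zero row and column to an $(n+p)\times(n+p)$ matrix that differs from $\mathbf V$ by a matrix of rank at most $2$, apply the standard rank estimate for Stieltjes transforms, and subtract off the contribution $-z^{-1}$ of the extra zero eigenvalue; this is the bound quoted from \cite{GT:2003}, Lemma~3.3. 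Combining the two displays gives $|\varepsilon_{j3}|\le\frac{y(\pi+1)}{2nv}\le\frac{C}{nv}$ with, say, $C=\frac{\pi+1}{2}$ since $0<y\le1$; as the estimate holds for each realisation of $\mathbf X$, it holds a.s.

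I do not anticipate any genuine difficulty: the only points that need a little care are the elementary bookkeeping of the $1/z$-terms stemming from the multiplicities of the zero eigenvalue of $\mathbf V$ and of $\mathbf V^{(j)}$, and a clean statement of the interlacing/integration-by-parts step. In particular the argument nowhere uses $z\in\mathbb G$ and goes through verbatim for all $z\in\mathbb C_+$, which is consistent with the way Lemma~\ref{lem2} was handled for $z=u+iV$.
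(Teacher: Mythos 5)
Your proof is correct and follows essentially the same route as the paper: the paper's own proof of Lemma~\ref{basic8} likewise reduces $\varepsilon_{j3}$ to the trace difference $\frac1{2n}(\Tr\mathbf R-\Tr\mathbf R^{(j)})$ plus a term of order $(n|z|)^{-1}$ and then uses the standard bound $|\Tr\mathbf R-\Tr\mathbf R^{(j)}|\le C v^{-1}$ together with $|z|\ge v$ (the trace bound being quoted from \cite{GT:2003}, Lemma 3.3). The only difference is that you write out the interlacing/integration-by-parts proof of that trace bound instead of citing it, which is a harmless, self-contained addition.
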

\begin{proof}Firstly we represent
\begin{equation}\label{tracej}
 \varepsilon_{j3}=\frac1{2n}\Tr \mathbf R-\frac1{2n}\Tr\mathbf R^{(j)}+\frac{1-y}{2z}.
\end{equation}
This equality implies
\begin{equation}\notag
 |\varepsilon_{j3}|\le \frac1{nv}.
\end{equation}

\end{proof}
\begin{lem}\label{lam1**}Assuming the conditions of Theorem \ref{main}, we have, for  $z\in\mathbb G$,
\begin{align}
\E|\Lambda_n|^2\le \frac C{nv|(z+\frac{y-1}z)^2-4|^{\frac12}}.\notag
\end{align}
\end{lem}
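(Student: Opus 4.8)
The plan is to start from the self-consistency identity \eqref{lambda'} for $\Lambda_n=m_n(z)-s_y(z)$. Using the relation $z+2ys_y(z)+\frac{y-1}{z}=\sqrt{(z+\frac{y-1}{z})^2-4y}$ (with the branch of the square root having non-negative imaginary part), one rewrites \eqref{lambda'}, exactly as in the proof of Lemma~\ref{lem7}, as the quadratic equation $y\Lambda_n^2+\sqrt{(z+\frac{y-1}{z})^2-4y}\,\Lambda_n=T_n$, where $T_n=\frac1n\sum_{j=1}^n\varepsilon_jR_{jj}$. Solving it and applying the elementary bound $|\sqrt{a+b}-\sqrt a|\le C|b|/\sqrt{|a|+|b|}$ gives $|\Lambda_n|\le C|T_n|\,|(z+\frac{y-1}{z})^2-4y|^{-1/2}$, hence $\E|\Lambda_n|^2\le C\,|(z+\frac{y-1}{z})^2-4y|^{-1}\,\E|T_n|^2$. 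So the whole matter reduces to an estimate for $\E|T_n|^2$: it suffices to show that, for $z\in\mathbb G$, $\E|T_n|^2$ is bounded by $C\,|(z+\frac{y-1}{z})^2-4y|^{1/2}(nv)^{-1}$, modulo a self-referential term proportional to $\E|\Lambda_n|^2$ with a small coefficient.

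To bound $\E|T_n|^2$ I split $T_n=\frac1n\sum_j(\varepsilon_{j1}+\varepsilon_{j2})R_{jj}+\frac1n\sum_j\varepsilon_{j3}R_{jj}$. The $\varepsilon_{j3}$-sum equals $\frac{y}{2n}\big(-m_n'(z)+\frac{m_n(z)}{z}\big)$ by \eqref{7.3}; since $|m_n'(z)|\le v^{-1}\im m_n(z)+\frac{1-y}{|z|^2}$ and $\im m_n(z)=\im s_y(z)+\im\Lambda_n(z)$, and on $\mathbb G$ one has both $\im s_y(z)\le C|(z+\frac{y-1}{z})^2-4y|^{1/2}$ and $v\le C|(z+\frac{y-1}{z})^2-4y|^{1/2}$ (from Lemma~\ref{lemG} together with $|z|\le 5/\sqrt y$), this contributes $O\big(n^{-2}v^{-2}(|(z+\frac{y-1}{z})^2-4y|^{1/2}+\E|\Lambda_n|^2)\big)$ to $\E|T_n|^2$. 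For the $(\varepsilon_{j1}+\varepsilon_{j2})$-sum I substitute $R_{jj}=-\big(z+ym_n^{(j)}(z)+\frac{y-1}{z}\big)^{-1}\big(1-\varepsilon_jR_{jj}\big)$. The leading piece $-\frac1n\sum_j(\varepsilon_{j1}+\varepsilon_{j2})\big(z+ym_n^{(j)}(z)+\frac{y-1}{z}\big)^{-1}$ has summands with vanishing conditional expectation given $\mathfrak M^{(j)}$, so, expanding its second moment, the diagonal part equals $\frac1{n^2}\sum_j\E\big(|\varepsilon_{j1}+\varepsilon_{j2}|^2|z+ym_n^{(j)}(z)+\frac{y-1}{z}|^{-2}\big)$; by Lemmas~\ref{basic2}, \ref{basic5}, the bound $\E|z+ym_n^{(j)}(z)+\frac{y-1}{z}|^{-q}\le C_0^q$ of Corollary~\ref{cor8}, and the elementary inequality $\im m_n^{(j)}(z)\le y^{-1}\im\big(z+ym_n^{(j)}(z)+\frac{y-1}{z}\big)$, each expectation is $O((nv)^{-1})$, so the diagonal part is $O(n^{-2}v^{-1})=O\big(n^{-2}v^{-2}|(z+\frac{y-1}{z})^2-4y|^{1/2}\big)$ on $\mathbb G$; the off-diagonal cross terms are handled by replacing one factor by the version with the second row also deleted (whose contribution vanishes by conditional centering) and estimating the error by the stability bound $|m_n(z)-m_n^{(j)}(z)|\le C(nv)^{-1}$. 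The remaining piece $\frac1n\sum_j(\varepsilon_{j1}+\varepsilon_{j2})\varepsilon_jR_{jj}\big(z+ym_n^{(j)}(z)+\frac{y-1}{z}\big)^{-1}$ is of higher order and, via Cauchy--Schwarz together with Lemmas~\ref{basic2}, \ref{basic5}, \ref{basic8} and Corollary~\ref{cor8}, has squared $L^2$-norm $O(n^{-2}v^{-2})$.

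Collecting the three contributions and absorbing everything not of the leading shape into $\frac{C}{nv|(z+\frac{y-1}{z})^2-4y|^{1/2}}$ by means of the lower bound $nv\,|(z+\frac{y-1}{z})^2-4y|^{1/2}\ge A_0\frac{4y}{5}$ of Lemma~\ref{lemG}, one arrives at
\begin{equation}\notag
\E|\Lambda_n|^2\le \frac{C}{nv\,|(z+\frac{y-1}{z})^2-4y|^{1/2}}+\frac{C}{n^2v^2\,|(z+\frac{y-1}{z})^2-4y|}\,\E|\Lambda_n|^2 .
\end{equation}
Choosing the constant $A_0$ in \eqref{v0} large enough that $n^2v^2\,|(z+\frac{y-1}{z})^2-4y|=\big(nv\,|(z+\frac{y-1}{z})^2-4y|^{1/2}\big)^2\ge (A_0\frac{4y}{5})^2\ge 2C$ for all $z\in\mathbb G$ (Lemma~\ref{lemG} again), the last term is absorbed into the left-hand side and the claim follows.

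The main obstacle is the bookkeeping in the middle step rather than any single inequality: one must extract the factor $|(z+\frac{y-1}{z})^2-4y|^{1/2}$, and not merely $|(z+\frac{y-1}{z})^2-4y|$, from $\E|T_n|^2$, which hinges on (i) retaining the conditional-centering gain — a $\frac1{n^2}\sum_j$ rather than $\frac1n\sum_j$ — for the leading part of $\frac1n\sum_j(\varepsilon_{j1}+\varepsilon_{j2})R_{jj}$, so that a naive application of Cauchy--Schwarz must be avoided there, and (ii) the edge estimate $\im s_y(z)\lesssim |(z+\frac{y-1}{z})^2-4y|^{1/2}$ valid on $\mathbb G$; besides this, the self-referential term generated through $\im m_n(z)$ and $m_n'(z)$ has to be carried along and, thanks to Lemma~\ref{lemG}, shown to enter with a coefficient below one.
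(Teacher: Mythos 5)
Your first step (solving the quadratic as in Lemma \ref{lem7}) commits you to the bare estimate $\E|T_n|^2\le C\,|(z+\frac{y-1}z)^2-4y|^{\frac12}(nv)^{-1}$, and the part of your outline that is supposed to produce this for the leading term $\frac1n\sum_j(\varepsilon_{j1}+\varepsilon_{j2})\bigl(z+ym_n^{(j)}(z)+\frac{y-1}z\bigr)^{-1}$ does not close. For the off-diagonal terms you propose to replace one factor by its row-$j$-deleted version and to control the error by the stability bound $|m_n(z)-m_n^{(j)}(z)|\le C(nv)^{-1}$; but the dominant error is not the shift of the denominator, it is the change of the quadratic/diagonal forms themselves, $\varepsilon_{k\nu}-\varepsilon_{k\nu}^{(j)}$, whose coefficients are the resolvent differences $R^{(k)}_{l+n,m+n}-R^{(j,k)}_{l+n,m+n}=R^{(k)}_{l+n,j}R^{(k)}_{j,m+n}/R^{(k)}_{jj}$. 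With the tools available in this paper (only $\E|R^{(\mathbb J)}_{jj}|^q\le C_0^q$ and \eqref{res2}), that difference is of size $(nv)^{-1}$ in $L^2$ and no better, so each off-diagonal pair contributes $\E^{\frac12}|\varepsilon_j'/D_j|^2\cdot O((nv)^{-1})\sim (nv)^{-3/2}$ after the $n^{-2}$ prefactor is cancelled by the $\sim n^2$ pairs. After dividing by $|(z+\frac{y-1}z)^2-4y|$ this gives a term of order $(nv)^{-3/2}|(z+\frac{y-1}z)^2-4y|^{-1}$, which at points of $\mathbb G$ with $\gamma\sim v\sim n^{-2/3}$ (so $nv\sim n^{1/3}$, $|(z+\frac{y-1}z)^2-4y|\sim n^{-2/3}$) is of order $n^{1/6}$, whereas the claimed bound is $O(1)$ there. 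Closing this would require a genuine fluctuation-averaging argument, or pointwise control of $\im R_{jj}$ by $\im s_y(z)$, neither of which is provided by the lemmas you cite; so this is a missing idea, not bookkeeping. (Your treatment of the $\varepsilon_{j3}$ part and of the diagonal part is fine, and the absorption of the self-referential term via Lemma \ref{lemG} with $A_0$ large is acceptable.)

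The paper avoids this difficulty entirely, and in fact does exactly the ``naive'' Cauchy--Schwarz you say must be avoided: it keeps the random denominator, writes $\E|\Lambda_n|^2=\E\bigl[T_n\,\overline\Lambda_n/(z+y(m_n(z)+s_y(z))+\frac{y-1}z)\bigr]$, applies Cauchy--Schwarz against $\overline\Lambda_n$, and then bounds $|T_{n\nu}|^2\le\frac1n\sum_j|\varepsilon_{j\nu}|^2|R_{jj}|^2$ term by term. One power of the denominator is bounded below by $c\,|(z+\frac{y-1}z)^2-4y|^{\frac12}$ via Lemma \ref{lem00}, while the other power is kept inside the expectation, where it absorbs the factor $\im m_n^{(j)}(z)$ coming from the conditional variances (Lemmas \ref{basic2}, \ref{basic5}); Corollary \ref{corgot} (inequality \eqref{corgot1}) and Corollary \ref{cor8} then give $(nv)^{-1}$ per summand, with $T_{n3}$ handled by \eqref{7.3}. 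No off-diagonal cancellation is needed for this lemma; the conditional-centering gain is only exploited later, in the proof of the sharper Lemma \ref{lam1*}. If you want to keep your route, you must supply a proof that the off-diagonal part of $\E\bigl|\frac1n\sum_j(\varepsilon_{j1}+\varepsilon_{j2})/D_j\bigr|^2$ is $O\bigl(|(z+\frac{y-1}z)^2-4y|^{\frac12}(nv)^{-1}\bigr)$; as written, that step fails near the inner boundary of $\mathbb G$.
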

\begin{proof}We write
\begin{align}
 \E|\Lambda_n|^2=\E\Lambda_n\overline \Lambda_n&=\E\frac{T_n}{z+y(m_n(z)+s_y(z))+\frac{y-1}z}\overline\Lambda_n\notag\\&=
 \sum_{\nu=1}^3\E\frac{T_{n\nu}}{z+y(m_n(z)+s(z))+\frac{y-1}z}\overline\Lambda_n,\notag
\end{align}
where
\begin{align}
 T_{n\nu}:=\frac1n\sum_{j=1}^n\varepsilon_{j\nu}R_{jj}, \text{ for }\nu=1,\ldots,3.\notag
\end{align}
Applying Cauchy -- Schwartz inequality , we get
\begin{equation}\label{ll2}
 \E^{\frac12}|\Lambda_n|^2\le \sum_{\nu=1}^3\E^{\frac12}\frac{|T_{n\nu}|^2}{|z+y(m_n(z)+s_y(z))+\frac{y-1}z|^2}.
\end{equation}

First we observe that 
\begin{align}
 T_{n3}&=\frac1{2n^2}\sum_{j=1}^n(\Tr \mathbf R-\Tr \mathbf R^{(j)})R_{jj}+\frac1{2n^2z}\sum_{j=1}^nR_{jj}\notag\\&=
 -\frac1{2n}\frac d{dz}m_n(z)+\frac{1-y}{2nz}m_n(z).\notag
\end{align}
Therefore,
\begin{align}\notag
 |T_{n3}|=\frac1n|m_n'(z)|+\frac{1-y}{|z|}|m_n(z)|\le \frac1{nv}\im m_n(z)+\frac{1-y}{n|z|}|m_n(z)|.
\end{align}
 Hence $|z+y(m_n(z)+s_y(z))+\frac{1-y}z|\ge \im m_n(z)+\frac{(1-y)v}{|z|^2}$ and Jensen's inequality yields
\begin{equation}\label{tn4}
 \E\frac{|T_{n3}|^2}{|z+y(m_n(z)+s_y(z))+\frac{1-y}z|^2}\le \frac C{n^2v^2}(1+|z|^2)\le \frac {4C}{n^2v^2}.
\end{equation}
Furthermore, we observe that, 
\begin{align}
\frac1{|z+y(s_y(z)+m_n(z))+\frac{y-1}z|}&\le\frac1{|z+y(s_y(z)+m_n^{(j)}(z))+\frac{y-1}z|}\notag\\&\qquad\qquad\times(1+
\frac{|\varepsilon_{j3}|}{|z+y(s_y(z)+m_n(z))+\frac{y-1}z|} ).\notag
\end{align}
Therefore, by Lemmas \ref{basic8} and  \ref{lemG}, for $z\in\mathbb G$,
\begin{align}\label{lar1}
\frac1{|z+y(s_y(z)+m_n(z))+\frac{y-1}z|}&\le\frac C{|z+y(s_y(z)+m_n^{(j)}(z))+\frac{y-1}z|}.
\end{align}
Applying inequality \eqref{lar1}, we may write, for $\nu=1,2$
\begin{align}
 \E&\frac{|T_{n\nu}|^2}{|z+y(m_n(z)+s_y(z))+\frac{y-1}z|^2}\notag\\&\qquad\qquad\le \frac1n\sum_{j=1}^n
 \E\frac{|\varepsilon_{j\nu}|^2|R_{jj}|^2}{|z+y(s_y(z)+m_n^{(j)}(z))+\frac{y-1}z|^2}.\notag
\end{align}
Applying Cauchy -- Schwartz inequality and Lemma \ref{lem00}, we get
\begin{align}
 \E&\frac{|T_{n\nu}|^2}{|z+y(m_n(z)+s_y(z))+\frac{y-1}z|^2}\notag\\&\le \frac C{n|(z+\frac{y-1}z)^2-4|^{\frac12}}\sum_{j=1}^n
 \E^{\frac12}\frac{|\varepsilon_{j\nu}|^4}{|z+y(m_n^{(j)}(z)+s_y(z))+\frac{y-1}z|^2}
 \E^{\frac12}|R_{jj}|^4.\notag
\end{align}
Using now Corollary \ref{corgot}, inequality \eqref{corgot1} and Corollary \ref{cor8}, we get for $\nu=1,2,3$
\begin{align}\label{lll2}
 \E\frac{|T_{n\nu}|^2}{|z+y(m_n(z)+s_y(z))+\frac{y-1}z|^2}\le \frac C{nv|(z+\frac{y-1}z)^2-4|^{\frac12}}.
\end{align}
Inequalities \eqref{ll2}, \eqref{tn4} and \eqref{lll2} together complete the proof.
Thus Lemma \ref{lam1**} is proved.
\end{proof}

\begin{lem}\label{lam1*}Assuming the conditions of Theorem \ref{main}, we have, for  $z\in\mathbb G$,
\begin{align}\notag
\E|\Lambda_n|^2\le \frac C{n^2v^2}.
\end{align}
\end{lem}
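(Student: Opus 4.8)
The plan is to bootstrap the a priori estimate of Lemma~\ref{lam1**} up to the optimal order by establishing a self-consistent recursive inequality for $\E|\Lambda_n|^2$. Write $d_n:=z+y(m_n(z)+s_y(z))+\frac{y-1}z$, so that \eqref{lambda'} gives $\Lambda_n=T_n/d_n$ with $T_n=\frac1n\sum_{j=1}^n\varepsilon_jR_{jj}$, and expand
\[
\E|\Lambda_n|^2=\E\frac{T_n\overline{\Lambda_n}}{d_n}=\frac1n\sum_{j=1}^n\sum_{\nu=1}^3\E\frac{\varepsilon_{j\nu}R_{jj}\overline{\Lambda_n}}{d_n}.
\]
The $\nu=3$ contribution equals $\E\frac{T_{n3}\overline{\Lambda_n}}{d_n}$ and is bounded, via the Cauchy--Schwarz inequality together with the estimate $\E\frac{|T_{n3}|^2}{|d_n|^2}\le Cn^{-2}v^{-2}$ already obtained in the proof of Lemma~\ref{lam1**} (which rests on $|\varepsilon_{j3}|\le C(nv)^{-1}$ from Lemma~\ref{basic8}, the identity $T_{n3}=-\frac1{2n}m_n'(z)+\frac{1-y}{2nz}m_n(z)$, and Corollary~\ref{cor8}), by $\frac C{nv}\E^{\frac12}|\Lambda_n|^2$.

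For $\nu=1,2$ one exploits that $\E\{\varepsilon_{j\nu}\mid\mathfrak M^{(j)}\}=0$. Using the resolvent expansions \eqref{repr01} and \eqref{repr01**} one replaces $R_{jj}$ by $-(z+ym_n^{(j)}(z)+\tfrac{y-1}z)^{-1}$ plus a remainder proportional to $(\varepsilon_{j1}+\varepsilon_{j2})R_{jj}$, replaces $1/d_n$ by $-s_y(z)$ plus a remainder proportional to $s_y(z)\Lambda_n$, and replaces $\overline{\Lambda_n}$ by $\overline{\Lambda_n^{(j)}}$ plus $\overline{\widetilde\varepsilon_{j3}}$; the product of the three $\mathfrak M^{(j)}$--measurable leading parts has zero conditional expectation, and every remaining term carries an extra small factor, namely $\widetilde\varepsilon_{j3}$ (of size $O((nv)^{-1})$ a.s. by Lemma~\ref{basic8}), $\varepsilon_{j1}+\varepsilon_{j2}$ (controlled in conditional $L^2$ and $L^4$ by Lemmas~\ref{basic2} and \ref{basic5}), or $\Lambda_n$, $\Lambda_n^{(j)}$ (controlled by Lemma~\ref{lam1**} together with $|\Lambda_n-\Lambda_n^{(j)}|=|\widetilde\varepsilon_{j3}|\le C(nv)^{-1}$). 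Estimating these terms by the Cauchy--Schwarz and H\"older inequalities, the moment bounds $\E|R_{jj}|^q\le C^q$ and $\E|z+ym_n(z)+\frac{y-1}z|^{-q}\le C^q$ of Corollary~\ref{cor8}, the lower bound $|d_n|\ge c_0|(z+\frac{y-1}z)^2-4y|^{1/2}$ of Lemma~\ref{lem00}, and the region bound $nv\sqrt{|(z+\frac{y-1}z)^2-4y|}\ge A_0\tfrac{4y}5$ of Lemma~\ref{lemG}, one arrives at a recursive inequality of the form
\[
\E|\Lambda_n|^2\le\frac C{n^2v^2}+\frac C{nv}\,\E^{\frac12}|\Lambda_n|^2+\rho\,\E|\Lambda_n|^2,
\]
with a constant $\rho<1$ (here $A_0$ is taken large, and the a priori bound of Lemma~\ref{lam1**} is used to keep the $\rho$--term small on $\mathbb G$).

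Finally, absorbing the $\rho\,\E|\Lambda_n|^2$ term and estimating $\frac C{nv}\E^{\frac12}|\Lambda_n|^2\le\frac{1-\rho}2\E|\Lambda_n|^2+\frac{C'}{n^2v^2}$ by Young's inequality yields $\E|\Lambda_n|^2\le Cn^{-2}v^{-2}$, which is the assertion. I expect the main difficulty to be the bookkeeping in the $\nu=1,2$ decomposition: one must track the numerous cross terms produced by the three simultaneous substitutions and verify that each one gains a genuine factor $(nv)^{-1}$ beyond the base order $(nv)^{-1}$ coming from the quadratic forms $\varepsilon_{j1},\varepsilon_{j2}$ --- the delicate point being that wherever the coarse deterministic bound $|d_n|^{-1}\le Cv^{-1/2}$ has to be invoked, the resulting loss must be repaid through the a priori estimate of Lemma~\ref{lam1**} and the constraint $v\ge v_0/\sqrt{\gamma}$ on $\mathbb G$, so that the term stays of order $n^{-2}v^{-2}$ rather than degrading to $n^{-2}v^{-3}$.
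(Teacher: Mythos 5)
Your overall strategy is the same as the paper's: the decomposition $\E|\Lambda_n|^2=\sum_{\nu=1}^3\E\frac{T_{n\nu}\overline\Lambda_n}{d_n}$, the treatment of the $\nu=3$ term via \eqref{tn4**}, conditional centering of $\varepsilon_{j1},\varepsilon_{j2}$ after making the three factors $\mathfrak M^{(j)}$-measurable, and a quadratic recursion that is then solved (the paper's recursion \eqref{finalrek*} in fact contains no $\rho\,\E|\Lambda_n|^2$ term, but absorbing such a term would be harmless). The genuine gap is in the third of your substitutions, exactly the point you flag as delicate: you propose to replace $\overline{\Lambda_n}$ by $\overline{\Lambda_n^{(j)}}$ and to control the error only through $|\Lambda_n-\Lambda_n^{(j)}|=|\widetilde\varepsilon_{j3}|\le C(nv)^{-1}$ (Lemma \ref{basic8}). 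That cross term cannot be conditionally centered (it still depends on row $j$), and with the tools you list the best available bound per summand is
\[
\frac{C}{nv}\;\E\frac{|\varepsilon_{j\nu}|}{\bigl|z+ym_n^{(j)}(z)+\frac{y-1}z\bigr|\,\bigl|z+y(m_n^{(j)}(z)+s_y(z))+\frac{y-1}z\bigr|}
\le\frac{C}{(nv)^{\frac32}\,\bigl|(z+\frac{y-1}z)^2-4y\bigr|^{\frac14}},
\]
by Corollary \ref{corgot} and Lemma \ref{lem00}. On $\mathbb G$ this is \emph{not} $O(n^{-2}v^{-2})$: Lemma \ref{lemG} only gives $\sqrt{nv}\,|(z+\frac{y-1}z)^2-4y|^{1/4}\gtrsim1$, which turns the term into $O((nv)^{-1})$, and in the bulk ($\gamma\asymp1$) at $v\asymp n^{-1/2}$ the bound is of order $n^{-3/4}$ while the target $n^{-2}v^{-2}\asymp n^{-1}$. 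Hence your recursion closes only at a rate of type $(nv)^{-3/2}|(z+\frac{y-1}z)^2-4y|^{-1/4}$ — essentially Lemma \ref{lam1**} again — and does not deliver $\E|\Lambda_n|^2\le Cn^{-2}v^{-2}$.

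The paper avoids this by never using the crude almost-sure bound on $\widetilde\varepsilon_{j3}$ at this spot: it replaces $\overline\Lambda_n$ by the $\mathfrak M^{(j)}$-measurable $\overline{\Lambda_n^{(j,1)}}$, so that the error is $\delta_{nj}=\frac{R_{jj}-s_y(z)}{2n}+\frac1{np}\sum_{l,k}X_{jl}X_{jk}[(\mathbf R^{(j)})^2]_{k+n,l+n}$ (see \eqref{deltan1}), whose second moment is estimated through \eqref{raz*}, Lemma \ref{bet1a} and \eqref{dva} as in \eqref{delta2*}, i.e. $\E|\delta_{nj}|^2\le\frac1{n^2}\E|\Lambda_n|^2+\frac C{n^3v}+\dots$, smaller than the crude $(nv)^{-2}$ by precisely the factor $v/n$ your argument is missing; only then do $Z_{\nu2}$ and $Z_{\nu3}$ come out as $\frac C{nv}\E^{\frac12}|\Lambda_n|^2+\frac C{n^2v^2}$, with Lemma \ref{lem14} supplying the needed fourth-moment bound for $\varepsilon_{j3}/d_n$ in $Z_{\nu3}$. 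A secondary inaccuracy in your sketch: the leading part of $1/d_n$ is not $-s_y(z)$ but $1/(z+2ys_y(z)+\frac{y-1}z)$ (or, as in the paper, the $\mathfrak M^{(j)}$-measurable $1/(z+y(m_n^{(j)}(z)+s_y(z))+\frac{y-1}z)$); the difference $1/d_n+s_y(z)$ is proportional to $m_n(z)$, not to $\Lambda_n$, so that remainder is not small as written — this slip is repairable, unlike the $\widetilde\varepsilon_{j3}$ issue, which requires the finer $\delta_{nj}$ analysis.
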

\begin{proof}We write
\begin{align}
 \E|\Lambda_n|^2&=\E\Lambda_n\overline \Lambda_n=\E\frac{T_n}{z+y(m_n(z)+s(z))+\frac{y-1}z}\overline\Lambda_n\notag\\&=
 \sum_{\nu=1}^4\E\frac{T_{n\nu}}{z+y(m_n(z)+s(z))+\frac{y-1}z}\overline\Lambda_n,\notag
\end{align}
where
\begin{align}
 T_{n\nu}:=\frac1n\sum_{j=1}^n\varepsilon_{j\nu}R_{jj}, \text{ for }\nu=1,2,3.\notag
\end{align}
First we observe that by \eqref{shur}
\begin{align}
 |T_{n3}|=\frac1n|m_n'(z)|\le \frac1{nv}(\im m_n(z)+\frac{(1-y)v}{|z|^2}).\notag
\end{align}
 Hence $|z+y(m_n^{(j)}(z)+s(z))+\frac{y-1}z|\ge \im m_n^{(j)}(z)+\frac{(1-y)v}{|z|^2}$ and Jensen's inequality yields
\begin{equation}\label{tn4**}
 |\E\frac{T_{n3}}{z+y(m_n(z)+s_y(z))+\frac{y-1}z}\overline\Lambda_n|\le \frac 1{nv}\E^{\frac12}|\Lambda_n|^2.
\end{equation}

Consider now the quantity
\begin{equation}\notag
 Y_{\nu}:=\E\frac{T_{n\nu}}{z+y(m_n(z)+s_y(z))+\frac{y-1}z}\overline\Lambda_n,
\end{equation}
for $\nu=1,2$.
We represent it as follows
\begin{equation}\notag
 Y_{\nu}=Y_{\nu1}+Y_{\nu2},
\end{equation}
where
\begin{align}
 Y_{\nu1}&=-\frac1n\sum_{j=1}^n\E\frac{\varepsilon_{j\nu}\overline\Lambda_n}{(z+ym_n^{(j)}(z)+\frac{y-1}z)(z+y(m_n(z)+s_y(z))+\frac{y-1}z)},\notag\\
 Y_{\nu2}&=\frac1n\sum_{j=1}^n\E\frac{\varepsilon_{j\nu}(R_{jj}+\frac1{z+ym_n^{(j)}(z)+\frac{y-1}z})\overline\Lambda_n}{z+y(m_n(z)+s_y(z))+\frac{y-1}z}.\notag
\end{align}
By the representation \eqref{repr01}, which is similar to \eqref{repr001} we have
\begin{align}
 Y_{\nu2}=\frac1n\sum_{j=1}^n\E\frac{\varepsilon_{j\nu}(\varepsilon_{j1}+\varepsilon_{j2})\overline\Lambda_n}{(z+y(m_n(z)+s_y(z))+\frac{y-1}z)(z+ym_n^{(j)}(z)+\frac{y-1}z)}.\notag
\end{align}
Using inequality \eqref{lar1}, we may write, for $z\in\mathbb G$
\begin{align}
 |Y_{\nu2}|\le\frac Cn\sum_{j=1}^n\E\frac{|\varepsilon_{j\nu}||\varepsilon_{j1}+\varepsilon_{j2}||\overline\Lambda_n|}{|z+y(m_n^{(j)}(z)+s_y(z))+\frac{y-1}z|
 |z+ym_n^{(j)}(z)+\frac{y-1}z|}. \notag
\end{align}
Applying the Cauchy -- Schwartz inequality and the inequality $ab\le \frac12(a^2+b^2)$, we get
\begin{align}\label{y21}
 |Y_{\nu2}|&\le\frac Cn\sum_{j=1}^n\E^{\frac12}\frac{|\varepsilon_{j\nu}|^2|\varepsilon_{j1}+\varepsilon_{j2}|^2}{|z+ym_n^{(j)}(z)+\frac{y-1}z+s(z)|^2|z+ym_n^{(j)}(z)+\frac{y-1}z|^2}
 \E^{\frac12}|\Lambda_n|^2\notag\\&\le\frac Cn\sum_{j=1}^n\E^{\frac12}\frac{|\varepsilon_{j1}+\varepsilon_{j2}|^4}{|z+y(m_n^{(j)}(z)+s(z))+\frac{y-1}z|^2
 |z+ym_n^{(j)}(z)+\frac{y-1}z|^2}
 \E^{\frac12}|\Lambda_n|^2 .
\end{align}
Using Corollary \ref{corgot} with $\alpha=2$, we arrive at
\begin{equation}\label{fnu2*}
 |Y_{\nu2}|\le \frac C{nv}\E^{\frac12}|\Lambda_n|^2.
\end{equation}
In order to estimate $Y_{\nu1}$ we introduce now the quantity

\begin{equation}\notag
 \Lambda_n^{(j,1)}=\frac1n\Tr\mathbf R^{(j)}-s_y(z)+\frac{s_y(z)}{2n}+\frac1{2nz}.
\end{equation}
Note that
\begin{align}\label{deltan1}
 \Lambda_n-\Lambda_n^{(j,1)}&=\frac1n(\frac12(\Tr\mathbf R-\Tr\mathbf R^{(j)}+\frac1z)-\frac{s(z)}{2n}-\frac1{2nz}\notag\\&=\frac{R_{jj}-s_y(z)}{2n}
 +\frac1{np}\sum_{l,k=1}^pX_{jl}X_{jk}[(\mathbf R^{(j)})^2]_{k+n,l+n}
 =\delta_{nj}.
\end{align}
We represent $Y_{\nu1}$ in the form
\begin{equation}\notag
 Y_{\nu1}=Z_{\nu1}+Z_{\nu2}+Z_{\nu3},
\end{equation}
where
\begin{align}
 Z_{\nu1}&=-\frac1n\sum_{j=1}^n\E\frac{\varepsilon_{j\nu}{\overline\Lambda}_n^{(j1)}}{(z+ym_n^{(j)}(z)+\frac{y-1}z)(z+y(m_n^{(j)}(z)+s_y(z))+\frac{y-1}z},\notag\\
 Z_{\nu2}&=\frac1n\sum_{j=1}^n\E\frac{\varepsilon_{j\nu}\overline\delta_{nj}}{(z+ym_n^{(j)}(z)+\frac{y-1}z)(z+y(m_n(z)+s(z))+\frac{y-1}z)},\notag\\
 Z_{\nu3}&=\frac1n\sum_{j=1}^n\E\frac{\varepsilon_{j\nu}{\overline\Lambda}_n^{(j1)}}{(z+ym_n^{(j)}(z)+\frac{y-1}z)
 (z+y(m_n^{(j)}(z)+s_y(z))+\frac{y-1}z)}\notag\\&\qquad\qquad\qquad\qquad\times\frac{\varepsilon_{j3}}{(z+y(m_n(z)+s_y(z))+\frac{y-1}z)}.\notag
\end{align}
First,  note that by \tc{conditional} independence
\begin{equation}\label{v1}
 Z_{\nu1}=0.
\end{equation}
Furthermore, applying H\"older's inequality, we get
\begin{align}
 |Z_{\nu3}|&\le \frac1n\sum_{j=1}^n\E^{\frac14}\frac{|\varepsilon_{j\nu}|^4}{|z+y(m_n^{(j)}(z)+s_y(z))+\frac{y-1}z|^4|z+ym_n^{(j)}(z)+\frac{y-1}z|^4}\notag\\&
 \qquad\qquad\qquad\qquad\qquad\times
 \E^{\frac14}\frac{|\varepsilon_{j3}|^4}{|z+y(m_n(z)+s_y(z))+\frac{y-1}z|^4}\E^{\frac12}|\Lambda_n^{(j,1)}|^2.\notag
\end{align}
Using  Corollary \ref{corgot} with $\alpha=4$ and Lemmas \ref{lem14} and \ref{lem00}, we obtain
\begin{align}
 |Z_{\nu3}|\le \frac{C}{(nv)^{\frac32}|(z+\frac{y-1}z)^2-4y|^{\frac14}}\frac1n\sum_{j=1}^n\E^{\frac12}|\Lambda_n^{(j,1)}|^2.\notag
\end{align}
Applying now Corollary \eqref{cor8}, we get
\begin{equation}\notag
 |Z_{\nu3}|\le \frac{C}{(nv)^{\frac32}|(z+\frac{y-1}z)^2-4y|^{\frac14}}\E^{\frac12}|\Lambda_n|^2
 +\frac{C}{(nv)^{\frac32}|(z+\frac{y-1}z)^2-4y|^{\frac14}}\frac1n\sum_{j=1}^n\E^{\frac12}|\delta_{nj}|^2.
\end{equation}
For $z\in\mathbb G$ we may rewrite this bound using Lemma \ref{lemG}
\begin{equation}\notag
 |Z_{\nu3}|\le \frac{C}{nv}\E^{\frac12}|\Lambda_n|^2
 +\frac{C}{nv}\frac1n\sum_{j=1}^n\E^{\frac12}|\delta_{nj}|^2.
\end{equation}
By definition of $\delta_{nj}$, see \eqref{deltan1}, we have
\begin{align}
 \E|\delta_{nj}|^2\le C\Big (\frac1{n^2}\E|R_{jj}-s(z)|^2+\E^{\frac12}\Big|\frac1{n^2}\sum_{l,k\in\mathbb T_j}X_{jl}X_{jk}[(\mathbf R^{(j)})^2]_{k+n,l+n}\Big|^4
 \E^{\frac12}|R_{jj}|^4\Big).\notag
\end{align}
By representation \eqref{repr001}, we have
\begin{align}
 \E|R_{jj}-s(z)|^2\le \E|\Lambda_n|^2+\E^{\frac12}|\varepsilon_j|^4\E^{\frac12}|R_{jj}|^4.
\end{align}
Note that by Lemmas  \ref{basic2},\ref{basic4}, \ref{basic6}, we have
\begin{equation}\label{forgot}
 \E|\varepsilon_j|^4\le \frac C{n^2v^2}.
\end{equation}

By  Corollaries \ref{cor8} and \ref{forgot}, we get
\begin{equation}\label{raz*}
 \E|R_{jj}-s(z)|^2\le \E|\Lambda_n|^2+\frac C{nv}.
\end{equation}
By Lemmas  \ref{bet1a}, inequality \eqref{bet12} and Corollary \ref{cor8}, we have
\begin{align}\label{dva}
 \E|\frac1{n^2}\sum_{l,k\in\mathbb T_j}X_{jl}X_{jk}[(\mathbf R^{(j)})^2]_{k+n,l+n}|^4=\frac1{n^4}\E|\eta_{j3}|^4\le \frac C{n^6v^6}.
\end{align}
Inequalities \eqref{raz*} and \eqref{dva} together imply
\begin{align}\label{delta2*}
 \E|\delta_{nj}|^2\le \frac1{n^2}\E|\Lambda_n|^2+\frac C{n^3v}+\frac C{n^6v^6}.
\end{align}
Therefore, for $z\in\mathbb G$,
\begin{align}\label{znu3*}
 |Z_{\nu3}|\le\frac{C}{nv}\E^{\frac12}|\Lambda_n|^2+\frac{C}{n^4v^4}+\frac1{n^2v}\E^{\frac12}|\Lambda_n|^2\le \frac C{nv}\E^{\frac12}|\Lambda_n|^2+\frac C{n^2v^2}.
\end{align}
To bound $Z_{\nu2}$ we first apply  inequality \eqref{lar1} and obtain
\begin{align}
  |Z_{\nu2}|&\le\frac Cn\sum_{j=1}^n\E\frac{|\varepsilon_{j\nu}||\delta_{nj}|}
  {|z+ym_n^{(j)}(z)+\frac{y-1}z||z+y(m_n^{(j)}(z)+s_y(z))+\frac{y-1}z|}.\notag
\end{align}
Applying now H\"older's inequality, we get
\begin{align}
 |Z_{\nu2}|&\le\frac Cn\sum_{j=1}^n\E^{\frac12}\frac{|\varepsilon_{j\nu}|^2}{|z+ym_n^{(j)}(z)+\frac{y-1}z|^2|z+ym_n^{(j)}(z)+\frac{y-1}z+s(z)|^2}\E^{\frac12}|\delta_{nj}|^2.\notag
\end{align}
Therefore, by Corollary \ref{corgot}
\begin{align}
 |Z_{\nu2}|\le \frac{C}{\sqrt{nv}|(z+\frac{y-1}z)^2-4y|^{\frac14}}\frac 1n\sum_{j=1}^n\E^{\frac12}|\delta_{nj}|^2.\notag
\end{align}
The last inequality together with Lemma \ref{lemG} and inequality \eqref{delta2*},  imply
\begin{align}\label{znu2}
 |Z_{\nu2}|&\le \frac{C}{n\sqrt{nv}|(z+\frac{y-1}z)^2-4y|^{\frac14}}\E^{\frac12}|\Lambda_n|^2+\frac C{n^{2}v}+\frac C{n^{\frac72}v^{\frac72}}
 \notag\\&\le\frac C{nv}\E^{\frac12}|\Lambda_n|^2+\frac C{n^2v^2}.
\end{align}
Combining now inequalities \eqref{tn4**}, \eqref{v1}, \eqref{znu3*}, \eqref{znu2}, we get
\begin{equation}\label{finalrek*}
 \E|\Lambda_n|^2\le \frac C{nv}\E^{\frac12}|\Lambda_n|^2+\frac C{n^2v^2}.
\end{equation}
Solving this inequality with respect to $\E|\Lambda_n|^2$  completes the proof of Lemma \ref{lam1*}. 
Thus Lemma \ref{lam1*} is proved.
\end{proof}
\begin{lem}\label{deltan4}
 There exists a positive constant $C$ such that
\begin{equation}
|\delta_{n3}|\le \frac1{nv}\im m_n(z).
\end{equation}
\end{lem}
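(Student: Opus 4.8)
The plan is to identify $\delta_{n3}$ as a linear functional of $\mathbf R^2$ and then to control the diagonal entries of $\mathbf R^2$ by the elementary spectral estimate, averaging over $j$. First I would unwind the definition of $\delta_{n3}$: it is the non-random (``expectation'') term produced when, in the decomposition of $\delta_{nj}$ from \eqref{deltan1}, the random quadratic form $\frac1p\sum_{l,k}X_{jl}X_{jk}[(\mathbf R^{(j)})^2]_{k+n,l+n}$ is replaced by its conditional mean $\frac1p\sum_l[(\mathbf R^{(j)})^2]_{l+n,l+n}$ and the result is re-expressed via the Schur identity \eqref{shur} (equivalently via \eqref{lal3} and \eqref{mn'}). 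This yields $\delta_{n3}=\frac1{n^2}\sum_{j=1}^n[\mathbf R^2]_{jj}=\frac1n m_n'(z)$, where $m_n(z)=\frac1n\sum_{j=1}^n R_{jj}$ is the symmetrized Stieltjes transform. Note that, by working with $\sum_{j=1}^n R_{jj}$ directly rather than through $\frac1{2n}\Tr\mathbf R$, no $\frac{p-n}{z}$-correction is generated — and this is precisely the point that needs attention in the Marchenko--Pastur case, since such a term would not be dominated by $\frac1{nv}\im m_n(z)$.

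It then remains to bound $\frac1{n^2}\bigl|\sum_{j=1}^n[\mathbf R^2]_{jj}\bigr|$. Since $\mathbf V$ is real symmetric, $\mathbf R=(\mathbf V-z\mathbf I)^{-1}$ is symmetric, so $[\mathbf R^2]_{jj}=\sum_k R_{jk}^2$ and hence
\begin{equation}\notag
\bigl|[\mathbf R^2]_{jj}\bigr|\le\sum_k|R_{jk}|^2=v^{-1}\im R_{jj},
\end{equation}
the last equality being $\im\mathbf R=v\,\mathbf R\mathbf R^{*}$ read off the $(j,j)$ entry; this is the full-index analogue of \eqref{res2} and may also be obtained from the eigenbasis representation $R_{jj}=\sum_q u_{jq}^2/(\lambda_q-z)$ used in the proof of Lemma \ref{resol00}, together with $\sum_q u_{jq}^2=1$ and $\im(\lambda_q-z)^{-1}=v|\lambda_q-z|^{-2}$. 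Averaging over $j=1,\dots,n$ then gives
\begin{equation}\notag
|\delta_{n3}|\le\frac1{n^2}\sum_{j=1}^n\bigl|[\mathbf R^2]_{jj}\bigr|\le\frac1{n^2v}\sum_{j=1}^n\im R_{jj}=\frac1{nv}\im m_n(z),
\end{equation}
which is the asserted inequality.

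The argument is essentially a one-liner once $\delta_{n3}$ has been correctly identified. The only genuine difficulty — and the only place where this step departs from the corresponding argument for Wigner matrices in \cite{GT:2014} — is the bookkeeping of the symmetrization term $\frac{p-n}{z}$: one must check that $\delta_{n3}$, as it actually arises, does not secretly carry a $\frac{p-n}{nz^2}$ contribution, since that piece is not controlled by $\frac1{nv}\im m_n(z)$. Apart from this verification there is no obstacle.
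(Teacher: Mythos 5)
Your spectral estimate is fine, but the identification of $\delta_{n3}$ is not, and that is where the proof fails. The quantity the lemma is about is the one established in \eqref{rrr7} and then cited in \eqref{7.3}:
\begin{equation}\notag
\delta_{n3}=\frac1{2n}\Bigl(\frac{m_n(z)}z-m_n'(z)\Bigr)=\frac{z^2}{n^2}\sum_{k=1}^n\frac1{(s_k^2-z^2)^2}
=\frac1{n^2}\Tr\bigl(z(\mathbf W-z^2\mathbf I)^{-1}\bigr)^2,
\end{equation}
i.e.\ the normalized trace of the \emph{square of the upper-left block} of $\mathbf R$, not of the upper-left block of $\mathbf R^2$. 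Your claim $\delta_{n3}=\frac1{n^2}\sum_{j\le n}[\mathbf R^2]_{jj}=\frac1n m_n'(z)$ differs from this by $\frac1{n^2}\sum_k\frac{s_k^2}{(s_k^2-z^2)^2}$ (the contribution of the off-diagonal blocks $\mathbf X(\mathbf X^*\mathbf X-z^2\mathbf I)^{-2}\mathbf X^*$), and also by a factor and a sign. Moreover, the very term you argue away is in fact present: since deleting the $j$-th row raises the multiplicity of the zero eigenvalue from $p-n$ to $p-n+1$, one has $\sum_{l=1}^p\bigl(R_{l+n,l+n}-R^{(j)}_{l+n,l+n}\bigr)=\frac12(\Tr\mathbf R-\Tr\mathbf R^{(j)})+\frac1{2z}$, and it is exactly this $\frac1{2z}$ shift that produces the $\frac{m_n(z)}{2nz}$ piece in $\frac1n\sum_j\varepsilon_{j3}R_{jj}=\frac y{2n}\bigl(\frac{m_n(z)}z-m_n'(z)\bigr)$. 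So the inequality you prove, $\frac1n|m_n'(z)|\le\frac1{nv}\im m_n(z)$, is a true statement about a different object, and the identity \eqref{rrr7} — which is the part of this lemma actually used in Section \ref{expect} — is not obtained at all.

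The repair is easy but necessary: starting from $\varepsilon_{j3}$ and $(\Tr\mathbf R-\Tr\mathbf R^{(j)})R_{jj}=-\frac d{dz}R_{jj}$, derive the exact cancellation $\frac1{2n}\bigl(\frac{m_n(z)}z-m_n'(z)\bigr)=-\frac{z^2}{n^2}\sum_k(s_k^2-z^2)^{-2}$ (using $m_n(z)=\frac1n\sum_k\frac z{s_k^2-z^2}$), and then your kind of estimate closes the argument, since
\begin{equation}\notag
\Bigl|\frac{z^2}{n^2}\sum_{k=1}^n\frac1{(s_k^2-z^2)^2}\Bigr|\le\frac1{n^2}\sum_{k=1}^n\frac{s_k^2+|z|^2}{|s_k^2-z^2|^2}=\frac1{nv}\im m_n(z),
\end{equation}
which is exactly the paper's final step. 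As written, however, your proposal bounds the wrong quantity and rests on the incorrect assertion that no $z^{-1}$-correction arises in the Marchenko--Pastur setting.
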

\begin{proof}It is easy to check that
\begin{equation}\notag
 \sum_{k=1}^p(R_{k+n,k+n}-R^{(j)}_{k+n,k+n})=\frac12(\Tr\mathbf R-\Tr\mathbf R^{(j)})
+\frac{1}{2z}.
\end{equation}
 By formula (5.4) in \cite{GT:2003}, we have
\begin{align}
 (\Tr\mathbf R-\Tr\mathbf R^{(j)})R_{jj}=
(1+\frac1p\sum_{l,k=1}^nX_{jl}X_{jk}(R^{(j)})^2)_{l+n,k+n})R_{jj}^2
=-\frac{d}{dz}R_{jj}.\notag
\end{align}
From here it follows that
\begin{equation}\notag
 \frac1{n^2}\sum_{j=1}^n(\Tr\mathbf R-\Tr\mathbf R^{(j)})R_{jj}=-\frac1{n}\frac{d}{dz}m_n(z) .
\end{equation}
Note that
\begin{equation}\notag
m_n(z)=\frac{z}n\sum_{k=1}^n\frac1{s_k^2-z^2}
\end{equation}
and
\begin{equation}\notag
\frac{d}{dz}m_n(z)=\frac{m_n(z)}z-\frac{2z^2}n\sum_{k=1}^n\frac1{(s_k^2-z^2)^2}.
\end{equation}
This implies that
\begin{equation}\label{rrr7}
\delta_{n3}=\frac1{2n}(-m_n'(z)+\frac{m_n(z)}z)=\frac{z^2}{n^2}\sum_{k=1}^n\frac1{(s_k^2-z^2)^2}.
\end{equation}
Finally, we note that
\begin{equation}
 \im m_n(z)=\frac1n\sum_{k=1}^n\frac{v(s_k^2+|z|^2)}{|s_k^2-z^2|^2}.
\end{equation}
The last relation implies
\begin{equation}\label{last}
 \Big|\frac{z^2}{n^2}\sum_{k=1}^n\frac1{(s_k^2-z^2)^2}\Big|\le \frac1{nv}\im m_n(z).
\end{equation}
The inequality \eqref{last} concludes the proof.
Thus Lemma \ref{deltan4} is proved.
\end{proof}




We introduce the following quantity
\begin{align}\beta_{j1}&=\frac1p\sum_{l=1}^p[(\mathbf R^{(j)})^2]_{l+n,l+n}-\frac1p\sum_{l=1}^p[(\mathbf R)^2]_{l+n,l+n},\notag\\
\beta_{j2}&=\frac1p\sum_{1\le l\ne k\le p}X_{jl}X_{jk}[(\mathbf R^{(j)})^2]_{l+n,k+n},
\notag\\
\beta_{j3}&=\frac1n\sum_{l=1}^p(X^2_{jl}-1)[(\mathbf R^{(j)})^2]_{l+n,l+n}
.\notag
\end{align}
\begin{lem}\label{bet1*}Assuming the conditions of Theorem \ref{main}, we have, for $\nu=2,3$,
\begin{align}\label{bet3}
\E\{|\beta_{j\nu}|^2\Big|\mathfrak M^{(j)}\}\le\frac C{nv^3}\im m_n^{(j)}(z).
\end{align}
\end{lem}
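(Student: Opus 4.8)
The plan is to treat $\beta_{j2}$ and $\beta_{j3}$ separately, conditioning in both cases on $\mathfrak M^{(j)}$. Recall that with respect to this $\sigma$-algebra the matrix $\mathbf R^{(j)}$, hence $(\mathbf R^{(j)})^2$, is measurable, while the variables $X_{j1},\dots,X_{jp}$ are independent of it with $\E X_{jl}=0$, $\E X_{jl}^2=1$ and $\E X_{jl}^4\le\mu_4$ by \eqref{moment}.

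For $\beta_{j3}=\frac1n\sum_{l=1}^p(X_{jl}^2-1)[(\mathbf R^{(j)})^2]_{l+n,l+n}$ the summands are, conditionally on $\mathfrak M^{(j)}$, independent and centered, so
\begin{align}\notag
\E\{|\beta_{j3}|^2\,\big|\,\mathfrak M^{(j)}\}=\frac1{n^2}\sum_{l=1}^p\E(X_{jl}^2-1)^2\,\big|[(\mathbf R^{(j)})^2]_{l+n,l+n}\big|^2\le\frac{\mu_4}{n^2}\sum_{l=1}^p\big|[(\mathbf R^{(j)})^2]_{l+n,l+n}\big|^2,
\end{align}
using $\E(X_{jl}^2-1)^2=\E X_{jl}^4-1\le\mu_4$. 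Applying Lemma \ref{resol00}, inequality \eqref{res3} (with the deleted set $\{j\}$), which gives $\frac1n\sum_{l=1}^p|[(\mathbf R^{(j)})^2]_{l+n,l+n}|^2\le v^{-3}\im m_n^{(j)}(z)$, yields the claim for $\nu=3$.

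For $\beta_{j2}=\frac1p\sum_{1\le l\ne k\le p}X_{jl}X_{jk}[(\mathbf R^{(j)})^2]_{l+n,k+n}$ write $a_{lk}:=[(\mathbf R^{(j)})^2]_{l+n,k+n}$; conditionally on $\mathfrak M^{(j)}$ these are fixed, and since $(\mathbf R^{(j)})^2$ is symmetric one has $a_{kl}=a_{lk}$. Expanding the conditional second moment and using that $\E(X_{jl}X_{jk}X_{jl'}X_{jk'})$ vanishes unless $\{l,k\}=\{l',k'\}$, we obtain
\begin{align}\notag
\E\{|\beta_{j2}|^2\,\big|\,\mathfrak M^{(j)}\}=\frac1{p^2}\sum_{1\le l\ne k\le p}\big(|a_{lk}|^2+a_{lk}\overline{a_{kl}}\big)\le\frac2{p^2}\sum_{l,k=1}^p\big|[(\mathbf R^{(j)})^2]_{l+n,k+n}\big|^2.
\end{align}
Now Lemma \ref{resol00}, inequality \eqref{res5}, gives $\sum_{l,k=1}^p|[(\mathbf R^{(j)})^2]_{l+n,k+n}|^2\le n v^{-3}\im m_n^{(j)}(z)$, and since $p=n/y_n$ is of order $n$ we conclude $\E\{|\beta_{j2}|^2\mid\mathfrak M^{(j)}\}\le C n^{-1}v^{-3}\im m_n^{(j)}(z)$, as required.

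No genuine difficulty arises here; the only points to watch are choosing the right entry of Lemma \ref{resol00} — the diagonal bound \eqref{res3} for $\beta_{j3}$ versus the full bound \eqref{res5} for $\beta_{j2}$ — and using the symmetry of $(\mathbf R^{(j)})^2$ to control the crossed pairing contribution $a_{lk}\overline{a_{kl}}$ in the second moment of $\beta_{j2}$.
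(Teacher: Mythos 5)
Your proof is correct and follows essentially the same route as the paper: condition on $\mathfrak M^{(j)}$, compute the conditional second moment of the linear form ($\nu=3$) and the quadratic form ($\nu=2$) using independence and the moment bound \eqref{moment}, and then invoke Lemma \ref{resol00} (the paper bounds both cases by the full sum and uses \eqref{res5}, whereas you use \eqref{res3} for the diagonal case — an immaterial difference). Your explicit treatment of the crossed pairing term via the symmetry of $(\mathbf R^{(j)})^2$ is a detail the paper absorbs into the generic constant $C$.
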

\begin{proof}We recall that by $C$ we denote \tc{a} generic constant depending on $\mu_4$ and $D$ only.
By definition of $\beta_{j\nu}$ for $\nu=1,2$, conditioning on $\mathfrak M^{(j)}$, we get 
\begin{align}
\E\{|\beta_{j2}|^2\big|\mathfrak M^{(j)}\}\le &\frac C{n^2}\sum_{1\le l\ne k\le p}|[(\mathbf R^{(j)})^2]_{k+n,l+n}|^2
\le \frac C{n^2}\sum_{1\le l, k\le p}|[(\mathbf R^{(j)})^2]_{k+n,l+n}|^2,\notag\\
\E\{|\beta_{j3}|^2\big|\mathfrak M^{(j)}\}\le&\frac C{n^2}\sum_{l=1}^p|[(\mathbf R^{(j)})^2]_{l+n,l+n}|^2\le 
\frac C{n^2}\sum_{l, k=1}^p|[(\mathbf R^{(j)})^2]_{k+n,l+n}|^2.\notag
\end{align}

Applying Lemma \ref{resol00}, we get the claim.
Thus Lemma \ref{bet1*} is proved.
\end{proof}
\begin{lem}\label{bet1a}Assuming the conditions of Theorem \ref{main}, we have, 
\begin{align}\label{bet12}
\E\{|\beta_{j\nu}|^8\Big|\mathfrak M^{(j)}\}\le\frac C{n^{4}v^{12}}.
\end{align}

\end{lem}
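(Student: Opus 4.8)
The plan is to follow the scheme of Lemma~\ref{bet1*}, but with the eighth conditional moment in place of the second, using the higher-order resolvent bounds collected in Lemma~\ref{resol00} together with the Rosenthal/Burkholder technique already employed in Section~\ref{key} and in Lemmas~\ref{basic2}, \ref{basic5}. Since the asserted bound is conditional on $\mathfrak M^{(j)}$, the entries of $(\mathbf R^{(j)})^2$ are constants and only the $j$-th row $X_{j1},\dots,X_{jp}$ is random, with $\E X_{jl}=0$, $\E X_{jl}^2=1$, $\E|X_{jl}|^4\le\mu_4$ and $|X_{jl}|\le Dn^{\frac14}$. I would treat $\beta_{j1},\beta_{j2},\beta_{j3}$ separately. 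The quantity $\beta_{j1}$ does not reduce to a sum over the random row, so I would bound it deterministically: from the identity $\beta_{j1}=-\frac y{2n}\frac d{dz}(\Tr\mathbf R-\Tr\mathbf R^{(j)})-\frac y{2nz^2}+\frac1{pz^2}$, the interlacing estimate $|\Tr\mathbf R-\Tr\mathbf R^{(j)}|\le\pi v^{-1}$ (see, e.g., \cite{BaiSilv:2010}) and Cauchy's estimate on the disc of radius $v/2$ give $|\frac d{dz}(\Tr\mathbf R-\Tr\mathbf R^{(j)})|\le Cv^{-2}$; since $|z|\ge v$ one obtains $|\beta_{j1}|\le Cn^{-1}v^{-2}$ a.s., so $|\beta_{j1}|^8\le Cn^{-8}v^{-16}\le Cn^{-4}v^{-12}$ because $nv\ge A_0$ on $\mathbb G$.

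For the diagonal sum $\beta_{j3}=\frac1n\sum_{l=1}^p(X_{jl}^2-1)[(\mathbf R^{(j)})^2]_{l+n,l+n}$ I would condition on $\mathfrak M^{(j)}$ and apply Rosenthal's inequality (Lemma~\ref{Rosent}) with $q=8$ to the independent centered summands $X_{jl}^2-1$ and the fixed coefficients $a_l=\frac1n[(\mathbf R^{(j)})^2]_{l+n,l+n}$. This reduces matters to $\bigl(\sum_l|a_l|^2\bigr)^4$, controlled through inequality~\eqref{res3} (which supplies the gain of $v^{-3}\im m_n^{(j)}$ over the trivial $v^{-4}$), and to $\bigl(\max_l\E|X_{jl}^2-1|^8\bigr)\sum_l|a_l|^8$, where inequality~\eqref{res4} is used together with the truncation bound $\E|X_{jl}^2-1|^8\le C(\E|X_{jl}|^{16}+1)\le C(D^{12}n^3\mu_4+1)\le Cn^3$, exactly as in the proof of Lemma~\ref{basic5}. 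For the off-diagonal form $\beta_{j2}=\frac1p\sum_{l\ne k}X_{jl}X_{jk}[(\mathbf R^{(j)})^2]_{l+n,k+n}$ I would instead invoke Burkholder's inequality for quadratic forms (Lemma~\ref{burkh1}) with $q=8$, reduce the inner sums $\sum_{k<l}X_{jk}[(\mathbf R^{(j)})^2]_{l+n,k+n}$ by a further application of Rosenthal, and estimate the resulting sums of squares by inequalities~\eqref{res20}, \eqref{res5}, \eqref{res6}, \eqref{res7}; the heavy-tail term is absorbed using $\E|X_{jl}|^8\le D^4n\mu_4$. Throughout, the pointwise bounds $|[(\mathbf R^{(j)})^2]_{l+n,l+n}|\le v^{-1}\im R^{(j)}_{l+n,l+n}\le v^{-2}$ and $\im m_n^{(j)}(z)\le Cv^{-1}$ are used, and the remaining slack is cleaned up with $nv\ge A_0$ on $\mathbb G$.

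The main obstacle is the bookkeeping of the powers of $n$ and $v$ so that they land on the stated $n^{-4}v^{-12}$. A naive use of Burkholder and Rosenthal leaves several spare factors $v^{-1}$ stemming from $\|(\mathbf R^{(j)})^2\|\le v^{-2}$, so every occurrence of such a norm must be routed through the averaged partial-trace estimates of Lemma~\ref{resol00}, trading a.s.\ powers of $v^{-1}$ for $\im m_n^{(j)}$ or $\im R^{(j)}_{l+n,l+n}$; simultaneously one must check that the $O(n)$ growth of $\E|X_{jl}|^8$ and the $O(n^3)$ growth of $\E|X_{jl}^2-1|^8$ — admissible only because of the truncation hypothesis \eqref{trun} — are absorbed by the normalizing factors $n^{-1}$ and by the constraint $v\ge v_0=A_0n^{-1}$. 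Once the powers are tracked carefully the estimate collapses to the same calculation carried out for $q=2$ in Lemma~\ref{bet1*}, with all exponents shifted accordingly; this is the Marchenko--Pastur analogue of the corresponding eighth-moment bound in \cite{GT:2014}.
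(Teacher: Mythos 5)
Your route is essentially the paper's for the cases that matter here: for $\nu=2,3$ the paper also conditions on $\mathfrak M^{(j)}$ and expands the conditional eighth moment of the quadratic (resp.\ diagonal) form --- it does so by a direct moment count rather than via Lemma~\ref{burkh1}/Lemma~\ref{Rosent}, which is immaterial --- then uses the truncation-driven growth $\mu_6\le C\sqrt n\,\mu_4$, $\mu_8\le Cn\mu_4$ and the partial-trace estimates \eqref{res5}, \eqref{res6} of Lemma~\ref{resol00}, finishing with Corollary~\ref{cor8}. Your treatment of $\beta_{j1}$ is superfluous for this lemma (the paper proves it for $\nu=2,3$; $\beta_{j1}$ is bounded deterministically in Lemma~\ref{beta}), but it is consistent with that lemma.

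The one place where your bookkeeping, as described, does not close is the disposal of the imaginary-part factors. After Rosenthal/Burkholder and Lemma~\ref{resol00}, the leading terms come out as $\frac{C}{n^4v^{12}}\bigl(\im m_n^{(j)}(z)\bigr)^4$, together with averages of the form $\frac1n\sum_l\im^q R^{(j)}_{l+n,l+n}$. You propose to finish with the pointwise bound $\im m_n^{(j)}(z)\le Cv^{-1}$ and to clean up remaining slack with $nv\ge A_0$; but that pointwise bound costs an extra $v^{-4}$, turning the leading term into $Cn^{-4}v^{-16}$, and a deficit in powers of $v$ alone cannot be repaired by $nv\ge A_0$ without spending powers of $n$ that are not available, so the target $Cn^{-4}v^{-12}$ is missed. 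The paper's way out is to keep these imaginary-part factors and remove them via Corollary~\ref{cor8} (equivalently, the estimate is exploited after taking the remaining expectation, as in its application \eqref{dva}), using that $\E\frac1n\sum_l|R^{(j)}_{l+n,l+n}|^q\le C^q$ for $z\in\mathbb G$; note that $\im m_n^{(j)}$ is not bounded by a constant almost surely, so some step of this kind is unavoidable. With that replacement your argument coincides with the paper's proof.
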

\begin{proof} \tc{Let $C$  denote a generic constant depending on $\mu_4$ and $D$ only}. First we consider the case $q=8$. 
By definition of $\beta_{j\nu}$ for $\nu=2,3$, conditioning on $\mathfrak M^{(j)}$  , direct calculations \tc{shows}
\begin{align}
\E\{|\beta_{j2}|^8\big|\mathfrak M^{(j)}\}\le &\frac C{n^{8}}\Big((\sum_{1\le l\ne k\le p}|[(R^{(j)})^2]_{k+n,l+n}|^2)^4+
\mu_4^4(\sum_{1\le l\ne k\le p}|[(R^{(j)})^2]_{k+n,l+n}|^4)^2\notag\\&+\mu_6^2(\sum_{1\le l\ne k\le p}|[(R^{(j)})^2]_{k+n,l+n}|^6)
\mu_8^2(\sum_{1\le l\ne k\le p}|[(R^{(j)})^2]_{k+n,l+n}|^2)\notag\\&+\mu_8^2\sum_{1\le l\ne k\le p}|[(R^{(j)})^2]_{k+n,l+n}|^8\Big).\notag
\end{align}
Using that $\mu_6\le C\sqrt n\mu_4$ and $\mu_8\le Cn\mu_4$, we get
\begin{align}
 \E\{|\beta_{j2}|^8\big|\mathfrak M^{(j)}\}\le &\frac C{n^{8}}\Big((\sum_{1\le l\ne k\le p}|[(R^{(j)})^2]_{k+n,l+n}|^2)^4
\notag\\&+n(\sum_{1\le l\ne k\le p}|[(R^{(j)})^2]_{k+n,l+n}|^6)
(\sum_{1\le l\ne k\le p}|[(R^{(j)})^2]_{k+n,l+n}|^2)\notag\\&+n^2\sum_{1\le l\ne k\le p}|[(R^{(j)})^2]_{k+n,l+n}|^8\Big).\notag
\end{align}

Applying Lemma \ref{resol00}, inequalities \eqref{res5} and \eqref{res6} and Corollary \ref{cor8}, we get the claim.
Thus Lemma \ref{bet1a} is proved.
\end{proof}
\begin{lem}\label{beta} Assuming the conditions of Theorem \ref{main}, we have, for $j=1,\ldots,n$,
\begin{align}\notag
|\beta_{j1}|&\le \frac C{nv^2}.
\end{align}
\end{lem}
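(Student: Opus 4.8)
The plan is to reduce $\beta_{j1}$ to the derivative of a bounded analytic function of $z$ and then close the estimate with a Cauchy bound. First I would rewrite $\beta_{j1}$ in terms of full traces. From the identities $\sum_{l=1}^pR_{l+n,l+n}=nm_n(z)-\frac{p-n}z$ and $\Tr\mathbf R=2nm_n(z)-\frac{p-n}z$ (and the analogous ones for $\mathbf R^{(j)}$, $j\le n$, with $p-n$ replaced by $p-n+1$ and $m_n$ by $m_n^{(j)}$) one gets
\[
\sum_{l=1}^pR_{l+n,l+n}=\tfrac12\Tr\mathbf R-\tfrac{p-n}{2z},\qquad
\sum_{l=1}^pR^{(j)}_{l+n,l+n}=\tfrac12\Tr\mathbf R^{(j)}-\tfrac{p-n+1}{2z}.
\]
Since $\frac{d}{dz}[\mathbf R]_{kk}=[\mathbf R^2]_{kk}$, differentiating the difference of these two expressions gives
\[
\beta_{j1}=\frac1p\frac{d}{dz}\Bigl(\sum_{l=1}^pR^{(j)}_{l+n,l+n}-\sum_{l=1}^pR_{l+n,l+n}\Bigr)
=-\frac{1}{2p}\,g'(z)+\frac{1}{2pz^2},\qquad g(z):=\Tr\mathbf R(z)-\Tr\mathbf R^{(j)}(z).
\]

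Next I would bound $g'(z)$. The function $g$ is analytic on $\mathbb C_+$, and since $\mathbf V^{(j)}$ is a principal submatrix of the Hermitian matrix $\mathbf V$, Cauchy interlacing of eigenvalues yields the classical a.s.\ bound $|g(z)|\le v^{-1}$ for every $z=u+iv\in\mathbb C_+$ (this is exactly the estimate underlying Lemma~\ref{basic8}; cf.\ \cite{GT:2003}, Lemma~3.3). Applying Cauchy's integral formula on the disc $\{\zeta:|\zeta-z|\le v/2\}\subset\mathbb C_+$, on whose boundary $\im\zeta\ge v/2$ and hence $|g(\zeta)|\le 2v^{-1}$, gives $|g'(z)|\le \frac{2v^{-1}}{v/2}=\frac{4}{v^2}$.

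Finally I would combine the two steps: using $|z|\ge\im z=v$, so that $|z|^{-2}\le v^{-2}$, together with assumption \eqref{yn}, which gives $p^{-1}=y_n/n\le (y+c_yn^{-1})/n\le Cn^{-1}$, we obtain
\[
|\beta_{j1}|\le \frac{|g'(z)|}{2p}+\frac{1}{2p|z|^{2}}\le \frac{1}{2p}\Bigl(\frac{4}{v^{2}}+\frac{1}{v^{2}}\Bigr)=\frac{5}{2pv^{2}}\le \frac{C}{nv^{2}},
\]
which is the asserted bound (valid for all $z\in\mathbb C_+$, in particular on $\mathbb G$). There is no serious obstacle here; the only points to watch are the bookkeeping of the $1/z$-type terms — which encode the jump of the Marchenko--Pastur law at the origin — when passing from the partial traces $\sum_lR_{l+n,l+n}$ to the full traces $\Tr\mathbf R$, and the verification that the resulting $\frac1{2pz^2}$ term is dominated by $\frac{C}{nv^2}$, which is immediate from $|z|\ge v$.
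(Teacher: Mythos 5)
Your proof is correct. Both arguments ultimately rest on the same fact — deleting the $j$-th row and column of the Hermitian matrix $\mathbf V$ perturbs the spectrum only through interlacing — but the mechanics differ from the paper's. The paper quotes the interlacing theorem (\cite{Horn}, Theorem 4.38) to get $\sup_x|\mathcal F_n(x)-\mathcal F_n^{(j)}(x)|\le C/n$ for the empirical spectral distributions of $\mathbf W$ and $\mathbf W^{(j)}$, writes $\beta_{j1}$ as $\int (x-z)^{-2}\,d(\mathcal F_n-\mathcal F_n^{(j)})$ plus a $\frac1{n|z|^2}$-type correction, and integrates by parts, the factor $v^{-2}$ coming from $\int|x-z|^{-3}dx$. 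You instead perform the algebraic reduction $\beta_{j1}=-\frac1{2p}\,\frac{d}{dz}\bigl(\Tr\mathbf R-\Tr\mathbf R^{(j)}\bigr)+\frac1{2pz^2}$ (your bookkeeping of the extra zero eigenvalue of $\mathbf V^{(j)}$, i.e.\ the $-\frac1{2z}$ term, is right), take the $1/n$ from the explicit prefactor $1/p$, invoke the a.s.\ bound $|\Tr\mathbf R-\Tr\mathbf R^{(j)}|\le C/v$ (the same fact the paper already uses in Lemma \ref{basic8}, via \cite{GT:2003}), and then convert it into a derivative bound by a Cauchy estimate on the disc of radius $v/2$. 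What your route buys is that it never introduces the ESDs of $\mathbf W$ and $\mathbf W^{(j)}$ or the Kolmogorov-distance step at all, reuses a lemma already in the paper's toolkit, and yields the bound with explicit constants on all of $\mathbb C_+$; the paper's route is notationally shorter because the interlacing theorem is cited as a black box and the integration by parts is one line. Both give $|\beta_{j1}|\le C(nv^2)^{-1}$, so there is no gap.
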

\begin{proof}
Let $\mathcal F_n^{(j)}(x)$ denote empirical spectral distribution function of \tc{the} matrix $\mathbf W^{(j)}$. According to {\it the interlacing eigenvalues  Theorem}
(see \cite{Horn}, Theorem 4.38) we have
\begin{equation}
 \sup_x|\mathcal F_n(x)-\mathcal F_n^{(j)}(x)|\le \frac Cn.
\end{equation}
Furthermore, we represent
\begin{equation}\notag
 \beta_{j1}=\int_{-\infty}^{\infty}\frac1{(x-z)^2}d(\mathcal F_n(x)-\mathcal F_n^{(j)}(x))+\frac1{n|z|^2}.
\end{equation}
Integrating by parts, we get the claim.

Thus Lemma \ref{beta} is proved.
\end{proof}
\begin{lem}\label{lem14}Assuming the conditions of Theorem \ref{main}, we have
\begin{align}
\E\frac{|\varepsilon_{j3}|^4}{|z+y(s_y(z)+m_n(z))+\frac{y-1}z|^4}&\le\frac {C(y)}{n^4v^4}. 
\end{align}
\end{lem}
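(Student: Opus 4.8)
The plan is to combine two different bounds on $\varepsilon_{j3}$. For most of the region $\mathbb G$ the almost-sure estimate $|\varepsilon_{j3}|\le C(nv)^{-1}$ of Lemma~\ref{basic8} is enough, but near the edge of the spectrum I would need the sharper fourth-moment bound $\E|\varepsilon_{j3}|^4\le C(y)\,|W|^2\,(nv)^{-4}$, where $W:=(z+\tfrac{y-1}z)^2-4y$. To get this I would start from the resolvent identity expressing $\varepsilon_{j3}$ through $R_{jj}^{-1}\sum_{l=1}^p R_{j,l+n}^2$, so that $|\varepsilon_{j3}|\le\tfrac1{p|R_{jj}|}\sum_{l=1}^p|R_{j,l+n}|^2\le\tfrac{y}{nv}\,\tfrac{\im R_{jj}}{|R_{jj}|}$ by $\sum_{l=1}^p|R_{j,l+n}|^2\le v^{-1}\im R_{jj}$ (Lemma~\ref{resol00}) and $p^{-1}=y/n$; then I would use that on $\mathbb G$ one has $\E|R_{jj}-s_y(z)|^2\le C(nv)^{-1}$ (by \eqref{raz*}, Lemma~\ref{lam1*} and Corollary~\ref{cor8}) and $|s_y(z)|$ is bounded below there, so $|R_{jj}|\ge\tfrac1{2\sqrt y}$ off an event of probability $O((nv)^{-1})$ and, on the complementary event, $\tfrac{\im R_{jj}}{|R_{jj}|}\le 2\sqrt y\,(\im s_y(z)+|R_{jj}-s_y|)$ with $\im s_y(z)\le\tfrac1{2y}|W|^{1/2}$; taking fourth moments and treating the small exceptional event crudely would give the bound.

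Next I would estimate the denominator deterministically. Writing $z+y(m_n(z)+s_y(z))+\tfrac{y-1}z=\sqrt W+y\Lambda_n$ (using $z+2ys_y(z)+\tfrac{y-1}z=\sqrt W$ and $m_n=s_y+\Lambda_n$), Lemma~\ref{lem00} gives $|z+y(m_n+s_y)+\tfrac{y-1}z|\ge c_0\sqrt{|W|}$ on $\mathbb G$, hence $|z+y(m_n+s_y)+\tfrac{y-1}z|^{-4}\le c_0^{-4}|W|^{-2}$. Multiplying this against $\E|\varepsilon_{j3}|^4\le C(y)|W|^2(nv)^{-4}$ cancels the factor $|W|^2$ and leaves $\E|\varepsilon_{j3}|^4\,|z+y(m_n+s_y)+\tfrac{y-1}z|^{-4}\le c_0^{-4}C(y)(nv)^{-4}$, which is the assertion. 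Any residual power $|W|^{-1}$ that might survive an intermediate step would be absorbed through Lemma~\ref{lemG}: on $\mathbb G$ the defining inequality $v\ge v_0/\sqrt\gamma$ yields $\gamma\ge v_0^2/v^2=A_0^2 n^{-2}v^{-2}$, whence $|W|\ge\tfrac{4y}5\max\{\gamma,v\}\ge\tfrac{4yA_0^2}5 n^{-2}v^{-2}$ and $nv\sqrt{|W|}\ge\tfrac{4y}5A_0$, so each such $|W|^{-1}$ costs at most $C(y)\,n^2v^2$. The $y$-dependence of the final constant collects into $c_0$, $A_0$ and, when $y<1$, the edge factor $(1-\sqrt y)^{-1/2}$, exactly as in \eqref{cy}.

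The hard part, I expect, is the sharper bound $\E|\varepsilon_{j3}|^4\le C(y)|W|^2(nv)^{-4}$ itself: if one merely inserts the a.s.\ bound $|\varepsilon_{j3}|\le C(nv)^{-1}$ into all four factors and estimates the denominator by $|W|^{-2}$, one loses a factor $|W|^{-2}$ which near the edge is as large as $\gamma^{-2}$ and is irrecoverable. So one really has to exploit that $\varepsilon_{j3}$ is small relative to $\sqrt{|W|}$ on $\mathbb G$ — it is governed by the ratio $\im R_{jj}/|R_{jj}|$, which concentrates around $\sqrt y\,\im s_y(z)\le\tfrac1{2\sqrt y}\sqrt{|W|}$ — and then the $n$- and $v$-exponents balance only because the constraint $v\ge v_0/\sqrt\gamma$ built into $\mathbb G$ prevents $|W|$ from dropping below a constant multiple of $(nv)^{-2}$. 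Using the $L^2$-closeness $\E|R_{jj}-s_y|^2\le C(nv)^{-1}$ in place of the crude bound $|R_{jj}|\le v^{-1}$, and keeping $\varepsilon_{j3}$ paired with the denominator instead of decoupling them before taking expectations, are what make the argument go through.
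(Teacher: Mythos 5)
Your reduction $|\varepsilon_{j3}|\le\frac1{p|R_{jj}|}\sum_{l}|R_{j,l+n}|^2\le\frac{y}{nv}\,\frac{\im R_{jj}}{|R_{jj}|}$ is correct, but the step you yourself flag as the hard one — the intermediate bound $\E|\varepsilon_{j3}|^4\le C(y)|W|^2(nv)^{-4}$ with $W:=(z+\frac{y-1}z)^2-4y$ — is not delivered by the tools you invoke, and this is a genuine gap. On the good event your estimate gives $\E\bigl[(\im R_{jj}/|R_{jj}|)^4\bigr]\lesssim (\im s_y)^4+\E|R_{jj}-s_y|^4$, so after dividing by $|W|^2$ you must show $\E|R_{jj}-s_y|^4\lesssim |W|^2$. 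The only quantitative input you cite is $\E|R_{jj}-s_y|^2\le C(nv)^{-1}$ (via \eqref{raz*} and Lemma \ref{lam1*}); the available fourth-moment information (Corollary \ref{cor4.8} with $q=2$, or Cauchy--Schwarz from the second moment) only gives something of order $(nv)^{-1/2}$, and near the edge of $\mathbb G$ one has $v\asymp v_0/\sqrt\gamma$, $\gamma\asymp\varepsilon\asymp n^{-2/3}$, hence $(nv)^{-1/2}\asymp n^{-1/6}$ while $|W|^2\asymp\gamma^2\asymp n^{-4/3}$ — the fluctuation term is far too large. The exceptional event where $|R_{jj}|$ is small fares no better: Chebyshev from the $L^2$ bound gives probability $O((nv)^{-1})\asymp n^{-1/3}$ there, and the crude bound on that event contributes $\frac{C}{(nv)^4|W|^2}\Pr(\mathrm{bad})$, which is $\le C(nv)^{-4}$ only if $\Pr(\mathrm{bad})\lesssim|W|^2\asymp n^{-4/3}$; so ``treating the small exceptional event crudely'' fails by a full power of $n$. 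In short, in the regime where the lemma is delicate, $\im R_{jj}/|R_{jj}|$ does \emph{not} concentrate at scale $\sqrt{|W|}$ with the probability estimates at hand, because the typical fluctuation $(nv)^{-1/2}$ dominates $\im s_y\asymp\sqrt{|W|}$ there.

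A second, structural problem is your fallback remark that any residual $|W|^{-1}$ can be absorbed via Lemma \ref{lemG} at cost $Cn^2v^2$: in your scheme the main term is exactly $(nv)^{-4}$ with no spare powers of $n$, so a surviving $|W|^{-1}$ turns the bound into $C n^{-2}v^{-2}$, which is \emph{larger} than the target $Cn^{-4}v^{-4}$; the cancellation of $|W|^2$ would have to be exact, which is precisely what the unproven intermediate bound was supposed to guarantee. The paper's proof is built so that this issue never arises: it starts from the Schur-complement identity \eqref{shur}, $\varepsilon_{j3}=\frac{y}{2n}\bigl((1+\beta_{j1}+\beta_{j2}+\beta_{j3})R_{jj}+\frac1{2z}\bigr)$, which extracts an explicit factor $n^{-1}$ per copy of $\varepsilon_{j3}$ before any probabilistic estimate; the quadratic-form fluctuations $\beta_{j2},\beta_{j3}$ are then bounded by conditional moment estimates (Lemmas \ref{bet1*}, \ref{bet1a}) whose $\im m_n^{(j)}$ factors are cancelled against the denominator (kept inside the expectation, using Lemma \ref{lem00} and Corollary \ref{cor8}), $\beta_{j1}$ is handled deterministically by interlacing (Lemma \ref{beta}), and the leftover negative powers of $|W|$ then multiply terms carrying $n^{-6},n^{-7},\dots$, so Lemma \ref{lemG} (the bounds \eqref{lowerbound}) can absorb them with room to spare. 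If you want to salvage your route you would need genuinely stronger concentration for $R_{jj}$ (high moments of $\Lambda_n$ and $\varepsilon_j$ at scale $(nv)^{-1}$, uniformly in $\mathbb G$), which is not among the lemmas you cite.
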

\begin{proof}Using the representations \eqref{shur} we have 
\begin{align}\label{shu}
  \varepsilon_{j3}&=\frac y{2n}(1+\frac1p\sum_{l, k=1}^pX_{jl}X_{jk}[(\mathbf R^{(j)})^2]_{l+n,k+n})R_{jj}+\frac{y}{2nz}\notag\\&=\frac y{2n}((1+\beta_{j1} +\beta_{j2}+\beta_{j3}) R_{jj}+\frac1{2z}).
  \end{align}
 Applying the Cauchy--Schwartz inequality, we get
\begin{align}
\E&|\frac{\varepsilon_{j3}}{z+y(s_y(z)+m_n(z))+\frac{y-1}z}|^4\notag\\&\le \frac C{n^4}\Bigg(\Bigg(1+\E^{\frac12}\Bigg(\frac{|\frac1n\sum_{l,k=1}^p
X_{jl}X_{jk}[(\mathbf R^{(j)})^2]_{l+n,k+n}|}{|z+y(s_y(z)+m_n(z))+\frac{y-1}z|}\Bigg)^8\Bigg)\E^{\frac12}|R_{jj}|^8+\frac1{|z|^4}\Bigg).\notag
\end{align}

Using Corollary \ref{cor8},  we we may write
\begin{align}
\E&|\frac{\varepsilon_{j3}}{z+y(s_y(z)+m_n(z))+\frac{y-1}z}|^4\notag\\&\le\frac C{n^4}
\Big(1+\frac1{|z|^4}+\E^{\frac12}\Big|\frac{\beta_{j1}}{z+y(m_n(z)+s_y(z))+\frac{y-1}z}\Big|^8 \notag\\&
+\E^{\frac12}\Big|\frac{\beta_{j2}}{z+y(m_n(z)+s_y(z))+\frac{y-1}z}\Big|^8+
\E^{\frac12}\Big|\frac{\beta_{j3}}{z+y(m_n(z)+s_y(z))+\frac{y-1}z}\Big|^8\Big).\notag
\end{align}

Using Lemma \ref{beta}, we get  by definition of $\eta_{j1}$, Lemma \ref{resol00}, and  inequality \eqref{res1} that for $z\in\mathbb G$,
\begin{align}\notag
\E^{\frac12}\Big|\frac{\beta_{j1}}{z+y(m_n(z)+s_y(z))+\frac{y-1}z}\Big|^8
&\le C\E^{\frac12}\frac{C}{n^8v^{16}|z+y(m_n^{(j)}(z)+s_y(z))+\frac{y-1}z|^8}
\notag\\&\le Cv^{-4}.
\end{align}
Furthermore, applying inequality \eqref{lar1}, we obtain
\begin{align}\notag
\E^{\frac12}\Big|\frac{\beta_{j2}}{z+y(m_n(z)+s_y(z))+\frac{y-1}z}\Big|^8&\le\E^{\frac12}\Big|\frac{\beta_{j2}}{z+y(m_n^{(j)}(z)+s(z))+\frac{y-1}z}\Big|^8.
\end{align}
Conditioning with respect to $\mathfrak M^{(j)}$ and applying Lemma \ref{bet1*}, we obtain
\begin{align}
\E^{\frac12}&\Big|\frac{\beta_{j2}}{z+y(m_n(z)+s_y(z))+\frac{y-1}z}\Big|^8\notag\\&
\le\E^{\frac12}\frac C{|z+y(m_n^{(j)}(z)+s_y(z))+\frac{y-1}z|^8}\Big(\frac1{n^4v^{12}}(\im m_n^{(j)}(z))^4\notag\\&
\quad\quad\quad\quad\quad\quad\quad\quad\quad\quad\quad\quad\quad\quad\quad\quad+
\frac{\mu_4^4}{n^4v^8}\frac 1n\sum_{l=1}^p(\im R^{(j)}_{l+n,l+n})^8\Big).
\end{align}
Using  Lemma \ref{lem00}, inequality \eqref{lem00.2},
together with Corollary \ref{cor8} we get
\begin{align}
\E^{\frac12}\Big|\frac{\beta_{j2}}{z+y(m_n(z)+s_y(z))+\frac{y-1}z}\Big|^8&\le\frac C{n^2v^6|(z+\frac{y-1}z)^2-4y|}\notag\\&+\frac{C\mu_{4}^2}{n^{2}v^4|(z+\frac{y-1}z)^2-4y|^2}.\notag
\end{align}

Applying inequality \eqref{lar1} and conditioning with respect to $\mathfrak M^{(j)}$ and applying Lemma \ref{bet1*}, we get
\begin{align}\label{ka0}
\E^{\frac12}\Big|&\frac{\beta_{j3}}{z+y(m_n(z)+s_y(z))+\frac{y-1}z}\Big|^8\notag\\&\le 
\E^{\frac12}\frac1{|z+y(s_y(z)+m_n^{(j)}(z))+\frac{y-1}z|^8}\Bigg(\frac C{n^4v^{12}}(\im m_n^{(j)}(z))^4\notag\\&+
 \frac {C\mu_4^2}{n^4v^{8}}\Big(\frac1n\sum_{l\in\mathbb T_j}(\im R^{(j)}_{ll})^4\Big)^2\notag\\&+
 \frac {C\mu_4^2}{n^5v^{12}}\Big(\frac1n\sum_{l\in\mathbb T_j}(\im R^{(j)}_{ll})^3\Big)(\im m_n^{(j)}(z))\notag\\&+
 \frac {C\mu_4^4}{n^6v^{12}}\Big(\frac1n\sum_{l\in\mathbb T_j}(\im R^{(j)}_{ll})^2\Big)^2\notag\\&+
 \frac {C\mu_4^2}{n^6v^{12}}\Big(\frac1n\sum_{l\in\mathbb T_j}(\im R^{(j)}_{ll})^2\Big)(\im m_n^{(j)}(z))^2
\Bigg).
\end{align}
Using that $|z+y(m_n^{(j)}(z)+s_y(z))+\frac{y-1}z|\ge \im m_n^{(j)}(z)+\frac{(1-y)v}{|z|^2}$ together with  Lemma \ref{resol00}, we arrive at
\begin{align}
\E^{\frac12}&\Big|\frac{\beta_{j3}}{z+y(m_n(z)+s_y(z))+\frac{y-1}z}\Big|^8\le \frac C{n^{2}v^{6}|(z+\frac{y-1}z)^2-4y|}
\notag\\&+\frac C{n^{2}v^{4}|(z+\frac{y-1}z)^2-4y|^2}
+
\frac C{n^{\frac52}v^{6}|(z+\frac{y-1}z)^2-4y|^{\frac74}}\notag\\&
+
\frac C{n^{3}v^{6}|(z+\frac{y-1}z)^2-4y|^{2}}+
\frac C{n^{3}v^{6}|(z+\frac{y-1}z)^2-4y|^{\frac32}}.\notag
\end{align}
Summarizing we may write now, for $z\in\mathbb G$,
\begin{align}
\E&\frac{|\varepsilon_{j3}|^4}{|z+s(z)+m_n(z)|^4}\le\frac C{n^4v^4}+\frac C{n^{6}v^{6}|(z+\frac{y-1}z)^2-4y|}
\notag\\&+\frac C{n^{6}v^{4}|(z+\frac{y-1}z)^2-4y|^2}
+
\frac C{n^{\frac{13}2}v^{6}|(z+\frac{y-1}z)^2-4y|^{\frac74}}\notag\\&
+
\frac C{n^{7}v^{6}|(z+\frac{y-1}z)^2-4y|^{2}}+
\frac C{n^{7}v^{6}|(z+\frac{y-1}z)^2-4y|^{\frac32}}.\notag
\end{align}
For $z\in\mathbb G$,   see \eqref{region} and Lemma \ref{lemG}, this inequality may be simplified by means of the following bounds (with $v_0= A_0n^{-1}$)
\begin{align}\label{lowerbound}
 n^{\frac52}v^2|(z+\frac{y-1}z)^2-4y|^{\frac74}\ge n^{\frac52}v_0^2\gamma^{-1+\frac74}\ge C\sqrt n\gamma^{\frac34}\ge C,\notag\\
 n^3v^2|(z+\frac{y-1}z)^2-4y|\ge C,\quad n^3v^2|(z+\frac{y-1}z)^2-4y|^{\frac32}\ge C,\notag\\
 n^2|(z+\frac{y-1}z)^2-4y|^2\ge C.
\end{align}

Using these relation,  we obtain
\begin{align}\notag
\E\frac{|\varepsilon_{j3}|^4}{|z+y(s(z)+m_n(z))+\frac{y-1}z|^4}&\le\frac C{n^4v^4}.
\end{align}
Thus Lemma \ref{lem14} is proved.
\end{proof}



\begin{thebibliography}{99}
\itemsep=\smallskipamount
 \bibitem{Bai:02} Bai, Z. D., Miao, Tsay, J.
      {\em Convergence rates of the spectral distributions
      of large {W}igner matrices}.
Int. Math. J. {\bf 1} (2002), 65--90.
 \bibitem{Bai:93}Bai, Z. D.
 {\em Convergence rate of expected spectral distributions of large random matrices. I. Wigner matrices.} Ann. Probab. 21 (1993), no. 2, 625--648.

\bibitem{BaiSilv:2010}Bai Z. D., Silverstein J. W.
 {\em Spectral Analysis of Large Dimensional Random Matrices.} Springer 2010, 551 PP.

 

 

 \bibitem{BGT:08} Bobkov, S.; G\"otze, F.; Tikhomirov, A. N.
 {\em On concentration of empirical measures and convergence
 to the semi--circle law.}  Journal of Theoretical Probability, {\bf 23},
  (2010), 792--823.
\bibitem{Burkholder:1973} Burkholder, D. L. {\em Distribution function inequalities for martingales.} Ann. Probability 1 (1973), 19–-42.
\bibitem{ErdosYauYin:2010a} Erd\"os, L.; Yau, H.-T.;  Yin, J.
     {\em Rigidity of eigenvalues of generalized {W}igner matrices}. Preprint,  arXiv:1007.4652.

\bibitem{ErdosYauYin:2010}  Erd\"os, L.; Yau ; H.-T,  Yin, J.
     {\em Bulk universality for generalized {W}igner matrices}. Preprint, arXiv:1001.3453.
\bibitem{Girko:02} Girko, V. L. {\em Extended proof of the statement:
Convergence rate of expected spectral functions of symmetric random
matrices $\Sigma_n$ is equal $O(n^{-\frac12})$ and the method of critical
steepest  descent.} Random Oper. Stochastic Equations {\bf 10} (2002),
253--300.

\bibitem{GT:2003} G\"otze, F.; Tikhomirov, A. N. {\em Rate of convergence to the semi-circular law.}
Probab. Theory Related Fields 127 (2003), no. 2, 228--276.
\bibitem{GT:2004}\tc{G\"otze, F.; Tikhomirov, A. N.}
{\em Rate of convergence in probability to the Marchenko-Pastur law.} Bernoulli 10 (2004), no. 3, 503–548

\bibitem{GT:2005}G\"otze, F.; Tikhomirov, A. N.
 {\em The rate of convergence for spectra of {GUE} and {LUE} matrix ensembles.}
 Cent. Eur. J. Math. {\bf 3} (2005),  666--704.

\bibitem{GT:2009} G\"otze, F.; Tikhomirov, A. N. {\em The rate of convergence of spectra of sample covariance matrices.}
Teor. Veroyatn. Primen.  54  (2009),   196--206.
\bibitem{GT:2013a}G\"otze, F.; Tikhomirov, A. N. {\em On the Rate of Convergence to the  Marchenko--Pastur Distribution }
arXiv:1110.1284
\bibitem{GT:2013}G\"otze, F.; Tikhomirov, A. N.
{\em On the rate of convergence to the semi-circular law
}. Preprint, arXiv:1109.0611v3.
 \bibitem{GT:2014} G\"otze, F.; Tikhomirov, A. N.
     {\em On the optimal bounds of the rate of convergence of the expected spectral distribution functions to the semi-circle law.}\\
    Preprint. 2014, available
  on http://arxiv.org/abs/1405.7820. 
  \bibitem{GT:2014a} G\"otze, F.; Tikhomirov, A. N.
     {\em Rate of convergence of the empirical  spectral distribution functions to the semi-circle law.}\\
    Preprint. 2014, available
  on http://arxiv.org/abs/1407.2780. 
 \bibitem{Gustavsson:2005} Gustavsson, Jonas.
     {\em Gaussian fluctuations of eigenvalues in the GUE}.\\
     Ann. I. H. Poincare --PR 41 (2005),151--178.
 \bibitem{Hitczenko:1990}Hitczenko P. {\em Best constant in the decoupling inequality for nonnegative random variables}. 
 Statist. Probab. Lett. 9 (1990), no. 4, 327–-329.
\bibitem{Johnson:1985} Johnson, W. B., Schechtman G., Zinn J.
{\em Best Constants in Moment Inequalities for Linear Combinations of Independent and Exchangeable random Variables}. \tc{Ann.  Probab.} 13(1985) No.1,  234--253 
\bibitem{SchleinMaltseva:2013} Cacciapuoti C.,  Maltsev A.,  Schlein B. 
{\em Optimal Bounds on the Stieltjes Transform of Wigner Matrices}. Preprint, 2013, arXiv:1311.0326 , available
  on http://arxiv.org/abs/1311.0326  
  \bibitem{Horn}Horn R. A., Horn Ch. R.
  {Topics in Matrix Analysis}. Cambridge University Press 1991, 607 PP.
\bibitem{Rosenthal:1970} Rosenthal, H. P. {\em On the subspaces of $L_p,\, (p>2)$ spanned by sequences of independent random variables}. 
Israel J. Math. 8 (1970) 273--303.
\bibitem{TTKh:2008}Timushev, D. A.; Tikhomirov, A. N.; Kholopov, A. A. {\em On the accuracy of the approximation of the GOE spectrum 
 by the semi-circular law}. (Russian) Teor. Veroyatn. Primen. 52 (2007), no. 1, 180--185; translation in Theory Probab. Appl. 52 (2008), no. 1, 171–-177
\bibitem{T:09}Tikhomirov, A. N.
 {\em On the rate of convergence
of the expected spectral distribution function of a {W}igner matrix to the semi-circular law.}
  Siberian Adv. Math.  {\bf19},  (2009),  211--223.
 \end{thebibliography}
\end{document}